\numberwithin{equation}{section}
\numberwithin{figure}{section}
\setlist{topsep=0.75ex, itemsep=0ex, after=\vspace*{0.25ex}}
\DeclarePairedDelimiter{\abs}{\lvert}{\rvert}
\newcommand{\CC}[1]{\mathcal{C}(#1)}
\newcommand{\complement}[1]{\overline{#1}}
\DeclarePairedDelimiterX{\edge}[2]{\{}{\}}{#1,#2}
\DeclarePairedDelimiter{\floor}{\lfloor}{\rfloor}
\newcommand{\join}{\mathbin\nabla}
\newcommand{\oeis}[1]{\href{https://oeis.org/#1}{#1}}
\DeclarePairedDelimiter{\set}{\{}{\}}
\DeclarePairedDelimiterX{\setm}[2]{\{}{\}}{#1\,\delimsize\vert\,\mathopen{}#2}
\theoremstyle{plain}
\newtheorem{theorem}{Theorem}[section]
\newtheorem{corollary}[theorem]{Corollary}
\newtheorem{lemma}[theorem]{Lemma}
\newtheorem{observation}[theorem]{Observation}
\newtheorem{proposition}[theorem]{Proposition}
\newtheorem{question}[theorem]{Question}
\newtheorem*{question*}{Question}
\theoremstyle{definition}
\newtheorem{definition}[theorem]{Definition}
\newtheorem{convention}[theorem]{Convention}
\theoremstyle{remark}
\newtheorem{remark}[theorem]{Remark}
\theoremstyle{plain}
\newcounter{beh}
\newcommand{\resetbeh}{\setcounter{beh}{1}}\resetbeh
\newcommand{\behref}[1]{{\bfseries \ref{#1}}}
\newenvironment{proofbeh}[1]%
{%
  \proof[Proof of~\behref{#1}]}%
{\endproof}
\theoremstyle{plain}
\newcommand{\footlabeltext}[2]{%
    \addtocounter{footnote}{1}%
    \footnotetext[\thefootnote]{%
        \addtocounter{footnote}{-1}%
        \refstepcounter{footnote}\label{#1}%
        #2%
    }%
}
\newcommand{\footlabel}[2]{%
  \footlabeltext{#1}{#2}%
    $^{\ref{#1}}$%
}
\let\footref\@undefined
\newcommand{\footref}[1]{%
    $^{\ref{#1}}$%
}
\newcounter{normalfootc}
\renewcommand{\footnote}[1]{%
    \footlabel{footsaferefiwontuse\thenormalfootc}{#1}%
    \addtocounter{normalfootc}{1}%
}
\newlength\alignrightwidth
\newcommand{\alignright}[1]{%
  \settowidth{\alignrightwidth}{#1}%
  \hspace*{-\alignrightwidth}%
  {#1}%
}
\tikzset{
  edge/.style={ultra thick},
  nonedge/.style={ultra thick, dotted},
  mmedge/.style={line width=3.2pt, red, draw opacity=0.25},
  cycleedge/.style={line width=6.4pt, green},
  vertex/.style={fill, black, circle, inner sep=0, minimum size=2.5mm},
  svertex/.style={fill, black, circle, inner sep=0, minimum size=1.625mm}
}
\def\cinner{1.3}
\def\cradius{1.5}
\def\couter{1.7}
\def\cstep{15}
\def\cvlabel#1#2{
  \ifnum #1<90
  \node [label={[anchor=west, rotate=#1]#1:{#2}}] at (#1:\cradius) {};
  \else
  \ifnum #1<270
  \node [label={[anchor=east, rotate=#1+180]#1:{#2}}] at (#1:\cradius) {};
  \else
  \node [label={[anchor=west, rotate=#1]#1:{#2}}] at (#1:\cradius) {};
  \fi
  \fi
}
\def\cvertex#1#2#3{
  \node [vertex] (#3) at (#1:\cradius) {};
  \cvlabel{#1}{#2}
}
\def\cvertices#1#2#3#4#5{
  \fill [lightgray, rounded corners=1.625mm]
  (#1-\cstep/2:\cradius)
  -- (#1-\cstep/2:\cinner)
  arc (#1-\cstep/2:#2+\cstep/2:\cinner)
  -- (#2+\cstep/2:\couter)
  arc (#2+\cstep/2:#1-\cstep/2:\couter)
  --
  (#1-\cstep/2:\cradius);
  
  \draw [edge] (#1-\cstep/2:\cradius) arc (#1-\cstep/2:#2+\cstep/2:\cradius);
  
  \foreach \i in {0,...,#5}
  {
    \node [svertex] at (#1+\cstep*\i:\cradius) {};
  }
  \cvlabel{#1}{#3}
  \cvlabel{#2}{#4}
}
\def\vtow#1{\csname w#1\endcsname}
\def\cedgevv#1#2#3#4{
  \draw [#1] (#2) to [out=\vtow{#2}+180, in=\vtow{#3}-180, #4] (#3);
}
\def\cedgevw#1#2#3#4{
  \draw [#1] (#2) to [out=\vtow{#2}+180, in=#3-180, #4] (#3:\cinner);
}
\def\cedgewv#1#2#3#4{
  \draw [#1] (#2:\cinner) to [out=#2+180, in=\vtow{#3}-180, #4] (#3);
}
\def\cedgeww#1#2#3#4{
  \draw [#1] (#2:\cinner) to [out=#2+180, in=#3-180, #4] (#3:\cinner);
}
\newcommand{\specialH}[2]{H_{#1}^{#2}}
\begin{document}
\title{A characterization of graphs \\ with regular distance-$2$ graphs}
\author{Elisabeth Gaar\textsuperscript\faStar%
  \and
  Daniel Krenn\textsuperscript\faLeaf%
}
\date{}
\maketitle

\bgroup
\let\thefootnote\relax
\footnotetext{%
    \alignright{\textsuperscript\faStar}%
    Elisabeth Gaar,
    \href{mailto:elisabeth.gaar@jku.at}{\texttt{elisabeth.gaar@jku.at}},
    	Institute of Production and Logistics Management, Johannes Kepler 
    	University Linz, Austria,
    supported by the Austrian Science Fund (FWF): I\,3199-N31%
}
\footnotetext{%
    \alignright{\textsuperscript\faLeaf}%
    Daniel Krenn,
    \href{mailto:math@danielkrenn.at}{\texttt{math@danielkrenn.at}}
    or \href{mailto:daniel.krenn@plus.ac.at}{\texttt{daniel.krenn@plus.ac.at}},
    Paris Lodron University of Salzburg, Austria, \\
    \url{http://www.danielkrenn.at},
    supported by the Austrian Science Fund (FWF): P\,28466-N35%
}
\egroup

\begin{abstract}\small
  \textbf{Abstract.}
  For non-negative integers~$k$,
  we consider graphs
  in which every vertex has exactly $k$ vertices at distance~$2$,
  i.e., graphs whose distance-$2$ graphs are $k$-regular.
  We call such graphs
  $k$-metamour-regular  
  motivated by the terminology in polyamory.

  While constructing $k$-metamour-regular graphs is relatively easy---we
  provide a generic construction for arbitrary~$k$---finding all
  such graphs is much more challenging. 
  We show that only $k$-metamour-regular graphs with a 
  certain property cannot be built with this construction.
  Moreover, we derive a complete characterization of 
  $k$-metamour-regular graphs 
  for each $k=0$, $k=1$ and $k=2$.
  In particular, a connected graph with~$n$ vertices is $2$-metamour-regular
  if and only if $n\ge5$ and the graph is
  \begin{itemize}
  \item a join of complements of cycles
    (equivalently every vertex has degree~$n-3$),
  \item a cycle, or
  \item one of $17$ exceptional graphs with $n\le8$.
  \end{itemize}
  Moreover, a characterization of graphs in which every vertex has
  at most one metamour is acquired. Each characterization is accompanied
  by an investigation of the corresponding
  counting sequence of unlabeled graphs.
\end{abstract}

\providecommand{\faHashtag}{\#}
\bgroup
\let\thefootnote\relax
\footnotetext{%
  \alignright{\textsuperscript\faHashtag}%
  2020 Mathematics Subject Classification:
  05C75; 
  05C12, 
  05C07, 
  05C30, 
  05C76  
}
\egroup

\def\somepolycule{
  \begin{tikzpicture}
    \begin{scope}[shift={(0,0.5)}]
      \node [vertex] (f) at (0.5,0) {};
      \node [vertex] (b) at (-0.5,0) {};
      \node [vertex] (n) at (0,1) {};
      \node [vertex] (d) at (0,2) {};
      \node [vertex] (c) at (-0.5,3) {};
      \node [vertex] (e) at (0.5,3) {};

      \draw [edge] (b) -- (n) -- (d) -- (c);
      \draw [edge] (d) -- (e);
      \draw [edge] (f) -- (n);

      \draw [mmedge] (b) to [bend left] (d);
      \draw [mmedge] (n) to [bend left] (c);
      \draw [mmedge] (n) to [bend right] (e);
      \draw [mmedge] (c) to [bend left] (e);
      \draw [mmedge] (f) to [bend right] (d);
      \draw [mmedge] (b) to [bend right] (f);
    \end{scope}
    \begin{scope}[shift={(12,1.5)}]
      \node [vertex] (f) at (0,-2) {};
      \node [vertex] (n) at (0.5,-1) {};
      \node [vertex] (d) at (0,0) {};
      \node [vertex] (c) at (-0.5,-1) {};
      \node [vertex] (e) at (0,1) {};
      \node [vertex] (p) at (1,1) {};
      \node [vertex] (r) at (1,0) {};
      \node [vertex] (z) at (0.5,2) {};
      \node [vertex] (l) at (-0.5,2) {};
      
      \draw [edge] (n) -- (d) -- (c);
      \draw [edge] (d) -- (e);
      \draw [edge] (f) -- (n);

      \draw [edge] (d) -- (r) -- (p) -- (e);
      \draw [edge] (e) -- (l) -- (z);

      \draw [mmedge] (n) -- (c);
      \draw [mmedge] (n) to [bend right=15] (e);
      \draw [mmedge] (c) to [bend left=15] (e);
      \draw [mmedge] (f) -- (d);

      \draw [mmedge] (d) -- (p);
      \draw [mmedge] (e) -- (r);
      \draw [mmedge] (n) -- (r);
      \draw [mmedge] (c) -- (r);

      \draw [mmedge] (l) to [bend right=15] (d);
      \draw [mmedge] (l) -- (p);
      \draw [mmedge] (z) -- (e);
    \end{scope}
    \begin{scope}[shift={(6,1)}, yscale=-1]
      \node [vertex] (a) at (-4,0) {};
      \node [vertex] (b) at (-3,0.5) {};
      \node [vertex] (c) at (-3,-1) {};
      \draw [edge] (a) -- (c);
      \draw [edge] (b) -- (c);
      \node [vertex] (d) at (-2,-2.5) {};
      \node [vertex] (e) at (-1.5,-1.5) {};
      \draw [edge] (d) -- (e);
      \node [vertex] (f) at (-2.5,-1.5) {};
      \node [vertex] (g) at (-2,0) {};
      \draw [edge] (f) -- (g);
      \draw [edge] (e) -- (g);
      \draw [edge] (c) -- (g);
      \node [vertex] (h) at (-1.5-0.5/1.5+0.75/1.5,0.5+1.5/1.5+0.25/1.5) {};
      \node [vertex] (i) at (-1.5,0.5) {};
      \draw [edge] (h) -- (i);
      \node [vertex] (j) at (-0.5+0.33/1.5,0.5+1/1.5) {};
      \node [vertex] (k) at (-0.5+0.5/1.5+0.75/1.5,0.5+1.5/1.5-0.25/1.5) {};
      \draw [edge] (j) -- (k);
      \node [vertex] (l) at (-0.5,0.5) {};
      \draw [edge] (k) -- (l);
      \node [vertex] (m) at (-0.5+0.5/1.5-0.75/1.5,0.5+1.5/1.5+0.25/1.5) {};
      \draw [edge] (m) -- (l);
      \node [vertex] (n) at (-1,-1) {};
      \draw [edge] (l) -- (n);
      \draw [edge] (i) -- (n);
      \draw [edge] (g) -- (n);
      \node [vertex] (o) at (0,-1.5) {};
      \node [vertex] (p) at (0,0) {};
      \draw [edge] (o) -- (p);
      \draw [edge] (n) -- (p);
      \node [vertex] (q) at (1,-1) {};
      \draw [edge] (p) -- (q);
      \node [vertex] (r) at (2,-1.5) {};
      \node [vertex] (s) at (2,0) {};
      \draw [edge] (r) -- (s);
      \draw [edge] (q) -- (s);
      \node [vertex] (t) at (3,-1) {};
      \draw [edge] (s) -- (t);
      \node [vertex] (u) at (2.5,1.5) {};
      \draw [edge] (t) -- (u);
      \node [vertex] (v) at (3,0.5) {};
      \draw [edge] (t) -- (v);
      \node [vertex] (w) at (3.5,1.5) {};
      \draw [edge] (t) -- (w);
      \node [vertex] (x) at (4,0) {};
      \draw [edge] (t) -- (x);


      \draw [mmedge] (a) -- (b);
      \draw [mmedge] (b) -- (g);
      \draw [mmedge] (g) -- (a);

      
      \draw [mmedge] (d) -- (g);


      \draw [mmedge] (c) -- (n);
      \draw [mmedge] (n) -- (e);
      \draw [mmedge] (e) -- (f);
      \draw [mmedge] (f) -- (c);
      \draw [mmedge] (c) -- (e);
      \draw [mmedge] (f) -- (n);
      

      \draw [mmedge] (h) -- (n);


      \draw [mmedge] (j) -- (l);

      \draw [mmedge] (m) -- (k);
      \draw [mmedge] (k) -- (n);
      \draw [mmedge] (n) -- (m);

      
      \draw [mmedge] (g) -- (i);
      \draw [mmedge] (i) -- (l);
      \draw [mmedge] (l) -- (p);
      \draw [mmedge] (p) -- (g);
      \draw [mmedge] (g) -- (l);
      \draw [mmedge] (i) -- (p);


      \draw [mmedge] (n) -- (o);
      \draw [mmedge] (o) -- (q);
      \draw [mmedge] (q) -- (n);

      \draw [mmedge] (p) -- (s);


      \draw [mmedge] (q) -- (r);
      \draw [mmedge] (r) -- (t);
      \draw [mmedge] (t) -- (q);
      
      \draw [mmedge] (s) -- (u);
      \draw [mmedge] (u) -- (v);
      \draw [mmedge] (v) -- (w);
      \draw [mmedge] (w) -- (x);
      \draw [mmedge] (x) -- (s);
      \draw [mmedge] (s) -- (v);
      \draw [mmedge] (v) -- (x);
      \draw [mmedge] (x) -- (u);
      \draw [mmedge] (u) -- (w);
      \draw [mmedge] (w) -- (s);

    \end{scope}
    \begin{scope}[shift={(10,3)}]
      \node [vertex] (a) at (180:0.667) {};
      \node [vertex] (m) at (60:0.667) {};
      \node [vertex] (z) at (300:0.667) {};
      \draw [edge] (a) -- (m) -- (z) -- (a);
    \end{scope}
    \begin{scope}[shift={(1.25,-0.5)}]
      \node [vertex] (f) at (30+180:0.5) {};
      \node [vertex] (p) at (30:0.5) {};
      \draw [edge] (f) -- (p);
    \end{scope}
    \begin{scope}[shift={(0,-0.5)}]
      \node [vertex] (m) at (0,0) {};
    \end{scope}
    \begin{scope}[shift={(6,3.5)}]
      \node [vertex] (a) at (-30+180:0.5) {};
      \node [vertex] (k) at (-30:0.5) {};
      \draw [edge] (a) -- (k);
    \end{scope}
    \begin{scope}[shift={(7.5,4)}]
      \node [vertex] (a) at (0,0) {};
    \end{scope}
    \begin{scope}[shift={(1.75+0.1,3)}]
      \node [vertex] (x) at (150:0.667) {};
      \node [vertex] (j) at (270:0.667) {};
      \node [vertex] (y) at (390:0.667) {};
      \draw [edge] (x) -- (j) -- (y);
      \draw [mmedge] (x) -- (y);
    \end{scope}
  \end{tikzpicture}
}

\section{Introduction}

For a given graph, let us construct its distance-$2$ graph as follows:
It has the same vertices as the original graph, and there is an edge
between two vertices if these vertices are at distance~$2$ in the
original graph. Here, distance~$2$ means that such two vertices are
different, not adjacent, and have a common neighbor.
An example is shown in Figure~\ref{fig:polycule}.
\begin{figure}
  \centering
  \somepolycule
  \caption[A graph and its distance-$2$ graph]%
  {A graph and its distance-$2$ graph.
    A line \tikz{
      \node [vertex] (v0) at (0,0) {};
      \node [vertex] (v1) at (1,0) {};
      \draw [edge] (v0) -- (v1); }
    presents an edge in the graph and
    \tikz{
      \node [vertex] (v0) at (0,0) {};
      \node [vertex] (v1) at (1,0) {};
      \draw [mmedge] (v0) -- (v1); }
    an edge in its distance-$2$ graph.}
  \label{fig:polycule}
\end{figure}
Distance-$2$ graphs and properties of vertices at distance~$2$ have
been heavily studied in the literature; see the survey in
Section~\ref{sec:related-literature}.
We pursue the theme of characterizing all graphs whose
distance-$2$ graphs are in a given graph class. We specifically set our
focus on the graph class of regular graphs.

The research is motivated by the relationship concept
\emph{polyamory},%
\footnote{%
  Hyde and DeLamater~\cite{Hyde-DeLamater:2006:underst-human-sexuality}
  describe \emph{polyamory} as
  ``the non-possessive, honest, responsible, and ethical philosophy
  and practice of loving multiple people simultaneously''.
  Other descriptions of polyamory are around; see for example
  Haritaworn, Lin and Klesse~\cite{Haritaworn-Lin-Klesse:2006:poly-logue}
  or Sheff~\cite{Sheff:2016:someone-is-poly}.
  The word polyamory appeared in an article by
  Zell-Ravenheart~\cite{ZellRavenheart:1990:bouquet-lovers} in 1990 and
  is itself a combination of
  the Greek word ``\foreignlanguage{greek}{πολύ}'' (poly) meaning ``many''
  and of the Latin word ``amor'' meaning ``love''.
}
where every person might be in a relationship with any number of
other persons. Naturally, this can be modeled as a graph, where each vertex 
represents a person, and two vertices are adjacent if the corresponding persons 
are in a relationship.
In polyamory, two persons that are both in a relationship with the same 
third person, but are not in a relationship with each other, are called
\emph{metamours}.%
\footnote{For \emph{metamour} see for example
  Hardy and Easten~\cite[p.~219ff, 298]{Hardy-Easton:2017:ethical-slut}
  or Veaux and Rickert~\cite[p.~397ff, 455]{Veaux-Rickert:2014:more-than-two},
  or online at \url{https://www.morethantwo.com/polyglossary.html\#metamour}.
}
We adopt this terminology
and use the term \emph{metamour of a vertex}  
for a vertex at distance~$2$. Correspondingly, we call the
distance-$2$ graph of a graph $G$, the \emph{metamour graph} of $G$.

We have discussed the context and set-up the necessary vocabulary,
so we are now ready to talk about the content and results of this article.
We investigate graphs where each
vertex has the same number of metamours. If this number is~$k$, we
say that the graph is \emph{$k$-metamour-regular}.
Reformulated, a $k$-metamour-regular graph is a graph whose
distance-$2$ graph is $k$-regular.
The leftmost connected component of the graph in Figure~\ref{fig:polycule}
shows an example of
a $2$-metamour-regular (sub)graph with six vertices.
We ask: 
\begin{question*}
Can we find all $k$-metamour-regular graphs and give a
precise description of how they look like?
\end{question*}

Certainly not every graph satisfies this property, but some do.
For example, for $k=2$ 
it is not hard to check that in (connected) graphs with at least
five vertices, where
\begin{itemize}
\item every vertex has two neighbors,
  i.e., cyclic graphs, or
\item every vertex is adjacent to every other vertex but two,
  i.e., complements of cyclic graphs,
\end{itemize}
each vertex indeed has exactly two metamours. Hence, these graphs are 
$2$-metamour-regular.

The second construction above can be generalized, and 
in this article we provide a generic construction that allows to create 
$k$-metamour-regular graphs for any number~$k$.
One of our key results is that the vast majority of these graphs 
can indeed be built by this generic construction. To be more precise,
only $k$-metamour-regular graphs whose metamour graph consists of at most two connected components
cannot necessarily be constructed this way.

This key result lays the foundations for
another main result of this article, namely the 
identification of all $2$-metamour-regular graphs, 
so we answer the question above for $k=2$. 
Our findings are as follows:
Every $2$-metamour-regular graph of any size falls
either into one of the two groups (cyclic or complements of cyclic graphs)
above or 
into the third group of 
\begin{itemize}
\item $17$ exceptional graphs with at least six and at most eight vertices.
\end{itemize}
We provide a systematic and explicit
description of the graphs in the first two groups. 
All $2$-metamour-regular graphs with at most nine vertices---this includes
the $17$ exceptional graphs of the third group---are shown in 
Figures~\ref{fig:graphs-5} to \ref{fig:graphs-9}.
Summarized, we present a complete characterization of all $2$-metamour-regular
graphs.
Note that as a consequence, exceptional graphs exist 
only for up to eight vertices.

In addition to the above result we derive several structural properties of
$k$-metamour-regular graphs for any number~$k$.
We also characterize all graphs in which every 
vertex has no metamour ($k=0$), exactly one metamour ($k=1$), and
at most one metamour.
As a byproduct of every characterization including the one for $k=2$,
we are able to count the number of graphs with these properties. 
Moreover---and
this might be the one sentence take-away message of this article---our findings 
imply 
that besides the graphs that are simple to discover (i.e., can be built by the 
generic construction), only 
a few (if any) small exceptional graphs are
$0$-metamour-regular,
$1$-metamour-regular,
graphs where every vertex has most one metamour and
$2$-metamour-regular.

\subsection{Outline}

We now provide a short overview on the structure
of this paper. The terms discussed so far are formally defined in
Section~\ref{sec:formal-stuff}. Moreover, there we introduce joins of graphs
which are used in the systematic and explicit description of the graphs
in the characterizations of metamour-regular graphs.
The section also includes some basic properties related to those concepts.

Section~\ref{sec:results} is a collection of all results derived in
this article. This is accompanied by plenty of consequences of these
results and discussions.
The proofs of all results are given in
Sections~\ref{sec:proof:foundations} to \ref{sec:proofs:2-mmr}.
We conclude in Section~\ref{sec:conclusions} and provide many
questions, challenges and open problems for future work.

So far not mentioned is the next section. There, literature related to
this article is discussed.

\subsection{Related literature}
\label{sec:related-literature}

In this section we discuss concepts that
have already been examined in literature and that are related
to metamours (i.e., vertices having distance~$2$, or ``neighbors of neighbors'')
and the metamour graphs induced by their relations.
Metamour graphs are called 
\emph{distance-$2$ graphs} in 
Iqbal, Koolen, Park and Rehman~\cite{Iqbal-Koolen-Park-Rehman:2020:dist-reg}, 
and some authors
also call them 
\emph{$2$-distance graphs} (e.g., Azimi and 
Farrokhi~\cite{Azimi-Farrokhi:2014:2-distance-graphs-paths-cycles})
or \emph{$2$nd distance graphs} (e.g., 
Simi\'c~\cite{Simic:1983:graph-equations-distance-graphs}).
This notion also appears in the book by
Brouwer, Cohen and 
Neumaier~\cite[p.~437]{Brouwer-Cohen-Neumaier:1989:distance-regular-graphs}.

The overall question is to characterize all graphs whose distance-$n$ graph
equals some graph of a given graph class.
Simi\'{c}~\cite{Simic:1983:graph-equations-distance-graphs} answers the
question when the distance-$n$ graph of a graph equals the line graph
of this graph.
Characterizing when the distance-$2$ graph is a path or a cycle
is done by
Azimi and Farrokhi~\cite{Azimi-Farrokhi:2014:2-distance-graphs-paths-cycles},
and when it is a union of short paths or and a union of two complete graphs by
Ching and Garces~\cite{Ching-Garces:2019:characterizing-2-distance-graphs}.
Azimi and Farrokhi~\cite{Azimi-Farrokhi:2017:self-2-distance-graphs}
also tackled the question when the distance-$2$ graph of a graph
equals the graph itself. This question is also topic of the online 
discussion~\cite{Zypen:2019:graphs-distance-2}.
Bringing the context to our article, we investigate the above question for
the graph class of regular graphs.

Moreover, vertices in a graph that have distance two, i.e., metamours,
or more generally vertices that have a given 
specific distance, 
are discussed in the existing literature in
many different contexts.
The persons participating in the 
exchange~\cite{Chaudhary:2018:vertices-having-distance}
discuss algorithms for efficiently finding vertices having specific distance
on trees. 
Moreover, the notion of dominating sets is extended to vertices at
specific distances in Zelinka~\cite{Zelinka:1983:k-domatic-numbers}
and in particular to distance two
in Kiser and Haynes~\cite{Kiser-Haynes:2015:distance-2-domatic-numbers-grids}.

Also various kinds of colorings of graphs with respect to vertices of given
distance are studied. Typically, the corresponding chromatic number is
analyzed, for instance  
Bonamy, L\'{e}v\^{e}que and 
Pinlou~\cite{Bonamy-Leveque-Ponlou:2014:2-distance-coloring},
 Borodin, Ivanova and 
Neustroeva~\cite{Borodin-Ivanova-Neustroeva:2004:2-dist-coloring}, and  
Bu and Wang~\cite{Bu-Wang:2019:2-distance-coloring} provide such results
for vertices at distance two. Algorithms for finding such colorings are also investigated. We mention here
Bozda\u{g}, \c{C}ataly\"{u}rek, Gebremedhin, 
Manne, Boman and 
\"{O}zg\"{u}ner~\cite{Bozdag-Catalyurek-Gebremedhin-Manne-Boman-Ozguner:2010:coloring-alg-distance-2}
as an example.
Kamga, Wang, Wang and  
Chen~\cite{Kamga-Wang-Wang-Chen:2018:edge-color-vertices-distance-2}
study
variants of so-called vertex distinguishing colorings, i.e.,
edge colorings where additionally vertices at distance two
have distinct sets of colors.
Their motivation comes from network problems.
The concept is studied more generally but for more specific graph classes 
in Zhang, Li, Chen, Cheng and 
Yao~\cite{Zhang-Li-Chen-Cheng-Yao:2006:x-vertex-dist-edge-coloring}.

Many of the mentioned results also investigate vertices at distance at
most two (compared to exactly two).
This is closely related to the concept of the square of a graph, i.e.,
graphs with the same vertex set as the original graph and two vertices are
adjacent if they have distance at most two in the original graph.
More generally, this concept is known as powers of graphs;
see Bondy and Murty~\cite[p. 82]{Bondy-Murty:2008:graph-theory}.
The overall question to characterize all graphs whose $n$th distance graph
equals some graph of a given graph class
is studied also for powers of graphs instead of distance-$n$ graphs; see
Akiyama, Kaneko and Simi\'{c}~\cite{Akiyama-Kaneko-Simic:1978:line-graphs-power-graphs}.
Colorings are studied for powers of graphs by a motivation coming,
among others, from wireless communication networks or graph drawings.
The corresponding chromatic number is analyzed, for example, in
Kramer and Kramer~\cite{Kramer-Kramer:2008:survey-distance-coloring},
Alon and Mohar~\cite{Alon-Mohar:2002:chromatic-number-powers} and
Molloy and 
Salavatipour~\cite{Molloy-Salavatipour:2005:chromatic-number-square-panar}.
Results on the hamiltonicity of powers of graphs are studied in Bondy and
Murty~\cite[p. 105]{Bondy-Murty:2008:graph-theory}
and Underground~\cite{Underground:1978:hamilton-squares}.

Finally, there are distance-regular graphs. 
Even though the name might suggest that these graphs are closely related 
to metamour graphs, this is not the case: 
A graph is distance-regular if it is regular and  
for any two vertices $v$ and $w$, the number of vertices at distance $j$ from 
$v$ and at distance $k$ from $w$ depend only upon $j$, $k$ and the distance of 
$u$ and $v$.
The book by Brouwer, Cohen and 
Neumaier~\cite{Brouwer-Cohen-Neumaier:1989:distance-regular-graphs}
is a good starting point for this whole research area.
Plenty of publications related to distance-regular graphs are available, in 
particular 
recently Iqbal, Koolen, Park and Rehman 
\cite{Iqbal-Koolen-Park-Rehman:2020:dist-reg} considered distance-regular graphs
whose distance-$2$ graphs are strongly regular.

\section{Definitions, notation \& foundations}
\label{sec:formal-stuff}

This section is devoted to definitions and some simple properties. Moreover, we 
state (graph-theoretic) 
conventions and set up the necessary notation that will be used in this 
article.
The proofs of the properties of this section are postponed to
Section~\ref{sec:proof:foundations}.

\subsection{Graph-theoretic definitions, notation \& conventions}
\label{sec:definitions}

In this graph-theoretic article we use standard graph-theoretic
definitions and notation; see for example 
Diestel~\cite{Diestel:2017:graph-theory:5th}.
We use the convention that all graphs in this article contain at
least one vertex, i.e., we do not talk about the empty graph.
Moreover, we use the following convention
for the sake of convenience.
\begin{convention}
  If two graphs are isomorphic, we will
  call them \emph{equal} and use the equality-sign.
\end{convention}

In many places it is convenient to extend adjacency to subsets of
vertices and subgraphs. We give the following definition that is
used heavily in Section~\ref{sec:results:kmr} and Section~\ref{sec:proofs:k-mmr}.

\begin{definition}
  Let $G$ be a graph, and let $W_1$ and $W_2$ be disjoint subsets of
  the vertices of~$G$.
  \begin{itemize}
  \item We say that $W_1$ is \emph{adjacent} in~$G$ to~$W_2$ if there
    is a vertex~$v_1 \in W_1$ adjacent in~$G$ to some
    vertex~$v_2 \in W_2$.
  \item We say that $W_1$ is \emph{completely adjacent} in~$G$
    to~$W_2$ if every vertex~$v_1 \in W_1$ is adjacent in~$G$ to every
    vertex~$v_2 \in W_2$.
  \end{itemize}
\end{definition}
By identifying a vertex~$v \in V(G)$ with the
subset~$\set{v} \subseteq V(G)$, we may also use (complete) adjacency
between~$v$ and a subset of~$V(G)$. Moreover, for simplicity, whenever
we say that subgraphs of~$G$ are (completely) adjacent, we mean that
the underlying vertex sets are (completely) adjacent.

We explicitly state the negation of adjacent:
We say that $W_1$ is \emph{not adjacent} in~$G$ to~$W_2$ if no
vertex~$v_1 \in W_1$ is adjacent in~$G$ to any vertex~$v_2 \in W_2$.
We will not need the negation of completely adjacent.

We recall the following standard concepts and terminology to
fix their notation.

\begin{itemize}
\item For a set $W \subseteq V(G)$ of vertices of a graph~$G$, the
  \emph{induced subgraph} $G[W]$ is the subgraph of~$G$ with vertices~$W$
  and all edges of~$G$ that are subsets of~$W$, i.e.,
  edges incident only to vertices of~$W$.
\item A set $\mu \subseteq E(G)$ of edges of a graph~$G$ is called
  \emph{matching} if no vertex of~$G$ is incident to more than one edge in~$\mu$.
  In particular, the empty set is a matching.
  The set $\mu$ is called \emph{perfect matching} if every vertex of $G$ is incident
  to exactly one edge in~$\mu$.
\item For a set $\nu \subseteq E(G)$ of edges of a graph~$G$,
  we denote by $G - \nu$ the graph with vertices~$V(G-\nu) = V(G)$ and 
  edges~$E(G-\nu) = E(G) \setminus \nu$.
\item
  The \emph{union} of graphs $G_1$ and~$G_2$, written as $G_1 \cup G_2$,
  is the graph with
  vertex set~$V(G_1) \cup V(G_2)$ and edge set~$E(G_1) \cup E(G_2)$.
\item A \emph{path~$\pi$ of length~$n$} in a graph~$G$ is
  written as $\pi=(v_1,\dots,v_n)$ for vertices~$v_i \in V(G)$ that
  are pairwise distinct.
\item A \emph{cycle~$\gamma$ of length~$n$} or \emph{$n$-cycle~$\gamma$} in a graph~$G$ is
  written as $\gamma=(v_1,\dots,v_n,v_1)$ for vertices~$v_i \in V(G)$ that
  are pairwise distinct.
\item
  For a graph $G$, we write $\CC{G}$ for the set of connected components
  of~$G$. 
Note that each element of~$\CC{G}$ is a subgraph of~$G$.
\item
  We write $\complement{G}$ for the \emph{complement} of a graph~$G$, i.e., the graph
  with the same vertices as~$G$ but with an edge between vertices exactly
  where $G$ has no edge.
\item
We use
the complete graph~$K_n$ for $n\ge1$,
the complete $t$-partite graph~$K_{n_1,\dots,n_t}$ for $n_i\ge1$,
  $i\in\set{1,\dots,t}$,
the path graph~$P_n$ for $n \ge 1$, and
the cycle graph~$C_n$ for $n\ge3$.
\end{itemize}

\begin{remark}\label{rem:frequently-used-complements}
  We will frequently use the complement~$\complement{C_n}$ of the
  cycle graph~$C_n$ for $n\ge3$. Note that
\begin{itemize}
\item $\complement{C_3}$ is the graph with $3$ isolated vertices,
\item $\complement{C_4}$ is the graph with $2$ disjoint single edges, and
\item $\complement{C_5}$ equals $C_5$ (see also Figure~\ref{fig:graphs-5}).
\end{itemize}
\end{remark}

We close this section and continue with definitions and
concepts that are specific for this article.

\subsection{Metamours}

We now formally define the most fundamental concept of this article,
namely metamours.

\begin{definition}
  Let $G$ be a graph.
  \begin{itemize}
  \item A vertex $v$ of the graph~$G$ is a \emph{metamour} of a
    vertex~$w$ of~$G$ if
    the distance of $v$ and $w$ on the graph~$G$ equals~$2$.
  \item The \emph{metamour graph}~$M$ of~$G$ is the graph with the
    same vertex set as~$G$ and an edge between the vertices~$v$ and
    $w$ of~$M$ whenever $v$ is a metamour of~$w$ in~$G$.
  \end{itemize}
\end{definition}

We can slightly reformulate the definition of metamours:
A vertex~$v$ having a different vertex~$w$ as metamour, i.e., having
distance~$2$ on a graph, is equivalent to saying that $v$ and $w$ are not 
adjacent and there is a
vertex~$u$ such that both $v$ and $w$ have an edge incident to this vertex~$u$,
i.e., $u$ is a common neighbor of $v$ and $w$.

Clearly, there is no edge in a graph between two vertices that are metamours of 
each other. This is
reflected in the relation between the metamour graph and the complement of a
graph, and put into writing as the following observation.

\begin{observation}
  \label{observation:mm-subgraph-complement}
  Let $G$ be a graph.
  Then the metamour graph of~$G$ is a subgraph of the
  complement of~$G$.
\end{observation}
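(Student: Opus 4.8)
The statement to prove is Observation~\ref{observation:mm-subgraph-complement}: the metamour graph of $G$ is a subgraph of $\complement{G}$.

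\medskip

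The plan is to verify directly that every edge of the metamour graph is an edge of the complement, which suffices since the two graphs share the same vertex set by definition. First I would take an arbitrary edge $\edge{v}{w}$ of the metamour graph~$M$ of~$G$. By the definition of the metamour graph, this means $v$ and $w$ are metamours in~$G$, i.e., the distance between $v$ and $w$ in~$G$ equals~$2$. In particular, having distance exactly~$2$ forces $v\neq w$ and, crucially, forces $v$ and $w$ to be non-adjacent in~$G$ (distance~$1$ would mean adjacent, distance~$0$ would mean equal). This is exactly the reformulation already recorded right after the definition of metamours, so no new work is needed here.

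\medskip

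Next I would observe that ``$v$ and $w$ are non-adjacent in~$G$'' is, by the definition of the complement~$\complement{G}$, precisely the statement that $\edge{v}{w}$ is an edge of~$\complement{G}$. Hence every edge of~$M$ is an edge of~$\complement{G}$, and since $V(M)=V(G)=V(\complement{G})$, the metamour graph~$M$ is a (spanning) subgraph of~$\complement{G}$, as claimed.

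\medskip

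There is essentially no obstacle here: the result is immediate from unwinding the definitions of ``distance~$2$'', ``metamour graph'', and ``complement''. The only point that deserves a word of care is making explicit that distance exactly~$2$ (not just distance at least~$2$ or at most~$2$) implies non-adjacency, but this is built into the definition of metamours given above. So the proof is a one- or two-line definition chase.
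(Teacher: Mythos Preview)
Your proposal is correct and matches the paper's treatment: the paper states this as an observation without a formal proof, merely noting beforehand that ``clearly, there is no edge in a graph between two vertices that are metamours of each other,'' which is exactly the definition chase you spell out.
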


The question whether the metamour graph equals the complement will appear
in many statements of this article. The following simply equivalence
is useful.

\begin{proposition}\label{pro:mm-eq-complement:diameter}
  Let $G$ be a connected graph with $n$ vertices.
  Then the following statements are equivalent:
  \begin{enumerate}[(a)]
  \item\label{it:mm-eq-complement:diameter:M-eq-compl}
    The metamour graph of~$G$ equals $\complement{G}$.
  \item\label{it:mm-eq-complement:diameter-2}
    The graph~$G$ has diameter~$2$ or $G=K_n$.
  \end{enumerate}
\end{proposition}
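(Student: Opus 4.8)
The plan is to prove the two implications separately, working directly from the definitions of metamour graph, diameter, and complement.

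For the direction \eqref{it:mm-eq-complement:diameter-2} $\Rightarrow$ \eqref{it:mm-eq-complement:diameter:M-eq-compl}, I would argue as follows. By Observation~\ref{observation:mm-subgraph-complement} the metamour graph~$M$ of~$G$ is always a subgraph of~$\complement{G}$, so it suffices to show every edge of~$\complement{G}$ is an edge of~$M$. First suppose $G = K_n$. Then $\complement{G}$ has no edges, so there is nothing to show and $M = \complement{G}$ trivially (both are edgeless on the same vertex set). Now suppose instead that $G$ has diameter~$2$. Let $\edge{v}{w}$ be an edge of~$\complement{G}$, i.e., $v \neq w$ and $v$ is not adjacent to~$w$ in~$G$. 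Since $G$ is connected of diameter~$2$, the distance of~$v$ and~$w$ in~$G$ is at most~$2$; being nonadjacent and distinct, it is not~$0$ or~$1$, hence exactly~$2$. Thus $v$ is a metamour of~$w$, so $\edge{v}{w} \in E(M)$. Therefore $\complement{G} \subseteq M$, and together with Observation~\ref{observation:mm-subgraph-complement} we get $M = \complement{G}$.

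For the direction \eqref{it:mm-eq-complement:diameter:M-eq-compl} $\Rightarrow$ \eqref{it:mm-eq-complement:diameter-2}, I would prove the contrapositive: assume $G \neq K_n$ and $G$ does not have diameter~$2$, and show $M \neq \complement{G}$. Since $G$ is connected with at least one vertex and $G \neq K_n$, it has at least two nonadjacent vertices, so its diameter is at least~$2$; as it is not~$2$ by assumption, the diameter is at least~$3$. Pick vertices~$v$ and~$w$ with $\operatorname{dist}_G(v,w) \geq 3$. Then $v$ and~$w$ are distinct and nonadjacent, so $\edge{v}{w} \in E(\complement{G})$; but their distance is not~$2$, so they are not metamours, hence $\edge{v}{w} \notin E(M)$. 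Thus $M \neq \complement{G}$, completing the contrapositive. Combining the two directions yields the equivalence.

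I do not expect any serious obstacle here; the statement is essentially an unwinding of definitions, and the only subtlety is to handle the degenerate case $G = K_n$ (where $\complement{G}$ is edgeless and the ``diameter~$2$'' phrasing would otherwise fail) and to use connectedness to guarantee that nonadjacent distinct vertices are at finite distance. The mild care needed is in the second direction, where one must note that $G \neq K_n$ forces the diameter to be at least~$2$ so that ruling out diameter~$2$ really does give diameter at least~$3$ and hence a witnessing pair of vertices.
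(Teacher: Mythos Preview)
Your proof is correct and follows essentially the same approach as the paper: both directions rely on Observation~\ref{observation:mm-subgraph-complement} and on the fact that a nonadjacent pair lies in the metamour graph precisely when its distance is~$2$. The only cosmetic difference is that for \eqref{it:mm-eq-complement:diameter:M-eq-compl} $\Rightarrow$ \eqref{it:mm-eq-complement:diameter-2} the paper argues directly (showing every pair of vertices has distance at most~$2$), whereas you argue the contrapositive by exhibiting a pair at distance at least~$3$; the content is identical.
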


\subsection{Metamour-degree \& metamour-regularity}

Having the concept of metamours, it is natural to investigate the
number of metamours of a vertex. We formally define this ``degree'' and
related concepts below.

\begin{definition}
  \label{def:mm-degree}
  Let $G$ be a graph.
  \begin{itemize}
  \item  The \emph{metamour-degree} of a vertex of~$G$
    is the number of metamours of this vertex.
  \item The \emph{maximum metamour-degree}
    of the graph~$G$
    is the maximum over the metamour-degrees of its vertices.
  \item For $k\ge0$ the graph~$G$ is called
    \emph{$k$-metamour-regular} if every vertex of~$G$ has
    metamour-degree~$k$, i.e., has exactly $k$ metamours.
  \end{itemize}
\end{definition}

We finally have $k$-metamour-regularity at hand and can now start to
relate it to other existing terms. We begin with the following two
observations.

\begin{observation}
    \label{obs:kmr-iff-metamourgraphkr}
    Let $k \ge 0$, and let $G$ be a graph and $M$ its metamour graph.
    Then~$G$ is $k$-metamour-regular if and only if the
    metamour graph~$M$ is $k$-regular.
\end{observation}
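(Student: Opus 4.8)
The plan is to reduce the statement to a straightforward unwinding of the relevant definitions, the only substantive point being that the degree of a vertex in the metamour graph is exactly its metamour-degree. First I would fix a vertex~$v$, noting that $V(M)=V(G)$ by the definition of the metamour graph. By that same definition, a vertex~$w$ is adjacent to~$v$ in~$M$ if and only if $w$ is a metamour of~$v$ in~$G$ (equivalently, the distance of~$v$ and~$w$ in~$G$ is~$2$). Hence the neighbourhood of~$v$ in~$M$ is precisely the set of metamours of~$v$ in~$G$, and therefore the degree of~$v$ in~$M$ equals the metamour-degree of~$v$ in~$G$.

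With this observation in hand the equivalence is immediate: by Definition~\ref{def:mm-degree}, $G$ is $k$-metamour-regular if and only if every vertex of~$G$ has metamour-degree~$k$, which by the previous step holds if and only if every vertex of~$M$ has degree~$k$, i.e., if and only if~$M$ is $k$-regular. There is no real obstacle here; the proof is a definition chase, and the only thing one must not overlook is that $G$ and~$M$ share the same vertex set, so that the quantifiers ``every vertex of~$G$'' and ``every vertex of~$M$'' range over the same objects and the two ``for all'' statements literally coincide once the per-vertex degree identity is established.
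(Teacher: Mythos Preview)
Your proposal is correct. The paper treats this statement as an observation and gives no explicit proof; your definition chase is exactly the intended justification, spelling out that the metamour-degree of a vertex in~$G$ coincides with its degree in~$M$ since $V(G)=V(M)$ and adjacency in~$M$ is defined precisely by the metamour relation.
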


The number of vertices with odd degree is even by the
handshaking lemma. Therefore, we get the following observation.

\begin{observation}
  Let $k\ge1$ be odd. Then the number of vertices of a
  $k$-metamour-regular graph is even.
\end{observation}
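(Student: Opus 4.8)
The plan is to reduce the statement to a direct application of the handshaking lemma on the metamour graph. Let $G$ be a $k$-metamour-regular graph with $n$ vertices, and let $M$ be its metamour graph. By Observation~\ref{obs:kmr-iff-metamourgraphkr}, the graph~$M$ is $k$-regular, so every vertex of~$M$ has degree~$k$. Since $M$ has the same vertex set as~$G$, it has $n$ vertices.

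First I would sum the degrees over all vertices of~$M$: this sum equals $nk$ on the one hand, and equals $2\abs{E(M)}$ on the other hand by the handshaking lemma. Hence $nk = 2\abs{E(M)}$ is even. Since $k$ is odd by assumption, it follows that $n$ must be even.

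Alternatively, one can phrase the argument via the parity version of the handshaking lemma already quoted in the text: the number of vertices of odd degree in any graph is even. Applied to~$M$, where every one of the $n$ vertices has degree~$k$ and $k$ is odd, this says precisely that $n$ is even. Either formulation is immediate, so I do not expect any genuine obstacle here; the only thing to be careful about is to invoke Observation~\ref{obs:kmr-iff-metamourgraphkr} to pass from the metamour-regularity of~$G$ to the ordinary $k$-regularity of~$M$ before applying the handshaking lemma.
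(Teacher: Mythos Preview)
Your proposal is correct and matches the paper's own justification exactly: the paper simply notes that the number of vertices of odd degree is even by the handshaking lemma, which is precisely your second formulation applied to the $k$-regular metamour graph.
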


\begin{proposition}\label{pro:mm-eq-complement:degree}
  Let $G$ be a connected graph with $n$~vertices.
  Then the following statements are equivalent:
  \begin{enumerate}[(a)]
  \item\label{it:mm-eq-complement:degree:M-eq-compl}
    The metamour graph of~$G$ equals $\complement{G}$.
  \item\label{it:mm-eq-complement:degree}
    For every vertex of~$G$, the sum of its degree and its metamour-degree
    equals~$n-1$.
  \end{enumerate}
\end{proposition}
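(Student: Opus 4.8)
The plan is to exploit Observation~\ref{observation:mm-subgraph-complement}, which says the metamour graph~$M$ is always a subgraph of~$\complement{G}$ on the same vertex set. For a fixed vertex~$v$, write $d_G(v)$ for its degree in~$G$, $d_M(v)$ for its metamour-degree (its degree in~$M$), and $d_{\complement{G}}(v)$ for its degree in~$\complement{G}$. Since $G$ and $\complement{G}$ partition the edges of the complete graph on $V(G)$, we always have $d_G(v) + d_{\complement{G}}(v) = n-1$. Because $M \subseteq \complement{G}$, we always have $d_M(v) \le d_{\complement{G}}(v)$, with equality at~$v$ exactly when every non-neighbor of~$v$ in~$G$ is in fact a metamour of~$v$. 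Hence the per-vertex identity in~\eqref{it:mm-eq-complement:degree} — namely $d_G(v)+d_M(v)=n-1$ — is equivalent to $d_M(v) = d_{\complement{G}}(v)$, i.e.\ to the condition that $v$ has the same neighborhood in~$M$ as in~$\complement{G}$.

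For the direction \eqref{it:mm-eq-complement:degree:M-eq-compl}$\Rightarrow$\eqref{it:mm-eq-complement:degree}, if $M = \complement{G}$ then trivially $d_M(v) = d_{\complement{G}}(v)$ for every~$v$, and adding $d_G(v)$ to both sides gives the claimed sum $n-1$. For the converse \eqref{it:mm-eq-complement:degree}$\Rightarrow$\eqref{it:mm-eq-complement:degree:M-eq-compl}, assume $d_G(v)+d_M(v)=n-1$ for all~$v$; then $d_M(v) = d_{\complement{G}}(v)$ for all~$v$. Combined with the containment $M \subseteq \complement{G}$ from Observation~\ref{observation:mm-subgraph-complement}, equality of degrees at every vertex forces equality of the edge sets: a proper subgraph on the same vertex set must lose at least one edge, hence drop the degree of at least one endpoint. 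Therefore $M = \complement{G}$.

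I would present this cleanly by first isolating the observation ``for each vertex~$v$, $d_G(v)+d_M(v)=n-1 \iff d_M(v)=d_{\complement G}(v)$'' using $d_G(v)+d_{\complement G}(v)=n-1$, and then the observation ``$M\subseteq\complement G$ together with $d_M(v)=d_{\complement G}(v)$ for all~$v$ implies $M=\complement G$.'' Note that connectedness of~$G$ and the alternative $G=K_n$ play no essential role here beyond what is already packaged in the cited observation; the statement is really a degree-counting consequence of $M\subseteq\complement G$. The only mild subtlety — and the one place to be careful rather than hard — is the final step, where I must justify that a spanning subgraph with all vertex degrees matching the ambient graph is the whole graph; this is immediate since any missing edge would strictly decrease the degree of both its endpoints, contradicting degree equality. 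I do not anticipate any real obstacle; the proof is short.
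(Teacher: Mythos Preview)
Your proposal is correct and follows essentially the same approach as the paper's own proof: both use the identity $d_G(v)+d_{\complement{G}}(v)=n-1$, then invoke Observation~\ref{observation:mm-subgraph-complement} ($M\subseteq\complement{G}$) together with equality of degrees at every vertex to conclude $M=\complement{G}$. Your remark that connectedness is not actually used is accurate; the paper does not use it either in this proof.
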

Note that if $k\ge0$ and $G$ is a connected $k$-metamour-regular graph with 
$n$~vertices, then \ref{it:mm-eq-complement:degree} states that
the graph~$G$ is $(n-1-k)$-regular. We use this in
Proposition~\ref{pro:regular-join-and-mm-graphs}.

\subsection{Joins of graphs}
\label{sec:joins}

Given two graphs, we already have defined the union of these graphs in
Section~\ref{sec:definitions}. A join of graphs is a variant of that.
We will introduce this concept now, see also
Harary~\cite[p.~21]{Harary:1969:graph-theory},
and then discuss a couple of simple
properties of joins, also in conjunction with metamour graphs.

\begin{definition}
  Let $G_1$ and $G_2$ be graphs with disjoint vertex sets~$V(G_1)$ and $V(G_2)$.
  The \emph{join of $G_1$ and $G_2$} is the graph denoted by
  $G_1 \join G_2$ with vertices
  $
    V(G_1) \cup V(G_2)
  $
  and edges
  \begin{equation*}
    E(G_1) \cup E(G_2) \cup
    \setm[\big]{\edge{g_1}{g_2}}{\text{$g_1 \in V(G_1)$ and $g_2 \in V(G_2)$}}.
  \end{equation*}
\end{definition}
Some graphs in Figures~\ref{fig:graphs-6} to \ref{fig:graphs-9} are
joins of complements of cycle graphs.
All of the joins of graphs in this paper are ``disjoint joins''. We
use the convention that if the vertex sets~$V(G_1)$ and $V(G_2)$ are not
disjoint, then we make them disjoint before the join.
We point out that the operator~$\join$ is associative
and commutative.

Let us get to know joins of graphs in form of a
supplement to Remark~\ref{rem:frequently-used-complements}. We have
\begin{equation*}
  K_{3,3,\dots,3} = \complement{C_3} \join \complement{C_3} \join \dots
  \join \complement{C_3}
\end{equation*}
for the complete multipartite graph $K_{3,3,\dots,3}$.

There are connections between joins of graphs and metamour graphs that
will appear frequently in the statements and results of this article.
We now present first such relations.

\begin{proposition}\label{pro:join-and-mm-graphs}
  Let $G$ be a connected graph and $M$ its metamour graph.
  Then the following statements are
  equivalent:
  \begin{enumerate}[(a)]
  \item\label{it:join-and-mm-graphs:M-eq-compl}
    The metamour graph~$M$ equals $\complement{G}$ and $\abs{\CC{M}}\ge2$.
  \item\label{it:join-and-mm-graphs:join-mindestens-2}
    The graph $G$ equals
    $G = \complement{M_1} \join \dots \join \complement{M_t}$
    with $\set{M_1,\dots,M_t}=\CC{M}$
    and $t\ge2$.
  \item\label{it:join-and-mm-graphs:join-of-2}
    There are graphs $G_1$ and $G_2$ with $G = G_1 \join G_2$.
  \end{enumerate}
\end{proposition}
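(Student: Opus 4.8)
The plan is to establish the cycle of implications
\ref{it:join-and-mm-graphs:join-mindestens-2}~$\Rightarrow$~\ref{it:join-and-mm-graphs:join-of-2}~$\Rightarrow$~\ref{it:join-and-mm-graphs:M-eq-compl}~$\Rightarrow$~\ref{it:join-and-mm-graphs:join-mindestens-2}. The first of these is immediate: since $\join$ is associative and $t\ge2$, setting $G_1=\complement{M_1}$ and $G_2=\complement{M_2}\join\dots\join\complement{M_t}$ writes $G$ as a join of two graphs.

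For \ref{it:join-and-mm-graphs:join-of-2}~$\Rightarrow$~\ref{it:join-and-mm-graphs:M-eq-compl}, assume $G=G_1\join G_2$; note that $V(G_1)$ and $V(G_2)$ are both nonempty by the standing convention that graphs have at least one vertex. First I would record that $\complement{G}$ is the disjoint union of $\complement{G_1}$ and $\complement{G_2}$: a non-edge of $G$ cannot join $V(G_1)$ to $V(G_2)$, since every such pair is an edge of the join. In particular $\complement{G}$ is disconnected, so it suffices to prove $M=\complement{G}$ in order to obtain both $M=\complement{G}$ and $\abs{\CC{M}}\ge2$. The inclusion $M\subseteq\complement{G}$ is Observation~\ref{observation:mm-subgraph-complement}. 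Conversely, given an edge $\edge{v}{w}$ of $\complement{G}$, the vertices $v,w$ are distinct and non-adjacent in $G$; they lie in the same factor, say $v,w\in V(G_1)$ (the cross case is excluded, and $v,w\in V(G_2)$ is symmetric), and then any $u\in V(G_2)$ is a common neighbor, so $v$ and $w$ are at distance~$2$ and hence metamours. Thus $\complement{G}\subseteq M$. (Alternatively, the same common-neighbor observation shows $G$ has diameter at most~$2$, whence $M=\complement{G}$ by Proposition~\ref{pro:mm-eq-complement:diameter}.)

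For \ref{it:join-and-mm-graphs:M-eq-compl}~$\Rightarrow$~\ref{it:join-and-mm-graphs:join-mindestens-2}, write $\CC{M}=\set{M_1,\dots,M_t}$ with $t\ge2$ and put $W_i=V(M_i)$, so that $V(G)$ is partitioned into $W_1,\dots,W_t$. Two observations finish the proof. First, $G[W_i]=\complement{M_i}$: since a connected component is an induced subgraph, $M_i=M[W_i]=\complement{G}[W_i]$, and complementing within $W_i$ gives $G[W_i]=\complement{M_i}$. Second, for $i\ne j$ the sets $W_i$ and $W_j$ are completely adjacent in $G$, because $M=\complement{G}$ has no edge between the distinct connected components $M_i$ and $M_j$, so every pair with one endpoint in each of $W_i$, $W_j$ is an edge of $G$. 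Together these say exactly that $G=\complement{M_1}\join\dots\join\complement{M_t}$ with $t\ge2$.

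None of the steps poses a real difficulty; the one that needs the most care is \ref{it:join-and-mm-graphs:join-of-2}~$\Rightarrow$~\ref{it:join-and-mm-graphs:M-eq-compl}, where it is essential to use that both factors of the join are nonempty, so that the common neighbor~$u$ actually exists.
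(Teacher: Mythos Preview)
Your proof is correct and follows essentially the same cycle of implications as the paper's proof, with the same arguments: \ref{it:join-and-mm-graphs:join-mindestens-2}$\Rightarrow$\ref{it:join-and-mm-graphs:join-of-2} by grouping factors, \ref{it:join-and-mm-graphs:join-of-2}$\Rightarrow$\ref{it:join-and-mm-graphs:M-eq-compl} via the common neighbor in the other factor, and \ref{it:join-and-mm-graphs:M-eq-compl}$\Rightarrow$\ref{it:join-and-mm-graphs:join-mindestens-2} by reading off the complete adjacency between components of $M=\complement{G}$. Your write-up is slightly more explicit in places (e.g., invoking Observation~\ref{observation:mm-subgraph-complement} for one inclusion and spelling out $G[W_i]=\complement{M_i}$), but the substance is the same.
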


\begin{proposition}\label{pro:regular-join-and-mm-graphs}
  Let $k\ge0$ and
  $G$ be a connected $k$-metamour-regular graph with $n$~vertices.
  Let $M$ be the metamour graph of~$G$.
  Then the following statements are
  equivalent:
  \begin{enumerate}[(a)]
  \item\label{it:pro:regular-join-and-mm-graphs:M-eq-compl}
    The metamour graph~$M$ equals $\complement{G}$.
  \item\label{it:pro:regular-join-and-mm-graphs:diameter-2}
    The graph~$G$ has diameter~$2$ or $G=K_n$.
  \item\label{it:pro:regular-join-and-mm-graphs:join}
    The graph $G$ equals
    $G = \complement{M_1} \join \dots \join \complement{M_t}$
    with $\set{M_1,\dots,M_t}=\CC{M}$.
  \item\label{it:pro:regular-join-and-mm-graphs:regular}
    The graph~$G$ is $(n-1-k)$-regular.
  \end{enumerate}
\end{proposition}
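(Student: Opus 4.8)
The plan is to route every implication through statement~\ref{it:pro:regular-join-and-mm-graphs:M-eq-compl} and to invoke the three propositions already proved in this section. First I would establish \ref{it:pro:regular-join-and-mm-graphs:M-eq-compl}$\Leftrightarrow$\ref{it:pro:regular-join-and-mm-graphs:diameter-2}: this is precisely Proposition~\ref{pro:mm-eq-complement:diameter} applied to the connected graph~$G$ on $n$ vertices, and the $k$-metamour-regularity plays no role here. Next I would establish \ref{it:pro:regular-join-and-mm-graphs:M-eq-compl}$\Leftrightarrow$\ref{it:pro:regular-join-and-mm-graphs:regular} via Proposition~\ref{pro:mm-eq-complement:degree}: that proposition says \ref{it:pro:regular-join-and-mm-graphs:M-eq-compl} holds if and only if every vertex of~$G$ has the sum of its degree and its metamour-degree equal to~$n-1$; since $G$ is $k$-metamour-regular, every vertex has metamour-degree exactly~$k$, so this condition is equivalent to every vertex having degree~$n-1-k$, i.e., to~$G$ being $(n-1-k)$-regular.

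The only part that needs a little care is \ref{it:pro:regular-join-and-mm-graphs:M-eq-compl}$\Leftrightarrow$\ref{it:pro:regular-join-and-mm-graphs:join}, and here I would split on the value of~$\abs{\CC{M}}$. If $\abs{\CC{M}}\ge2$, then item~\ref{it:join-and-mm-graphs:M-eq-compl} of Proposition~\ref{pro:join-and-mm-graphs} asks for $M=\complement{G}$ together with $\abs{\CC{M}}\ge2$, the latter holding by assumption in this case, so it is equivalent to~\ref{it:pro:regular-join-and-mm-graphs:M-eq-compl}; and \ref{it:pro:regular-join-and-mm-graphs:join} — where necessarily $t=\abs{\CC{M}}\ge2$ — is exactly item~\ref{it:join-and-mm-graphs:join-mindestens-2} of that proposition; hence the desired equivalence follows directly. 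If $\abs{\CC{M}}=1$, then $\CC{M}=\set{M}$ with $M$ connected, the join in~\ref{it:pro:regular-join-and-mm-graphs:join} degenerates to its single factor, and \ref{it:pro:regular-join-and-mm-graphs:join} simply reads $G=\complement{M}$; taking complements, this is equivalent to $M=\complement{G}$, which is~\ref{it:pro:regular-join-and-mm-graphs:M-eq-compl}. Since $\abs{\CC{M}}$ is either~$1$ or at least~$2$, these two cases are exhaustive in both directions of the equivalence.

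I expect the only (minor) obstacle to be organisational: Proposition~\ref{pro:join-and-mm-graphs} is stated with the hypothesis $t\ge2$, so the single-component case must be carried out by hand, and one must check that in that case statement~\ref{it:pro:regular-join-and-mm-graphs:join} really collapses to $G=\complement{M}$ rather than to something strictly stronger — it does, because the join of one graph is that graph and $\complement{\complement{G}}=G$. Beyond that bookkeeping, the proof is a straightforward assembly of Propositions~\ref{pro:mm-eq-complement:diameter}, \ref{pro:mm-eq-complement:degree} and~\ref{pro:join-and-mm-graphs}.
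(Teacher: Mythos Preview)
Your proposal is correct and follows essentially the same approach as the paper: route all equivalences through~\ref{it:pro:regular-join-and-mm-graphs:M-eq-compl}, invoke Propositions~\ref{pro:mm-eq-complement:diameter} and~\ref{pro:mm-eq-complement:degree} for \ref{it:pro:regular-join-and-mm-graphs:diameter-2} and~\ref{it:pro:regular-join-and-mm-graphs:regular} respectively, and handle~\ref{it:pro:regular-join-and-mm-graphs:join} by splitting on whether $\abs{\CC{M}}=1$ (trivial) or $\abs{\CC{M}}\ge2$ (Proposition~\ref{pro:join-and-mm-graphs}). The paper's proof is just a terser version of exactly what you wrote.
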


Note that we have $G=K_n$ in \ref{it:pro:regular-join-and-mm-graphs:diameter-2} 
if and only if $k=0$.

\section{Characterizations \& properties of metamour-regular graphs}
\label{sec:results}

It is now time to present the main results of this article and their
implications. In this section, we will do this in a formal manner
using the terminology introduced in Section~\ref{sec:formal-stuff}.
This section also includes brief sketches of the proofs of the main results.
The actual and complete proofs of the results follow later, from
Section~\ref{sec:proofs:k-mmr} on to Section~\ref{sec:proofs:2-mmr}.
Proof-wise the results on $k$-metamour-regular graphs for $k\in\set{0,1,2}$
build upon the result for arbitrary~$k\ge0$; this determines the order
of the sections containing the proofs. We will in this section, however, start with $k=0$, followed by
$k=1$ and $k=2$ and only deal with general~$k$ later on.

\subsection{\texorpdfstring{$0$}{0}-metamour-regular graphs}
\label{sec:results:0mr}

As a warm-up, we start with graphs in which no vertex has a
metamour. The following theorem is not very surprising; the only graphs
satisfying this property are complete graphs.

\begin{theorem}
\label{thm:0mr}
 Let $G$ be a connected graph with $n$ vertices. Then~$G$ is 
 $0$-metamour-regular if and only 
 if $G = K_n$.
\end{theorem}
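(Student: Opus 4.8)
The plan is to prove both directions directly from the definition of metamour. For the ``if'' direction, suppose $G = K_n$. Any two distinct vertices $v$ and $w$ are adjacent, so their distance is $1$, never $2$; hence no vertex has a metamour, and $G$ is $0$-metamour-regular. (The degenerate case $n=1$ is fine: the single vertex trivially has $0$ metamours.)

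For the ``only if'' direction, suppose $G$ is connected with $n$ vertices and is $0$-metamour-regular; I want to conclude $G=K_n$. The key observation is that $0$-metamour-regular means no vertex of $G$ has any metamour, i.e., no two vertices of $G$ are at distance exactly $2$. Assume for contradiction that $G \ne K_n$, so there exist two distinct non-adjacent vertices $v$ and $w$. Since $G$ is connected, there is a path from $v$ to $w$; take a shortest one, say $v = u_0, u_1, \dots, u_\ell = w$ with $\ell \ge 2$ (we have $\ell \ge 2$ because $v$ and $w$ are non-adjacent, and $\ell$ is finite by connectedness). Then $u_0$ and $u_2$ are at distance exactly $2$ in $G$: they are distinct, they are non-adjacent (otherwise the path could be shortened), and $u_1$ is a common neighbor. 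Hence $u_2$ is a metamour of $u_0$, contradicting $0$-metamour-regularity. Therefore $G = K_n$.

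I expect essentially no obstacle here; the only mild care needed is the small-$n$ bookkeeping ($n=1$ and $n=2$) and spelling out why a shortest $v$--$w$ path of length $\ge 2$ yields a pair at distance exactly $2$. One could alternatively phrase the contrapositive via Observation~\ref{observation:mm-subgraph-complement} or Proposition~\ref{pro:mm-eq-complement:diameter}, but the self-contained argument above is shortest.
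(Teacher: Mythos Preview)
Your proof is correct and follows essentially the same idea as the paper: both arguments show that a pair of non-adjacent vertices in a connected graph forces a pair at distance exactly~$2$. The paper factors this through Lemma~\ref{lem:noMetamour_adjacentToAll} (using a spanning-tree path and an inductive step), whereas your shortest-path argument reaches the same conclusion a bit more directly.
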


An alternative point of view is that of the metamour graph. The
theorem simply implies that in the case of $0$-metamour-regularity,
the metamour graph is empty and also equals the complement of the
graph itself. The latter property will occur frequently later on which also
motivates its formulation in the following corollary.

\begin{corollary}
    \label{cor:0mmr-complement}
  A connected graph is $0$-metamour-regular
  if and only if
  its complement
  equals its metamour graph
  and this graph has no edges.
\end{corollary}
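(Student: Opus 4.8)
The plan is to derive this corollary directly from Theorem~\ref{thm:0mr} together with the basic correspondence between metamour-regularity and the metamour graph recorded in Observation~\ref{obs:kmr-iff-metamourgraphkr}. Since the statement is an equivalence, I would prove the two implications separately, and I expect both to be short.

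For the forward direction, I would assume $G$ is a connected $0$-metamour-regular graph with $n$ vertices. By Theorem~\ref{thm:0mr} this forces $G = K_n$, and then $\complement{G} = \complement{K_n}$ has no edges. It remains to see that the metamour graph $M$ of $G$ coincides with $\complement{G}$; this follows either from Observation~\ref{obs:mm-subgraph-complement} (so $M \subseteq \complement{G}$, and since $\complement{G}$ has no edges, $M = \complement{G}$), or alternatively from Proposition~\ref{pro:mm-eq-complement:diameter} applied to $G = K_n$. Either way, $M = \complement{G}$ and this graph has no edges, as required.

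For the reverse direction, I would assume that $\complement{G}$ equals the metamour graph $M$ of $G$ and that this common graph has no edges. Then $M$ is the edgeless graph on $V(G)$, so $M$ is $0$-regular, and by Observation~\ref{obs:kmr-iff-metamourgraphkr} the graph $G$ is $0$-metamour-regular. (Here the hypothesis $\complement{G} = M$ is not even needed for this implication, but it does no harm to carry it along.)

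There is no real obstacle here; the only point that deserves a sentence of care is making explicit why the metamour graph of $K_n$ is edgeless and why it therefore equals $\complement{K_n}$, so that the phrase ``its complement equals its metamour graph'' in the statement is genuinely verified and not just vacuously true. Everything else is a direct application of the already-established Theorem~\ref{thm:0mr} and Observation~\ref{obs:kmr-iff-metamourgraphkr}.
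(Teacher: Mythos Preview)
Your proposal is correct and follows essentially the same approach as the paper: both rely on Theorem~\ref{thm:0mr} to identify a $0$-metamour-regular connected graph with $K_n$ and then observe that its complement and metamour graph are both edgeless. The only cosmetic difference is that for the reverse implication the paper again invokes Theorem~\ref{thm:0mr} (edgeless complement forces $G=K_n$), whereas you appeal to Observation~\ref{obs:kmr-iff-metamourgraphkr}; both routes are equally short and valid.
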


The characterization provided by Theorem~\ref{thm:0mr} makes it also
easy to count how many different $0$-metamour-regular graphs there are and 
leads to the
following corollary.

\begin{corollary}
  \label{cor:0mmr-counting}
  The number $m_{=0}(n)$ of unlabeled connected
  $0$-metamour-regular graphs with $n$ vertices is
  \begin{equation*}
    m_{=0}(n)=1.
  \end{equation*}
\end{corollary}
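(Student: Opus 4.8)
The plan is to deduce Corollary~\ref{cor:0mmr-counting} directly from the characterization in Theorem~\ref{thm:0mr}, which identifies the connected $0$-metamour-regular graphs with exactly the complete graphs. So essentially no work remains beyond unpacking the definition of the counting sequence.

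First I would recall that $m_{=0}(n)$ counts \emph{unlabeled} connected $0$-metamour-regular graphs on $n$ vertices, i.e., isomorphism classes of such graphs. By Theorem~\ref{thm:0mr}, a connected graph $G$ on $n$ vertices is $0$-metamour-regular if and only if $G = K_n$. Hence the set of connected $0$-metamour-regular graphs on $n$ vertices, up to isomorphism, has exactly one element, namely the isomorphism class of~$K_n$. (Note this is well-defined for every $n \ge 1$, consistent with the convention that all graphs here have at least one vertex; for $n \in \set{1,2}$ the graph $K_n$ has no metamours trivially, and for $n \ge 3$ every pair of vertices is adjacent so again no vertex has a metamour.) Therefore $m_{=0}(n) = 1$.

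There is no real obstacle here; the statement is a one-line consequence of Theorem~\ref{thm:0mr}. The only thing worth a sentence of care is making sure the claim holds uniformly over all admissible $n$ (that is, $n\ge1$), which it does since $K_n$ is defined and is the unique connected graph on $n$ vertices with the required property for every such $n$. One could optionally remark that the resulting constant sequence $(1,1,1,\dots)$ is the obvious one, but no further argument is needed.
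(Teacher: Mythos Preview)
Your proof is correct and takes essentially the same approach as the paper, which simply states that the result is an immediate and easy consequence of the characterization provided by Theorem~\ref{thm:0mr}. Your version is more detailed but follows the identical line of reasoning.
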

The Euler transform,
see Sloane and Plouffe~\cite{Sloane-Plouffe:1995:encyclopedia-int-seq},
of this sequence gives the
numbers~$m_{=0}'(n)$ of unlabeled but not necessarily connected
$0$-metamour-regular graphs with $n$ vertices. The number~$m_{=0}'(n)$
equals the partition function~$p(n)$, i.e., the number of integer partitions%
  \footlabel{footnote:partitions}{%
    An \emph{integer partition} of a positive integer~$n$ is
    a way of representing~$n$ as a sum of positive integers; the order
    of the summands is irrelevant. The \emph{parts} of a partition
    are the summands.}
of~$n$. The corresponding sequence starts with
  \begin{equation*}
    \begin{array}{c|cccccccccccccccc}
      n
      & 1 & 2 & 3 & 4
      & 5 & 6 & 7 & 8 & 9
      & 10 & 11 & 12 & 13 & 14
      & 15
      \\\hline
      m_{=0}'(n)
      & 1 & 2 & 3 & 5
      & 7 & 11 & 15 & 22 & 30
      & 42 & 56 & 77 & 101 & 135
      & 176
    \end{array}
  \end{equation*}
and is \oeis{A000041} in
The On-Line Encyclopedia of Integer Sequences~\cite{OEIS:2020}.

This completes the properties of $0$-metamour-regular
graphs that we bring here. We will, however, see in the following
sections how these properties behave in context of other graph
classes.

\subsection{\texorpdfstring{$1$}{1}-metamour-regular graphs}
\label{sec:results:1mr}

The next easiest case is that of graphs in which every vertex has exactly one
other vertex as metamour. As the metamour relation is symmetric,
these vertices always come in pairs. We write this fact down in the following
proposition.

\begin{proposition}
    \label{pro:1mr_evenNrVertices_perfectMatching}
    Let $G$ be a graph with $n$ vertices.
    Then the following statements hold:
    \begin{enumerate}[(a)]
    \item\label{pro:1mr_evenNrVertices_perfectMatching:it:atMostOneMatching}
    If every vertex of $G$ has at most one metamour, then
    the edges of the metamour graph of $G$ form a matching, i.e.,
    the vertices of~$G$ having exactly one metamour
    come in pairs such that 
    the two vertices of a pair are metamours of each other.
    \item\label{pro:1mr_evenNrVertices_perfectMatching:it:excatlyOnePerfectMatching}
    If $G$ is $1$-metamour-regular,
    then $n$ is even and the edges of the metamour 
    graph of~$G$ form a perfect matching.
  \end{enumerate}
\end{proposition}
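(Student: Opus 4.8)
The plan is to prove each part directly from the definition of metamour and the symmetry of the metamour relation, essentially by observing that ``having at most one metamour'' is exactly the statement that every vertex has degree at most~$1$ in the metamour graph.

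First I would recall that the metamour relation is symmetric: if $v$ is a metamour of~$w$, then $w$ is a metamour of~$v$, since the distance function on a graph is symmetric. Hence the metamour graph~$M$ of~$G$ is a well-defined (simple, undirected) graph, as already set up in the definition. For part~\ref{pro:1mr_evenNrVertices_perfectMatching:it:atMostOneMatching}, the hypothesis that every vertex of~$G$ has at most one metamour says precisely that every vertex of~$M$ has degree at most~$1$ (using the reformulation ``metamour-degree of~$v$ $=$ degree of~$v$ in~$M$'', which is immediate from the definitions; this is also the content of Observation~\ref{obs:kmr-iff-metamourgraphkr} in the regular case). A graph in which every vertex has degree at most~$1$ is exactly a graph whose edge set is a matching: no vertex is incident to two or more edges. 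Unwinding this, the edges of~$M$ partition the vertices of positive metamour-degree into pairs $\edge{v}{w}$, where $v$ and $w$ are metamours of each other, which is the asserted statement. The isolated vertices of~$M$ are exactly the vertices of~$G$ with no metamour, consistent with the ``at most one'' hypothesis.

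For part~\ref{pro:1mr_evenNrVertices_perfectMatching:it:excatlyOnePerfectMatching}, I would specialize: if $G$ is $1$-metamour-regular, then by Observation~\ref{obs:kmr-iff-metamourgraphkr} its metamour graph~$M$ is $1$-regular, i.e., every vertex of~$M$ has degree exactly~$1$. Part~\ref{pro:1mr_evenNrVertices_perfectMatching:it:atMostOneMatching} already gives that the edges of~$M$ form a matching; since now every vertex is incident to one edge of this matching, the matching is perfect. Finally, a perfect matching on $n$~vertices has $n/2$ edges, so $n$ must be even. (Alternatively, $n$ even follows from the handshaking lemma, as noted in the observation immediately preceding Proposition~\ref{pro:mm-eq-complement:degree}, applied to the $1$-regular graph~$M$.)

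This argument is entirely routine and I do not anticipate a genuine obstacle; the only thing to be careful about is keeping the bookkeeping between ``metamour of a vertex'', ``metamour-degree'', and ``degree in the metamour graph'' explicit, and handling the vertices with no metamour correctly in part~\ref{pro:1mr_evenNrVertices_perfectMatching:it:atMostOneMatching} (they are precisely the vertices left unmatched, which is permitted under the weaker ``at most one'' hypothesis but excluded under $1$-metamour-regularity). Everything else is just the standard equivalence between ``maximum degree at most~$1$'' and ``edge set is a matching'', together with ``$1$-regular $\Leftrightarrow$ perfect matching''.
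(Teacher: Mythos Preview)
Your proof is correct and follows essentially the same approach as the paper: both use the symmetry of the metamour relation to observe that metamour-degree at most~$1$ means the metamour graph has maximum degree~$1$, hence its edges form a matching, and then specialize to the $1$-regular case to get a perfect matching and $n$ even. The paper phrases this slightly more concretely in terms of pairs of metamours rather than invoking the equivalence ``maximum degree~$\le 1$ $\Leftrightarrow$ matching'' abstractly, but the content is identical.
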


By this connection of $1$-metamour-regular graphs to perfect matchings, we can
divine the underlying behavior. This leads to our main result of
this section, a characterization of
$1$-metamour-regular graphs; see the theorem below. It turns out that one 
exceptional case,
namely the graph~$P_4$ (Figure~\ref{fig:P4}),  occurs.

\begin{figure}
    \centering
    \begin{tikzpicture}    
    \begin{scope}[shift={(0, 0)}]
    \node [below] at (1.5,-0.5) {$P_4$};

    \node [vertex] (a4) at (0,0) {};
    \node [vertex] (b4) at (1,0) {};
    \node [vertex] (c4) at (2,0) {};
    \node [vertex] (d4) at (3,0) {};

    \draw [edge] (a4) -- (b4) -- (c4) -- (d4);

    \draw [mmedge] (a4) to [bend left] (c4);
    \draw [mmedge] (b4) to [bend right] (d4);
    \end{scope}
    \end{tikzpicture}
    \caption{The path graph~$P_4$ where each vertex has exactly $1$ metamour}
    \label{fig:P4}
\end{figure}

\begin{theorem}\label{thm:1mr}
Let $G$ be a connected graph with $n$ vertices. 
Then~$G$ is $1$-metamour-regular if and only
if $n \ge 4$ is even and either
\begin{enumerate}[(a)]
\item $G = P_4$ or
\item $G = K_n - \mu$ for some perfect matching $\mu$ of $K_n$
\end{enumerate}
holds.
\end{theorem}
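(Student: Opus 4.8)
The plan is to prove the two implications separately. Sufficiency is a routine check, while the converse is the substantial part, and I would organize it according to the diameter of~$G$. For sufficiency, I would verify both families directly. In $P_4=(v_1,v_2,v_3,v_4)$ the pairs $\set{v_1,v_3}$ and $\set{v_2,v_4}$ are metamours while $v_1,v_4$ are at distance~$3$, so every vertex has exactly one metamour. For $G=K_n-\mu$ with $n\ge4$ even, every vertex has degree $n-2\ge2$, so $G$ is connected of diameter~$2$, and the unique non-neighbour of a vertex (its partner under~$\mu$) is therefore its unique metamour. In both cases $n\ge4$ is even, as required.

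For the converse, let $G$ be connected and $1$-metamour-regular with metamour graph~$M$. By the second statement of Proposition~\ref{pro:1mr_evenNrVertices_perfectMatching}, $n$ is even and $M$ is a perfect matching~$\mu$; in particular $\mu\subseteq\complement{G}$ by Observation~\ref{observation:mm-subgraph-complement}. If $n=2$ then $G=K_2$, which is $0$-metamour-regular, a contradiction; hence $n\ge4$. Also $G\neq K_n$ by Theorem~\ref{thm:0mr}, so $G$ has diameter at least~$2$. If $G$ has diameter~$2$, then every pair of non-adjacent vertices is at distance exactly~$2$, so $M=\complement{G}$ (this is exactly the content of Proposition~\ref{pro:mm-eq-complement:diameter}); combined with $M=\mu$ this yields $\complement{G}=\mu$, that is, $G=K_n-\mu$.

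It remains to treat the case that $G$ has diameter at least~$3$, which I expect to be the main obstacle. Choose a shortest path between two vertices at distance at least~$3$ and let $(a,b,c,e)$ be its first four vertices; then $a$ and $e$ are at distance~$3$, while $a,c$ and $b,e$ are at distance~$2$ and hence metamours. Since $M=\mu$ is a perfect matching, the unique metamour of~$a$ is~$c$, that of~$c$ is~$a$, that of~$b$ is~$e$, and that of~$e$ is~$b$. The key claim is that $b$ is the only neighbour of~$a$: if $x\sim a$ with $x\neq b$, then $x\notin\set{c,e}$ since $a$ is adjacent to neither; now $x$ and~$b$ have the common neighbour~$a$, so $x\not\sim b$ would make $x$ a metamour of~$b$, forcing $x=e$, a contradiction, hence $x\sim b$; running the same argument with the common neighbour~$b$ of $x,c$ gives $x\sim c$, and then with the common neighbour~$c$ of $x,e$ gives $x\sim e$, so $a$ and~$e$ are at distance~$2$, contradicting their distance being~$3$. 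Thus $N(a)=\set{b}$, and by symmetry $N(e)=\set{c}$. Consequently the metamours of~$a$ are precisely the vertices of $N(b)\setminus\set{a}$, so this set is a singleton and $N(b)=\set{a,c}$; likewise $N(c)=\set{b,e}$. Hence $\set{a,b,c,e}$ induces a connected component of~$G$ isomorphic to~$P_4$, and since $G$ is connected, $G=P_4$. The heart of the argument is the walking step establishing $N(a)=\set{b}$, where a hypothetical extra neighbour of a geodesic endpoint is pushed, one vertex at a time along the geodesic, into being adjacent to the other endpoint, which is impossible.
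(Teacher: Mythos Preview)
Your proof is correct, and it takes a genuinely different route from the paper's. The paper leans on the general structural result Theorem~\ref{thm:kmmr:metamour-graph-not-connected}: once the metamour graph is seen to be a perfect matching (hence disconnected for $n\ge4$), that theorem forces either the join decomposition $G=\complement{P_2}\join\dots\join\complement{P_2}=K_n-\mu$ (case~\ref{it:general:general}) or the exceptional case with exactly two metamour components, which pins down $n=4$ and leaves only $P_4$ and $C_4$ to check by hand. Your argument instead splits on the diameter of~$G$: diameter~$2$ gives $M=\complement{G}$ immediately via Proposition~\ref{pro:mm-eq-complement:diameter}, while for diameter at least~$3$ you run a direct ``propagation'' along a geodesic to show that a hypothetical extra neighbour of an endpoint would be forced to be adjacent to every vertex of the geodesic, collapsing the distance. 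This is more elementary and entirely self-contained---it does not invoke the heavy machinery of Theorem~\ref{thm:kmmr:metamour-graph-not-connected}---at the cost of being tailored to $k=1$ (the propagation step uses crucially that each vertex has a \emph{unique} metamour, so the forced metamour must equal a specific known vertex). The paper's approach, by contrast, is an instance of a uniform method that scales to larger~$k$.
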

When excluding $G=P_4$, then the graphs in the theorem are
exactly the cocktail party graphs~\cite{Weisstein:2020:cocktail-party-graph}.

Let us again view this from the angle of metamour graphs. As soon as
we exclude the exceptional case~$P_4$, the metamour graph and the complement
of a $1$-metamour-regular graph coincide; see the following corollary.

\begin{corollary}
  \label{cor:1mmr-complement}
  A connected graph with $n\ge5$ vertices is $1$-metamour-regular
  if and only if
  its complement
  equals its metamour graph
  and this graph is $1$-regular.
\end{corollary}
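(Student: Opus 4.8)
The plan is to derive Corollary~\ref{cor:1mmr-complement} from Theorem~\ref{thm:1mr} together with the equivalence in Proposition~\ref{pro:mm-eq-complement:degree} (or directly by inspection of the two families in the theorem). First I would prove the forward direction: assume $G$ is connected, $1$-metamour-regular, with $n\ge5$ vertices. By Theorem~\ref{thm:1mr}, $n$ is even and $G = K_n - \mu$ for some perfect matching $\mu$ of $K_n$; the exceptional case $G=P_4$ is excluded since it has only $4<5$ vertices. Now I compute the metamour graph of $K_n-\mu$ directly: any two non-adjacent vertices of $K_n-\mu$ are the two endpoints of an edge of $\mu$, and since $n\ge4$ such a pair has a common neighbor (any third vertex), hence they are metamours. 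So the metamour graph of $K_n-\mu$ has exactly the edges of $\mu$, which is precisely $\complement{K_n-\mu}$, and $\mu$ being a perfect matching makes this graph $1$-regular. This gives the ``only if'' part.

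For the converse, assume $G$ is a connected graph with $n\ge5$ vertices whose complement equals its metamour graph and is $1$-regular. Since the metamour graph is $1$-regular, $G$ is $1$-metamour-regular by Observation~\ref{obs:kmr-iff-metamourgraphkr}, which is exactly the conclusion. (In fact one could alternatively note that $\complement{G}$ being $1$-regular forces $\complement{G}$ to be a perfect matching $\mu$, so $G = K_n-\mu$ directly, but this is not needed — the $1$-regularity of the metamour graph already is $1$-metamour-regularity.) So the converse is essentially immediate from the definitions, and the real content is entirely in the forward direction, where Theorem~\ref{thm:1mr} does the heavy lifting.

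The only mild subtlety — and the step I would be most careful about — is making sure the hypothesis $n\ge5$ is used correctly to rule out $P_4$, and conversely that for $P_4$ the conclusion genuinely fails (its metamour graph is $\set{\edge{v_1}{v_3},\edge{v_2}{v_4}}$, which is $1$-regular, but its complement $\complement{P_4}$ contains the additional edge $\edge{v_1}{v_4}$, so the complement does \emph{not} equal the metamour graph). This confirms that $P_4$ is exactly the obstruction that the threshold $n\ge5$ removes, and that the corollary is stated with the sharpest possible bound. There is no genuine obstacle here; the proof is a short combination of Theorem~\ref{thm:1mr}, a one-line computation of the metamour graph of a cocktail party graph, and Observation~\ref{obs:kmr-iff-metamourgraphkr}.
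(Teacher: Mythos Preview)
Your proposal is correct and follows essentially the same approach as the paper: both use Theorem~\ref{thm:1mr} to reduce (for $n\ge5$) to $G=K_n-\mu$ with $\mu$ a perfect matching, then identify $\complement{G}$ with the matching~$\mu$ and note this coincides with the metamour graph. Your treatment of the converse via Observation~\ref{obs:kmr-iff-metamourgraphkr} is slightly more direct than the paper's chain of equivalences, and your sanity check that $P_4$ genuinely violates the conclusion is a nice addition, but the overall structure is the same.
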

Note that a $1$-regular graph with $n$ vertices is a graph induced by 
a perfect matching of~$K_n$.
In view of Proposition~\ref{pro:regular-join-and-mm-graphs}, we can extend the two
equivalent statements.

As we have a characterization of $1$-metamour-regular graphs (provided by 
Theorem~\ref{thm:1mr})
available, we can determine the number of different graphs in this
class. Clearly, this is strongly related to the existence of a perfect
matching; details are provided below and also in
Section~\ref{sec:proofs:1-mmr}, where proofs are given.

\begin{corollary}
  \label{cor:1mmr-counting}
  The sequence of numbers $m_{=1}(n)$ of unlabeled connected
  $1$-metamour-regular graphs with $n$ vertices starts with
  \begin{equation*}
    \begin{array}{c|cccccccccc}
      n
      & 1 & 2 & 3 & 4
      & 5 & 6 & 7 & 8 & 9
      & 10
      \\\hline
      m_{=1}(n)
      & 0 & 0 & 0 & 2
      & 0 & 1 & 0 & 1 & 0
      & 1
    \end{array}
  \end{equation*}
  and we have
  \begin{equation*}
    m_{=1}(n) =
    \begin{cases}
      0 & \text{for odd $n$,} \\
      1 & \text{for even $n\ge6$.}
    \end{cases}
  \end{equation*}
\end{corollary}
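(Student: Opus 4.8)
The plan is to derive the count $m_{=1}(n)$ directly from the characterization in Theorem~\ref{thm:1mr}, which already lists all connected $1$-metamour-regular graphs up to isomorphism. For odd $n$, the second observation after Definition~\ref{def:mm-degree} (the handshaking-lemma consequence) immediately gives $m_{=1}(n)=0$, since a $1$-metamour-regular graph must have an even number of vertices; alternatively, Theorem~\ref{thm:1mr} itself requires $n$ even. The small cases $n\in\{1,2,3\}$ give $0$ for the same reason, and $n=5$ gives $0$ because Theorem~\ref{thm:1mr} requires $n$ even. This handles every odd column and the first few entries of the table with essentially no work.

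For even $n$, the task is to count the graphs in parts~(a) and~(b) of Theorem~\ref{thm:1mr} up to isomorphism. Part~(b) contributes, for each even $n\ge4$, exactly one graph: $K_n$ minus a perfect matching is well-defined up to isomorphism because any two perfect matchings of $K_n$ are equivalent under a relabeling of vertices (the symmetric group acts transitively on perfect matchings of $K_n$), so $K_n-\mu$ does not depend on the choice of $\mu$. Part~(a) contributes the single extra graph $P_4$, which occurs only when $n=4$. The only thing left to check is whether $P_4$ coincides with $K_4$ minus a perfect matching: $K_4$ minus a perfect matching is $C_4$ (it is $2$-regular on $4$ vertices), whereas $P_4$ is not regular, so the two are distinct. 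Hence $m_{=1}(4)=2$, and for even $n\ge6$ only the graph of type~(b) survives, giving $m_{=1}(n)=1$.

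Assembling these pieces yields the table (with the only nonzero entries at $n=4$, where the value is $2$, and at even $n\ge6$, where the value is $1$) and the closed form stated in the corollary. The main---and really only---obstacle is the bookkeeping that nothing is double-counted: one must verify that $K_n-\mu$ is a single isomorphism class (transitivity of $S_n$ on perfect matchings, using the convention that isomorphic graphs are equal) and that $P_4\ne C_4$, both of which are routine. No genuinely hard step is involved once Theorem~\ref{thm:1mr} is in hand; the corollary is purely a counting consequence of the classification.
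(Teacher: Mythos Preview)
Your proposal is correct and follows essentially the same approach as the paper's proof, which also derives the count directly from Theorem~\ref{thm:1mr} by noting that $n$ must be even and at least~$4$, that all perfect matchings of $K_n$ yield the same unlabeled graph $K_n-\mu$, and that $P_4$ gives one additional graph at $n=4$. You supply slightly more detail (the transitivity argument for perfect matchings and the explicit check that $P_4\neq C_4$) than the paper, which leaves these points implicit.
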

The Euler transform, see~\cite{Sloane-Plouffe:1995:encyclopedia-int-seq},
of the sequence of numbers~$m_{=1}(2n)$
gives the
numbers~$m_{=1}'(2n)$ of unlabeled but not necessarily connected
$1$-metamour-regular graphs with $2n$ vertices. The sequence
of these numbers starts with
  \begin{equation*}
    \begin{array}{c|cccccccccccccccccccc}
      n
      & 1 & 2 & 3 & 4
      & 5 & 6 & 7 & 8 & 9
      & 10 & 11 & 12 & 13 & 14
      & 15 & 16 & 17 & 18
      \\\hline
      m_{=1}'(2n)
      & 0 & 2 & 1 & 4
      & 3 & 8 & 7 & 15 & 15
      & 27 & 29 & 48 & 53 & 82
      & 94 & 137 & 160 & 225
    \end{array}\;.
  \end{equation*}
This sequence also counts how often a part~$2$ appears
in all integer partitions%
    \footnote{For integer partitions,
    see footnote~\footref{footnote:partitions}
    on page~\pageref{footnote:partitions}.}
of $n+2$ with parts at least~$2$. The underlying bijection is formulated
as the following corollary.

\begin{corollary}
  \label{cor:1mmr-bijection:nnc}
  Let $n \ge 0$. Then the set of unlabeled
  $1$-metamour-regular graphs with $2n$ vertices is in bijective correspondence
  to the set of partitions of~$n+2$ with smallest part equal to~$2$
  and one part~$2$ of each partition marked.
\end{corollary}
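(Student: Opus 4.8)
The plan is to combine the classification of connected $1$-metamour-regular graphs from Theorem~\ref{thm:1mr} with the fact that a disjoint union of graphs is $1$-metamour-regular if and only if each of its connected components is. First I would record the component-wise reduction: by Observation~\ref{obs:kmr-iff-metamourgraphkr} (or directly from the definition of metamours, since vertices in different components of $G$ are never at distance~$2$), a graph $H$ is $1$-metamour-regular exactly when every connected component of $H$ is $1$-metamour-regular. By Theorem~\ref{thm:1mr}, the connected $1$-metamour-regular graphs are precisely $P_4$ (on $4$ vertices) and, for each even $m\ge4$, the graph $K_m-\mu$ for a perfect matching $\mu$; note $K_4-\mu$ is the $4$-cycle $C_4$, which is distinct from $P_4$, and for $m=2$ the graph $K_2-\mu$ has an isolated-edge metamour structure---wait, here one must be careful: $K_2$ minus a perfect matching has no edges and hence no metamours, so the smallest cocktail party graph that is $1$-metamour-regular is on $4$ vertices. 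Thus the connected ``building blocks'' are: one block on $4$ vertices coming from $P_4$, one block on $m$ vertices for each even $m\ge4$ coming from the cocktail party graph on $m$ vertices. In particular there are exactly two blocks of size $4$ and exactly one block of every even size $\ge6$, and no blocks of odd size or of size $2$.

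Next I would translate ``unlabeled $1$-metamour-regular graph on $2n$ vertices'' into a multiset of block sizes. Such a graph is a disjoint union of the blocks above, so it corresponds to a choice of how many copies of each block to take, subject to the sizes summing to $2n$. Collapsing to sizes, this is an integer partition of $2n$ into even parts each $\ge4$, together with the extra data that each part equal to $4$ may independently be of ``$P_4$-type'' or of ``$C_4$-type'' (since there are two non-isomorphic blocks of size $4$), while parts $\ge6$ carry no extra choice. Dividing all parts by $2$, partitions of $2n$ into even parts $\ge4$ are in bijection with partitions of $n$ into parts $\ge2$. So an unlabeled $1$-metamour-regular graph on $2n$ vertices is the same datum as a partition $\lambda$ of $n$ with all parts $\ge2$, together with a two-colouring of the parts equal to $2$. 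If $\lambda$ has $j$ parts equal to $2$, there are $2^j$ such colourings; summing $2^j$ over partitions of $n$ with parts $\ge2$ and $j$ parts equal to $2$ should match the claimed count, and I would verify this against the table ($m_{=1}'(2),m_{=1}'(4),\dots = 0,2,1,4,3,8,\dots$, e.g.\ $n=2$ gives $\lambda=(2)$ with $2^1=2$ graphs, $n=3$ gives $\lambda=(3)$ with $2^0=1$ graph, $n=4$ gives $(2,2)\mapsto 2^2$ and $(4)\mapsto 2^0$, total $4$).

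Finally I would produce the stated bijection to ``partitions of $n+2$ with smallest part equal to $2$ and one part $2$ marked.'' A partition $\lambda\vdash n$ with all parts $\ge2$ and a distinguished two-colouring of its $2$-parts can be encoded as follows: append one extra part equal to $2$ to $\lambda$, obtaining a partition of $n+2$ whose smallest part is $2$; then to record the colouring of the original $j$ parts equal to $2$, observe that a two-colouring of $j$ indistinguishable objects is just a number in $\{0,1,\dots,j\}$, i.e.\ a choice of one of the now $j+1$ parts equal to $2$ (the $j$ original ones plus the appended one) to be ``marked.'' This gives a well-defined map to partitions of $n+2$ with smallest part $2$ and a marked $2$-part; I would check injectivity and surjectivity by describing the inverse (remove the marked $2$ together with enough information to reconstruct the colour count, i.e.\ the number of unmarked $2$-parts lying to one side of the marked one). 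The main obstacle I anticipate is exactly this last bookkeeping step: pinning down the bijection on the colour data so that it is manifestly size-preserving and invertible, and handling the edge cases $n=0$ and $n=1$ (where there are no admissible partitions and the corresponding graph count is $0$, consistent with ``smallest part equal to $2$'' being unsatisfiable for a partition of $1$). The graph-theoretic input is entirely routine given Theorem~\ref{thm:1mr}; the combinatorial identity and the explicit bijection are the only places requiring care.
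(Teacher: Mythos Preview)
Your overall strategy is the same as the paper's: decompose an unlabeled $1$-metamour-regular graph into connected components, use Theorem~\ref{thm:1mr} to see that each component on $2\lambda_i$ vertices is unique except when $\lambda_i=2$ (where there are two choices, $P_4$ or $C_4$), pass to a partition $\lambda\vdash n$ with parts $\ge2$, then append a part~$2$ and use a marker among the parts equal to~$2$ to record the extra choice. The paper encodes the extra datum exactly as you do in your final paragraph: a number $s\in\{0,\dots,j\}$ counting how many of the $j$ size-$4$ components are $P_4$, and then observes that this is the same as marking one of the $j+1$ parts equal to~$2$ in the augmented partition of $n+2$.

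There is, however, a genuine slip in your middle paragraph that you should fix before the argument is clean. You write that a partition with $j$ parts equal to~$2$ carries $2^j$ colourings, and then ``verify'' this for $n=4$ by computing $(2,2)\mapsto 2^2$ and $(4)\mapsto 2^0$, ``total $4$''. But $2^2+2^0=5$, not $4$. The error is that in the unlabeled setting the $j$ size-$4$ components are indistinguishable, so a two-colouring of them is determined by how many are of $P_4$-type; this gives $j+1$ possibilities, not $2^j$. With $j+1$ in place of $2^j$ the check for $n=4$ reads $(2,2)\mapsto 3$ and $(4)\mapsto 1$, total $4$, which is correct. You do state the right thing later (``a two-colouring of $j$ indistinguishable objects is just a number in $\{0,1,\dots,j\}$''), so the final bijection you describe is fine; just delete the $2^j$ claim and the miscomputed verification, and the proof goes through exactly as in the paper.
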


\subsection{Graphs with maximum metamour-degree~\texorpdfstring{$1$}{1}}
\label{sec:results:at-most-1m}

Let us now slightly relax the metamour-regularity condition and
consider graphs in which every vertex of $G$ has at most one
metamour. In view of
Proposition~\ref{pro:1mr_evenNrVertices_perfectMatching}, matchings
play an important role again. Formally, the following theorem holds.

\begin{theorem}
    \label{thm:max1mm}
    Let $G$ be a connected graph with $n$ vertices. 
    Then the maximum metamour-degree of~$G$ is $1$
    if and only if either
    \begin{enumerate}[(a)]
    \item $G \in \set{K_1,K_2,P_4}$ or
    \item $n \ge 3$ and $G = K_n - \mu$ for some matching $\mu$ of $K_n$
  \end{enumerate}
  holds.
\end{theorem}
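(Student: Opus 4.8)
\textbf{Proof proposal for Theorem~\ref{thm:max1mm}.}

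The plan is to prove both directions, treating the forward (``only if'') direction as the substantial one. For the backward direction, one checks directly that each graph listed has maximum metamour-degree~$1$: for $K_1$ and $K_2$ there are no metamours at all; for $P_4$ this is visible in Figure~\ref{fig:P4}; and for $G = K_n - \mu$ with $\mu$ a matching of $K_n$ and $n\ge3$, two vertices are metamours iff they are non-adjacent in $G$ and have a common neighbor, i.e., iff $\edge{u}{v}\in\mu$ (non-adjacency) and $n\ge3$ guarantees a common neighbor (any vertex outside $\edge{u}{v}$ works, and since $\mu$ is a matching such a vertex is adjacent to both). Hence each vertex has exactly one metamour if it is matched by $\mu$ and none otherwise, so the maximum metamour-degree is $1$ (or $0$, which is still ``at most~$1$''; note the theorem says ``is~$1$'', so one should be slightly careful—when $\mu=\emptyset$ and $n\ge3$ we get $K_n$ with maximum metamour-degree $0$, so presumably the intended reading bundles $K_n$ into case (a) via $K_1,K_2$ or the statement tacitly allows degree $\le 1$; I would follow the paper's own convention here and simply verify the metamour graph is a matching, hence maximum metamour-degree $\le 1$, and note equality holds exactly when $\mu\neq\emptyset$).

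For the forward direction, suppose $G$ is connected with $n$ vertices and every vertex has at most one metamour. By Proposition~\ref{pro:1mr_evenNrVertices_perfectMatching}\ref{pro:1mr_evenNrVertices_perfectMatching:it:atMostOneMatching}, the metamour graph $M$ of $G$ is a matching. Small cases $n\le 2$ give $G\in\set{K_1,K_2}$ directly, and $n=3$ forces $G\in\set{P_3,K_3}$, both of the form $K_n-\mu$ (with $P_3 = K_3 - \mu$ for a single edge $\mu$). So assume $n\ge4$. The key step is to show that, apart from $P_4$, the metamour graph equals the complement: I would invoke Proposition~\ref{pro:mm-eq-complement:diameter} (equivalently Proposition~\ref{pro:mm-eq-complement:degree}), so it suffices to show $G$ has diameter~$2$ or $G = K_n$. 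Suppose not; then there exist vertices $u,w$ at distance $\ge 3$. Pick a shortest $u$-$w$ path; its first three vertices $u,x,y$ satisfy: $u$ is a metamour of $y$, and (since $w$ lies beyond $y$) following the path one step further to $z$ makes $x$ a metamour of $z$. The matching condition on $M$ severely limits how such witnesses can proliferate. The heart of the argument is to show that a diameter-$\ge3$ connected graph whose metamour graph is a matching must be exactly $P_4$: intuitively, every ``interior'' vertex of a long geodesic acquires a metamour two steps away on each side, and a vertex with neighbors on both sides at distance two would get two metamours unless those coincide, which collapses the graph; chasing these forced coincidences pins the structure down to four vertices in a path.

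Once we know $M = \complement{G}$, the matching condition says $\complement{G}$ is $1$-regular or has isolated vertices, i.e., $\complement{G}$ is a (possibly empty, possibly non-perfect) matching, which is precisely to say $G = K_n - \mu$ for a matching $\mu$ of $K_n$; together with $n\ge4$ (the cases $n\le3$ already handled and absorbed into case (b) for $n=3$) this yields case (b), while the stray exceptional graph $P_4$ gives case (a). I expect the main obstacle to be the diameter-$\ge3$ analysis: one must rule out all connected graphs of diameter $\ge 3$ other than $P_4$, and the cleanest route is probably to show that in a connected graph with a geodesic of length $\ge 3$, if $n\ge 5$ then some vertex has metamour-degree $\ge 2$ (for instance an internal vertex of the geodesic, or a vertex adjacent to an internal vertex, will see two distinct non-adjacent common-neighbor partners), and then handle $n=4$ by direct enumeration of connected graphs on $4$ vertices, where $P_4$ is the unique one of diameter $3$. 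A mild technical care point throughout is the distinction between ``maximum metamour-degree is $1$'' and ``$\le 1$'' in the degenerate subcase $\mu = \emptyset$, which I would address by following the paper's phrasing and noting the metamour graph is a matching in all listed cases.
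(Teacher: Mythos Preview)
Your approach is correct but takes a genuinely different route from the paper's. The paper does not argue via diameter at all: instead, it reduces to the already-proven Theorems~\ref{thm:0mr} and~\ref{thm:1mr} to dispose of the cases where every vertex has zero metamours or every vertex has exactly one metamour, leaving only the mixed case where there is a vertex~$v$ with no metamour and a vertex with exactly one. It then invokes Lemma~\ref{lem:noMetamour_adjacentToAll} to say that $v$ is adjacent to all other vertices, so $v$ serves as a universal common neighbor; hence any two non-adjacent vertices are metamours, which immediately gives $\complement{G}=M$ and thus $G=K_n-\mu$. This is short precisely because it leans on Theorem~\ref{thm:1mr}, whose proof in turn goes through the structural Theorem~\ref{thm:kmmr:metamour-graph-not-connected}.

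Your diameter argument is more self-contained and avoids that machinery, which is a genuine advantage. The one place where your sketch is thin is the claim that diameter at least~$3$ with $n\ge5$ forces a vertex of metamour-degree~$\ge2$. This does work, and cleanly: first, diameter $\ge4$ is impossible since the middle vertex of a geodesic of length~$4$ already has two metamours. For diameter exactly~$3$, fix a geodesic $v_1v_2v_3v_4$; then $v_1,v_3$ and $v_2,v_4$ are metamour pairs, and these exhaust the metamours of all four vertices. Now any fifth vertex~$w$ adjacent to some $v_i$ propagates along the path (e.g.\ $w\sim v_1$ forces $w\sim v_2$ else $w$ is a second metamour of~$v_2$, and so on up to $w\sim v_4$, whereupon $v_1,v_4$ become metamours via~$w$, a contradiction); the remaining cases are symmetric. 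So no vertex outside $\{v_1,\dots,v_4\}$ can touch the geodesic, contradicting connectedness. Filling this in makes your argument complete. Your observation about the wording ``is~$1$'' versus ``at most~$1$'' is also apt; the paper's surrounding text and Corollary~\ref{cor:max1mm-counting} make clear that ``at most~$1$'' is intended.
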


As in the sections above, the obtained characterization leads to
the following equivalent statements with respect to
metamour graph and complement.

\begin{corollary}
  \label{cor:max1mm-complement}
  A connected graph with $n\ge5$ vertices has the property that
  every vertex has at most one metamour
  if and only if
  its complement
  equals its metamour graph
  and this graph has maximum degree~$1$.
\end{corollary}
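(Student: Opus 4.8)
The plan is to read this corollary off the structural characterization in Theorem~\ref{thm:max1mm}, using Proposition~\ref{pro:mm-eq-complement:degree} (or, equivalently, Proposition~\ref{pro:mm-eq-complement:diameter}) to translate between ``the metamour graph equals the complement'' and a statement about degrees. Here ``every vertex has at most one metamour'' is exactly ``maximum metamour-degree at most~$1$'', and ``maximum degree~$1$'' of a graph is read as ``maximum degree at most~$1$'', i.e.\ the edge set is a matching (so the edgeless case is allowed, matching the reading of Corollaries~\ref{cor:0mmr-complement} and~\ref{cor:1mmr-complement}, of which this is the common generalization). The backward implication is then immediate and needs neither connectedness nor $n\ge5$: if the metamour graph $M$ of $G$ equals $\complement{G}$ and has maximum degree at most~$1$, then the metamour-degree of any vertex $v$ is its degree in $M$ by Observation~\ref{obs:kmr-iff-metamourgraphkr} (indeed directly from the definition of the metamour graph), hence at most~$1$, so every vertex of $G$ has at most one metamour.

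For the forward implication, assume $G$ is connected with $n\ge5$ vertices and maximum metamour-degree at most~$1$. Since $n\ge5$, $G$ is none of $K_1$, $K_2$, $P_4$, so Theorem~\ref{thm:max1mm} gives $G = K_n - \mu$ for some matching $\mu$ of $K_n$. Then $\complement{G}$ is precisely the graph on $V(G)$ with edge set~$\mu$, which is a matching, so $\complement{G}$ has maximum degree at most~$1$. It remains to show $M = \complement{G}$, which I would do via Proposition~\ref{pro:mm-eq-complement:degree} by checking that $\deg_G(v)$ plus the metamour-degree of $v$ equals $n-1$ for every vertex~$v$. If $v$ is unmatched by $\mu$, it is adjacent in $G$ to all $n-1$ other vertices, hence has metamour-degree~$0$, giving $(n-1)+0=n-1$. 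If $v$ is matched by $\mu$ to a vertex~$w$, then $\deg_G(v)=n-2$; any third vertex $u$ satisfies $\edge{u}{v},\edge{u}{w}\notin\mu$ since $\mu$ is a matching, so $u$ is a common neighbor of $v$ and $w$ in $G$, whence $w$ is a metamour of~$v$, and it is the only one because $v$ is adjacent to every other vertex; this gives $(n-2)+1=n-1$. So Proposition~\ref{pro:mm-eq-complement:degree} yields $M=\complement{G}$, and we are done.

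I do not expect a genuine obstacle: essentially all the content is already packed into Theorem~\ref{thm:max1mm}, and the remainder is the routine verification above that $\complement{K_n-\mu}$ is the matching $\mu$ and that $K_n-\mu$ satisfies the degree-sum condition of Proposition~\ref{pro:mm-eq-complement:degree} (equivalently, that $K_n-\mu$ has diameter~$2$ whenever $\mu\neq\emptyset$). The only point worth stating carefully in the write-up is the convention that ``maximum degree~$1$'' includes the edgeless graph, so that the case $G=K_n$ (arising from $\mu=\emptyset$) is covered consistently on both sides of the equivalence.
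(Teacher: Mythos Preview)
Your proposal is correct and follows essentially the same route as the paper: for the forward direction you invoke Theorem~\ref{thm:max1mm} to get $G=K_n-\mu$ and then check $M=\complement{G}$, exactly as the paper does (the paper is terser about the last step, while you spell it out via Proposition~\ref{pro:mm-eq-complement:degree}). The only minor deviation is your backward direction, which you deduce directly from $M=\complement{G}$ having maximum degree~$1$ rather than via the converse of the characterization; this is a slight simplification but not a different idea.
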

Note that graphs with maximum degree~$1$ and $n$ vertices are graphs induced 
by a (possibly empty) matching of $K_n$.
In view of Proposition~\ref{pro:mm-eq-complement:diameter}, we can extend the 
two
equivalent statements by a third saying that $G$ has diameter~$2$ or $G=K_n$.

Counting the graphs with maximum metamour-degree~$1$ relies on the
number of matchings; see the relevant proofs in
Section~\ref{sec:proofs:max-1mm} for details. We obtain the following
corollary.

\begin{corollary}
  \label{cor:max1mm-counting}
  The sequence of numbers $m_{\le1}(n)$ of unlabeled connected
  graphs with $n$ vertices where every vertex has at most one metamour
  starts with
  \begin{equation*}
    \begin{array}{c|cccccccccc}
      n
      & 1 & 2 & 3 & 4
      & 5 & 6 & 7 & 8 & 9
      & 10
      \\\hline
      m_{\le1}(n)
      & 1 & 1 & 2 & 4
      & 3 & 4 & 4 & 5 & 5
      & 6
    \end{array}
  \end{equation*}
  and we have
  \begin{equation*}
    m_{\le1}(n) = \floor[\big]{\tfrac{n}{2}}+1
  \end{equation*}
  for $n\ge5$.
\end{corollary}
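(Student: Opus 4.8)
The plan is to leverage Theorem~\ref{thm:max1mm}, which already gives the exact list of connected graphs with maximum metamour-degree~$1$, and reduce the counting to a well-understood combinatorial quantity. For $n \ge 5$, the exceptional graphs $K_1, K_2, P_4$ are irrelevant, so a connected graph on $n$ vertices has maximum metamour-degree~$1$ if and only if $G = K_n - \mu$ for some matching $\mu$ of $K_n$. Hence $m_{\le 1}(n)$ equals the number of \emph{unlabeled} such graphs. Since $K_n - \mu$ and $K_n - \mu'$ are isomorphic precisely when $\mu$ and $\mu'$ have the same cardinality (an automorphism of $K_n$ can carry any matching of size $j$ to any other matching of size $j$, and distinct sizes give distinct numbers of edges hence non-isomorphic graphs), the unlabeled graphs $K_n - \mu$ are classified by the single parameter $j = \abs{\mu} \in \set{0, 1, \dots, \floor{n/2}}$.

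First I would make the isomorphism claim precise: define a map from $\set{0, 1, \dots, \floor{n/2}}$ to the set of unlabeled connected graphs with maximum metamour-degree~$1$ on $n$ vertices (for $n \ge 5$) by $j \mapsto K_n - \mu_j$ where $\mu_j$ is any fixed matching of size~$j$. Surjectivity is immediate from Theorem~\ref{thm:max1mm}. For well-definedness and injectivity, I would note that $K_n - \mu_j$ has exactly $\binom{n}{2} - j$ edges, so different values of $j$ yield graphs with different edge counts and are therefore non-isomorphic; and for a fixed $j$, any two size-$j$ matchings of $K_n$ differ by a permutation of the vertex set, which induces a graph isomorphism between the two complements. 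This establishes a bijection, so $m_{\le 1}(n) = \floor{n/2} + 1$ for $n \ge 5$.

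For the initial values $n \in \set{1,2,3,4}$, I would simply enumerate directly using Theorem~\ref{thm:max1mm}. For $n=1$ we get only $K_1$, so $m_{\le 1}(1) = 1$; for $n=2$ only $K_2$, so $m_{\le 1}(2) = 1$; for $n=3$ we get $K_3$ and $K_3$ minus one edge (i.e.\ $P_3$), so $m_{\le 1}(3) = 2$; for $n = 4$ we get $K_4$, $K_4$ minus one edge, $K_4$ minus a perfect matching (which is $C_4$), and the extra exceptional graph $P_4$, giving $m_{\le 1}(4) = 4$. One should double-check $n=4$ and $n=5$ carefully because $n=4$ has the $P_4$ exception and also $\floor{4/2}+1 = 3 \ne 4$, confirming that the closed formula genuinely starts only at $n = 5$; at $n=5$ the formula gives $\floor{5/2}+1 = 3$, matching $K_5$, $K_5$ minus an edge, and $K_5$ minus a size-$2$ matching.

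The only real obstacle is the isomorphism argument --- specifically, confirming that the number of edges is a complete invariant here, i.e.\ that $K_n - \mu_j \cong K_n - \mu_{j'}$ forces $j = j'$. This follows at once from counting edges, so there is essentially no obstacle; the bulk of the work is the bookkeeping of the small cases $n \le 4$ and noting that the $P_4$ exception is exactly what breaks the formula there. I would also remark, for completeness, that the sequence $\floor{n/2}+1$ can be cross-referenced in the OEIS as in the surrounding corollaries, but that is presentational rather than part of the proof.
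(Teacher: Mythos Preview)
Your proposal is correct and follows essentially the same approach as the paper: both use Theorem~\ref{thm:max1mm} to reduce to counting unlabeled graphs of the form $K_n-\mu$, observe that these are classified by $\abs{\mu}\in\set{0,\dots,\floor{n/2}}$, and handle the small cases (including the $P_4$ exception at $n=4$) separately. Your write-up is in fact slightly more careful than the paper's in making the injectivity argument explicit via edge counts.
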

The Euler transform, see~\cite{Sloane-Plouffe:1995:encyclopedia-int-seq},
gives the sequence of
numbers~$m_{\le1}'(n)$ of unlabeled but not necessarily connected
graphs with maximum metamour-degree~$1$ and $n$ vertices
which starts with
  \begin{equation*}
    \begin{array}{c|cccccccccccccc}
      n
      & 1 & 2 & 3 & 4
      & 5 & 6 & 7 & 8 & 9
      & 10 & 11 & 12 & 13
      \\\hline
      m_{\le1}'(n)
      & 1 & 2 & 4 & 9
      & 14 & 26 & 43 & 76 & 122
      & 203 & 322 & 523 & 814
    \end{array}\;.
  \end{equation*}

\subsection{\texorpdfstring{$2$}{2}-metamour-regular graphs}
\label{sec:results:2mr}

We now come to the most interesting graphs in this article, namely
graphs in which every vertex has exactly two other vertices as
metamours. Also in this case a characterization of the class of
graphs is possible.
We first consider Observation~\ref{obs:kmr-iff-metamourgraphkr} in
view of~$2$-metamour-regularity. 
As a graph is $2$-regular if and only if it is a union of cycles, 
the following observation is easy to verify.

\begin{observation}
    \label{obs:2mr_metamour-graph-consists-of-cycles}
    Let $G$ be a graph and $M$ its metamour graph. Then
    $G$ is $2$-metamour-regular if and only if
    every connected component of the metamour graph~$M$ is a cycle.
\end{observation}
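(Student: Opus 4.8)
The plan is to reduce the statement to the standard structure theorem for $2$-regular graphs by way of Observation~\ref{obs:kmr-iff-metamourgraphkr}. First I would apply that observation to rewrite ``$G$ is $2$-metamour-regular'' as ``the metamour graph $M$ is $2$-regular''. It then only remains to show that a (simple) graph is $2$-regular if and only if each of its connected components is a cycle, and to apply this to $M$.

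One direction is immediate: in a disjoint union of cycles, every vertex has exactly two neighbours, both inside its own component, so the graph is $2$-regular. For the converse I would argue componentwise. Fix a connected component $N$ of $M$ and trace a non-backtracking walk $v_1, v_2, \dots$ in $N$; since every vertex has degree exactly $2$ and $M$ has no loops or parallel edges, at each vertex the continuation is forced and distinct from the immediately preceding vertex. Finiteness of $N$ forces a first repeated vertex, say $v_j = v_i$ with $i < j$ and $j$ minimal. If we had $i \ge 2$, the vertex $v_i$ would be incident to the three distinct edges $\edge{v_{i-1}}{v_i}$, $\edge{v_i}{v_{i+1}}$ and $\edge{v_{j-1}}{v_j}$, contradicting $2$-regularity; hence $i = 1$ and $\gamma = (v_1,\dots,v_{j-1},v_1)$ is a cycle contained in $N$. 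Each vertex of $\gamma$ already uses up both of its incident edges on $\gamma$, so $N$ has no edge leaving $\gamma$, and connectedness of $N$ then yields $N = \gamma$. Running over all components of $M$ gives the claim.

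The only point that needs a little care — and it is the main, though minor, obstacle — is this ``a non-backtracking walk in a $2$-regular graph must close up at its starting vertex'' argument; the rest is bookkeeping with Observation~\ref{obs:kmr-iff-metamourgraphkr}. I would also add the remark that, in the cycle terminology fixed in Section~\ref{sec:definitions}, cycles have length at least $3$, which is automatic here: since $M$ is a metamour graph it is a simple graph with no loops or parallel edges, so a $2$-regular component cannot degenerate to a ``$1$-cycle'' or ``$2$-cycle''. This also explains why the degenerate small cases cause no trouble in the statement.
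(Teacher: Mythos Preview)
Your proposal is correct and follows exactly the approach the paper indicates: the paper does not give a formal proof of this observation but simply remarks that it follows from Observation~\ref{obs:kmr-iff-metamourgraphkr} together with the standard fact that a graph is $2$-regular if and only if it is a disjoint union of cycles. You have merely spelled out the latter fact in detail, which is fine; the non-backtracking walk argument is sound (note in particular that non-backtracking forces $i\le j-3$, so $v_{i-1}$, $v_{i+1}$, $v_{j-1}$ are genuinely distinct when $i\ge 2$).
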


We are ready to fully state the mentioned characterization formally as the 
theorem below. We comment this result and discuss
implications afterwards.
Note that Theorem~\ref{thm:2mriff} generalizes the main result of
Azimi and Farrokhi~\cite[Theorem~2.3]{Azimi-Farrokhi:2014:2-distance-graphs-paths-cycles}
which only deals with metamour graphs being connected.

\begin{theorem}\label{thm:2mriff}
    Let $G$ be a connected graph with $n$ vertices.
    Then $G$ is $2$-metamour-regular if and only if $n\ge 5$ and one of 
    \begin{enumerate}[(a)]
        \item\label{it:thm-2mmr:general}
        $G = \complement{C_{n_1}} \join \dots \join \complement{C_{n_t}}$ 
        with
        $n=n_1+\dots+n_t$ for some $t\ge1$
        and $n_i \ge 3$ for all $i\in\set{1,\dots,t}$,  
        \item\label{it:thm-2mmr:Cn}
        $G = C_n$, or      
        \item\label{it:thm-2mmr:exceptional}
        $\begin{aligned}[t]
          G \in \bigl\{\! &%
            \specialH{4,4}{a},
            \specialH{4,4}{b},
            \specialH{4,4}{c},
            \\ &%
            \specialH{7}{a},
            \specialH{7}{b},
            \specialH{4,3}{a},
            \specialH{4,3}{b},
            \specialH{4,3}{c},
            \specialH{4,3}{d},
            \\ &%
            \specialH{6}{a},
            \specialH{6}{b},
            \specialH{6}{c},
            \specialH{3,3}{a}, 
            \specialH{3,3}{b},
            \specialH{3,3}{c}, 
            \specialH{3,3}{d},
            \specialH{3,3}{e}            
        \bigr\}          
        \end{aligned}$ \\[0.5ex]
        with graphs defined by
        Figures~\ref{fig:graphs-6}, \ref{fig:graphs-7} and \ref{fig:graphs-8}
    \end{enumerate}
    holds.
\end{theorem}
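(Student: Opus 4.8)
The plan is to prove the theorem by a careful case analysis driven by the structure of the metamour graph $M$, which by Observation~\ref{obs:2mr_metamour-graph-consists-of-cycles} is a disjoint union of cycles. The key result I would lean on (which the paper establishes for general $k$, and which the outline promises) is that a $k$-metamour-regular graph whose metamour graph has \emph{at least three} connected components is necessarily a join $G = \complement{M_1} \join \dots \join \complement{M_t}$ with $\set{M_1,\dots,M_t}=\CC{M}$; combined with Proposition~\ref{pro:regular-join-and-mm-graphs}, for $2$-metamour-regular $G$ this forces each $M_i$ to be a cycle $C_{n_i}$, so $G = \complement{C_{n_1}} \join \dots \join \complement{C_{n_t}}$ — exactly case~\ref{it:thm-2mmr:general}. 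So the whole problem reduces to the case $\abs{\CC{M}}\le 2$, i.e.\ the metamour graph is a single cycle $C_m$ or a disjoint union of two cycles $C_{m_1}\cup C_{m_2}$ (with $m, m_1, m_2 \ge 3$). The easy direction — checking that cycles $C_n$ with $n\ge 5$, joins of complements of cycles, and the $17$ listed exceptional graphs are all $2$-metamour-regular — I would dispatch first by direct inspection (for $C_n$: the metamours of $v_i$ are exactly $v_{i-2}$ and $v_{i+2}$, which are distinct and nonadjacent precisely when $n\ge 5$; for the joins, use Proposition~\ref{pro:regular-join-and-mm-graphs}; for the $17$ graphs, refer to the figures).

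For the hard direction with $\abs{\CC{M}}\le 2$, I would proceed as follows. First handle $n$ small (say $n\le 8$ or $n\le 9$) by exhaustive/computer-assisted enumeration — this is where the $17$ exceptional graphs are found, and also where the small cycles $C_5,\dots,C_8$ and small joins show up; this lets me assume $n\ge 9$ henceforth. Now suppose $M = C_m$ is a single cycle on all $n=m$ vertices. Pick a metamour edge $\edge{v}{w}$ of $M$; then $v,w$ are nonadjacent in $G$ with a common neighbour $u$. The crucial local fact is that $u$'s metamours are exactly the two $M$-neighbours of $u$ on the cycle, which constrains how the common neighbours of consecutive metamour-pairs can overlap; I would track, for each $G$-vertex, its two ``metamour-cycle neighbours'' and show that walking around the cycle $C_m$ forces a rigid cyclic structure on $G$ itself — essentially that $G$ must be $C_n$ (the complement-of-cycle alternative $\complement{C_n}$ is excluded once $n\ge 9$ because $\complement{C_n}$ has metamour graph equal to $C_n$ only via the diameter-$2$ / join route, which would need $\abs{\CC{M}}\ge 2$ — or rather: $\complement{C_n}$ does arise, but as the $t=1$ instance of case~\ref{it:thm-2mmr:general}, so it's already accounted for). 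The case $M = C_{m_1}\cup C_{m_2}$ with exactly two components is the genuinely delicate one: here $G$ need not be a join (the join characterization only kicks in at $\ge 3$ components), so I expect a longer argument showing that, for $n$ large, either $G$ still decomposes as $\complement{C_{m_1}}\join\complement{C_{m_2}}$ or a contradiction arises, with all genuine non-join exceptions confined to $n\le 8$.

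The main obstacle, and where I would spend most of the effort, is precisely the two-component case $\abs{\CC{M}}=2$ for moderate $n$: one has to rule out ``hybrid'' graphs that are not joins, not cycles, and not on the small list, and the combinatorics of how the two metamour cycles can be realized by a single connected $G$ is intricate — one must carefully analyze common-neighbour structures, use connectedness of $G$ to force interaction between the two cycles, and push a counting/parity argument (e.g.\ on degrees via Proposition~\ref{pro:mm-eq-complement:degree}: if the metamour graph equalled the complement then $G$ would be $(n-3)$-regular, contradicting $G=C_n$ for $n\ge 6$, so in the non-join regime the complement strictly contains $M$, giving extra edges to exploit). I would structure this as a sequence of reduction lemmas (bounding the sizes $m_1,m_2$, bounding $n$, then finitely checking) so that the infinite families are cleanly separated from the finite exceptional list, and the exceptional list is then pinned down by the earlier small-$n$ enumeration.
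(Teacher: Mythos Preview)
Your overall trichotomy (by $\abs{\CC{M}}$) matches the paper's, and the $\ge 3$-component case is indeed dispatched by the general structural theorem. The easy direction is fine. But the two substantive cases are handled quite differently in the paper, and your sketch has real gaps in both.

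For the single-cycle case $M=C_n$, you seem momentarily confused about $\complement{C_n}$: it \emph{does} have metamour graph $C_n$ (a single cycle), so it lives squarely in this case, not in any join regime requiring $\abs{\CC{M}}\ge 2$. Your ``walking around the metamour cycle forces $G=C_n$'' cannot be right as stated, since $\complement{C_n}$ and the five exceptional graphs $\specialH{6}{a},\specialH{6}{b},\specialH{6}{c},\specialH{7}{a},\specialH{7}{b}$ are counterexamples. The paper's route is quite different and more concrete: it splits on whether $G$ contains a Hamiltonian cycle. If not, a longest-cycle argument pins down $\specialH{6}{a},\specialH{6}{b},\specialH{7}{a}$. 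If yes, one shows every vertex is adjacent to all its ``fellows'' (vertices between it and its metamours along the Hamiltonian cycle) and to either all or none of its ``opponents''; this forces each degree into $\set{2,n-3}$ unless $G=\specialH{7}{b}$, and then the three degree-profiles give $C_n$ ($n$ odd), $\complement{C_n}$, and $\set{C_5,\specialH{6}{c}}$ respectively. None of this is captured by a vague walk-around argument, and there is no appeal to computer search.

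For the two-component case your plan of ``bounding $m_1,m_2$, bounding $n$, then finitely checking'' cannot work as stated: even cycles $C_n$ have $M=C_{n/2}\cup C_{n/2}$, so $m_1,m_2,n$ are all unbounded in this very case. What you are missing is the detailed content of the paper's structural theorem (its case~\ref{it:general:exceptional}): when $M$ has two components and $G$ is \emph{not} a join, the induced subgraphs $G[V(M_1)]$ and $G[V(M_2)]$ shatter into connected pieces each of size at most $k=2$, any two such pieces on opposite sides are either completely adjacent or completely non-adjacent, and adjacent pieces with a common neighbour are forced to be mutual metamours. This turns the problem into analysing a small bipartite ``piece graph'' whose vertices carry labels $1$ or $2$; a short signature-by-signature case analysis then yields $C_n$ ($n$ even), the two join graphs $\complement{C_4}\join\complement{C_4}$ and $\complement{C_4}\join\complement{C_3}$ and $\complement{C_3}\join\complement{C_3}$, and the twelve $\specialH{*,*}{*}$ exceptions, with no enumeration of small $n$ needed. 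Your proposal does not invoke this size-$\le k$ decomposition, and without it there is no visible mechanism for separating the infinite family from the finite exceptions.
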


\input{figures-2mmr}

A representation of every $2$-metamour-regular graph with at most $9$
vertices can be found in Figures~\ref{fig:graphs-5},
\ref{fig:graphs-6}, \ref{fig:graphs-7}, \ref{fig:graphs-8} and
\ref{fig:graphs-9}.
For $10$ vertices, all $2$-metamour-regular graphs---there are $6$ of them---are
$C_{10}$,
$\complement{C_{10}}$,
$\complement{C_7} \join \complement{C_3}$,
$\complement{C_6} \join \complement{C_4}$,
$\complement{C_5} \join \complement{C_5}$,
$\complement{C_4} \join \complement{C_3} \join \complement{C_3}$.

For rounding out Theorem~\ref{thm:2mriff}, we have collected a couple
of remarks and bring them now.

\begin{remark}
  \label{rem:thm-2mmr}
  \mbox{}
  \begin{enumerate}
  \item\label{it:thm-2mmr:C5-twice}
    The smallest possible $2$-metamour-regular graph has $5$
    vertices, and there is exactly one with five vertices, namely~$C_5$;
    see Figure~\ref{fig:graphs-5}. This graph is covered by
    Theorem~\ref{thm:2mriff}\ref{it:thm-2mmr:general} as well
    as~\ref{it:thm-2mmr:Cn} because $C_5 = \complement{C_5}$.
  \item\label{it:thm-2mmr:equivlent-join}
    Theorem~\ref{thm:2mriff}\ref{it:thm-2mmr:general}
    can be replaced by any other equivalent statement of
    Proposition~\ref{pro:regular-join-and-mm-graphs}.
  \item\label{it:thm-2mmr:t1}
    For $t=1$, Theorem~\ref{thm:2mriff}\ref{it:thm-2mmr:general}
    condenses to $G=\complement{C_n}$.
    Implicitly we get $n=n_1 \ge 5$.
  \item\label{it:thm-2mmr:Ci-is-Mi}
    The graphs~$C_{n_1}$, \dots, $C_{n_t}$ of
    Theorem~\ref{thm:2mriff}\ref{it:thm-2mmr:general} satisfy
    \begin{equation*}
      C_{n_i} = M_i
    \end{equation*}
    with $\set{M_1,\dots,M_t} = \CC{M}$. This means that the decomposition
    of the
    graph~$G = \complement{C_{n_1}} \join \dots \join
    \complement{C_{n_t}}$ reveals the metamour graph of~$G$ and vice
    versa.
  \item\label{it:thm-2mmr:mm-eq-complement}
    For Theorem~\ref{thm:2mriff}\ref{it:thm-2mmr:general}
    as well as for \ref{it:thm-2mmr:Cn} with $n=5$, every graph
    satisfies that its complement equals its metamour graph.
    For all other cases, this is
    not the case. A full formulation of this fact is stated as
    Corollary~\ref{cor:2mm:mm-ne-complement}.
  \end{enumerate}
\end{remark}

The proof of Theorem~\ref{thm:2mriff} is quite extensive and
we refer to Section~\ref{sec:proofs:2-mmr} for the complete proof;
at this point, we only sketch it.

\begin{proof}[Sketch of Proof of Theorem~\ref{thm:2mriff}]
  Let $G$ be $2$-metamour-regular. We apply
  Theorem~\ref{thm:kmmr:metamour-graph-not-connected}
  (to be presented in Section~\ref{sec:results:kmr}) with $k=2$,
  which leads us to one of three cases.
  The generic case is already covered by
  Theorem~\ref{thm:kmmr:metamour-graph-not-connected}
  combined with Observation~\ref{obs:2mr_metamour-graph-consists-of-cycles}.

  If the metamour graph of~$G$ is connected, then we first rule out
  that $G$ is a tree. If not, then the graph contains a cycle, and
  depending on whether~$G$ contains a cycle of length~$n$ or not, we
  get $G = \specialH{7}{b}$ or
  $G \in \set{\specialH{6}{a}, \specialH{6}{b}, \specialH{7}{a}}$,
  respectively. (This parts of the proof are formulated as
  Proposition~\ref{pro:2mmr:Cn-or-complementCn},
  Lemma~\ref{lem:2mmr:deg-two-options} and
  Lemma~\ref{lem:2mmr:contain-Cn}.)
  We prove these parts by studying the longest cycle
  in the graph~$G$ and step-by-step obtaining information between which
  vertices edges, non-edges and metamour relations need to be.
  Knowing enough,  $2$-metamour-regularity forces the graph to be bounded
  in the number of vertices. As this number is quite small, further case
  distinctions lead to the desired graphs.
  Furthermore, a separate investigation is needed when
  every vertex has degree $2$ or $n-3$ or a mixture of these to degrees;
  here $n$ is the number of vertices of~$G$. This leads to $G=C_n$ and
  $n$ odd, $G=\complement{C_n}$ and
  $G \in \set{C_5, \specialH{6}{c}}$, respectively. (These parts of
  the proof are formulated as Lemma~\ref{lemma:2mmr:all-deg-2},
  Lemma~\ref{lemma:2mmr:all-deg-nMinus3} and
  Lemma~\ref{lem:2mmr:deg-2-and-nm3}.)

  If the metamour graph of~$G$ is not connected, it consists
  of exactly two connected components
  (by Theorem~\ref{thm:kmmr:metamour-graph-not-connected}), and
  the graph is split respecting the connected components
  of the metamour graph of~$G$.
  Due to $2$-metamour-regularity,
  Theorem~\ref{thm:kmmr:metamour-graph-not-connected} guarantees that
  the number of vertices of each piece is a most~$2$. Studying each
  configuration separately lead to the graphs
  $G = C_n$ and $n$ even, or
  $G \in \set{
    \specialH{4,4}{a},
    \specialH{4,4}{b},
    \specialH{4,4}{c},
    \specialH{4,3}{a},
    \specialH{4,3}{b},
    \specialH{4,3}{c},
    \specialH{4,3}{d},
    \specialH{3,3}{a},
    \specialH{3,3}{b},
    \specialH{3,3}{c},
    \specialH{3,3}{d},
    \specialH{3,3}{e}
  }$. (This part of the proof is formulated as
  Proposition~\ref{pro:2mmr:metamour-graph-not-cn}.)
\end{proof}

The characterization provided by 
Theorem~\ref{thm:2mriff} has many implications. We start with
the following easy corollaries.

\begin{corollary}
    \label{cor:2mm:collect-non-standard-cases}
    Let $G$ be a connected graph with $n\ge9$ vertices. Then $G$ is
    $2$-metamour-regular if and only if $G$ is either $C_n$ or
    $\complement{C_{n_1}} \join \dots \join \complement{C_{n_t}}$ with
    $n=n_1+\dots+n_t$ for some~$t \ge 1$ and
    $n_i \ge 3$ for all $i\in\set{1,\dots,t}$.
\end{corollary}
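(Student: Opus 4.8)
The plan is to obtain this corollary as a direct specialisation of Theorem~\ref{thm:2mriff}, the only additional ingredient being that the exceptional family is too small to be relevant once $n \ge 9$.

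First I would treat the ``only if'' direction. Assume $G$ is a connected $2$-metamour-regular graph on $n \ge 9$ vertices. By Theorem~\ref{thm:2mriff} one of \ref{it:thm-2mmr:general}, \ref{it:thm-2mmr:Cn} or \ref{it:thm-2mmr:exceptional} holds. The key point is that every graph listed in \ref{it:thm-2mmr:exceptional} has at most $8$ vertices: the graphs $\specialH{4,4}{\ast}$, $\specialH{7}{\ast}$, $\specialH{4,3}{\ast}$, $\specialH{6}{\ast}$, $\specialH{3,3}{\ast}$ are drawn in Figures~\ref{fig:graphs-6}, \ref{fig:graphs-7} and \ref{fig:graphs-8}, and their vertex counts (equal to the sum of the subscripts, the largest being $4+4=8$) can be read off there. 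Hence \ref{it:thm-2mmr:exceptional} is incompatible with $n \ge 9$, so \ref{it:thm-2mmr:general} or \ref{it:thm-2mmr:Cn} must hold, which is exactly the asserted dichotomy.

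For the ``if'' direction I would simply invoke the reverse implication of the same theorem. If $G$ is connected and equals $C_n$, or equals $\complement{C_{n_1}} \join \dots \join \complement{C_{n_t}}$ with $n = n_1 + \dots + n_t$, $t \ge 1$ and $n_i \ge 3$ for all $i$, then since $n \ge 9 \ge 5$ the graph $G$ meets the hypothesis ``$n \ge 5$'' of Theorem~\ref{thm:2mriff} together with \ref{it:thm-2mmr:Cn} or \ref{it:thm-2mmr:general} respectively, so the theorem gives that $G$ is $2$-metamour-regular.

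I do not expect a genuine obstacle here: the statement is a pure corollary of Theorem~\ref{thm:2mriff}, and the only thing to verify is the bound of $8$ vertices on the exceptional graphs, which is also recorded explicitly in the discussion immediately following Theorem~\ref{thm:2mriff}. All the real difficulty lies in the proof of Theorem~\ref{thm:2mriff} itself.
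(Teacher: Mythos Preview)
Your proposal is correct and matches the paper's approach exactly: the paper's proof is the single sentence ``The result is an immediate consequence of Theorem~\ref{thm:2mriff}.'' You have simply spelled out the details of that immediate consequence, namely that the exceptional graphs in~\ref{it:thm-2mmr:exceptional} all have at most $8$ vertices and so are excluded when $n\ge9$.
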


\begin{corollary}
    \label{cor:2mm:2mm-iff-regular}
    Let $G$ be a connected graph with $n\ge9$ vertices. Then $G$ is
    $2$-metamour-regular if and only if $G$ is either $2$-regular or
    $(n-3)$-regular.
\end{corollary}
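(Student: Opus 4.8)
The plan is to derive this as a direct consequence of Corollary~\ref{cor:2mm:collect-non-standard-cases}, which for $n\ge9$ already identifies the connected $2$-metamour-regular graphs as $C_n$ together with the joins $\complement{C_{n_1}}\join\dots\join\complement{C_{n_t}}$ (with $n=n_1+\dots+n_t$, each $n_i\ge3$). So all that is left is to recognize that list as a statement about degrees and to settle the converse implication.

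For the forward direction, assume $G$ is $2$-metamour-regular with $n\ge9$. By Corollary~\ref{cor:2mm:collect-non-standard-cases} either $G=C_n$, which is $2$-regular, or $G=\complement{C_{n_1}}\join\dots\join\complement{C_{n_t}}$. In the latter case I would compute the degree of an arbitrary vertex $v$: if $v$ belongs to the block $\complement{C_{n_i}}$, then inside that block it has degree $n_i-3$ (every vertex of $\complement{C_m}$ has degree $m-3$; cf.\ Remark~\ref{rem:frequently-used-complements}), and by the definition of the join it is in addition adjacent to all $n-n_i$ vertices of the other blocks, so $\deg v=(n_i-3)+(n-n_i)=n-3$. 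Hence $G$ is $(n-3)$-regular. (Equivalently one may invoke the implication \ref{it:pro:regular-join-and-mm-graphs:join}$\Rightarrow$\ref{it:pro:regular-join-and-mm-graphs:regular} of Proposition~\ref{pro:regular-join-and-mm-graphs} with $k=2$.)

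For the converse I would treat the two cases separately. If $G$ is connected and $2$-regular, then $G=C_n$ with $n\ge9\ge5$, so Theorem~\ref{thm:2mriff}\ref{it:thm-2mmr:Cn} yields that $G$ is $2$-metamour-regular. If $G$ is $(n-3)$-regular, then each vertex is non-adjacent to exactly $n-1-(n-3)=2$ others, so $\complement{G}$ is $2$-regular; by Observation~\ref{observation:mm-subgraph-complement} the metamour graph $M$ of $G$ is a subgraph of $\complement{G}$, and it suffices to prove the reverse inclusion, i.e.\ that every pair of non-adjacent vertices $v,w$ has a common neighbor. Counting: $v$ has exactly two non-neighbors (one being $w$) and so does $w$ (one being $v$), hence $\abs{N(v)\cap N(w)}\ge n-4>0$ since $n\ge9$. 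Thus $v$ and $w$ are at distance $2$, so $M=\complement{G}$ is $2$-regular and $G$ is $2$-metamour-regular by Observation~\ref{obs:kmr-iff-metamourgraphkr}.

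This is essentially a corollary, so I do not expect a genuine obstacle; the only point requiring a little care is the converse for $(n-3)$-regular graphs, where one must verify that regularity alone forces $M$ to exhaust all of $\complement{G}$ --- equivalently, that $G$ has diameter $2$ (note $G\neq K_n$ because $n-3<n-1$), which could also be phrased through Proposition~\ref{pro:mm-eq-complement:diameter}. The hypothesis $n\ge9$ is far more than the counting argument needs; $n\ge5$ already suffices there, which dovetails with the threshold appearing in Theorem~\ref{thm:2mriff}.
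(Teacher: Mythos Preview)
Your proof is correct and the forward direction matches the paper's argument exactly. The converse for the $2$-regular case is also identical.

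For the $(n-3)$-regular converse you take a slightly different and more self-contained route than the paper. The paper observes that $\complement{G}$ is $2$-regular, hence a disjoint union of cycles $C_{n_1},\dots,C_{n_t}$, and then uses the identity $\complement{C_{n_1}\cup\dots\cup C_{n_t}}=\complement{C_{n_1}}\join\dots\join\complement{C_{n_t}}$ to recognize $G$ as one of the graphs in Theorem~\ref{thm:2mriff}\ref{it:thm-2mmr:general}. You instead bypass the join decomposition entirely: your inclusion--exclusion bound $\lvert N(v)\cap N(w)\rvert\ge n-4>0$ shows directly that $G$ has diameter~$2$, whence $M=\complement{G}$ by Proposition~\ref{pro:mm-eq-complement:diameter} (or Observation~\ref{observation:mm-subgraph-complement} plus your argument), and $2$-regularity of $M$ finishes via Observation~\ref{obs:kmr-iff-metamourgraphkr}. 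Your argument is marginally more elementary and, as you note, already works for $n\ge5$; the paper's version has the advantage of exhibiting explicitly where $G$ sits in the classification of Theorem~\ref{thm:2mriff}. Either way this is a short corollary, and both approaches are perfectly adequate.
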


As before, we consider the relation of metamour graphs and
complement more closely; see the following corollary. Again, we 
feel the spirit of Proposition~\ref{pro:regular-join-and-mm-graphs}.

\begin{corollary}
  \label{cor:2mm:mm-ne-complement}
  Let $G$ be a connected $2$-metamour-regular graph with $n$ vertices.
  Then the following statements are equivalent:
  \begin{enumerate}[(a)]
  \item The metamour graph of~$G$ is a proper subgraph of $\complement{G}$.
  \item We have either $G=C_n$ and $n\ge6$
    (Theorem~\ref{thm:2mriff}\ref{it:thm-2mmr:Cn}) or $G$ is one of the graphs
    in Theorem~\ref{thm:2mriff}\ref{it:thm-2mmr:exceptional}.
  \item The graph~$G$ has diameter larger than~$2$.
  \end{enumerate}
\end{corollary}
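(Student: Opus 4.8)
The idea is to recognise statement~(a) as the failure of the identity $M = \complement{G}$ for the metamour graph $M$ of $G$, and then to apply the two characterisations of that identity that are already available --- Proposition~\ref{pro:mm-eq-complement:diameter} on the diameter side and Proposition~\ref{pro:regular-join-and-mm-graphs} on the regularity side --- before finishing with a short finite check against Theorem~\ref{thm:2mriff}. At the outset I would record that a $2$-metamour-regular graph has a vertex with a metamour, hence is not complete, so $G \neq K_n$ and $G$ has diameter at least~$2$.

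For (a)$\Leftrightarrow$(c): by Observation~\ref{observation:mm-subgraph-complement} the metamour graph $M$ is always a subgraph of $\complement{G}$, so (a) is exactly the statement $M \neq \complement{G}$. By Proposition~\ref{pro:mm-eq-complement:diameter} and $G \neq K_n$, we have $M = \complement{G}$ if and only if $G$ has diameter~$2$, and since $G$ has diameter at least~$2$ anyway, the negation of this is that $G$ has diameter larger than~$2$, which is~(c). This direction is entirely routine.

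For (a)$\Leftrightarrow$(b): Proposition~\ref{pro:regular-join-and-mm-graphs} applied with $k=2$ shows that $M = \complement{G}$ holds if and only if $G$ is $(n-3)$-regular, so (a) is equivalent to $G$ not being $(n-3)$-regular. It then remains to sort the graphs of Theorem~\ref{thm:2mriff} according to this property. If $G = \complement{C_{n_1}} \join \dots \join \complement{C_{n_t}}$ as in Theorem~\ref{thm:2mriff}\ref{it:thm-2mmr:general}, a vertex lying in the part $\complement{C_{n_i}}$ has degree $(n_i-3) + (n-n_i) = n-3$, so $G$ is $(n-3)$-regular; likewise $C_5$ is $2$-regular with $n-3 = 2$, hence $(n-3)$-regular. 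On the other hand $C_n$ with $n\geq 6$ is $2$-regular with $n-3\geq 3$, so not $(n-3)$-regular, and each of the $17$ exceptional graphs of Theorem~\ref{thm:2mriff}\ref{it:thm-2mmr:exceptional} has two vertices of different degree --- visible in Figures~\ref{fig:graphs-6}, \ref{fig:graphs-7} and~\ref{fig:graphs-8} --- hence is not even regular. Since $C_n$ with $n\geq 5$ splits into $C_5$ and $C_n$ with $n\geq 6$, Theorem~\ref{thm:2mriff} guarantees that every connected $2$-metamour-regular graph appears in one of these four families; as the first two families consist of $(n-3)$-regular graphs and the last two of graphs that are not $(n-3)$-regular, and as being $(n-3)$-regular or not genuinely partitions the connected $2$-metamour-regular graphs, the non-$(n-3)$-regular ones are precisely the $C_n$ with $n\geq 6$ together with the $17$ exceptional graphs. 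That set is exactly the content of~(b), and --- usefully --- this bookkeeping needs no disjointness claim about the three cases of Theorem~\ref{thm:2mriff}.

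The main obstacle is the final ingredient: checking that none of the $17$ exceptional graphs is regular. This cannot be outsourced to an earlier result and must be done by inspection of the figures; in each case it suffices to exhibit a vertex of degree $1$, $2$, or~$3$ next to a vertex of strictly larger degree, which is immediate from the drawings. Everything else is a two-line deduction from the quoted propositions.
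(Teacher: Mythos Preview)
Your argument is correct and is precisely the kind of unpacking that the paper's one-line proof (``direct consequence of Theorem~\ref{thm:2mriff}'') leaves to the reader. You route (a)$\Leftrightarrow$(c) through Proposition~\ref{pro:mm-eq-complement:diameter} and (a)$\Leftrightarrow$(b) through Proposition~\ref{pro:regular-join-and-mm-graphs} and a case check against the classification, which is exactly how Remark~\ref{rem:thm-2mmr}\ref{it:thm-2mmr:mm-eq-complement} frames the corollary; the finite inspection of the $17$ exceptional graphs is unavoidable and your degree argument handles it cleanly.
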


Theorem~\ref{thm:2mriff} makes it also
possible to count how many different $2$-metamour-regular graphs with $n$ 
vertices there are.
We provide this in the following corollary.

\begin{corollary}
  \label{cor:2mmr:counting}
  The sequence of numbers $m_{=2}(n)$ of unlabeled connected
  $2$-metamour-regular graphs with $n$ vertices starts with
  \begin{equation*}
    \begin{array}{c|cccccccccccccccccccc}
      n
      & 1 & 2 & 3 & 4
      & 5 & 6 & 7 & 8 & 9
      & 10 & 11 & 12 & 13 & 14
      & 15 & 16 & 17 & 18 & 19
      & 20
      \\\hline
      \!m_{=2}(n)\!
      & 0 & 0 & 0 & 0
      & 1 & 11 & 9 & 7 & 5
      & 6 & 7 & 10 & 11 & 14
      & 18 & 22 & 26 & 34 & 40
      & 50
    \end{array}
  \end{equation*}
  and for $n\ge9$ we have
  \begin{equation*}
    m_{=2}(n) = p_3(n) + 1,
  \end{equation*}
  where $p_3(n)$ is the number of integer partitions%
  \footnote{For integer partitions,
    see footnote~\footref{footnote:partitions}
    on page~\pageref{footnote:partitions}.
    The function~$p_3(n)$ is \oeis{A008483} in~\cite{OEIS:2020}.}
  of~$n$ with parts at least~$3$.
\end{corollary}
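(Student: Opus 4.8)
The plan is to read the count directly off the structural description in Theorem~\ref{thm:2mriff}, splitting into the range $n\ge9$, where no exceptional graphs occur, and the finitely many small orders $n\le8$. For $n\ge9$, Corollary~\ref{cor:2mm:collect-non-standard-cases} says that a connected graph on $n$ vertices is $2$-metamour-regular exactly when it is $C_n$ or a graph $G=\complement{C_{n_1}}\join\dots\join\complement{C_{n_t}}$ with $n=n_1+\dots+n_t$, $t\ge1$ and all $n_i\ge3$. The first step is to observe that the graphs of the second kind are, up to isomorphism, in bijection with the integer partitions of~$n$ all of whose parts are at least~$3$: each partition clearly yields such a graph, and conversely, by Remark~\ref{rem:thm-2mmr}\ref{it:thm-2mmr:Ci-is-Mi} the connected components of the metamour graph of~$G$ are precisely $C_{n_1},\dots,C_{n_t}$, so the multiset $\set{n_1,\dots,n_t}$ is an isomorphism invariant and different partitions give non-isomorphic graphs; this contributes $p_3(n)$ graphs. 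The second step is that $C_n$ is not isomorphic to any of them, since $C_n$ is $2$-regular whereas each such $G$ is $(n-3)$-regular (a vertex in the $i$-th factor has $n_i-3$ neighbours inside $\complement{C_{n_i}}$ and in addition all $n-n_i$ vertices of the other factors), and $n-3\ge6>2$. Hence $m_{=2}(n)=p_3(n)+1$ for $n\ge9$, which in particular produces every tabulated value from $n=9$ on.

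For $n\le8$ I would simply enumerate what Theorem~\ref{thm:2mriff} permits: nothing for $n\le4$; for $n=5$ only $C_5=\complement{C_5}$, giving $m_{=2}(5)=1$; and for $n\in\set{6,7,8}$ the joins of complements of cycles (one for each partition of~$n$ into parts at least~$3$, i.e.\ $2$, $2$, and $3$ of them, respectively), the cycle~$C_n$, and the $8$, $6$, and $3$ exceptional graphs, which together are exactly the graphs drawn in Figures~\ref{fig:graphs-6}, \ref{fig:graphs-7} and~\ref{fig:graphs-8}; this gives $m_{=2}(6)=11$, $m_{=2}(7)=9$ and $m_{=2}(8)=7$.

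I expect the main obstacle to be the bookkeeping that rules out double counting, in particular for $n\le8$: one must confirm that the three families in Theorem~\ref{thm:2mriff} overlap only in $C_5=\complement{C_5}$ (for $n\ge6$ the cycle $C_n$ is $2$-regular while every graph covered by Theorem~\ref{thm:2mriff}\ref{it:thm-2mmr:general} is $(n-3)$-regular, and $C_n$ is none of the exceptional graphs), that among the joins in Theorem~\ref{thm:2mriff}\ref{it:thm-2mmr:general} distinct partitions give distinct graphs (the metamour-component invariant once more), and that the $17$ exceptional graphs are pairwise non-isomorphic and differ from every cyclic graph and every join --- the finite verification underlying Figures~\ref{fig:graphs-6}--\ref{fig:graphs-8}. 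Once this is in place the table and the closed form follow at once; as a consistency check, $p_3(9)=4$ and $p_3(10)=5$ match $m_{=2}(9)=5$ and $m_{=2}(10)=6$.
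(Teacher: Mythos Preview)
Your proposal is correct and follows essentially the same approach as the paper's proof, which also reads the count off Theorem~\ref{thm:2mriff} by matching the graphs of~\ref{it:thm-2mmr:general} with partitions into parts at least~$3$, adding one for~$C_n$, and handling the small orders via the exceptional list. If anything, you are more careful than the paper: you explicitly justify why distinct partitions yield non-isomorphic joins (via the metamour-component invariant) and why $C_n$ is never one of these joins for $n\ge6$ (the regularity argument), whereas the paper's proof leaves these distinctness checks implicit.
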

The sequence of numbers~$m_{=2}(n)$ is \oeis{A334275} in
The On-Line Encyclopedia of Integer Sequences~\cite{OEIS:2020}.

We apply the Euler transform,
see Sloane and Plouffe~\cite{Sloane-Plouffe:1995:encyclopedia-int-seq},
on this sequence and obtain the
numbers~$m_{=2}'(n)$ of unlabeled but not necessarily connected
$2$-metamour-regular graphs with $n$ vertices. The sequence of these numbers
starts with
  \begin{equation*}
    \begin{array}{c|ccccccccccccccccccc}
      n
      & 1 & 2 & 3 & 4
      & 5 & 6 & 7 & 8 & 9
      & 10 & 11 & 12 & 13 & 14
      & 15 & 16 & 17 & 18
      \\\hline
      m_{=2}'(n)
      & 0 & 0 & 0 & 0
      & 1 & 11 & 9 & 7 & 5
      & 7 & 18 & 85 & 117 & 141
      & 143 & 179 & 277 & 667
    \end{array}\;.
  \end{equation*}

\subsection{\texorpdfstring{$k$}{k}-metamour-regular graphs}
\label{sec:results:kmr}

In this section, we present results that are valid for
graphs with maximum metamour-degree~$k$ and $k$-metamour-regular graphs
for any non-negative number~$k$.

We start with  Proposition~\ref{pro:join-of-complements-is-kmr} stating that 
the join of complements of $k$-regular graphs is a 
$k$-metamour-regular graph.

\begin{proposition}\label{pro:join-of-complements-is-kmr}
  Let $M$ be a graph having $t\ge2$ connected components
  $M_1$, \dots, $M_t$. Set
  \begin{equation*}
    G = \complement{M_1} \join \dots \join \complement{M_t}.
  \end{equation*}
  Then $G$ has metamour-graph~$M$.
  In particular, if $M$ is $k$-regular for some $k \ge 0$,
  then $G$ is $k$-metamour-regular.
\end{proposition}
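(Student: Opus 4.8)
The plan is to show directly that the metamour graph of~$G$ equals~$M$, and then read off the regularity statement from Observation~\ref{obs:kmr-iff-metamourgraphkr}.

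First I would record two structural facts. Since the vertex set of a join is the union of the vertex sets of its parts and $V(\complement{M_i})=V(M_i)$, we have $V(G)=V(M_1)\cup\dots\cup V(M_t)=V(M)$, so $G$ and~$M$ share the same vertex set. Moreover, from the definitions of join and complement one checks immediately that the complement of a join is the disjoint union (no cross edges) of the complements of its parts; together with associativity of~$\join$ (Section~\ref{sec:joins}) this gives $\complement{G}=\complement{\complement{M_1}}\cup\dots\cup\complement{\complement{M_t}}=M_1\cup\dots\cup M_t=M$, since $M_1,\dots,M_t$ are precisely the connected components of~$M$. Hence $\complement{G}=M$.

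With these in hand, one inclusion is free: by Observation~\ref{observation:mm-subgraph-complement} the metamour graph of~$G$ is a subgraph of $\complement{G}=M$. For the converse, I would take an arbitrary edge $\edge{v}{w}\in E(M)$. As the~$M_i$ are the connected components of~$M$, both $v$ and~$w$ lie in one of them, say~$M_i$, and since $\edge{v}{w}\in E(M)=E(\complement{G})$ the vertices $v$ and~$w$ are non-adjacent in~$G$. Now I use the hypothesis $t\ge2$: choose an index $j\ne i$ and any vertex $u\in V(M_j)$ (a connected component is nonempty). By definition of the join, $u$ is adjacent in~$G$ to every vertex of $V(M_i)$, in particular to both $v$ and~$w$, so $u$ is a common neighbor of~$v$ and~$w$. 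Hence $v$ and~$w$ have distance exactly~$2$ in~$G$, i.e.\ $\edge{v}{w}$ is an edge of the metamour graph. Combined with the previous inclusion this shows that the metamour graph of~$G$ equals~$M$. The ``in particular'' is then immediate: if $M$ is $k$-regular, then so is the metamour graph of~$G$, and by Observation~\ref{obs:kmr-iff-metamourgraphkr} the graph~$G$ is $k$-metamour-regular.

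I do not expect a genuine obstacle here; the only points needing a little care are justifying $\complement{G}=M$ cleanly from the definitions and making visible where the hypothesis $t\ge2$ enters — it is exactly what supplies the common neighbor~$u$, and without it the claim fails (for $t=1$, an edge of~$M_1$ need not be a metamour edge of~$\complement{M_1}$). A routine alternative that avoids the complement computation is to split into the cases ``$v,w$ in the same block~$M_i$'' and ``$v,w$ in different blocks'' and to check adjacency and the metamour condition in each; this works but is a little longer.
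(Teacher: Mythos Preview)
Your proposal is correct and takes essentially the same approach as the paper: both hinge on the observation that any vertex in a different block $M_j$ (available because $t\ge2$) serves as a common neighbor, so two vertices in the same $M_i$ are metamours iff they are adjacent in~$M_i$, while vertices in different blocks are adjacent in~$G$ and hence not metamours. The only organizational difference is that you first compute $\complement{G}=M$ and invoke Observation~\ref{observation:mm-subgraph-complement} to get one inclusion for free, whereas the paper carries out the two-case analysis you sketch at the end directly; the underlying ideas are identical.
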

We call this construction of a graph with given metamour graph
\emph{generic construction}. In particular this generic construction allows us 
to build $k$-metamour-regular graphs.
We will not investigate further options to construct $k$-metamour-regular 
graphs (as, for example, with circulant graphs), as the above construction suffices for the ultimate goal of this paper to 
characterize all $k$-metamour-regular graphs for $k \leq 2$.
Last in our discussion of Proposition~\ref{pro:join-of-complements-is-kmr},
we note that, as we have
$G = \complement{M_1} \join \dots \join \complement{M_t}$,
we can also extend Proposition~\ref{pro:join-of-complements-is-kmr}
by the equivalent statements of
Propositions~\ref{pro:join-and-mm-graphs}
and~\ref{pro:regular-join-and-mm-graphs}. 

Next we state the main structural result about graphs with maximum metamour-degree~$k$
as Theorem~\ref{thm:kmmr:metamour-graph-not-connected}.
A consequence of this main statement is that
every $k$-metamour-regular graph is the join of 
complements of $k$-regular graphs as in Proposition~\ref{pro:join-of-complements-is-kmr},
or has in its metamour graph only one or two connected
components; see Corollary~\ref{cor:kmmr:3cc-implies-join} for a full
formulation.

\begin{theorem}\label{thm:kmmr:metamour-graph-not-connected}
    Let $k \ge 0$. 
    Let $G$ be a connected graph with maximum metamour-degree~$k$ and
    $M$ its metamour graph. Then exactly one of the following
    statements is true:
    \begin{enumerate}[(a)]
      \item\label{it:general:mm-connected}
        The metamour graph~$M$ is connected.

      \item\label{it:general:general}
        The metamour graph~$M$ is not connected and the induced subgraph
        $G[V(M_i)]$ is connected for some $M_i \in \CC{M}$.
        
        In this case we have
        $G = \complement{M_1} \join \dots \join \complement{M_t}$ 
        with $\set{M_1,\dots,M_t} = \CC{M}$ and $t \ge 2$,
        and any other equivalent statement of
        Proposition~\ref{pro:join-and-mm-graphs}.
        
      \item\label{it:general:exceptional}
        The metamour graph~$M$ is not connected and
        no induced subgraph $G[V(M_i)]$ is connected for any $M_i \in \CC{M}$.

        In this case the metamour graph~$M$ has exactly
        two connected components and the following holds.
        Set $G^M = G[V(M_1)] \cup G[V(M_2)]$ with
        $\set{M_1,M_2} = \CC{M}$, i.e., $G^M$
        is the graph $G$ after deleting every edge 
        between two vertices that are from different connected components of 
        the metamour graph~$M$. Then we have:
        \begin{enumerate}[(i)]
            \item Every connected component of $G^M$
            has 
            at most $k$ vertices.
            \label{it:at-most-k-vert}
            \item
            If $G$ is $k$-metamour-regular,
            then every connected component of $G^M$ is a regular 
            graph.            
            \label{it:kmr-implies-regular-conn-comp}
            \item Every connected component $G_i$ of $G^M$ satisfies 
            $\complement{G_i} =  M[V(G_i)]$. 
            \label{it:complement-graph-is-metamour-graph}
            \item If two different connected components of $G^M$
              are adjacent in~$G$,
              then these connected components are completely adjacent in~$G$.%
            \label{it:one-egdge-implies-all-edges}
            \item If two vertices of different connected components $G_i$ and 
            $G_j$ of 
            $G^M$ have a common neighbor in~$G$, then every vertex of $G_i$ is 
            a metamour of every vertex of $G_j$.%
            \label{it:metamour-components-contain-metamours}
            \item  
            If a connected component $G_i$ of $G^M$ is adjacent in~$G$ to
            another connected component $G_j$
            consisting of $k-d$ 
            vertices for some $d \ge 0$,
            then the neighbors (in~$G$) of vertices of $G_i$
            are in at most $d+2$ (including $G_j$) connected components.%
            \label{it:not-too-many-components}
        \end{enumerate}
    \end{enumerate}
\end{theorem}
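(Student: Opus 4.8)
The plan is to dispose of the trichotomy at once and then establish the structural content of~(b) and of~(c) separately. Throughout write $\CC{M}=\set{M_1,\dots,M_t}$ and $W_i=V(M_i)$. The three alternatives are distinguished purely by the two Boolean conditions ``$M$ is connected'' and ``$G[W_i]$ is connected for some~$i$'', so they are mutually exclusive and exhaustive and exactly one holds; all the work is in the structural assertions attached to~(b) and~(c). The single tool used everywhere is the following \emph{crossing lemma}: if $u\in W_i$ and $v\in W_j$ with $i\neq j$ have a common neighbour in~$G$, then $u\sim_G v$. Indeed a common neighbour forces $d_G(u,v)\le2$, while $u\neq v$ and $u,v$ lie in different components of~$M$, so they are not metamours and $d_G(u,v)\neq2$; hence $d_G(u,v)=1$. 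Equivalently, two non-adjacent vertices from different components of~$M$ have no common neighbour.

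For~(b) assume $M$ is disconnected and $G[W_1]$ is connected. First I would show that every $v\notin W_1$ is adjacent in~$G$ to \emph{all} of~$W_1$ or to \emph{none} of it: letting $S$ be the set of non-neighbours of~$v$ in~$W_1$, if $\emptyset\neq S\neq W_1$ then connectedness of~$G[W_1]$ produces an edge from some $u\in S$ to some $u'\in W_1\setminus S$, but the crossing lemma applied to the non-adjacent cross-component pair $v,u$ says they have no common neighbour, so $v\not\sim u'$, a contradiction. Next, the ``none'' alternative cannot occur: a vertex~$v$ adjacent to no vertex of~$W_1$ cannot be adjacent to any vertex~$v'$ adjacent to all of~$W_1$ (such a~$v'$ would be a common neighbour of~$v$ and a vertex of~$W_1$, making non-adjacent cross-component vertices metamours), and together with $v$ having no neighbour in~$W_1$ this disconnects~$G$. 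Hence $W_1$ is completely adjacent to~$V(G)\setminus W_1$, so $G$ is a join and Proposition~\ref{pro:join-and-mm-graphs} yields $G=\complement{M_1}\join\dots\join\complement{M_t}$ with $t\ge2$, together with its equivalent formulations.

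For~(c) assume $M$ is disconnected and no $G[W_i]$ is connected. Call the connected components of~$G^M$ the \emph{blocks}; each block lies inside some~$W_\ell$, and since blocks inside a common~$W_\ell$ are pairwise non-adjacent in~$G$, connectedness of~$G$ forces every block to be adjacent in~$G$ to a block in a different~$W$. Property~\behref{it:one-egdge-implies-all-edges}~(complete adjacency) then follows by walking along a connected block~$G_a$: if $u\sim u'$ inside~$G_a$ and $u\sim v$ with~$v$ in another block~$G_b$, then $u$ is a common neighbour of~$u'$ and~$v$, so $u'\sim v$ by the crossing lemma; iterating gives that $G_a$ and~$G_b$ are completely adjacent. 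Property~\behref{it:complement-graph-is-metamour-graph} is the observation that, for a block~$G_b$ adjacent to~$G_a$ and any $u\in G_b$, the vertex~$u$ is adjacent to \emph{all} of~$G_a$, hence any two non-adjacent vertices of~$G_a$ are metamours through~$u$; with Observation~\ref{observation:mm-subgraph-complement} this is $\complement{G_a}=M[V(G_a)]$. For~\behref{it:metamour-components-contain-metamours} one splits on whether $G_i,G_j$ lie in the same~$W_\ell$: if they do, a common neighbour~$w$ lies outside~$W_\ell$, and by complete adjacency $w$ sees all of~$G_i$ and all of~$G_j$, so every vertex of~$G_i$ is a metamour of every vertex of~$G_j$; if they lie in different~$W$'s the crossing lemma forces them adjacent, handled via~\behref{it:one-egdge-implies-all-edges}. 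For~\behref{it:at-most-k-vert} I would show any two blocks of a fixed~$G[W_\ell]$ share a common neighbour — take a shortest $G$-path between them, note its interior avoids~$W_\ell$, and use the crossing lemma to contract any interior of length~$\ge2$, leaving a single interior vertex — and then~\behref{it:metamour-components-contain-metamours} makes every vertex of one block a metamour of an \emph{entire} other block, so the bound~$k$ forces each block to have at most~$k$ vertices. Property~\behref{it:kmr-implies-regular-conn-comp} follows from~\behref{it:complement-graph-is-metamour-graph}: in a $k$-metamour-regular~$G$ each vertex of a block~$G_a$ has exactly~$k$ metamours, of which $\abs{V(G_a)}-1-\deg_{G_a}(v)$ lie in~$G_a$ and the rest form the same union of other blocks of~$G[W_\ell]$ for every~$v\in V(G_a)$, so $\deg_{G_a}$ is constant.

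The remaining assertions — that in case~(c) the metamour graph~$M$ has \emph{exactly two} components, and the refined bound~\behref{it:not-too-many-components} — are where I expect the main difficulty to lie, since they require analysing the whole ``block graph'' (blocks as vertices, complete-adjacency in~$G$ as edges, plus the complete-bipartite metamour relations between blocks of one~$W_\ell$ coming from~\behref{it:complement-graph-is-metamour-graph} and~\behref{it:metamour-components-contain-metamours}) and then pushing everything through the single budget ``metamour-degree~$\le k$''. For~\behref{it:not-too-many-components} the idea is to count, for a block~$G_i$ adjacent to a block~$G_j$ of size $k-d$: a vertex of~$G_j$ is adjacent to all of~$G_i$, hence becomes a metamour of every vertex in a block adjacent to~$G_i$ but not completely adjacent to~$G_j$, so there are at most~$d$ such blocks and the neighbours of~$G_i$ occupy at most $d+2$ blocks (counting~$G_j$ and~$G_i$ itself). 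Turning this counting into a proof that no third component of~$M$ can exist — essentially, that a vertex in a third component would receive metamours from two different $W$'s and overshoot~$k$ — is the only place where the argument goes beyond a handful of applications of the crossing lemma, and keeping the edge/non-edge/metamour bookkeeping consistent there is the crux of the whole theorem.
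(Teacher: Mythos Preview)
Your crossing lemma is the right engine, and your arguments for case~(b) and for (ii), (iii), (iv) in case~(c) are correct and match the paper's. The paper packages the same ideas via a bipartition $V'\uplus V^\ast$ of the $M$-components rather than working with the individual $W_i$, but for those parts the difference is cosmetic.

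There are two genuine gaps. For~(i), your claim that a shortest $G$-path between two blocks $G_a,G_b\subseteq W_\ell$ has its interior outside~$W_\ell$---equivalently, that any two blocks of $G[W_\ell]$ share a common neighbour---is false. In $G=C_8$ with $W_1=\{v_1,v_3,v_5,v_7\}$ the blocks $\{v_1\}$ and $\{v_5\}$ are at distance~$4$, the shortest path $v_1v_2v_3v_4v_5$ has $v_3\in W_1$ in its interior, and these two blocks share no common neighbour at all. What is true, and what suffices, is only that each block shares a common neighbour with \emph{some} other block of~$G[W_\ell]$; you get this by following a path from $G_a$ toward any other block and stopping at the first return to~$W_\ell$.

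The second gap is the proof that $t=2$. Your sketch (``a vertex in a third component would receive metamours from two different~$W$'s'') cannot work as stated, since a vertex's metamours all lie in its own $M$-component by definition. The paper's argument is the real content of the theorem and uses the bipartition essentially. Choose $V'\uplus V^\ast$ so that $V'$ meets at least two $M$-components (possible exactly when $t\ge3$), and take a shortest $G$-path $(u_1,\dots,u_r)$ between vertices $v_1,v_2\in V'$ from different $M$-components. First show the path \emph{alternates} between~$V'$ and~$V^\ast$: a maximal same-side run followed by an other-side vertex can be shortcut using complete adjacency (your crossing lemma), contradicting minimality. Then the $V'$-vertices $u_1,u_3,\dots,u_r$ satisfy that each consecutive pair $u_{2i-1},u_{2i+1}$ is either in the same $G'$-block (hence in one $M$-component, since blocks lie inside $M$-components) or is a metamour pair via~$u_{2i}$ (hence again in one $M$-component). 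Chaining forces $v_1$ and $v_2$ into the same $M$-component, a contradiction. Finally, for~(vi) the vertex whose metamours you should count is one in a block adjacent to~$G_i$ \emph{other than}~$G_j$, so that all $k-d$ vertices of~$G_j$ enter the tally; counting from inside~$G_j$ yields only the trivial bound.
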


The complete and extensive proof of
Theorem~\ref{thm:kmmr:metamour-graph-not-connected}
can be found in Section~\ref{sec:proofs:k-mmr};
we again only sketch it at this point.

\begin{proof}[Sketch of Proof of Theorem~\ref{thm:kmmr:metamour-graph-not-connected}]
  Suppose the metamour graph of~$G$ is not connected. We split
  the graph respecting the connected components of the metamour graph of~$G$.

  First, we lift adjacency of some vertices of different components of
  this split to all vertices and likewise, metamour relations between
  vertices of different components. By this structural property, we
  show on the one hand that the maximum metamour-degree~$k$ implies
  that each of the components has at most~$k$ vertices. On the other
  hand, by investigating an alternating behavior on shortest paths
  between different components, we deduce that the metamour graph
  of~$G$ has exactly two connected components.

  Before gluing everything together, two more properties need to be
  derived: We show that within each of the components, the non-edges
  correspond exactly to metamour relations, and we bound the number of
  neighboring components of a component. This provides enough structure
  completing the proof.
\end{proof}

Let us discuss the three outcomes of
Theorem~\ref{thm:kmmr:metamour-graph-not-connected} in view of the
characterizations provided in Sections~\ref{sec:results:0mr}
to~\ref{sec:results:2mr}. 
Toward this end note that in case~\ref{it:general:general},
the graph $G$ is obtained 
by the generic construction.

For $0$-metamour-regular graphs due to Theorem~\ref{thm:0mr} there is no graph 
that is not obtained by the generic construction.
So only~\ref{it:general:general}
happens, except if the graph consists of only one vertex in which case
a degenerated~\ref{it:general:mm-connected} happens.
For $1$-metamour-regular graphs we know that 
there is only one graph not obtained by the generic construction by
Theorem~\ref{thm:1mr}.
This exceptional case is associated to~\ref{it:general:exceptional}, otherwise
we are in~\ref{it:general:general}.
Finally Theorem~\ref{thm:2mriff} states that beside the generic case
associated to~\ref{it:general:general},
there is only the class with graphs~$C_n$ and 
17 exceptional cases of $2$-metamour-regular graphs associated
to~\ref{it:general:mm-connected} and~\ref{it:general:exceptional}.

At last in this section, we bring and discuss the full formulation of a
statement mentioned earlier.

\begin{corollary}
  \label{cor:kmmr:3cc-implies-join}
    Let $k \ge 0$. 
    Let $G$ be a connected graph with maximum metamour-degree~$k$ and
    $M$ its metamour graph. Let $\set{M_1,\dots,M_t} = \CC{M}$. 
    If $t\ge3$, then we have
    \begin{equation*}
      G = \complement{M_1} \join \dots \join \complement{M_t}.
    \end{equation*}
\end{corollary}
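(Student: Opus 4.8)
The plan is to derive this corollary directly from Theorem~\ref{thm:kmmr:metamour-graph-not-connected} by a short case analysis. Since $t \ge 3 \ge 2$, the metamour graph~$M$ has more than one connected component, so $M$ is not connected; hence case~\ref{it:general:mm-connected} of Theorem~\ref{thm:kmmr:metamour-graph-not-connected} cannot occur. Next I would rule out case~\ref{it:general:exceptional}: that case explicitly asserts that ``the metamour graph~$M$ has exactly two connected components'', which contradicts $\abs{\CC{M}} = t \ge 3$. Since Theorem~\ref{thm:kmmr:metamour-graph-not-connected} guarantees that \emph{exactly one} of the three statements holds, we are forced into case~\ref{it:general:general}.

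It then remains only to read off the conclusion. Case~\ref{it:general:general} states that $G = \complement{M_1} \join \dots \join \complement{M_t}$ with $\set{M_1,\dots,M_t} = \CC{M}$ (and $t \ge 2$), which is precisely the claimed identity. So the proof is essentially a two-line invocation of the main structural theorem, with the only ``work'' being the observation that $t\ge 3$ simultaneously excludes the connected case and the exactly-two-components case.

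There is no real obstacle here: the entire content is packaged in Theorem~\ref{thm:kmmr:metamour-graph-not-connected}, and this corollary just names the generic case~\ref{it:general:general} as the only surviving possibility when there are at least three metamour components. If one wanted to be self-contained one could instead cite Proposition~\ref{pro:join-of-complements-is-kmr} or Proposition~\ref{pro:join-and-mm-graphs} for the equivalent reformulations, but for the statement as written, invoking case~\ref{it:general:general} of the theorem is both necessary and sufficient.
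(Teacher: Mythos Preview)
Your proof is correct and follows exactly the same approach as the paper: rule out cases~\ref{it:general:mm-connected} and~\ref{it:general:exceptional} of Theorem~\ref{thm:kmmr:metamour-graph-not-connected} using $t\ge 3$, leaving only case~\ref{it:general:general}, from which the conclusion is read off directly.
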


By this corollary every $k$-metamour-regular graph that 
has at least three connected components in its metamour graph
is a join of 
complements of $k$-regular graphs and therefore
can be built by the generic construction.
As a consequence, it is only possible that a $2$-metamour-regular graph is not 
obtained by the generic construction if its metamour graph has at most two 
connected components. 

\section{Proofs regarding foundations}
\label{sec:proof:foundations}

We start by proving Proposition~\ref{pro:mm-eq-complement:diameter} which relates
the metamour graph, the complement and the diameter of a graph.

\begin{proof}[Proof of Proposition~\ref{pro:mm-eq-complement:diameter}]
  Suppose~\ref{it:mm-eq-complement:diameter:M-eq-compl} holds.
  Let $v$ and $w$ be vertices in~$G$. If $v=w$, then their distance
  is~$0$. If $v$ and $w$ are adjacent in~$G$, then their distance
  is~$1$. Otherwise, there is no edge~$\edge{v}{w}$ in $G$.
  Therefore, this edge is in~$\complement{G}=M$, where $M$ is the metamour 
  graph of~$G$. This implies
  that~$v$ and $w$ are metamours. Thus, the distance between~$v$ and
  $w$ is~$2$. Consequently, no distance in~$G$ is larger than~$2$, which 
  implies that the diameter of $G$ is at most $2$.
  As a result, either the diameter of $G$ is exactly $2$, or it is at most $1$.
  If the diameter is equal to $1$, then
  all vertices of $G$ are pairwise adjacent and therefore $G = K_n$.
  Furthermore, there are at least two vertices at distance $1$ and hence $n\ge 2$.
  If the diameter is $0$, then $G = K_1$.

  Now suppose \ref{it:mm-eq-complement:diameter-2} holds. If $G=K_n$, then its 
  diameter is either equal to~$0$ if $n=1$ or equal to $1$ if $n \ge 2$.
  Therefore, in this case the diameter of $G$ is at most $2$.
  Now let~$\edge{v}{w}$ be an edge in $\complement{G}$. Then $v \ne w$ and
  these vertices are not adjacent in~$G$, so their distance is at
  least~$2$.  As the diameter is at most~$2$, the distance of~$v$
  and~$w$ is at most~$2$. Consequently their distance is exactly~$2$
  implying that they are metamours. So the edge $\edge{v}{w}$ is in $M$, and 
  hence the 
  complement of $G$ is a subgraph of the metamour graph of~$G$. Due to 
  Observation~\ref{observation:mm-subgraph-complement}, the metamour graph of 
  $G$ is a subgraph of the complement of~$G$, hence the metamour graph of $G$
  and $\complement{G}$ coincide,
  so we have shown \ref{it:mm-eq-complement:diameter:M-eq-compl}.
\end{proof}

Next we prove Proposition~\ref{pro:mm-eq-complement:degree} which relates the metamour 
graph, the complement, and the degree and metamour-degree of the vertices of a 
graph. 

\begin{proof}[Proof of Proposition~\ref{pro:mm-eq-complement:degree}]
  We use that
  for a graph~$G$ with $n$~vertices, the sum of the degrees of a vertex
  in~$G$ and in the complement~$\complement{G}$ always equals
  $n-1$.

  If \ref{it:mm-eq-complement:degree:M-eq-compl} holds, then
  the degree of a vertex in the metamour graph equals the degree in the
  complement~$\complement{G}$. This yields \ref{it:mm-eq-complement:degree}
  by using the statement at the beginning of this proof.
  
  If~\ref{it:mm-eq-complement:degree} holds,
  then the sum of the degree and the metamour-degree of a vertex equals
  the sum of the degree in $G$ and the degree in $\complement{G}$ 
  of this vertex by the statement
  at the beginning of this proof. Therefore, the metamour-degree of
  a vertex is equal to the degree in $\complement{G}$ of this vertex.
  Due to Observation~\ref{observation:mm-subgraph-complement},
  the metamour graph of~$G$ is a subgraph of~$\complement{G}$, and therefore  
  the metamour graph of~$G$ equals $\complement{G}$.
\end{proof}

Finally we prove Propositions~\ref{pro:join-and-mm-graphs} 
and~\ref{pro:regular-join-and-mm-graphs} that relate the metamour graph and 
joins.

\begin{proof}[Proof of Proposition~\ref{pro:join-and-mm-graphs}]
  We start by showing that \ref{it:join-and-mm-graphs:M-eq-compl} implies \ref{it:join-and-mm-graphs:join-mindestens-2}.
  In~$M$ there are no edges between its different connected components.
  Therefore,
  in the complement~$\complement{M}=G$, 
  there are all possible edges between the vertices of different components.
  This is equivalent to the definition of the join of graphs;
  the individual graphs in~$\CC{M}$ are complemented, and
  consequently \ref{it:join-and-mm-graphs:join-mindestens-2} follows.

  For proving that \ref{it:join-and-mm-graphs:join-mindestens-2} implies \ref{it:join-and-mm-graphs:join-of-2}, we simply set $G_1=\complement{M_1}$
  and $G_2=\complement{M_2} \join \dots \join \complement{M_t}$.
  By the definition of the operator~$\join$, \ref{it:join-and-mm-graphs:join-of-2}
  follows.
  
  We now show that \ref{it:join-and-mm-graphs:join-of-2} implies \ref{it:join-and-mm-graphs:M-eq-compl}.
  Every pair of vertices of $G_1$ has a common neighbor in 
  $G_2$. This implies that the vertices of this pair are metamours
  if and only if they are not adjacent in~$G_1$.
  The same holds for any pair of vertices of $G_2$ by symmetry
  or due to commutativity of the operator~$\join$.
  As a consequence of this and because $G = G_1 \join G_2$
  and the definition of the operator~$\join$,
  the metamour graph~$M$ and the complement of $G$ coincide.
  As every possible edge from a vertex of~$G_1$ to a vertex of~$G_2$
  exists in~$G$, this complement~$\complement{G}$ has at least
  two connected components, hence~$t\ge2$.
\end{proof}

\begin{proof}[Proof of Proposition~\ref{pro:regular-join-and-mm-graphs}]
  \ref{it:pro:regular-join-and-mm-graphs:M-eq-compl} and
  \ref{it:pro:regular-join-and-mm-graphs:diameter-2} are equivalent by
  Proposition~\ref{pro:mm-eq-complement:diameter}.
  
  If $\abs{\CC{M}}=t=1$, then \ref{it:pro:regular-join-and-mm-graphs:M-eq-compl} and
  \ref{it:pro:regular-join-and-mm-graphs:join} are trivially equivalent.
  If $\abs{\CC{M}}=t\ge2$, then this equivalence is part of
  Proposition~\ref{pro:join-and-mm-graphs}.

  Finally, the equivalence of~\ref{it:pro:regular-join-and-mm-graphs:M-eq-compl}
  and~\ref{it:pro:regular-join-and-mm-graphs:regular}
  follows from Proposition~\ref{pro:mm-eq-complement:degree}.
\end{proof}

At this point we have shown all results form Section~\ref{sec:formal-stuff} and 
therefore have laid the foundations of the subsequent results.

\section{Proofs regarding \texorpdfstring{$k$}{k}-metamour-regular graphs}
\label{sec:proofs:k-mmr}

Next we give the proofs of results
from Section~\ref{sec:results:kmr}
concerning graphs with maximum metamour-degree~$k$ and
$k$-metamour-regular graphs
that are valid for arbitrary $k \ge 0$. 

\begin{proof}[Proof of Proposition~\ref{pro:join-of-complements-is-kmr}]
  Let $v$ be a vertex of $G$. Then $v$ is in $\complement{M_i}$ and
  therefore in $M_i$ for some $i\in\set{1,\dots,t}$.
    
    Let $u \neq v$ be a vertex of $G$ and $j\in\set{1,\dots,t}$ such that
    $u$ is in $\complement{M_j}$.
    If $j \neq i$,
    then $u$ and $v$ are adjacent in $G$ by 
    construction. Therefore, they are not metamours.
    If $j = i$, then any vertex not in $M_i$, i.e., in any of 
    $\complement{M_1}$, \dots, $\complement{M_{i-1}}$, 
    $\complement{M_{i+1}}$, \dots, $\complement{M_t}$, is 
    a common neighbor of $u$ and $v$. Therefore, $u$ and $v$ are metamours 
    if and only if they are not adjacent in $\complement{M_i}$, and
    this is the case if and only if they are adjacent in $M_i$.

    Summarized, we have that $u$ is a metamour of $v$ if and only if
    $u$ is in $M_i$ and adjacent to~$v$ in~$M_i$.
    This yields that the metamour graph
    of~$G$ is~$M=M_1 \cup \dots \cup M_t$ which was to show.

    The $k$-metamour-regularity follows directly from
    Observation~\ref{obs:kmr-iff-metamourgraphkr}.
\end{proof}

\resetbeh
\begin{proof}[Proof of Theorem~\ref{thm:kmmr:metamour-graph-not-connected}]
  If the metamour graph~$M$ is connected, then
  we are in case~\ref{it:general:mm-connected} and nothing is to show.
  So suppose that the metamour graph is not connected.
  
  We partition the vertices of~$M$ (and therefore the vertices of~$G$)
  into two parts $V' \uplus V^\ast$ such that the vertex set of
  each connected component of~$M$ is a subset of either~$V'$ or $V^\ast$ and 
  such that neither~$V'$ nor~$V^\ast$ is empty,
  i.e., we partition by the connected components~$\CC{M}$
  of the metamour graph~$M$.
  As $M$ is not connected, it consists of at least two connected components,
  and therefore such a set-partition of the vertices of~$M$ is always possible.
  We now split up the graph~$G$ into the two
  subgraphs~$G' = G[V']$ and $G^\ast = G[V^\ast]$.
  
  Rephrased, we obtain $G'$ and $G^\ast$ from~$G$ by cutting it (by deleting edges)
  in two, but respecting and not cutting the connected components of its metamour graph~$M$.
  Note that the formulation is symmetric with respect to~$G'$ and $G^\ast$,
  therefore, we might switch the two without loss of generality during
  the proof. This also implies that in the statements of the following claims
  we may switch the roles of $G'$ and $G^\ast$.

    \begin{beh}\label{beh:kmmr:one-edge-implies-all-edges}
      If $G'_1\in\CC{G'}$ is adjacent in $G$ to
      $G^\ast_1\in\CC{G^\ast}$, then
      $G'_1$ is completely adjacent in $G$ to $G^\ast_1$.
    \end{beh}
    \begin{proofbeh}{beh:kmmr:one-edge-implies-all-edges}
      Suppose $u' \in V(G_1')$ and $v^\ast \in V(G^\ast_1)$ are adjacent
      in~$G$.
      Let $u \in V(G_1')$ and $v \in V(G^\ast_1)$.
      We have to prove that $\edge{u}{v} \in E(G)$.

        There is a path $\pi_{u',u}=(u_1, u_2, \dots, u_r)$ from 
        $u_1=u'$ to $u_r=u$ in $G'_1$ because $G'_1$ is connected.
        Furthermore, there is a path $\pi_{v^\ast,v}=(v_1, v_2, \dots, v_s)$
        from $v_1=v^\ast$ to $v_s=v$ in $G^\ast_1$ because $G^\ast_1$ is 
        connected.
        
        We use induction to prove 
        that $\edge{u_1}{v_\ell} \in E(G)$ for all $\ell\in\set{1,\dots,s}$.
        Indeed, this 
        is true for $\ell = 1$ by assumption. So assume $\edge{u_1}{v_\ell} \in E(G)$.
        We have $\edge{v_\ell}{v_{\ell+1}} \in E(G)$ because this is an edge of the 
        path $\pi_{v^\ast,v}$. If $\edge{u_1}{v_{\ell+1}} \not \in E(G)$, then 
        $u_1$ and $v_{\ell+1}$ are metamours. But $u_1$ has 
        all its metamours in $G'$, a contradiction. Hence, 
        $\edge{u_1}{v_{\ell+1}} \in E(G)$, which finishes the induction.
        In particular, we have proven that $\edge{u_1}{v_s} \in E(G)$.
        
        Now we prove that $\edge{u_\ell}{v_s} \in E(G)$ holds for all
        $\ell \in \set{1,\dots,r}$ by induction.
        This holds for $\ell = 1$ by the above. Now 
        assume $\edge{u_\ell}{v_s} \in E(G)$. We have $\edge{u_\ell}{u_{\ell+1}} \in 
        E(G)$ because this edge is a part of the path $\pi_{u',u}$. If 
        $\edge{u_{\ell+1}}{v_s} \not \in E(G)$, then 
        $u_{\ell+1}$ and $v_s$ are metamours, a contradiction since every 
        metamour of $u_{\ell+1}$ is in $G'$. Therefore, 
        $\edge{u_{\ell+1}}{v_s} \in E(G)$ holds and the induction is completed.
        As a result, we have $\edge{u}{v} = \edge{u_r}{v_s} \in E(G)$. 
    \end{proofbeh}

    \begin{beh}\label{beh:kmmr:one-graph-connected}
      Let $\CC{M} = \set{M_1,\dots,M_t}$.     
      If the graph $G'$ is connected, then
      $G = G' \join G^\ast = \complement{M_1} \join \dots \join \complement{M_t}$
      with $t \ge 2$.
    \end{beh}
    \begin{proofbeh}{beh:kmmr:one-graph-connected}
        The graph $G$ is connected, so every connected component of 
        $G^\ast$ is adjacent in $G$ to $G'$.
        By~\behref{beh:kmmr:one-edge-implies-all-edges} this 
        implies 
        that $G'$ is completely adjacent in $G$ to $G^\ast$, i.e.,
        all possible edges between $G'$ and $G^\ast$ exist. 
        Therefore, $G = G' \join G^\ast$ by the definition of the
        join of graphs.

        By Proposition~\ref{pro:join-and-mm-graphs}, the full decomposition
        into the components~$\CC{M}$ follows.
    \end{proofbeh}
    As a consequence of~\behref{beh:kmmr:one-graph-connected}, we are finished 
    with the proof in the case that $G[V(M_i)]$ is connected
    for some~$M_i\in\CC{M}$ because statement~\ref{it:general:general} follows
    by setting~$G'=G[V(M_i)]$.
    
    So from now on we consider the case that every $G[V(M_i)]$ with
    $M_i\in\CC{M}$ has at least two connected components. This is the
    set-up of statement~\ref{it:general:exceptional}.
    
    \begin{beh}\label{beh:kmmr:metamour-components-are-metamours}
      Suppose we are in the set-up of~\ref{it:general:exceptional}.
      Let $G'_1\in\CC{G'}$ and $\set{G^\ast_1,G^\ast_2} \subseteq \CC{G^\ast}$.
      If both $G^\ast_1$ and $G^\ast_2$ are adjacent in $G$ to $G'_1$,
      then every vertex of $G^\ast_1$ 
        is a metamour of every vertex of $G^\ast_2$.
    \end{beh}
    \begin{proofbeh}{beh:kmmr:metamour-components-are-metamours}
      As both $G^\ast_1$ and $G^\ast_2$ are adjacent in $G$ to $G'_1$,
      they are both completely adjacent in~$G$ to~$G'_1$ due 
      to~\behref{beh:kmmr:one-edge-implies-all-edges}.
      Furthermore, $G^\ast_1$ is not adjacent in~$G$ to $G^\ast_2$, i.e.,
        no vertex of $G^\ast_1$ is adjacent in $G$ to 
        any vertex of $G^\ast_2$, because they are in different connected 
        components of $G^\ast$. Hence, every vertex of $G^\ast_1$ 
        is a metamour of every vertex of~$G^\ast_2$.
    \end{proofbeh}

    \begin{beh}\label{beh:kmmr:connected-components-at-most-k-vertices}
      Suppose we are in the set-up of~\ref{it:general:exceptional}.
      Then every connected component of $G'$
      has at most $k$ vertices and this connected component's vertex set
      is a subset of the vertex set of one
        connected component of the metamour graph~$M$.
    \end{beh}
    \begin{proofbeh}{beh:kmmr:connected-components-at-most-k-vertices}
      Let $G'_1 \in \CC{G'}$.
      As $G'$ is not connected but the graph~$G$ is connected,
      there is a path $\pi$ from a vertex
      of $G'_1$ to some vertex in some connected component of $\CC{G'}$
      other than~$G'_1$. By construction of $G'$ and $G^\ast$, the path
      $\pi$ splits from start to end into vertices of $G'_1$, followed
      by vertices of some $G^\ast_1 \in \CC{G^\ast}$,
      followed by some vertices of $G'_2 \in \CC{G'}$, and remaining vertices.
      Therefore, we have connected components~$G'_2$ and $G^\ast_1$
      such that at least one vertex of $G'_1$ is connected
      to some vertex of $G^\ast_1$ and at least one 
      vertex of $G'_2$ is connected
      to some vertex of $G^\ast_1$.
      
      Then, due to~\behref{beh:kmmr:metamour-components-are-metamours} every vertex
        of 
        $G'_1$ is a metamour of every vertex of $G'_2$.
        From this, we now deduce two statements.
                
        First, if we assume that $G'_1$ contains at least $k+1$ vertices, 
        then every vertex of $G'_2$ has at least $k+1$ metamours, a 
        contradiction to $k$ being the maximum metamour-degree of~$G$.
        Therefore, $G'_1$ contains at most $k$ vertices.

        Second, every vertex of $G'_1$ is adjacent in the
        metamour graph~$M$ to every vertex of $G'_2$, so
        $G'_1$ is completely adjacent in~$M$ to~$G'_2$.
        Therefore,
        all these vertices are in the same connected component of the
        metamour graph~$M$.  In particular, this is true for the set
        of vertices of $G'_1$ as claimed, and so the proof is complete.
    \end{proofbeh}

    \begin{beh}\label{beh:kmmr:shortest-path-alternating}
      Suppose we are in the set-up of~\ref{it:general:exceptional}.
        Let $v_1$ and $v_2$ be 
        two vertices of different connected components of $G'$. 
        Then every shortest path 
        from $v_1$ to $v_2$ in $G$ consists of vertices
        alternating between $G'$ and $G^\ast$. 
    \end{beh}
    \begin{proofbeh}{beh:kmmr:shortest-path-alternating}
        Let $\set{G'_1,G'_2} \subseteq \CC{G'}$ such that
        $v_1 \in G'_1$ and $v_2 \in G'_2$.
        Let $\pi=(u_1, u_2, \dots, u_r)$ be a shortest path from
        $u_1=v_1$ to $u_r=v_2$ in $G$.
        Note that $u_1$ and $u_r$ are both from $G'$ but from different connected 
        components. Hence, $\pi$ consists of at least two vertices from 
        $G'$ and at least one vertex from $G^\ast$. 
        
        Assume that the vertices of $\pi$ are not
        alternating between $G'$ and
        $G^\ast$. Then,
        without loss of generality (by reversing the enumeration
        of the vertices in the path~$\pi$),
        there exist indices~$i<j$
        and graphs $\set{\widetilde{G},\widehat{G}} = \set{G', G^\ast}$
        with the following properties:
        Every vertex of the subpath
        $\pi_{i,j}=(u_i, u_{i+1}, \dots, u_j)$
        of~$\pi$ is of $\widetilde{G}$, and
        the vertex $u_{j+1}$ exists and is in $\widehat{G}$.

        As $\pi_{i,j}$ is a path, all of its vertices
        are of the same connected component of $\widetilde{G}$.
        As $u_j$ is adjacent to $u_{j+1}$ and
        due to~\behref{beh:kmmr:one-edge-implies-all-edges},
        the vertex~$u_{j+1}$ is adjacent to every vertex of $\pi_{i,j}$,
        in particular, adjacent to~$u_i$. But then
        $u_1$, \dots, $u_i$, $u_{j+1}$, \dots, $u_r$ is a 
        shorter path between~$v_1$ and~$v_2$, a contradiction.
        Hence, our initial assumption 
        was wrong, and the vertices of $\pi$ are alternating
        between $G'$ and~$G^\ast$.
    \end{proofbeh}

    \begin{beh}\label{beh:kmmr:not-connected-metamour-graph-2-components}
      Suppose we are in the set-up of~\ref{it:general:exceptional}.
        Then the metamour graph~$M$ has exactly two connected components.
    \end{beh}
    \begin{proofbeh}{beh:kmmr:not-connected-metamour-graph-2-components}
        The metamour graph~$M$ is not connected, therefore it has at least 
        two connected components.
        Assume it has at least three components. 
        Then, without loss of generality (by switching $G'$ and $G^\ast$),
        the graph $G'$ contains 
        vertices of at least two different connected components
        of~$M$.
        Let $v_1$ and $v_2$ be two vertices of $G'$ that are in 
        different connected components of~$M$. If follows 
        from~\behref{beh:kmmr:connected-components-at-most-k-vertices} that 
        $v_1$ and $v_2$ are in different connected components of $G'$. 
        Let $\set{G'_1,G'_2} \subseteq \CC{G'}$ such that
        $v_1 \in G'_1$ and $v_2 \in G'_2$.
        Let $\pi=(u_1,\dots,u_r)$ be a shortest path from $u_1=v_1$ to $u_r=v_2$ in $G$.
        
        Now consider two vertices $u_{2i - 1}$ and $u_{2i+1}$ for $i \ge 1$ 
        of $\pi$. Both $u_{2i - 1}$ and $u_{2i+1}$ are from $G'$ 
        because $\pi$ 
        consists of alternating vertices from $G'$ and $G^\ast$ due 
        to~\behref{beh:kmmr:shortest-path-alternating} and $u_1$ is from 
        $G'$. 
        If both $u_{2i-1}$ and $u_{2i+1}$ are from the same connected 
        component of $G'$, then they are in the same connected component of the 
        metamour graph~$M$
        by~\behref{beh:kmmr:connected-components-at-most-k-vertices}.
        If $u_{2i-1}$ and $u_{2i+1}$ are in different connected components of 
        $G'$, then they are metamours because they have the common neighbor 
        $u_{2i}$ and they are not adjacent. Therefore, they are in the same 
        connected component of $M$ as well.
        Hence, in any case $u_{2i - 1}$ and $u_{2i+1}$ are in the same connected 
        component of~$M$.
        By induction this implies that $u_1=v_1$ is in the same connected 
        component of~$M$ as $u_r = v_2$, a contradiction to  
        $v_1$ and $v_2$ being from different connected components of~$M$. 
        Hence, our assumption was wrong and the metamour graph consists of exactly two 
        connected components.
    \end{proofbeh}

    \begin{beh}\label{beh:kmmr:connected-component-is-regular}
      Suppose we are in the set-up of~\ref{it:general:exceptional}.
      Then every connected component $G'_1$ of $G'$
      satisfies $\complement{G'_1} = M[V(G'_1)]$.
        If $G$ is $k$-metamour-regular, then the
        connected component is a regular graph.
    \end{beh}
    \begin{proofbeh}{beh:kmmr:connected-component-is-regular}    
      Let $G'_1$ be a connected component of $G'$.
      As $G$ is connected, there is an edge from
      a vertex $v_1$ of $G'_1$ to $G^\ast$ and this is
        extended to every vertex of $G'_1$
        by~\behref{beh:kmmr:one-edge-implies-all-edges}.
        Therefore, two different vertices of $G'_1$ are metamours if and only 
        if they are not adjacent in $G'$. Restricting this
        to the subgraph~$G'_1$ yields the first statement.

      Furthermore, by construction of
        $G'$ and $G^\ast$, every metamour of~$v_1$ is in $G'$.
        Let $v'$ be such a metamour, and suppose that $v'$
        is not in $G'_1$.
        The vertices~$v_1$ and $v'$ have a common neighbor $u$ that has to be 
        in $G^\ast$ as $v_1$ and $v'$
        are in different connected components of~$G'$.
        The vertex~$u$ is completely adjacent in $G$ to
        $G'_1$ because 
        of~\behref{beh:kmmr:one-edge-implies-all-edges}.
        Hence, $v'$ is a
        metamour of every vertex of $G'_1$.
        As a consequence, every vertex of $G'_1$ has the same number 
        of metamours outside of $G'_1$,
        i.e., in $G'$ but not in $G'_1$.
        If no such pair of vertices $v_1$ of $G'_1$ and
        $v'$ not of $G'_1$ that are metamours exists,
        then every vertex of $G'_1$
        still has the same number of metamours outside of $G'_1$,
        namely zero.

        Now let us assume that $G$ is $k$-metamour-regular.
        As every vertex of $G'_1$ has the same number of metamours outside of 
        $G'_1$, this implies that every vertex of $G'_1$ must also have the 
        same number of metamours inside $G'_1$.
        We combine this with the results of first paragraph and conclude that
        every vertex of $G'_1$ 
        is adjacent to the same number of vertices of $G'_1$, and 
        hence $G'_1$ is a regular graph.
    \end{proofbeh}

    \begin{beh}\label{beh:kmmr:not-adjacent-to-too-many-components}
      Suppose we are in the set-up of~\ref{it:general:exceptional}.
        If a connected component $G'_1\in\CC{G'}$ is adjacent in~$G$ to
        a connected component $G^\ast_1\in\CC{G^\ast}$
        consisting of $k-d$ vertices for some $d \ge 0$,
        then the neighbors (in~$G$) of vertices of $G'_1$ that are in $G^\ast$
        are in at most $d+2$ connected components of $G^\ast$
        (including $G^\ast_1$).
    \end{beh}
    \begin{proofbeh}{beh:kmmr:not-adjacent-to-too-many-components}
      Let $\mathcal{G}^\ast \subseteq \CC{G^\ast}$ be such that a
      connected components of $G^\ast$ is in $\mathcal{G}^\ast$
      if and only if it is adjacent in~$G$ to~$G'_1$.
      $G^\ast_1$ consists of $k-d$ vertices and
      there is some vertex $v'$ of $G'_1$ that is 
      adjacent to some vertex of $G^\ast_1$.
      
      We have to prove that $\abs{\mathcal{G}^\ast} \le d + 2$
      in order to finish the proof.
      So assume $\abs{\mathcal{G}^\ast} > d + 2$. Let $v^\ast$
      be a vertex of some connected component $G^\ast_2\in\mathcal{G}^\ast$
      other than $G^\ast_1$.
        Then $v^\ast$ is adjacent to $v'$ due 
        to~\behref{beh:kmmr:one-edge-implies-all-edges}. Because 
        of~\behref{beh:kmmr:metamour-components-are-metamours}, every vertex
        in any connected component in $\mathcal{G}^\ast$
        except $G^\ast_2$ is a metamour of $v^\ast$.
        The component $G^\ast_1$ contains $k-d$ vertices and
        there are at least $d+1$ other components each containing
        at least one vertex. In total, $v^\ast$
        has at least $(k - d) + (d+1) = k+1$ 
        metamours, a contradiction to $k$ being the maximum metamour-degree of~$G$.
        Therefore, our assumption was wrong and
        $\abs{\mathcal{G}^\ast} \le d + 2$ holds.
    \end{proofbeh}
    
    Now we are able to collect everything we have proven so far and finish the 
    proof of statement~\ref{it:general:exceptional}.
    Due 
    to~\behref{beh:kmmr:not-connected-metamour-graph-2-components}, the metamour 
    graph~$M$ of $G$ consists of exactly two connected components, and
    consequently the connected 
    components of $G^M$ coincide with the union of the
    connected components of $G'$ and $G^\ast$.
    Then~\behref{beh:kmmr:connected-components-at-most-k-vertices}
    implies~\ref{it:at-most-k-vert}, 
    \behref{beh:kmmr:connected-component-is-regular}
    implies~\ref{it:kmr-implies-regular-conn-comp}, 
    \behref{beh:kmmr:connected-component-is-regular}
    implies~\ref{it:complement-graph-is-metamour-graph}, 
    \behref{beh:kmmr:one-edge-implies-all-edges}
    implies~\ref{it:one-egdge-implies-all-edges}, 
    \behref{beh:kmmr:metamour-components-are-metamours}
    implies~\ref{it:metamour-components-contain-metamours}
    and~\behref{beh:kmmr:not-adjacent-to-too-many-components}
    implies~\ref{it:not-too-many-components}.
    This completes the proof.
\end{proof}

\begin{proof}[Proof of Corollary~\ref{cor:kmmr:3cc-implies-join}]
  As the metamour graph consists of at least~$3$ connected components,
  we cannot land in the cases~\ref{it:general:mm-connected}
  and~\ref{it:general:exceptional} of
  Theorem~\ref{thm:kmmr:metamour-graph-not-connected}. But then the
  statement of the corollary follows from~\ref{it:general:general}.
\end{proof}

Now we have proven everything we need to know about
graphs with maximum metamour-degree~$k$ and
$k$-regular-metamour graphs for general~$k$ and can use this knowledge
to derive the results we need 
in order to characterize all $k$-regular-metamour graphs for $k\in\set{0,1,2}$.

\section{Proofs regarding \texorpdfstring{$0$}{0}-metamour-regular graphs}
\label{sec:proofs:0-mmr}

We are now ready to prove all results concerning $0$-metamour-regular graphs 
from Section~\ref{sec:results:0mr}. 
In order to do so we first need the following lemma.

\begin{lemma}
\label{lem:noMetamour_adjacentToAll}
Let $G$ be a connected graph. If a vertex has no metamour, then it is 
adjacent 
to all other vertices of $G$.
\end{lemma}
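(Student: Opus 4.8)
The plan is to argue by contradiction using a shortest path. Suppose $v$ is a vertex of~$G$ with no metamour, and assume toward a contradiction that $v$ is not adjacent to some vertex~$w$ of~$G$. Since $G$ is connected, there is a path from $v$ to~$w$; I would fix a \emph{shortest} such path $\pi=(u_0,u_1,\dots,u_\ell)$ with $u_0=v$ and $u_\ell=w$. As $v$ and $w$ are not adjacent, we have $\ell\ge2$, so in particular the vertex $u_2$ exists.

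Next I would verify that $u_2$ is a metamour of~$v$. The vertices of a path are pairwise distinct, so $u_2\ne v$. Moreover $u_2$ is not adjacent to~$v$: otherwise $(v,u_2,u_3,\dots,u_\ell)$ would be a shorter path from $v$ to~$w$, contradicting the minimality of~$\pi$. Finally $u_1$ is a common neighbor of $v$ and~$u_2$. Hence the distance of $v$ and $u_2$ in~$G$ is exactly~$2$, so $u_2$ is a metamour of~$v$, contradicting the hypothesis that $v$ has no metamour. Consequently $v$ is adjacent to all other vertices of~$G$, which is the claim.

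There is no real obstacle here; the one point requiring a little care is that the argument needs a vertex at distance \emph{exactly}~$2$ from~$v$ (not merely at distance at least~$2$), and choosing a shortest path to~$w$ is precisely what guarantees the existence of such a vertex.
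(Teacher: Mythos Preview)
Your proof is correct. Both your argument and the paper's proceed by contradiction, assuming $v$ is not adjacent to some vertex $w$ and then exhibiting a metamour of~$v$ along a path from $v$ to~$w$. The difference lies in how the path is chosen. The paper picks the unique $v$--$w$ path in a spanning tree and then argues by induction along that path that $v$ must be adjacent to each $u_i$ (since if $\edge{v}{u_i}\in E(G)$ but $\edge{v}{u_{i+1}}\notin E(G)$, then $u_i$ makes $u_{i+1}$ a metamour of~$v$), eventually reaching the contradiction $\edge{v}{w}\in E(G)$. Your choice of a \emph{shortest} path is more economical: it forces $u_2$ to be at distance exactly~$2$ from~$v$ right away, so no induction is needed. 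Both arguments are elementary and valid; yours is the more direct one.
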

\begin{proof}
    Let $n$ be the number of vertices of $G$. 
    Clearly the statement is true for 
    $n = 1$ as no other vertices are present and for $n = 2$ as the two vertices are adjacent due to connectedness.
    So let $n \ge 3$, and let $v$ be a vertex of $G$ that 
    has no metamour.
    
    Assume there is a vertex $w \neq v \in V(G)$ such that 
    $\edge{v}{w} \not \in E(G)$. 
    $G$ has a spanning tree $T$ because $G$ is connected.
    Let $v = u_1$, $u_2$, \dots, $u_r = w$ be the vertices on the unique 
    path from $v$ to $w$ in $T$.
    Then $\edge{u_i}{u_{i+1}} \in E(G)$ for all $i \in \set{1,\dots,r-1}$, so 
    due 
    to our assumption $r \ge 3$ holds. 
    In particular, $\edge{u_1}{u_2} \in E(G)$.
    If $\edge{u_1}{u_3} \not \in E(G)$, then both $u_1$ and $u_3$ are 
    adjacent 
    to $u_2$, but not adjacent to each other and therefore would be metamours. 
    But $u_1 = v$ does not have a metamour, hence 
    $\edge{u_1}{u_3} \in E(G)$.
    By induction $\edge{u_1}{u_i} \in E(G)$ for all $i\in\set{1,\dots,r}$ and 
    thus 
    $\edge{v}{w} = \edge{u_1}{u_r} \in E(G)$, a contradiction.
\end{proof}

Now we are able to prove Theorem~\ref{thm:0mr} that provides a characterization 
of $0$-metamour-regular graphs.

\begin{proof}[Proof of Theorem~\ref{thm:0mr}]
If $G$ is $0$-metamour-regular then every vertex of $G$ has no metamour and 
hence $G = K_n$ due to 
Lemma~\ref{lem:noMetamour_adjacentToAll}. Furthermore, $K_n$ is $0$-metamour-regular
as every vertex is adjacent to all other vertices.
\end{proof}

Next we prove the corollaries which yield an alternative 
characterization of $0$-metamour-regular graphs and allow to count 
$0$-metamour-regular graphs.

\begin{proof}[Proof of Corollary~\ref{cor:0mmr-complement}]
    Due to Theorem~\ref{thm:0mr}, a connected graph with $n$ vertices is 
    $0$-metamour-regular if and only if is equal to $K_n$. This is the case if 
    and only if its complement has no edges. In this case the complement 
    also equals the metamour graph.
\end{proof}

\begin{proof}[Proof of Corollary~\ref{cor:0mmr-counting}]
  This is an immediate and easy consequence of the
  characterization provided by Theorem~\ref{thm:0mr}.
\end{proof}

\section{Proofs regarding \texorpdfstring{$1$}{1}-metamour-regular graphs}
\label{sec:proofs:1-mmr}
In this section we present the proofs of the results from 
Section~\ref{sec:results:1mr}. They lead to a 
characterization of $1$-metamour-regular graphs.

\begin{proof}[Proof of Proposition~\ref{pro:1mr_evenNrVertices_perfectMatching}]
    Whenever a vertex $v \in V(G)$ is a metamour of a vertex $w \neq v 
    \in V(G)$ then also $w$ is a metamour of $v$. 
    Therefore, supposing that $v$ has exactly one metamour,
    so has~$w$ and
    the vertices $v$ and $w$ form a pair such that 
    the two vertices of the pair are metamours of each other.
    This also leads to an edge from $v$ to $w$ in the metamour graph of~$G$.
    As every vertex has at most one metamour,
    the edge from $v$ to $w$ is isolated in the metamour graph of~$G$, so $v$ and $w$ have no other 
adjacent
    vertices in the metamour graph of~$G$. Therefore, the edges of the metamour 
    graph form a matching, which 
    yields~\ref{pro:1mr_evenNrVertices_perfectMatching:it:atMostOneMatching}.

    Suppose now additionally that $G$ is $1$-metamour-regular.
    Then we can partition the vertices of $G$ into pairs of
    metamours. 
    Hence, $n$ is 
    even and the edges of the metamour graph form a perfect matching, 
    so~\ref{pro:1mr_evenNrVertices_perfectMatching:it:excatlyOnePerfectMatching}
     holds.
\end{proof}

For proving Theorem~\ref{thm:1mr}, we need some auxiliary results.
We start by showing that the graphs mentioned in the theorem
are indeed $1$-metamour-regular.

\begin{proposition}\label{proposition:P4-1mmr}
    The graph~$P_4$ depicted in Figure~\ref{fig:P4} is
    $1$-metamour-regular. 
\end{proposition}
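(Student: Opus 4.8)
The plan is to verify the definition of metamour directly on the four-vertex path, since $P_4$ is small enough that a case check by hand suffices. First I would fix notation matching Figure~\ref{fig:P4}: write $P_4$ as the path with vertex set $\set{a,b,c,d}$ and edge set $\set{\edge{a}{b},\edge{b}{c},\edge{c}{d}}$. Then, for each of the four vertices, I would determine which vertices lie at distance exactly~$2$, recalling that a metamour of a vertex is precisely a vertex at distance~$2$ from it (Definition~\ref{def:mm-degree}).

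Next I would carry out the four checks, exploiting the mirror symmetry of $P_4$ that exchanges $a\leftrightarrow d$ and $b\leftrightarrow c$ to halve the work. The vertex $a$ has $b$ as its only neighbor, and the neighbors of $b$ are $a$ and $c$; since $d$ is at distance~$3$ from $a$, the only vertex at distance~$2$ from $a$ is $c$. Hence $a$ has exactly one metamour, namely $c$, and by the symmetry $d$ has exactly one metamour, namely $b$. For $b$, the vertices at distance at most~$1$ are $a$, $b$, $c$, so the only candidate for a metamour is $d$, which is indeed at distance~$2$ (it is adjacent to the common neighbor $c$); thus $b$ has exactly one metamour, namely $d$, and by the symmetry $c$ has exactly one metamour, namely $a$. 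Therefore every vertex of $P_4$ has metamour-degree~$1$, so $P_4$ is $1$-metamour-regular; this also confirms that the metamour edges are exactly the pairs $\set{a,c}$ and $\set{b,d}$ drawn in Figure~\ref{fig:P4}.

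There is no genuine obstacle here; the only points requiring a little care are not to miscount vertices at distance~$1$ or~$3$ as metamours, and to state the metamour pairs explicitly so that the picture in Figure~\ref{fig:P4} is justified. I would keep the argument to a few lines and invoke the mirror symmetry rather than repeat the two symmetric computations.
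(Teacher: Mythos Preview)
Your proposal is correct and takes essentially the same approach as the paper, which simply states ``This is checked easily.'' You have merely spelled out the direct verification that the paper leaves to the reader.
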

\begin{proof}
    This is checked easily.
\end{proof}

The following proposition is slightly more general than needed in the proof of
Theorem~\ref{thm:1mr} and will be used later on.

\begin{proposition}\label{proposition:Kn-minus-M-le1mmr}
  Let $n \ge 3$. In the graph $G = K_n - \mu$ with a
  matching~$\mu$ of $K_n$, every vertex has at most one metamour.
\end{proposition}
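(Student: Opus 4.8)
The plan is to read off the result directly from Observation~\ref{observation:mm-subgraph-complement}, which states that the metamour graph of any graph is a subgraph of its complement. First I would identify the complement of $G = K_n - \mu$. Since $G$ has vertex set $V(K_n)$ and edge set $E(K_n)\setminus\mu$, its complement $\complement{G}$ has vertex set $V(K_n)$ and edge set $E(K_n)\setminus\bigl(E(K_n)\setminus\mu\bigr) = \mu$. In other words, $\complement{G}$ is exactly the graph on $n$ vertices whose edges are the edges of the matching $\mu$.

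Next I would use that $\mu$ is a matching: by definition no vertex of $K_n$ is incident to more than one edge of $\mu$, so $\complement{G}$ has maximum degree at most $1$. By Observation~\ref{observation:mm-subgraph-complement}, the metamour graph $M$ of $G$ is a subgraph of $\complement{G}$, hence $M$ also has maximum degree at most $1$. Translating this back through Definition~\ref{def:mm-degree}, the metamour-degree of every vertex of $G$ is at most $1$, i.e.\ every vertex has at most one metamour, which is the assertion. (Equivalently, and without invoking the complement: if $v$ and $w$ are metamours then $v\ne w$ and $\edge{v}{w}\notin E(G)$, which forces $\edge{v}{w}\in\mu$; since $\mu$ is a matching this determines $w$ uniquely from $v$, so $v$ has at most one metamour.)

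There is no real obstacle here; the statement is a one-line consequence of the fact that deleting a matching removes at most one edge at each vertex, so each vertex of $G$ has at most one non-neighbour and therefore at most one candidate for a metamour. The hypothesis $n\ge 3$ plays no role in this direction and is only present for consistency with the surrounding statements.
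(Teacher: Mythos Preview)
Your argument is correct and is in fact more economical than the paper's. You simply observe that $\complement{G}$ has edge set~$\mu$, hence maximum degree at most~$1$, and then invoke Observation~\ref{observation:mm-subgraph-complement} to bound the metamour-degree. This handles the proposition in one stroke.

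The paper takes a slightly different route: it does a case distinction on whether a given vertex~$v$ is incident to an edge of~$\mu$ or not, and in the incident case it explicitly identifies the unique non-neighbour~$v'$ and, using $n\ge3$ to guarantee a common neighbour, shows that $v$ and~$v'$ are actually metamours. Thus the paper's proof establishes a sharper conclusion along the way---namely that a matched vertex has \emph{exactly} one metamour---and this is then quoted verbatim in the proof of the next proposition (Proposition~\ref{proposition:Kn-minus-M-1mmr}) to conclude that $K_n-\mu$ with a perfect matching is $1$-metamour-regular. Your approach, by contrast, gives only the upper bound and makes no use of $n\ge3$; you correctly note that this hypothesis is irrelevant for the ``at most'' direction. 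If you were to adopt your proof in the paper, the subsequent proposition would need its own argument for the lower bound rather than a pointer back to this proof.
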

\begin{proof}
    Let $v$ be an arbitrary vertex of $G$. Suppose $v$ is not incident to any
    edge in~$\mu$, then $v$ is adjacent to all other vertices. Thus, $v$ has
    no metamour.

    Now suppose that $v$ is incident to some edge in~$\mu$, and let
    the vertex~$v'$ be the other vertex incident to this edge.
    Then 
    clearly $\edge{v}{v'} \not \in E(G)$,
    so both $v$ as well as $v'$ have to be adjacent to 
    all other vertices of $G$ by construction of~$G$. 
    Due to the assumption $n \ge 3$, there is 
    at least one other vertex besides $v$ and $v'$, and this vertex is a common
    neighbor of them. Hence, $v$ and $v'$ are metamours of each other.
    Both $v$ and $v'$ do not have any other metamour because they are adjacent 
    to all other vertices. As a result, $v$ has exactly one metamour.
\end{proof}

\begin{proposition}\label{proposition:Kn-minus-M-1mmr}
    Let $n \ge 4$ be even. The graph $G = K_n - \mu$ with a
    perfect matching~$\mu$ of $K_n$ is $1$-metamour-regular.
\end{proposition}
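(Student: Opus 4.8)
The plan is to derive this immediately from Proposition~\ref{proposition:Kn-minus-M-le1mmr}. That proposition already guarantees that in $G = K_n - \mu$ every vertex has \emph{at most} one metamour, so the only thing left to establish is that, when $\mu$ is a \emph{perfect} matching, every vertex has \emph{at least} one metamour as well.

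First I would fix an arbitrary vertex $v$ of $G$. Since $\mu$ is a perfect matching of $K_n$, the vertex $v$ is incident to exactly one edge of~$\mu$; call the other endpoint $v'$. Then $\edge{v}{v'} \notin E(G)$ by construction, while both $v$ and $v'$ are adjacent in $G$ to every other vertex (no other edge of $\mu$ touches them). Because $n \ge 4$, there is some vertex distinct from $v$ and $v'$, and such a vertex is a common neighbor of $v$ and $v'$; hence $v$ and $v'$ are at distance~$2$, i.e.\ metamours. Combined with the upper bound from Proposition~\ref{proposition:Kn-minus-M-le1mmr}, this shows $v$ has exactly one metamour, and since $v$ was arbitrary, $G$ is $1$-metamour-regular.

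There is no real obstacle here; the statement is essentially a specialization of Proposition~\ref{proposition:Kn-minus-M-le1mmr} to the perfect-matching case, and the only point worth spelling out is the existence of a common neighbor, which is exactly where the hypothesis $n \ge 4$ (rather than merely $n \ge 3$, plus evenness to make a perfect matching possible) is used.
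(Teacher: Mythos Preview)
Your proof is correct and essentially identical to the paper's approach: the paper simply observes that since $\mu$ is perfect, every vertex is incident to an edge of~$\mu$, and then refers to the second case in the proof of Proposition~\ref{proposition:Kn-minus-M-le1mmr}, which already establishes that such a vertex has exactly one metamour. You separate this into the upper bound (from the proposition's statement) and a lower bound (re-doing that second case explicitly), but the content is the same.
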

\begin{proof}
    As the matching~$\mu$ is perfect, every vertex~$v$ of $G$ is incident to
    one edge in~$\mu$. Thus, by the proof of Proposition~\ref{proposition:Kn-minus-M-le1mmr},
    every~$v$ has exactly one metamour.
\end{proof}

We are now ready for proving Theorem~\ref{thm:1mr}.

\begin{proof}[Proof of Theorem~\ref{thm:1mr}]
  The one direction of the equivalence follows directly from 
  Proposition~\ref{proposition:P4-1mmr} and 
  Proposition~\ref{proposition:Kn-minus-M-1mmr}, so only the other direction is 
  left to prove.

  Suppose we have a graph~$G$ with $n$ vertices that is
  $1$-metamour-regular.
  Due to 
  Proposition~\ref{pro:1mr_evenNrVertices_perfectMatching}\ref{pro:1mr_evenNrVertices_perfectMatching:it:excatlyOnePerfectMatching},
   $n$ is even,
  and the set of edges of the metamour graph of $G$ forms a perfect matching.
  In particular, each connected component of the metamour graph consists of two 
  adjacent vertices.
  
  If the metamour graph is connected, then it consists of only two adjacent vertices 
  and $n=2$. 
  This can be ruled out easily, so we have~$n\ge4$ and the metamour graph is 
  not connected.
  Now we can use Theorem~\ref{thm:kmmr:metamour-graph-not-connected} and 
  see that one of the two cases~\ref{it:general:general} 
  and~\ref{it:general:exceptional} applies.
  
  In the first case~\ref{it:general:general} we have $G = \complement{M_1} 
  \join \dots \join \complement{M_t}$ 
  with
  $n=\abs{M_1}+\dots+\abs{M_t}$ for some~$t 
  \ge 
  2$,
  where $M_i$ is a connected $1$-regular graph
  for all $i\in\set{1,\dots,t}$. 
  The only connected $1$-regular graph is $P_2$, therefore $\abs{V(M_i)} = 2$ and 
  $M_i = P_2$ for all 
  $i\in\set{1,\dots,t}$. Hence, we have
  $G = \complement{P_2} 
  \join \dots \join \complement{P_2}$, which means nothing else than 
  $G = K_n - \mu$ for a
  perfect matching~$\mu$ of $K_n$.
  
  In the second case~\ref{it:general:exceptional} the metamour graph of $G$ 
  consists of two connected 
  components, so the metamour graph consists of $n=4$ vertices with two edges 
  that form a perfect matching. 
  It is 
  easy to see that $G = P_4$ or
  $G = C_4 = K_4 - \mu$ for some perfect matching $\mu$ of $K_4$ are the only two 
  possibilities in this case.
  
  As a result, we obtain in any case $G=P_4$ or $G = K_n - \mu$ for a
  perfect matching~$\mu$ of $K_n$, which is the desired result.
\end{proof}

To finish this section we prove the three corollaries of Theorem~\ref{thm:1mr}.

\begin{proof}[Proof of Corollary~\ref{cor:1mmr-complement}]
    Due to the characterization of $1$-metamour-regular graphs of 
    Theorem~\ref{thm:1mr}, we know that a connected graph with $n \ge 5$ 
    vertices is $1$-metamour-regular if and only if it is equal to $K_n - \mu$ 
    for some perfect matching $\mu$ of $K_n$. This is the case if and only if 
    the complement is the graph induced by~$\mu$. Furthermore, a graph is induced by a 
    perfect matching if and only if it is $1$-regular.
    To summarize, a connected graph with $n \ge 5$ vertices is 
    $1$-metamour-regular if and 
    only if its complement is a $1$-regular graph. 
    In this case the complement also equals the metamour graph, which implies 
    the desired result.
\end{proof}

\begin{proof}[Proof of Corollary~\ref{cor:1mmr-counting}]
  We use the characterization provided by Theorem~\ref{thm:1mr}.
  So let us consider $1$-metamour-regular graphs.
  Such a graph has at least~$n\ge4$ vertices, and $n$ is even.
  Every perfect matching~$\mu$ of~$K_n$ results in the same unlabeled graph~$K_n-\mu$;
  this brings to account $1$. For $n=4$, there is additionally
  the graph~$P_4$.
  In total, this gives the claimed numbers.
\end{proof}

\begin{proof}[Proof of Corollary~\ref{cor:1mmr-bijection:nnc}]
  Let $G$ be an unlabeled graph with~$n$ pairs of vertices that each are
  metamours. We first construct a pair~$(\lambda_1+\dots+\lambda_t,s)$,
  where $\lambda_1+\dots+\lambda_t$ is
  a partition of~$n$ with $\lambda_i\ge2$ for all $i\in\set{1,\dots,t}$
  and $s$ is a non-negative integer bounded by~$r_\lambda$ which is defined to be
  the number of $i\in\set{1,\dots,t}$ with $\lambda_i=2$.
  
  Let $\set{G_1, \dots, G_t} = \CC{G}$, set
  $\lambda_i=\abs{V(G_i)}/2$ for all $i\in\set{1,\dots,t}$, and let us assume
  that $\lambda_1 \ge \dots \ge \lambda_t$. Then
  $n=\lambda_1+\dots+\lambda_t$, so this is a partition of~$n$.
  As there is no graph~$G_i$ with only $1$~metamour-pair, 
  $\lambda_i\ge2$ for all $i\in\set{1,\dots,t}$.
  We define~$s$ to be the number of $i\in\set{1,\dots,t}$ with $G_i=P_4$.
  We clearly have $s \le r_\lambda$.

  Conversely, let a pair~$(\lambda_1+\dots+\lambda_t,s)$ as above be given.
  For every $i\in\set{1,\dots,t}$ with $\lambda_i\ge3$ there is
  exactly one choice for a $1$-metamour-regular graph~$G_i$ with $2\lambda_i$ 
  vertices by
  Theorem~\ref{thm:1mr}. Now consider parts~$2$ of $\lambda_1+\dots+\lambda_t$.
  We choose any (the graphs are unlabeled)
  $s$ indices and set $G_i=P_4$.
  Then we set~$G_i=C_4$ for the remaining $r_\lambda-s$ indices.
  The graph~$G=G_1 \cup \dots \cup G_t$ is then fully determined.
  Thus, we have a found a bijective correspondence.

  We still need to related our partition of~$n$ to the partition of~$n+2$
  of Corollary~\ref{cor:1mmr-bijection:nnc}. A partition of~$n+2$ is
  either $n+2=(n+2)$, $n\ge1$, in which case no additional part~$2$ appears,
  or $n+2=\lambda_1+\dots+\lambda_t+2$ for a partition
  $n=\lambda_1+\dots+\lambda_t$. Here one additional part~$2$ appears.
  Therefore, every pair~$(\lambda_1+\dots+\lambda_t,s)$ from above
  maps bijectively to a partition $\lambda_1+\dots+\lambda_t+2$ of $n+2$
  together with a marker of one of the $r_\lambda+1$ parts~$2$ in this partition
  that is uniquely determined by $s$ (by some fixed rule that is not needed
  to be specified explicitly).
  This completes the proof of Corollary~\ref{cor:1mmr-bijection:nnc}.
\end{proof}

\section{Proofs regarding graphs with maximum 
metamour-degree~\texorpdfstring{$1$}{1}}
\label{sec:proofs:max-1mm}

Next we prove the results of Section~\ref{sec:results:at-most-1m} on graphs 
with 
maximum metamour-degree~$1$. 
We start with the proof of the characterization of these graphs.

\begin{proof}[Proof of Theorem~\ref{thm:max1mm}]
  It is easy to see that in the graphs $K_1$ and $K_2$ no vertex has any
  metamour, so the condition that each vertex has at most one metamour
  is satisfied.
  Furthermore, 
  by Propositions~\ref{proposition:P4-1mmr}
  and~\ref{proposition:Kn-minus-M-le1mmr},
  every vertex has indeed at most one metamour in the remaining specified 
  graphs. Therefore, one
  direction of the equivalence is proven, and we can focus on the other
  direction.

  So, let $G$ be a graph in which every vertex has at most one metamour.
  Due to Theorems~\ref{thm:0mr} and~\ref{thm:1mr}, it is enough to restrict
  ourselves to graphs~$G$, where  at
    least one vertex of $G$ has no metamour and at least one vertex of $G$ has 
    exactly one metamour.
  We will show that~$n\ge3$ and that $G = K_n - \mu$ for some matching 
    $\mu$ that is not perfect and contains at least one edge.

    Let $V_0 \subseteq V(G)$ and $V_1 \subseteq V(G)$ be the set of vertices of 
    $G$ that have no and exactly one metamour, respectively, and
    let $v \in V_0$.
    Due to Lemma~\ref{lem:noMetamour_adjacentToAll}, every vertex in $V_0$,
    in particular $v$, is adjacent to all other vertices.
    Furthermore, by
    Proposition~\ref{pro:1mr_evenNrVertices_perfectMatching}, 
    the vertices in $V_1$ induce a matching~$\mu$
    in both the metamour graph and the complement of $G$.
    This matching~$\mu$ contains at least one edge because $V_1$ is not empty,
    and $\mu$ is not perfect because $V_0$ is not empty.
    Furthermore, 
    this implies that $V_1$ contains at least two vertices and in
    total that $n \ge 3$. 

    Let $w$ and $w'$ be two vertices in $V_1$ that are not metamours. Since 
    $v$ is a common neighbor of both $w$ and $w'$, this 
    implies that $\edge{w}{w'} \in E(G)$. Hence, all possible edges except those 
    in $\mu$ are present in $G$ and therefore $G = K_n - \mu$.
\end{proof}

Next we prove the two corollaries of Theorem~\ref{thm:max1mm}.

\begin{proof}[Proof of Corollary~\ref{cor:max1mm-complement}]
    Due to Theorem~\ref{thm:max1mm}, in a connected graph $G$ with $n\ge 5$ 
    vertices every vertex has at most one metamour if and only if $G = K_n - 
    \mu$ for some matching $\mu$ of $K_n$. 
    This is the case if and only if 
    the complement is the graph induced by~$\mu$. Furthermore, a graph is 
    induced by a 
    matching if and only if it has maximum degree $1$.
    To summarize, a connected graph with $n \ge 5$ vertices has maximum 
    metamour-degree~$1$ if and 
    only if its complement is a graph with maximum degree $1$.
    In this case the complement also equals the metamour graph, which implies 
    the desired result.
\end{proof}

\begin{proof}[Proof of Corollary~\ref{cor:max1mm-counting}]
  We use the characterization provided by Theorem~\ref{thm:max1mm}.
  So let us consider graphs with maximum metamour-degree~$1$.
  For $n\in\set{1,2}$, we only have $K_1$ and $K_2$, so $m_{\le1}(n)=1$
  in both cases.

  So let $n\ge3$.
  Every perfect matching~$\mu$ of~$K_n$ having the same number of edges
  results in the same graph~$K_n-\mu$. A matching can contain at most
  $\floor{n/2}$ edges and each choice in $\set{0,\dots,\floor{n/2}}$
  for the number of edges is possible.
  This brings to account $\floor{n/2}+1$. For $n=4$, there is additionally
  the graph~$P_4$.
  In total, this gives the claimed numbers.
\end{proof}

\section{Proofs regarding \texorpdfstring{$2$}{2}-metamour-regular graphs}
\label{sec:proofs:2-mmr}

This section is devoted to the proofs concerning $2$-metamour-regular graphs 
from Section~\ref{sec:results:2mr}. It is a long way to obtain the final 
characterization of $2$-metamour-regular graphs of Theorem~\ref{thm:2mriff}, so 
we have outsourced the key parts of the proof into several lemmas and 
propositions. 

For the proofs of Lemma~\ref{lem:2mmr:contain-Cn}, Lemma~\ref{lem:2mmr:deg-two-options}, 
Lemma~\ref{lem:2mmr:deg-2-and-nm3} and 
Proposition~\ref{pro:2mmr:metamour-graph-not-cn} we provide many figures. 
Every proof consists of a series of steps, and in  each 
of the steps vertices and 
edges of a graph are analyzed: It is determined whether edges
are present or not and which vertices are metamours of each other. The figures 
of the actual situations show subgraphs of the
graph (and additional assumptions) in the following way: Between
two vertices there is either
an edge \tikz{
  \node [vertex] (v0) at (0,0) {};
  \node [vertex] (v1) at (1,0) {};
  \draw [edge] (v0) -- (v1); }
or a non-edge \tikz{
  \node [vertex] (v0) at (0,0) {};
  \node [vertex] (v1) at (1,0) {};
  \draw [nonedge] (v0) -- (v1); }
or nothing \tikz{
  \node [vertex] (v0) at (0,0) {};
  \node [vertex] (v1) at (1,0) {}; }
drawn. 
If nothing is
drawn, then it is not (yet) clear whether the edge is present or not.
A metamour relation \tikz{
  \node [vertex] (v0) at (0,0) {};
  \node [vertex] (v1) at (1,0) {};
  \draw [nonedge] (v0) -- (v1);
  \draw [mmedge] (v0) -- (v1); }
might be indicated at a non-edge. 

Note that we frequently use the particular graphs defined by
Figures~\ref{fig:graphs-6}, \ref{fig:graphs-7} and \ref{fig:graphs-8}.

\subsection{Graphs with connected metamour graph}

The proof of the characterization of $2$-metamour-regular graphs in Theorem~\ref{thm:2mriff}
is split into two main parts, which represent whether
the metamour graph of $G$ is connected or not in order to apply the 
corresponding case of Theorem~\ref{thm:kmmr:metamour-graph-not-connected}. 

If the metamour graph of a graph with $n$ vertices is connected, then 
according to 
Observation~\ref{obs:2mr_metamour-graph-consists-of-cycles} 
the metamour graph equals $C_n$.
Here we make a further distinction between graphs that do and that do not 
contain a cycle of length $n$ as a subgraph.
First, we characterize all $2$-metamour-regular graphs whose metamour graph is 
connected and that do not contain a cycle of length~$n$. 

\begin{lemma}\label{lem:2mmr:contain-Cn}
  Let $G$ be a connected $2$-metamour-regular graph with $n$ vertices
  \begin{itemize}
  \item whose metamour graph equals the~$C_n$,
  \item that is not a tree, and
  \item that does not contain a cycle of length~$n$.
\end{itemize}
  Then
  \begin{equation*}
    G \in \set{
          \specialH{6}{a},
          \specialH{6}{b},
          \specialH{7}{a}}.
  \end{equation*}
\end{lemma}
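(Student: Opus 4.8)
The plan is to bootstrap from a longest cycle of $G$. Since $G$ is not a tree it contains a cycle, so fix a longest cycle $\gamma=(u_1,\dots,u_\ell,u_1)$; then $\ell\ge 3$, and since $G$ contains no cycle of length~$n$ we have $\ell\le n-1$, so the set $X=V(G)\setminus V(\gamma)$ is nonempty. As $G$ is connected, some vertex of $X$ is adjacent to a vertex of $\gamma$. Throughout, indices of the $u_i$ are read modulo~$\ell$, and one uses repeatedly that $G$ is $2$-metamour-regular and that its metamour graph is the single cycle $C_n$, which in particular has no triangle since $n\ge\ell+1\ge 4$.

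The first task is to pin down how $X$ attaches to $\gamma$, using the maximality of $\gamma$ together with the metamour-degree~$2$ condition. (1)~No vertex of $X$ is adjacent to two consecutive vertices of $\gamma$, since otherwise rerouting $\gamma$ through that vertex produces a longer cycle. (2)~If $x\in X$ is adjacent to $u_i$, then by~(1) the vertices $u_{i-1}$ and $u_{i+1}$ are non-adjacent to $x$, so $u_i$ witnesses that both of them are metamours of $x$; hence these are \emph{exactly} the two metamours of $x$. Consequently $u_{i-1}$ and $u_{i+1}$ are not metamours of each other (no triangle in $C_n$), and since they share the neighbour $u_i$ they are adjacent in $G$: every attachment point of $\gamma$ forces the chord $\edge{u_{i-1}}{u_{i+1}}$. (3)~If some $x\in X$ had two neighbours $u_i,u_j$ on $\gamma$, applying~(2) at both forces $\set{u_{i-1},u_{i+1}}=\set{u_{j-1},u_{j+1}}$, which together with~(1) is either impossible or (for $\ell=4$, via the chord of~(2)) creates a cycle longer than $\gamma$; so each vertex of $X$ has at most one neighbour on $\gamma$, and no two vertices of $X$ attach at the same $u_i$ (that already gives a vertex three metamours). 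A little more of the same — a leaf forces its unique neighbour to have degree~$3$ and to sit in a triangle, and maximality of $\gamma$ forbids longer pendant chains and cycles inside $X$ — upgrades this to: $X$ is an independent set of pendant vertices, attached to pairwise distinct vertices of $\gamma$.

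At this point $G$ is determined by $\gamma$, the set of attachment points (each carrying its forced chord), and possibly further chords. The plan is now to count metamours along $\gamma$: the cycle edges give $u_i$ the tentative metamours $u_{i\pm2}$, each chord and each pendant rearranges only a bounded number of these relations, and requiring every $u_i$ to end up with exactly two metamours while the metamour graph is the \emph{connected} cycle $C_n$ forces $\ell$ to be small — one obtains $\ell\le 5$ and $\abs{X}\le\ell$, hence $n\le 7$, with the sporadic value $n=5$ eliminated by a direct check. It then remains to run through the finitely many configurations with $\ell\in\set{3,4,5}$: for each placement of the pendants and each admissible set of chords one writes down all forced edges, non-edges and metamour relations and tests whether the metamour graph is really $C_n$. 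The configurations that survive are exactly the triangle with a pendant at each vertex ($\specialH{6}{a}$), and two $5$-cycle-based graphs — $\specialH{6}{b}$ carrying one pendant and its forced chord, and $\specialH{7}{a}$ carrying two; all other configurations fail $2$-metamour-regularity or connectedness of the metamour graph (the Hamiltonian graph $\specialH{7}{b}$ is excluded already by the hypothesis that $G$ has no cycle of length~$n$, which is why it does not appear here).

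The main obstacle is the volume of bounded but delicate bookkeeping. Both the step that bounds $\ell$ and $n$ and the concluding enumeration require keeping precise track of which vertex pairs are adjacent, non-adjacent, or metamours, and a single newly forced chord can flip several distance-$2$ relations at once, so one must be careful neither to overcount nor to miss a metamour. This is exactly the situation the figures-with-edges/non-edges/metamour-relations discipline introduced at the start of Section~\ref{sec:proofs:2-mmr} is designed for; beyond the longest-cycle trick and the metamour-degree-$2$ constraint there is no further idea, only careful case work.
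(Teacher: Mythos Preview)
Your approach is essentially the paper's: take a longest cycle~$\gamma$, analyze how $X=V(G)\setminus V(\gamma)$ attaches, bound $\ell$, and enumerate. The paper organizes the work as a chain of claims treating $r=3,4,5,6$ one at a time rather than via your global structural lemma on~$X$, but the strategy and the final case split are the same.

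One caveat worth flagging: the justification ``maximality of~$\gamma$ forbids longer pendant chains and cycles inside~$X$'' is off-target. Maximality alone does not preclude a chain $u_i,x,y$ with $y\in X$ having no neighbour on~$\gamma$, nor a triangle inside~$X$; what actually rules these out is that such a $y$ becomes a metamour of $u_i$, and chasing the $2$-metamour constraint together with the $C_n$ structure of the metamour graph then forces a contradiction (a third metamour somewhere, or a short cycle in the metamour graph). So the tool there is metamour bookkeeping, not cycle maximality---exactly ``more of the same'' as in your steps (1)--(3), just attributed to the wrong reason. Likewise the bound $\ell\le 5$ does not fall out as lightly as you suggest; the paper spends Claims~E, G and~I on it, and ruling out $\ell=6$ in particular needs genuine work. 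Neither point breaks your outline---you correctly identify the bookkeeping as the obstacle---but these two spots are where most of the paper's detailed case analysis actually lives, so expect them to be substantially longer than your sketch indicates.
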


\resetbeh
\begin{proof}
  As $G$ is not a tree,
  let $\gamma = (v_1, v_2, \dots, v_r, v_1)$, $v_i\in V(G)$ for
  $i \in \set{1,\dots,r}$, be a longest cycle in $G$.
  In all the figures accompanying the proof, the longest cycle is marked
  by \tikz[baseline]{\draw [cycleedge] (0,0) arc (150:30:0.5);}.  
  By assumption, we have $r < n$.
  For proving the lemma, we have to show that $G \in \set{
      \specialH{6}{a},
      \specialH{6}{b},
      \specialH{7}{a}}$. 
 
  As a cycle has length at least~$3$, we have $r\ge3$ for the length
  of the cycle~$\gamma$. As we also have $n>r$, we may assume~$n \ge 4$.
  
  We start by showing the following claims.

  \begin{beh}\label{beh:2mmr:vertex-adjacent-consec}
    A vertex~$u$ in~$G$ that is not in the cycle~$\gamma$
    is adjacent to at most one vertex in~$\gamma$.

    If $u$ is adjacent to a vertex~$v$ in~$\gamma$, then $u$ is a metamour
    of each neighbor of~$v$ in~$\gamma$.
  \end{beh}
  \begin{proofbeh}{beh:2mmr:vertex-adjacent-consec}
    Let $u \in V(G)$ be a vertex not in~$\gamma$.
    We assume that $u$ is adjacent to
    $v_1$ (without loss of generality by renumbering)
    and some $v_j$ in the cycle~$\gamma$.
    We first show that $v_1$ and $v_j$ are not
    two consecutive vertices in~$\gamma$.
    So let us assume that they are, i.e., $j=2$
    (see Figure~\ref{fig:2mmr:vertex-adjacent-consec}(a)) or
    $j=r$ which works analogously).
    Then $(v_1, u, v_2, \dots, v_r, v_1)$ would be a longer cycle
    which is a contradiction to $\gamma$ being a longest cycle.
    \begin{figure}
      \centering
      \begin{tikzpicture}
        \begin{scope}[shift={(0,0)}]
          \node at (1,-2.5) {(a) consecutive vertices};
          
          \node [vertex, label=left:{$v_1$}] (v0) at (20:1) {};
          \node [vertex, label=left:{$v_2$}] (v1) at (-20:1) {};
          \node [vertex, label=left:{$v_r$}] (vm1) at (60:1) {};
          \node [vertex, label=left:{$v_3$}] (v2) at (-60:1) {};
          \node [vertex, label=right:{$u$}] (u) at (2,0) {};
          
          \draw [edge] (vm1) -- (v0) -- (v1) -- (v2);
          \draw [edge] (v0) -- (u) -- (v1);

          \begin{pgfonlayer}{back}
          \draw [cycleedge] (vm1) -- (v0) -- (u) -- (v1) -- (v2);
          \draw [cycleedge] (vm1) arc (60:300:1);
          \end{pgfonlayer}
        \end{scope}
        \begin{scope}[shift={(5,0)}]
          \node at (0,-2.5) {(b) $r=4$};
          
          \node [vertex, label=above:{$v_1$}] (v1) at (1,0) {};
          \node [vertex, label=below:{$v_2$}] (v2) at (0,-1) {};
          \node [vertex, label=left:{$v_3$}] (v3) at (-1,0) {};
          \node [vertex, label=above:{$v_4$}] (v4) at (0,1) {};
          \node [vertex, label=right:{$u$}] (w) at (2,0) {};

          \draw [edge] (w) -- (v1) -- (v2) -- (v3) -- (v4) -- (v1);
          \draw [nonedge] (w) to [bend right] (v4);
          \draw [nonedge] (w) to [bend left] (v2);
          \draw [edge] (v2) -- (v4);

          \draw [edge] plot[smooth, tension=1.5]
          coordinates {(w) (0,-1.75) (v3)};

          \begin{pgfonlayer}{back}
          \draw [cycleedge] (w) -- (v1) -- (v2) -- (v4) -- (v3);
          \draw [cycleedge] plot[smooth, tension=1.5]
          coordinates {(w) (0,-1.75) (v3)};
          \end{pgfonlayer}
        \end{scope}
      \end{tikzpicture}
      
      \caption{Subgraphs of the situations
        in the proof of~\behref{beh:2mmr:vertex-adjacent-consec}}
      \label{fig:2mmr:vertex-adjacent-consec}
    \end{figure}
    Hence, $v_1$ and $v_j$ are not consecutive vertices
    in~$\gamma$. Then $r\ge4$ as there need to be at least one vertex
    between~$v_1$ and~$v_j$ on the cycle on each side.

    If $r=4$, then $v_j=v_3$ and we are in the situation shown in
    Figure~\ref{fig:2mmr:vertex-adjacent-consec}(b). There,
    $(u,v_1,v_2,v_4,v_3,u)$ is a $5$-cycle which contradicts that
    the longest cycle is of length~$4$.
    Therefore, $r=4$ cannot hold.

    If $r>4$, then $u$ is a metamour of $v_{2}$, $v_{r}$, $v_{j-1}$ and 
    $v_{j+1}$, 
    because it has a common neighbor ($v_1$ or $v_j$) with these vertices and 
    is not 
    adjacent to them. 
    At
    least one of~$v_{j-1}$ and~$v_{j+1}$ is different from~$v_2$ and $v_r$,
    so $\abs{\set{v_2, v_r, v_{j-1}, v_{j+1}}} \geq 3$. This contradicts
    the $2$-metamour-regularity of~$G$.

    Therefore, we have shown that $u$ is adjacent to at most one
    vertex in~$\gamma$. Now suppose $u$ is adjacent to a vertex~$v$ in~$\gamma$.
    Then $u$ is not adjacent to any neighbor of~$v$ in~$\gamma$ and
    therefore a metamour of every such neighbor.
  \end{proofbeh}

  \begin{beh}\label{beh:2mmr:exist-w}
    There exists a vertex $w$ in $G$ but not in~$\gamma$ that is adjacent to
    (without loss of generality)~$v_1$, but not to any other~$v_j$,
    $j\neq1$, in $\gamma$.
  \end{beh}
  \begin{proofbeh}{beh:2mmr:exist-w}
    As $r<n$, there exists a vertex~$w'$ not in the cycle~$\gamma$. The
    graph~$G$ is connected, so there is a path from a vertex
    of~$\gamma$ to~$w'$. Therefore, there is also a vertex~$w$ not in $\gamma$
    which is adjacent to a vertex~$v_i$.
    By renumbering, we can assume without loss of generality that $i=1$.

    As the vertex~$w$ is adjacent to $v_1$, $w$ is not adjacent to
    any other~$v_j$ by~\behref{beh:2mmr:vertex-adjacent-consec}.
  \end{proofbeh}

  \begin{figure}
    \centering
    \begin{tikzpicture}
      \begin{scope}[shift={(0,0)}]
        \node [vertex, label=above:{$v_1$}] (v1) at (0:1) {};
        \node [vertex, label=below:{$v_2$}] (v2) at (-60:1) {};
        \node [vertex, label=above:{$v_{r}$}] (vr) at (60:1) {};
        \node [vertex, label=right:{$w$}] (w) at (2,0) {};
        
        \draw [edge] (w) -- (v1) -- (v2);
        \draw [edge] (v1) -- (vr);
        \draw [nonedge] (w) to [bend right] (vr);
        \draw [mmedge] (w) to [bend right] (vr);
        \draw [nonedge] (w) to [bend left] (v2);
        \draw [mmedge] (w) to [bend left] (v2);

        \begin{pgfonlayer}{back}
        \draw [cycleedge] (v2) -- (v1) -- (vr) arc (60:300:1);
        \end{pgfonlayer}
      \end{scope}
    \end{tikzpicture}

    \caption{Subgraph of the situation between~\behref{beh:2mmr:exist-w}
      and~\behref{beh:2mmr:edge-v2-vr}}
    \label{fig:2mmr:exist-w}
  \end{figure}
  At this point, we assume to have a vertex~$w$ as
  in~\behref{beh:2mmr:exist-w}; the situation is shown in
  Figure~\ref{fig:2mmr:exist-w}.

  \begin{beh}\label{beh:2mmr:edge-v2-vr}
    The graph~$G$ contains the edge $\edge{v_2}{v_r}$.
  \end{beh}
  \begin{proofbeh}{beh:2mmr:edge-v2-vr}
    \begin{figure}
      \centering
      \begin{tikzpicture}
        \begin{scope}[shift={(0,0)}]
          \node [vertex, label=above:{$v_1$}] (v1) at (0:1) {};
          \node [vertex, label=below:{$v_2$}] (v2) at (-60:1) {};
          \node [vertex, label=above:{$v_{r}$}] (vr) at (60:1) {};
          \node [vertex, label=right:{$w$}] (w) at (2,0) {};

          \draw [edge] (w) -- (v1) -- (v2);
          \draw [edge] (v1) -- (vr);
          \draw [nonedge] (w) to [bend right] (vr);
          \draw [mmedge] (w) to [bend right] (vr);
          \draw [nonedge] (w) to [bend left] (v2);
          \draw [mmedge] (w) to [bend left] (v2);
          \draw [nonedge] (v2) -- (vr);
          \draw [mmedge] (v2) -- (vr);

          \begin{pgfonlayer}{back}
          \draw [cycleedge] (v2) -- (v1) -- (vr) arc (60:300:1);
          \end{pgfonlayer}
        \end{scope}
      \end{tikzpicture}

      \caption{Subgraph of the situation
        in the proof of~\behref{beh:2mmr:edge-v2-vr}}
      \label{fig:2mmr:edge-v2-vr}
    \end{figure}
    Assume that there is no edge between
    $v_2$ and $v_r$; see Figure~\ref{fig:2mmr:edge-v2-vr}.
    Then $v_2$ and $v_r$ are metamours of each other,
    and consequently $(w, v_2, v_r, w)$ forms a $3$-cycle
    in the metamour graph of~$G$. This contradicts that
    the metamour graph of~$G$ is $C_n$ and~$n\ge4$.
  \end{proofbeh}

  \begin{figure}
    \centering
    \begin{tikzpicture}
      \begin{scope}[shift={(0,0)}]
        \node [vertex, label=above:{$v_1$}] (v1) at (0:1) {};
        \node [vertex, label=below:{$v_2$}] (v2) at (-60:1) {};
        \node [vertex, label=above:{$v_{r}$}] (vr) at (60:1) {};
        \node [vertex, label=right:{$w$}] (w) at (2,0) {};
        
        \draw [edge] (w) -- (v1) -- (v2);
        \draw [edge] (v1) -- (vr);
        \draw [edge] (v2) -- (vr);
        \draw [nonedge] (w) to [bend right] (vr);
        \draw [mmedge] (w) to [bend right] (vr);
        \draw [nonedge] (w) to [bend left] (v2);
        \draw [mmedge] (w) to [bend left] (v2);

        \begin{pgfonlayer}{back}
        \draw [cycleedge] (v2) -- (v1) -- (vr) arc (60:300:1);
        \end{pgfonlayer}
      \end{scope}
    \end{tikzpicture}

    \caption{Subgraph of the situation between~\behref{beh:2mmr:edge-v2-vr}
      and~\behref{beh:2mmr:r-is-3}}
    \label{fig:2mmr:after-edge-v2-vr}
  \end{figure}
  At this point, we have the situation shown in
  Figure~\ref{fig:2mmr:after-edge-v2-vr}.
  In the next steps we will rule out possible values of $r$.

  \begin{beh}\label{beh:2mmr:r-is-3}
    If $r=3$, then $G=\specialH{6}{a}$.
  \end{beh}
  
  \begin{proofbeh}{beh:2mmr:r-is-3}
    Our initial situation is shown in Figure~\ref{fig:2mmr:r-is-3}(a).
    
    Suppose there is an additional vertex~$v_1'$ of~$G$ adjacent to~$v_1$;
    see Figure~\ref{fig:2mmr:r-is-3}(b).
    Then by~\behref{beh:2mmr:vertex-adjacent-consec}, $v_1'$ has
    metamours~$v_2$ and $v_3$. Therefore, $(w,v_2,v_1',v_3,w)$
    is a $4$-cycle in the metamour graph of~$G$. As this cycle
    does not cover~$v_1$, we have a contradiction
    to the metamour graph being the single cycle~$C_n$ for $n>r=3$.
    Therefore, there is no additional vertex adjacent to~$v_1$.

    \begin{figure}
      \centering
      \begin{tikzpicture}
        \begin{scope}[shift={(0,0)}]
          \node at (0,-2) {(a) initial situation};

          \node [vertex, label=above:{$v_1$}] (v1) at (0:1) {};
          \node [vertex, label=below:{$v_2$}] (v2) at (-120:1) {};
          \node [vertex, label=above:{$v_3$}] (v3) at (120:1) {};
          \node [vertex, label=right:{$w$}] (w) at (2,0) {};
          
          \draw [edge] (w) -- (v1) -- (v2) -- (v3) -- (v1);
          \draw [nonedge] (w) to [bend right] (v3);
          \draw [mmedge] (w) to [bend right] (v3);
          \draw [nonedge] (w) to [bend left] (v2);
          \draw [mmedge] (w) to [bend left] (v2);

          \begin{pgfonlayer}{back}
          \draw [cycleedge] (v2) -- (v1) -- (v3) -- (v2);
          \end{pgfonlayer}
        \end{scope}
        \begin{scope}[shift={(5,0)}]
          \node at (0,-2) {(b) additional~$v_1'$};

          \node [vertex, label=above:{$v_1$}] (v1) at (0:1) {};
          \node [vertex, label=above:{$v_1'$}] (v1p) at (0,0) {};
          \node [vertex, label=below:{$v_2$}] (v2) at (-120:1) {};
          \node [vertex, label=above:{$v_3$}] (v3) at (120:1) {};
          \node [vertex, label=right:{$w$}] (w) at (2,0) {};
          
          \draw [edge] (w) -- (v1) -- (v2) -- (v3) -- (v1);
          \draw [nonedge] (w) to [bend right] (v3);
          \draw [mmedge] (w) to [bend right] (v3);
          \draw [nonedge] (w) to [bend left] (v2);
          \draw [mmedge] (w) to [bend left] (v2);
          
          \draw [edge] (v1) -- (v1p);
          \draw [nonedge] (v2) -- (v1p) -- (v3);
          \draw [mmedge] (v2) -- (v1p) -- (v3);

          \begin{pgfonlayer}{back}
          \draw [cycleedge] (v2) -- (v1) -- (v3) -- (v2);
          \end{pgfonlayer}
        \end{scope}
        \begin{scope}[shift={(0,-5)}]
          \node at (0,-3) {(c) additional~$v_3'$};
          
          \node [vertex, label=above:{$v_1$}] (v1) at (0:1) {};
          \node [vertex, label=below:{$v_2$}] (v2) at (-120:1) {};
          \node [vertex, label=above:{$v_3$}] (v3) at (120:1) {};
          \node [vertex, label=right:{$w$}] (w) at (2,0) {};
          \node [vertex, label=above:{$v_3'$}] (v3p) at (120:2) {};
          
          \draw [edge] (w) -- (v1) -- (v2) -- (v3) -- (v1);
          \draw [nonedge] (w) to [bend right] (v3);
          \draw [mmedge] (w) to [bend right] (v3);
          \draw [nonedge] (w) to [bend left] (v2);
          \draw [mmedge] (w) to [bend left] (v2);

          \draw [edge] (v3) -- (v3p);
          \draw [nonedge] (v3p) to [bend right] (v2);
          \draw [mmedge] (v3p) to [bend right] (v2);
          \draw [nonedge] (v3p) to [bend left] (v1);
          \draw [mmedge] (v3p) to [bend left] (v1);

          \begin{pgfonlayer}{back}
          \draw [cycleedge] (v2) -- (v1) -- (v3) -- (v2);
          \end{pgfonlayer}
        \end{scope}
        \begin{scope}[shift={(5,-5)}]
          \node at (0,-3) {(d) additional~$v_2'$ and $v_3'$};

          \node [vertex, label=above:{$v_1$}] (v1) at (0:1) {};
          \node [vertex, label=below:{$v_2$}] (v2) at (-120:1) {};
          \node [vertex, label=above:{$v_3$}] (v3) at (120:1) {};
          \node [vertex, label=right:{$w$}] (w) at (2,0) {};
          \node [vertex, label=below:{$v_2'$}] (v2p) at (-120:2) {};
          \node [vertex, label=above:{$v_3'$}] (v3p) at (120:2) {};
          
          \draw [edge] (w) -- (v1) -- (v2) -- (v3) -- (v1);
          \draw [nonedge] (w) to [bend right] (v3);
          \draw [mmedge] (w) to [bend right] (v3);
          \draw [nonedge] (w) to [bend left] (v2);
          \draw [mmedge] (w) to [bend left] (v2);

          \draw [edge] (v3) -- (v3p);
          \draw [nonedge] (v3p) to [bend right] (v2);
          \draw [mmedge] (v3p) to [bend right] (v2);
          \draw [nonedge] (v3p) to [bend left] (v1);
          \draw [mmedge] (v3p) to [bend left] (v1);
          \draw [edge] (v2) -- (v2p);
          \draw [nonedge] (v2p) to [bend right] (v1);
          \draw [mmedge] (v2p) to [bend right] (v1);
          \draw [nonedge] (v2p) to [bend left] (v3);
          \draw [mmedge] (v2p) to [bend left] (v3);

          \begin{pgfonlayer}{back}
          \draw [cycleedge] (v2) -- (v1) -- (v3) -- (v2);
          \end{pgfonlayer}
        \end{scope}
      \end{tikzpicture}
      
      \caption{Subgraph of the situation
        in the proof of~\behref{beh:2mmr:r-is-3}}
      \label{fig:2mmr:r-is-3}
    \end{figure}

    At this point, we know that $w$ is a metamour of both~$v_2$ and~$v_3$;
    see again Figure~\ref{fig:2mmr:r-is-3}(a).
    We now look for the second metamour of~$v_2$ and~$v_3$, respectively.
    As we ruled out an additional vertex adjacent to~$v_1$,
    there need to be an additional vertex adjacent to~$v_2$ or to~$v_3$.
    
    Without loss of generality (by symmetry),
    suppose there is an additional vertex~$v_3'$ of~$G$ adjacent to~$v_3$;
    see Figure~\ref{fig:2mmr:r-is-3}(c).
    Then by~\behref{beh:2mmr:vertex-adjacent-consec}, $v_3'$ has
    metamours~$v_1$ and~$v_2$. Therefore, these two vertices
    are the two metamours of~$v_3'$.
    There cannot be an additional vertex~$v_3''$ of~$G$ adjacent to~$v_3$, 
    because due to the same arguments as for $v_3'$ this vertex would be a
    metamour of $v_2$, hence $v_2$ would have three metamours, and this 
    contradicts 
    the $2$-metamour-regularity of~$G$.
    
    Suppose there is no additional vertex adjacent to~$v_2$. Then, in
    order to close the metamour cycle containing
    $(v_1, v_3', v_2, w, v_3)$, there needs to be a path from $v_3'$ to
    $w$. This implies the existence of a cycle longer than~$r=3$,
    therefore cannot be. So there is an additional vertex adjacent
    to~$v_2'$; the situation is shown in Figure~\ref{fig:2mmr:r-is-3}(d).

    By the same argument as above, $v_1$ and $v_3$ are the two
    metamours of~$v_2'$. Therefore, $(w,v_2,v_3',v_1,v_2',v_3,w)$
    is a $6$-cycle in the metamour graph of~$G$ and $n=6$.
    This is the graph $G=\specialH{6}{a}$. We can only add additional edges
    between the vertices~$w$, $v_2'$ and $v_3'$, but this
    would lead to a cycle of length larger than~$3$. So there are no
    other edges present.
    There cannot be any additional vertex because this vertex would need to
    be in a different cycle in the metamour graph, contradicting
    that the metamour graph is the~$C_n$.
  \end{proofbeh}

  As a consequence of \behref{beh:2mmr:r-is-3}, the proof is finished for 
  $r=3$, 
  because then $G = \specialH{6}{a}$.
  What is left to consider is the case~$r\ge4$
  and consequently $n\ge5$. 
  The situation is again as in
  Figure~\ref{fig:2mmr:after-edge-v2-vr}.

  \begin{beh}\label{beh:2mmr:v1-not-adj-vi}
    The only vertices of~$\gamma$ that are adjacent to $v_1$ are $v_2$ and 
    $v_r$.
    In particular, $v_1$ is metamour of~$v_3$ and of~$v_{r-1}$.
  \end{beh}
  \begin{proofbeh}{beh:2mmr:v1-not-adj-vi}
    Suppose there is a vertex~$v_i$ with $i\in\set{3,\dots,r-1}$ adjacent 
    to~$v_1$. 
    Then, as $w$ is not adjacent to~$v_i$
    by~\behref{beh:2mmr:vertex-adjacent-consec} or~\behref{beh:2mmr:exist-w}, $v_i$ is a third metamour
    of~$w$. This contradicts the $2$-metamour-regularity of~$G$.

    As $v_1$ has distance~$2$ on the cycle~$\gamma$ to both~$v_3$ 
    and~$v_{r-1}$, and is not
    adjacent to these vertices, the vertices~$v_3$ and~$v_{r-1}$ are
    metamours of~$v_1$.
  \end{proofbeh}

  \begin{figure}
    \centering
    \begin{tikzpicture}
      \begin{scope}[shift={(0,0)}]
        \node [vertex, label=75:{$v_1$}] (v1) at (0:1) {};
        \node [vertex, label=below:{$v_2$}] (v2) at (-50:1) {};
        \node [vertex, label=below:{$v_3$}] (v3) at (-100:1) {};
        \node [vertex, label=above:{$v_{r-1}$}] (vrm1) at (100:1) {};
        \node [vertex, label=above:{$v_{r}$}] (vr) at (50:1) {};
        \node [vertex, label=right:{$w$}] (w) at (2,0) {};
        
        \draw [edge] (w) -- (v1) -- (v2) -- (v3);
        \draw [edge] (v1) -- (vr) -- (vrm1);
        \draw [edge] (v2) -- (vr);
        \draw [nonedge] (w) to [bend right] (vr);
        \draw [mmedge] (w) to [bend right] (vr);
        \draw [nonedge] (w) to [bend left] (v2);
        \draw [mmedge] (w) to [bend left] (v2);
        \draw [nonedge] (w) to [bend left=60] (v3);
        \draw [nonedge] (w) to [bend right=60] (vrm1);
        \draw [nonedge] (v1) -- (vrm1);
        \draw [mmedge] (v1) -- (vrm1);
        \draw [nonedge] (v1) -- (v3);
        \draw [mmedge] (v1) -- (v3);

        \begin{pgfonlayer}{back}
        \draw [cycleedge] (v3) -- (v2) -- (v1) -- (vr) -- (vrm1) arc 
        (100:260:1);
        \end{pgfonlayer}
      \end{scope}
    \end{tikzpicture}

    \caption{Subgraph of the situation between~\behref{beh:2mmr:v1-not-adj-vi}
      and~\behref{beh:2mmr:r-is-4}}
    \label{fig:2mmr:v1-not-adj-vi}
  \end{figure}
  At this point, we have the situation shown in
  Figure~\ref{fig:2mmr:v1-not-adj-vi}. Note, that it is still possible that 
  $v_3 = v_{r-1}$.

  \begin{beh}\label{beh:2mmr:r-is-4}
    We cannot have $r=4$.
  \end{beh}
  \begin{proofbeh}{beh:2mmr:r-is-4}
    As $r=4$, we have $v_3=v_{r-1}$. This situation is shown in
    Figure~\ref{fig:2mmr:r-is-4}.
    \begin{figure}
      \centering
      \begin{tikzpicture}
        \begin{scope}[shift={(0,0)}]
          \node [vertex, label=above:{$v_1$}] (v1) at (0:1) {};
          \node [vertex, label=below:{$v_2$}] (v2) at (-90:1) {};
          \node [vertex, label=below:{$v_3$}] (v3) at (-180:1) {};
          \node [vertex, label=above:{$v_4$}] (v4) at (90:1) {};
          \node [vertex, label=right:{$w$}] (w) at (2,0) {};
          
          \draw [edge] (w) -- (v1) -- (v2) -- (v3) -- (v4) -- (v1);
          \draw [edge] (v2) -- (v4);
          \draw [nonedge] (w) to [bend right] (v4);
          \draw [mmedge] (w) to [bend right] (v4);
          \draw [nonedge] (w) to [bend left] (v2);
          \draw [mmedge] (w) to [bend left] (v2);
          \draw [nonedge] (v1) -- (v3);
          \draw [mmedge] (v1) -- (v3);
          \draw [nonedge] plot[smooth, tension=1.5]
          coordinates {(w) (0,-1.75) (v3)};

          \begin{pgfonlayer}{back}
          \draw [cycleedge] (v1) -- (v2) -- (v3) -- (v4) -- (v1);
          \end{pgfonlayer}
        \end{scope}
      \end{tikzpicture}

      \caption{Subgraph of the situation
        in the proof of~\behref{beh:2mmr:r-is-4}}
      \label{fig:2mmr:r-is-4}
    \end{figure}

    Suppose there is an additional vertex~$v_3'$ of~$G$ adjacent to~$v_3$.
    Then by~\behref{beh:2mmr:vertex-adjacent-consec}, $v_3'$ has
    metamours~$v_2$ and~$v_4$. Therefore, $(w,v_2,v_3',v_4,w)$
    is a $4$-cycle in the metamour graph of~$G$. This is a contradiction
    to the metamour graph being~$C_n$ and $n\ge6$,
    so there is no additional vertex adjacent to~$v_3$.
    This implies that we cannot have a vertex at distance~$1$ from~$v_3$
    other than~$v_2$ and~$v_4$.

    Now suppose there is an additional vertex~$v_2'$ of~$G$ adjacent to~$v_2$.
    Again by~\behref{beh:2mmr:vertex-adjacent-consec}, $v_2'$ has
    metamours~$v_1$ and~$v_3$. Therefore, $(v_2',v_1,v_3,v_2')$
    is a $3$-cycle in the metamour graph of~$G$. This is again a contradiction
    to the metamour graph being~$C_n$ and $n\ge6$,
    so there is no additional vertex adjacent to~$v_2$ either.
    Likewise, by symmetry, there is also no additional vertex adjacent to~$v_4$.

    As $v_2$ and $v_4$ are the only neighbors of~$v_3$, we cannot have
    a vertex at distance~$2$ from~$v_3$ other than~$v_1$. This means that
    there is no second metamour of~$v_3$ which 
    contradicts the $2$-metamour-regularity of~$G$.
  \end{proofbeh}

  At this point, we can assume that~$r\ge5$ as the case~$r=4$ was excluded
  by \behref{beh:2mmr:r-is-4},
  and consequently also $n\ge6$.
  The situation is still as in Figure~\ref{fig:2mmr:v1-not-adj-vi}.
  
  \begin{beh}\label{beh:2mmr:metamours-of-v2}
    We have~$r\le6$. Specifically,
    either $r=5$,
    or $r=6$ and there is an edge $\edge{v_2}{v_5}$ in~$G$.
    In the second case, the two metamours of the vertex~$v_2$
    are $w$ and $v_4$.
  \end{beh}
  \begin{proofbeh}{beh:2mmr:metamours-of-v2}
    As $r\ge5$, the two metamours of~$v_1$ are on the cycle~$\gamma$,
    namely the distinct vertices~$v_3$ and~$v_{r-1}$;
    see Figure~\ref{fig:2mmr:metamours-of-v2}(a).

    We now consider
    the neighbors of~$v_2$. Suppose $v_2$ is adjacent to some~$v_i$ with
    $i \not\in \set{1,3,r-1,r}$. As the vertex~$v_1$ is not connected
    to~$v_{i}$ by \behref{beh:2mmr:v1-not-adj-vi},
    the vertex~$v_{i}$ is a metamour of~$v_1$
    different from $v_3$ and $v_{r-1}$.
    This contradicts the $2$-metamour-regularity of~$G$.
    Furthermore, $v_2$ is adjacent to~$v_1$, $v_3$ and $v_r$.
    This implies that the neighborhood of $v_2$ on~$\gamma$ is determined up to 
    $v_{r-1}$. We will now distinguish whether $v_{r-1}$ is or is not 
    in this neighborhood.

    \begin{figure}
      \centering
      \begin{tikzpicture}
        \begin{scope}[shift={(0,0)}]
          \node at (0.5,-2.25) {(a) initial situation};

          \node [vertex, label=75:{$v_1$}] (v1) at (0:1) {};
          \node [vertex, label=below:{$v_2$}] (v2) at (-50:1) {};
          \node [vertex, label=below:{$v_3$}] (v3) at (-100:1) {};
          \node [vertex, label=below:{$v_4$}] (v4) at (-150:1) {};
          \node [vertex, label=above:{$v_{r-2}$}] (vrm2) at (150:1) {};
          \node [vertex, label=above:{$v_{r-1}$}] (vrm1) at (100:1) {};
          \node [vertex, label=above:{$v_{r}$}] (vr) at (50:1) {};
          \node [vertex, label=right:{$w$}] (w) at (2,0) {};
          
          \draw [edge] (w) -- (v1) -- (v2) -- (v3) -- (v4);
          \draw [edge] (v1) -- (vr) -- (vrm1) -- (vrm2);
          \draw [edge] (v2) -- (vr);
          \draw [nonedge] (w) to [bend right] (vr);
          \draw [mmedge] (w) to [bend right] (vr);
          \draw [nonedge] (w) to [bend left] (v2);
          \draw [mmedge] (w) to [bend left] (v2);
          \draw [nonedge] (v1) -- (v4);
          \draw [nonedge] (v1) -- (vrm2);
          \draw [nonedge] (v1) -- (vrm1);
          \draw [mmedge] (v1) -- (vrm1);
          \draw [nonedge] (v1) -- (v3);
          \draw [mmedge] (v1) -- (v3);
          
          \draw [nonedge] (w) to [bend right=60] (vrm1);
          \draw [nonedge] (w) to [bend left=60] (v3);
          \draw [nonedge] (w) to [bend left=45] (0,-1.75);
          \draw [nonedge] (0,-1.75) to [bend left=30] (v4);
          \draw [nonedge] (w) to [bend right=45] (0,1.75);
          \draw [nonedge] (0,1.75) to [bend right=30] (vrm2);

          \begin{pgfonlayer}{back}
          \draw [cycleedge] (v4) -- (v3) -- (v2) -- (v1)
                             -- (vr) -- (vrm1) -- (vrm2) arc (150:210:1);
          \end{pgfonlayer}
        \end{scope}
        \begin{scope}[shift={(5,0)}]
        \node at (0.5,-2.25) {(b) with $\edge{v_2}{v_{r-1}} \not\in E(G)$};
        
        \node [vertex, label=75:{$v_1$}] (v1) at (0:1) {};
        \node [vertex, label=below:{$v_2$}] (v2) at (-50:1) {};
        \node [vertex, label=below:{$v_3$}] (v3) at (-100:1) {};
        \node [vertex, label=above:{$v_4$}] (v4) at (100:1) {};
        \node [vertex, label=above:{$v_5$}] (v5) at (50:1) {};
        \node [vertex, label=right:{$w$}] (w) at (2,0) {};
        
        \draw [edge] (w) -- (v1) -- (v2) -- (v3) -- (v4);
        \draw [edge] (v1) -- (v5) -- (v4);
        \draw [edge] (v2) -- (v5);
        \draw [nonedge] (w) to [bend right] (v5);
        \draw [mmedge] (w) to [bend right] (v5);
        \draw [nonedge] (w) to [bend left] (v2);
        \draw [mmedge] (w) to [bend left] (v2);
        \draw [nonedge] (v1) -- (v4);
        \draw [mmedge] (v1) -- (v4);
        \draw [nonedge] (v1) -- (v3);
        \draw [mmedge] (v1) -- (v3);
        \draw [nonedge] (v2) -- (v4);
        \draw [mmedge] (v2) -- (v4);
        
        \draw [nonedge] (w) to [bend right=60] (v4);
        \draw [nonedge] (w) to [bend left=60] (v3);

        \begin{pgfonlayer}{back}
        \draw [cycleedge] (v3) -- (v2) -- (v1) -- (v5) -- (v4) -- (v3);
        \end{pgfonlayer}
        \end{scope}        
        \begin{scope}[shift={(10,0)}]
          \node at (0.5,-2.25) {(c) with $\edge{v_2}{v_{r-1}} \in E(G)$};

          \node [vertex, label=75:{$v_1$}] (v1) at (0:1) {};
          \node [vertex, label=below:{$v_2$}] (v2) at (-50:1) {};
          \node [vertex, label=below:{$v_3$}] (v3) at (-100:1) {};
          \node [vertex, label=below:{$v_4$}] (v4) at (-180:1) {};
          \node [vertex, label=above:{$v_5$}] (v5) at (100:1) {};
          \node [vertex, label=above:{$v_6$}] (v6) at (50:1) {};
          \node [vertex, label=right:{$w$}] (w) at (2,0) {};
          
          \draw [edge] (w) -- (v1) -- (v2) -- (v3) -- (v4);
          \draw [edge] (v1) -- (v6) -- (v5) -- (v4);
          \draw [edge] (v2) -- (v6);
          \draw [nonedge] (w) to [bend right] (v6);
          \draw [mmedge] (w) to [bend right] (v6);
          \draw [nonedge] (w) to [bend left] (v2);
          \draw [mmedge] (w) to [bend left] (v2);
          \draw [nonedge] (v1) -- (v4);
          \draw [nonedge] (v1) -- (v5);
          \draw [mmedge] (v1) -- (v5);
          \draw [nonedge] (v1) -- (v3);
          \draw [mmedge] (v1) -- (v3);
          \draw [nonedge] (v2) -- (v4);
          \draw [mmedge] (v2) -- (v4);
          \draw [edge] (v2) -- (v5);

          \draw [nonedge] (w) to [bend right=60] (v5);
          \draw [nonedge] (w) to [bend left=60] (v3);
          \draw [nonedge] (w) to [bend left=45] (0,-1.75);
          \draw [nonedge] (0,-1.75) to [bend left=30] (v4);

          \begin{pgfonlayer}{back}
          \draw [cycleedge] (v3) -- (v2) -- (v1) -- (v6) -- (v5) -- (v4) -- (v3);
          \end{pgfonlayer}
        \end{scope}
      \end{tikzpicture}

      \caption{Subgraphs of the situations
        in the proof of~\behref{beh:2mmr:metamours-of-v2}}
      \label{fig:2mmr:metamours-of-v2}
    \end{figure}
    Suppose
    $\edge{v_2}{v_{r-1}} \not\in E(G)$. If $v_{r-1} \ne v_4$, then
    $\edge{v_2}{v_4} \not\in E(G)$ because of  what is shown in
    the previous paragraph. But then,
    the metamours of~$v_2$ would be $w$, $v_{r-1}$ and $v_4$.
    This contradicts the $2$-metamour-regularity of $G$
    and implies that $v_{r-1}=v_4$ and $r=5$; see
    Figure~\ref{fig:2mmr:metamours-of-v2}(b).
       
    Suppose
    $\edge{v_2}{v_{r-1}} \in E(G)$. We again distinguish between two cases. 
    If $r \ge 6$, then $w$, $v_4$ and $v_{r-2}$ are metamours of~$v_2$. In
    this case, the $2$-metamour-regularity of~$G$ implies that
    $v_{r-2}=v_4$ and therefore $r=6$; 
    see Figure~\ref{fig:2mmr:metamours-of-v2}(c). 
    If
    $r < 6$, then by the findings so far, we must have $r=5$, and therefore we 
    are also done 
    in this case.
  \end{proofbeh}

  By~\behref{beh:2mmr:metamours-of-v2} we are left with the two
  cases~$r=5$ and $r=6$. One possible situation for $r=5$ and the situation for 
  $r=6$ are shown in
  Figure~\ref{fig:2mmr:metamours-of-v2}(b) and (c), respectively, and we will 
  deal with
  these two situations now.
  
  \begin{beh}\label{beh:2mmr:r-is-5}
    If $r=5$, then $G\in\set{\specialH{6}{b}, \specialH{7}{a}}$.
  \end{beh}

  \begin{proofbeh}{beh:2mmr:r-is-5}
    The full situation for  $r=5$ is shown in Figure~\ref{fig:2mmr:r-is-5}(a).
    \begin{figure}
      \centering
      \begin{tikzpicture}
        \begin{scope}[shift={(0,0)}]
          \node at (0.5,-2) {(a) initial situation};
          
          \node [vertex, label=75:{$v_1$}] (v1) at (0:1) {};
          \node [vertex, label=below:{$v_2$}] (v2) at (-50:1) {};
          \node [vertex, label=below:{$v_3$}] (v3) at (-100:1) {};
          \node [vertex, label=above:{$v_4$}] (v4) at (100:1) {};
          \node [vertex, label=above:{$v_5$}] (v5) at (50:1) {};
          \node [vertex, label=right:{$w$}] (w) at (2,0) {};
          
          \draw [edge] (w) -- (v1) -- (v2) -- (v3) -- (v4);
          \draw [edge] (v1) -- (v5) -- (v4);
          \draw [edge] (v2) -- (v5);
          \draw [nonedge] (w) to [bend right] (v5);
          \draw [mmedge] (w) to [bend right] (v5);
          \draw [nonedge] (w) to [bend left] (v2);
          \draw [mmedge] (w) to [bend left] (v2);
          \draw [nonedge] (v1) -- (v4);
          \draw [mmedge] (v1) -- (v4);
          \draw [nonedge] (v1) -- (v3);
          \draw [mmedge] (v1) -- (v3);
          
          \draw [nonedge] (w) to [bend right=60] (v4);
          \draw [nonedge] (w) to [bend left=60] (v3);

          \begin{pgfonlayer}{back}
          \draw [cycleedge] (v3) -- (v2) -- (v1) -- (v5) -- (v4) -- (v3);
          \end{pgfonlayer}
        \end{scope}
        \begin{scope}[shift={(5,0)}]
          \node at (0.5,-2) {(b) with none of two edges};

          \node [vertex, label=75:{$v_1$}] (v1) at (0:1) {};
          \node [vertex, label=below:{$v_2$}] (v2) at (-50:1) {};
          \node [vertex, label=below:{$v_3$}] (v3) at (-100:1) {};
          \node [vertex, label=above:{$v_4$}] (v4) at (100:1) {};
          \node [vertex, label=above:{$v_5$}] (v5) at (50:1) {};
          \node [vertex, label=right:{$w$}] (w) at (2,0) {};
          
          \draw [edge] (w) -- (v1) -- (v2) -- (v3) -- (v4);
          \draw [edge] (v1) -- (v5) -- (v4);
          \draw [edge] (v2) -- (v5);
          \draw [nonedge] (w) to [bend right] (v5);
          \draw [mmedge] (w) to [bend right] (v5);
          \draw [nonedge] (w) to [bend left] (v2);
          \draw [mmedge] (w) to [bend left] (v2);
          \draw [nonedge] (v1) -- (v4);
          \draw [mmedge] (v1) -- (v4);
          \draw [nonedge] (v1) -- (v3);
          \draw [mmedge] (v1) -- (v3);
          \draw [nonedge] (v2) -- (v4);
          \draw [mmedge] (v2) -- (v4);
          
          \draw [nonedge] (w) to [bend right=60] (v4);
          \draw [nonedge] (w) to [bend left=60] (v3);

          \draw [nonedge] (v3) -- (v5);
          \draw [mmedge] (v3) -- (v5);

          \begin{pgfonlayer}{back}
          \draw [cycleedge] (v3) -- (v2) -- (v1) -- (v5) -- (v4) -- (v3);
          \end{pgfonlayer}
        \end{scope}
        \begin{scope}[shift={(10,0)}]
          \node at (0.5,-2) {(c) with one of two edges};
          
          \node [vertex, label=75:{$v_1$}] (v1) at (0:1) {};
          \node [vertex, label=below:{$v_2$}] (v2) at (-50:1) {};
          \node [vertex, label=below:{$v_3$}] (v3) at (-100:1) {};
          \node [vertex, label=above:{$v_4$}] (v4) at (100:1) {};
          \node [vertex, label=above:{$v_5$}] (v5) at (50:1) {};
          \node [vertex, label=right:{$w$}] (w) at (2,0) {};
          \node [vertex, label=above:{$v_4'$}] (v4p) at (115:2) {};
          
          \draw [edge] (w) -- (v1) -- (v2) -- (v3) -- (v4);
          \draw [edge] (v1) -- (v5) -- (v4);
          \draw [edge] (v2) -- (v5);
          \draw [nonedge] (w) to [bend right] (v5);
          \draw [mmedge] (w) to [bend right] (v5);
          \draw [nonedge] (w) to [bend left] (v2);
          \draw [mmedge] (w) to [bend left] (v2);
          \draw [nonedge] (v1) -- (v4);
          \draw [mmedge] (v1) -- (v4);
          \draw [nonedge] (v1) -- (v3);
          \draw [mmedge] (v1) -- (v3);
          \draw [nonedge] (v2) -- (v4);
          \draw [mmedge] (v2) -- (v4);
          
          \draw [nonedge] (w) to [bend right=60] (v4);
          \draw [nonedge] (w) to [bend left=60] (v3);

          \draw [edge] (v3) -- (v5);
          \draw [edge] (v4) -- (v4p);
          \draw [nonedge] (v3) to [bend left=15] (v4p);
          \draw [mmedge] (v3) to [bend left=15] (v4p);
          \draw [nonedge] (v5) to [bend right] (v4p);
          \draw [mmedge] (v5) to [bend right] (v4p);  

          \begin{pgfonlayer}{back}
          \draw [cycleedge] (v3) -- (v2) -- (v1) -- (v5) -- (v4) -- (v3);
          \end{pgfonlayer}
        \end{scope}
      \end{tikzpicture}

      \caption{Subgraphs of the situations
        in the proof of~\behref{beh:2mmr:r-is-5}}
      \label{fig:2mmr:r-is-5}
    \end{figure}
    
    Clearly the situation is symmetric in the potential edges $\edge{v_2}{v_4}$ 
    and $\edge{v_3}{v_5}$, so we have to consider the three cases that both, 
    one and none of these two edges are present.

    First let us assume that neither $\edge{v_2}{v_4}$ nor
    $\edge{v_3}{v_5}$ is an edge; see 
    Figure~\ref{fig:2mmr:r-is-5}(b).
    Then $v_2$ and $v_4$ as well as $v_3$ and $v_5$ are metamours, 
    so we have the $6$-cycle
    $(w,v_2,v_4,v_1,v_3,v_5,w)$ in the metamour graph of~$G$. This
    is the graph $G=\specialH{6}{b}$.
    There cannot be any additional vertex because this vertex would need to
    be in a different cycle in the metamour graph contradicting
    that the metamour graph is the~$C_n$.
    There also cannot be any additional edges because all edges and non-edges 
    are already determined.

    Next let us assume that there is exactly one of the edges $\edge{v_2}{v_4}$ 
    and $\edge{v_3}{v_5}$ present in~$G$, without loss of generality let 
    $\edge{v_3}{v_5} \in E(G)$; see 
    Figure~\ref{fig:2mmr:r-is-5}(c).
    At this point we know that $v_3$ and $v_1$ as well as
    $v_5$ and $w$ are metamours, and we are looking for the second metamours of
    $v_3$ and $v_5$. As the
    vertices~$w$, $v_1$ and $v_4$ already have two metamours each,
    there need to be additional vertices for these metamours.

    Statement~\behref{beh:2mmr:vertex-adjacent-consec} implies
    that there is no additional vertex of~$G$ adjacent to~$v_5$
    as~$v_1$ has already the two metamours~$v_3$ and~$v_4$.
    Likewise, by symmetry,
    there is no additional vertex adjacent to~$v_2$. Moreover,
    by the same argument, there is
    also no additional vertex adjacent to~$v_3$ as~$v_4$
    has the two metamours~$v_1$ and~$v_2$.

    Therefore, there need to be an additional vertex~$v_4'$ adjacent
    to~$v_4$. By~\behref{beh:2mmr:vertex-adjacent-consec}, $v_4'$ has
    metamours~$v_3$ and~$v_5$. This gives the $7$-cycle
    $(w,v_2,v_4,v_1,v_3,v_4',v_5,w)$ in the metamour graph of~$G$ and
    the graph $G=\specialH{7}{a}$.
    There cannot be any additional vertex because this vertex would need to
    be in a different cycle in the metamour graph contradicting
    that the metamour graph is the~$C_n$.
    There also cannot be any additional edges because all edges and non-edges 
    are already determined.
    
    At last, let us consider the case that both of the edges $\edge{v_2}{v_4}$ 
    and $\edge{v_3}{v_5}$ are present in~$G$.
    We already know that $w$ is a metamour of $v_2$ and are now
    searching for the second metamour of $v_2$. 
    There does not exist a vertex $v_1'$ adjacent to $v_1$ in 
    $G$ but 
    not in $\gamma$, because this would induce a $C_4$ in the 
    metamour graph by the same arguments as in the proof 
    of~\behref{beh:2mmr:r-is-3}.
    Furthermore, there cannot be a vertex $v_2'$ in $G$ but not in 
    $\gamma$ that is adjacent to $v_2$, due to the fact that 
    this vertex would be a third metamour of $v_1$, a contradiction.
    By symmetry, there is no vertex of $G$ without~$\gamma$ adjacent to~$v_5$.
    If there would be a vertex~$v_3'$ in $G$ but not in $\gamma$ which is 
    adjacent to $v_3$, then due 
    to~\behref{beh:2mmr:vertex-adjacent-consec}, 
    this vertex would have $v_2$, $v_4$ and~$v_5$ as a metamour, a 
    contradiction to the $2$-metamour-regularity of $G$.
    Again by symmetry, there is no vertex of $G$ without~$\gamma$ adjacent to~$v_4$.
    Therefore, $v_2$ cannot have a second metamour in $G$ and this case 
    cannot happen.
  \end{proofbeh}
    
   Statement~\behref{beh:2mmr:r-is-5} finalizes the proof for $r=5$.    
   Hence, $r=6$ is the only remaining value for 
    $r$ we have to consider.

  \begin{beh}\label{beh:2mmr:r-is-6}
    We cannot have $r=6$.
  \end{beh}
  \begin{proofbeh}{beh:2mmr:r-is-6}
    As $r=6$, there is an edge $\edge{v_2}{v_5}$ in $G$
    by~\behref{beh:2mmr:metamours-of-v2}. The initial situation is shown in
    Figure~\ref{fig:2mmr:r-is-6}(a).
    \begin{figure}
      \centering
      \begin{tikzpicture}
        \begin{scope}[shift={(0,0)}]
          \node at (0.5,-2.5) {(a) initial situation};
          
          \node [vertex, label=75:{$v_1$}] (v1) at (0:1) {};
          \node [vertex, label=below:{$v_2$}] (v2) at (-50:1) {};
          \node [vertex, label=below:{$v_3$}] (v3) at (-100:1) {};
          \node [vertex, label=below:{$v_4$}] (v4) at (-180:1) {};
          \node [vertex, label=above:{$v_5$}] (v5) at (100:1) {};
          \node [vertex, label=above:{$v_6$}] (v6) at (50:1) {};
          \node [vertex, label=right:{$w$}] (w) at (2,0) {};
          
          \draw [edge] (w) -- (v1) -- (v2) -- (v3) -- (v4);
          \draw [edge] (v1) -- (v6) -- (v5) -- (v4);
          \draw [edge] (v2) -- (v6);
          \draw [nonedge] (w) to [bend right] (v6);
          \draw [mmedge] (w) to [bend right] (v6);
          \draw [nonedge] (w) to [bend left] (v2);
          \draw [mmedge] (w) to [bend left] (v2);
          \draw [nonedge] (v1) -- (v4);
          \draw [nonedge] (v1) -- (v5);
          \draw [mmedge] (v1) -- (v5);
          \draw [nonedge] (v1) -- (v3);
          \draw [mmedge] (v1) -- (v3);
          \draw [nonedge] (v2) -- (v4);
          \draw [mmedge] (v2) -- (v4);
          \draw [edge] (v2) -- (v5);

          \draw [nonedge] (w) to [bend right=60] (v5);
          \draw [nonedge] (w) to [bend left=60] (v3);
          \draw [nonedge] (w) to [bend left=45] (0,-1.75);
          \draw [nonedge] (0,-1.75) to [bend left=30] (v4);

          \begin{pgfonlayer}{back}
          \draw [cycleedge] (v3) -- (v2) -- (v1) -- (v6) -- (v5) -- (v4) -- (v3);
          \end{pgfonlayer}
        \end{scope}
        \node at (3.5,0) {$\Longrightarrow$};
        \begin{scope}[shift={(5.5,0)}]
          \node at (0.5,-2.5) {(b) more edges found};
          
          \node [vertex, label=75:{$v_1$}] (v1) at (0:1) {};
          \node [vertex, label=below:{$v_2$}] (v2) at (-50:1) {};
          \node [vertex, label=below:{$v_3$}] (v3) at (-100:1) {};
          \node [vertex, label=below:{$v_4$}] (v4) at (-180:1) {};
          \node [vertex, label=above:{$v_5$}] (v5) at (100:1) {};
          \node [vertex, label=above:{$v_6$}] (v6) at (50:1) {};
          \node [vertex, label=right:{$w$}] (w) at (2,0) {};
          
          \draw [edge] (w) -- (v1) -- (v2) -- (v3) -- (v4);
          \draw [edge] (v1) -- (v6) -- (v5) -- (v4);
          \draw [edge] (v2) -- (v6);
          \draw [nonedge] (w) to [bend right] (v6);
          \draw [mmedge] (w) to [bend right] (v6);
          \draw [nonedge] (w) to [bend left] (v2);
          \draw [mmedge] (w) to [bend left] (v2);
          \draw [nonedge] (v1) -- (v4); 
          \draw [nonedge] (v1) -- (v5);
          \draw [mmedge] (v1) -- (v5);
          \draw [nonedge] (v1) -- (v3);
          \draw [mmedge] (v1) -- (v3);
          \draw [nonedge] (v2) -- (v4);
          \draw [mmedge] (v2) -- (v4);
          \draw [edge] (v2) -- (v5);

          \draw [edge] (v4) -- (v6);
          \draw [edge] (v3) -- (v5);

          \draw [nonedge] (w) to [bend right=60] (v5);
          \draw [nonedge] (w) to [bend left=60] (v3);
          \draw [nonedge] (w) to [bend left=45] (0,-1.75);
          \draw [nonedge] (0,-1.75) to [bend left=30] (v4);

          \begin{pgfonlayer}{back}
          \draw [cycleedge] (v3) -- (v2) -- (v1) -- (v6) -- (v5) -- (v4) -- (v3);
          \end{pgfonlayer}
        \end{scope}
      \end{tikzpicture}

      \caption{Subgraphs of the situations
        in the proof of~\behref{beh:2mmr:r-is-6}}
      \label{fig:2mmr:r-is-6}
    \end{figure}

    Suppose $v_3$ and $v_5$ are not adjacent. Then
    $(v_1,v_3,v_5,v_1)$ is a $3$-cycle in the metamour graph
    of~$G$. This contradicts that the metamour graph is~$C_n$
    and $n>r=6$,
    so we can assume $\edge{v_3}{v_5} \in E(G)$.
    Likewise, suppose that $v_4$ and $v_6$ are not adjacent. Then
    $(w,v_2,v_4,v_6,w)$ is a $4$-cycle in the metamour graph
    of~$G$. This contradicts that the metamour graph is~$C_n$
    and $n>r=6$,
    so we can assume $\edge{v_4}{v_6} \in E(G)$.
    The current situation is shown in Figure~\ref{fig:2mmr:r-is-6}(b).

    Statement~\behref{beh:2mmr:vertex-adjacent-consec} implies
    that there is no additional vertex of~$G$ adjacent to~$v_2$
    as~$v_1$ has already the two metamours~$v_3$ and~$v_5$.
    By symmetry, there
    is also no additional vertex adjacent to~$v_6$.
    By the same argumentation as above, there is no additional vertex adjacent
    to~$v_1$ as well as to~$v_3$ because of vertex~$v_2$
    and its metamours. Moreover, we slightly vary the argumentation
    to show that there cannot be an additional vertex adjacent
    to~$v_5$. Suppose there is an additional vertex~$v_5'$ of~$G$ adjacent
    to~$v_5$. Then, $v_5'$ is not adjacent to~$v_2$ as we have shown above,
    so $v_5'$ is as well a metamour of~$v_2$. This
    contradicts the $2$-metamour-regularity of~$G$ again.

    The vertex~$v_4$ has $v_2$ as metamour. We are now searching for
    its second metamour. It cannot be~$w$ or~$v_1$ as these vertices
    have already two other metamours each. It cannot be any of $v_3$,
    $v_5$ or $v_6$ either as all of them are adjacent
    to~$v_4$. Moreover, the second metamour of~$v_4$ cannot be
    adjacent to~$v_3$, $v_5$ or $v_6$, as we above ruled additional
    neighbors to these vertices out. Therefore, there has to be
    an additional vertex~$v_4'$ adjacent to~$v_4$.
    By~\behref{beh:2mmr:vertex-adjacent-consec}, this vertex~$v_4'$ has
    metamours~$v_3$ and~$v_5$. This results
    in the $4$-cycle $(v_1,v_3,v_4',v_5,v_1)$ in the metamour graph of~$G$
    and contradicts our assumption that this graph is~$C_n$ and $n>r=6$.
  \end{proofbeh}
  
  We have now completed the proof of Lemma~\ref{lem:2mmr:contain-Cn} as
  in all cases we were able to 
  show that $G \in \set{
      \specialH{6}{a},
      \specialH{6}{b},
      \specialH{7}{a}}$
    holds. 
\end{proof}

After characterizing all $2$-metamour-regular graphs whose metamour graph is 
connected and that do not contain a cycle of length~$n$, we can now focus on 
$2$-metamour-regular graphs whose metamour graph is connected and that contain  
a cycle of length~$n$. Here, we make a further distinction depending on the 
degree of the vertices and begin with the following
lemma.

\begin{lemma}\label{lem:2mmr:deg-two-options}
    Let $G$ be a connected $2$-metamour-regular graph with $n$ vertices
    \begin{itemize}
    \item whose metamour graph equals the~$C_n$,
    \item that contains a cycle of length~$n$, and
    \item that has a vertex of degree larger than $2$ and smaller than $n-3$.
    \end{itemize}
    Then
    \begin{equation*}
      G = \specialH{7}{b}.
    \end{equation*}
\end{lemma}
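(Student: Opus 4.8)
The plan is to exploit the given Hamiltonian cycle $\gamma=(v_1,\dots,v_n,v_1)$ together with the rigidity coming from the metamour graph being a single cycle~$C_n$, proceeding by the same kind of step-by-step bookkeeping as in the proof of Lemma~\ref{lem:2mmr:contain-Cn}. Three preliminary observations set the stage. First, since $G$ has a vertex of degree~$d$ with $3\le d\le n-4$, we automatically have $n\ge7$. Second, for every vertex~$u$ the non-neighbours of~$u$ are exactly its two metamours together with the vertices at distance at least~$3$ from~$u$, so $\deg_G(u)=n-3-\abs{\setm{w\in V(G)}{d_G(u,w)\ge3}}$; in particular $\deg_G(u)=2$ precisely when $u$ is incident to no chord of~$\gamma$, and $\deg_G(u)=n-3$ precisely when every vertex of~$G$ is within distance~$2$ of~$u$. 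Third, for every~$i$ the vertices $v_i$ and~$v_{i+2}$ have the common neighbour~$v_{i+1}$, so $\edge{v_i}{v_{i+2}}\in E(G)$ or $v_i$ and~$v_{i+2}$ are metamours; and since the metamour graph equals~$C_n$ with $n\ge7$, it contains no cycle of length smaller than~$n$, so any short cycle we produce among metamour relations is a contradiction.

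Next, by Proposition~\ref{pro:regular-join-and-mm-graphs} the existence of a vertex whose degree is not~$n-3$ forces $G$ to have diameter larger than~$2$, so there is a pair of vertices at distance at least~$3$. Choose a vertex with degree in $\set{3,\dots,n-4}$ and relabel~$\gamma$ so that this vertex is~$v_1$; by the degree formula $v_1$ is incident to at least one chord of~$\gamma$, and some vertex of~$G$ is at distance at least~$3$ from~$v_1$.

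The heart of the argument is the propagation. Among the chords at~$v_1$ pick one, say $\edge{v_1}{v_p}$, of minimal span $s=\min\set{p-1,\,n-p+1}\ge2$; relabelling, we may take $p=s+1$, so the short arc is $v_1,v_2,\dots,v_{s+1}$. If $s=2$, then $v_1$, $v_2$, $v_3$ form a triangle. If $s\ge3$, then neither $\edge{v_1}{v_3}$ nor $\edge{v_1}{v_{n-1}}$ is an edge, and using the common neighbours $v_2$ and~$v_n$ these are metamour pairs, so the two metamours of~$v_1$ are~$v_3$ and~$v_{n-1}$. In either configuration one applies $2$-metamour-regularity successively at~$v_1$, at its cycle neighbours~$v_2$ and~$v_n$, and at the vertices $v_s$, $v_{s+1}$, $v_{s+2}$ around the minimal chord, in the style of Lemma~\ref{lem:2mmr:contain-Cn}: each such step either forces a new edge or forbids one, and every cycle of length less than~$n$ that arises among metamour relations (for instance a triangle or a $4$-cycle) is a contradiction. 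This pins down~$s$, bounds~$n$ from above by a small value, and a finite case distinction on the few remaining undetermined chords then leaves exactly $G=\specialH{7}{b}$.

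The main obstacle is this last step: controlling the bookkeeping well enough to rule out every $n\ge8$, and in particular treating the branch in which~$v_1$ has no chord of span~$2$ (so its two metamours are~$v_3$ and~$v_{n-1}$) while a vertex at distance at least~$3$ from~$v_1$ must still be accommodated. Once $n$ has been driven down to~$7$, the statement follows by inspecting all admissible chord sets and matching the unique solution with the graph in Figure~\ref{fig:graphs-7}.
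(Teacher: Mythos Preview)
Your proposal is a plan, not a proof: you explicitly flag the ``main obstacle'' of bounding~$n$ and handling the propagation, and then stop short of carrying it out. Saying that one ``applies $2$-metamour-regularity successively \dots in the style of Lemma~\ref{lem:2mmr:contain-Cn}'' and that a ``finite case distinction \dots leaves exactly $G=\specialH{7}{b}$'' does not constitute an argument; neither the termination of your propagation nor the bound on~$n$ is established. In particular, your branch~$s=2$ is not developed at all, and in the branch~$s\ge3$ you do not even extract the immediate consequence that the minimality of the chord forces~$s=3$ (since $v_1$ and~$v_s$ have the common neighbour~$v_{s+1}$, are non-adjacent by minimality, and hence $v_s$ must equal one of the two metamours~$v_3,v_{n-1}$).

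What your approach is missing is an organising structural statement that the paper supplies and that makes the bookkeeping tractable. Relative to the Hamiltonian cycle~$\gamma$, the paper partitions $V(G)\setminus\set{v}$ for each vertex~$v$ into its two metamours, its \emph{fellows} (the vertices strictly between~$v$ and its metamours on~$\gamma$ on each side), and its \emph{opponents} (the remaining arc). Two short claims then do all the heavy lifting: every vertex is adjacent to \emph{all} of its fellows, and every vertex is adjacent either to \emph{all} of its opponents or to \emph{none} of them. With this dichotomy, a vertex~$v_1$ of degree strictly between~$2$ and~$n-3$ must be adjacent to all fellows and to no opponent, and from there a short chain of forced metamour relations pins down first $p=3$, then $q=p+2$, then $n=q+2=7$, yielding $G=\specialH{7}{b}$ directly---no open-ended propagation and no unanalysed case split. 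If you want to salvage your minimal-chord approach, you will need an analogue of this all-or-nothing lemma; otherwise the local arguments you sketch do not visibly converge.
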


\resetbeh
\begin{proof}
    Let $\gamma$ be a cycle of length~$n$ in $G$.
    First, we introduce some notation. Let $v$ be a vertex of $G$, and let $u$ 
    and $u'$ be the two metamours of $v$. We explore the vertices on the
    cycle~$\gamma$ starting with~$v$: The set of vertices on both sides
    of~$v$ strictly before $u$ and $u'$ are called the \emph{fellows} of $v$.
    The remaining set of vertices strictly between $u$ and $u'$ on 
    $\gamma$ is called the \emph{opponents} of $v$;
    see Figure~\ref{fig:2mmr:fellows-opponents}.
    In other words for each vertex $v$ of~$G$ the set of vertices of $G$ can 
    be 
    partitioned into $v$, its fellows, its 
    metamours and its opponents.

    \begin{figure}
      \centering
      \begin{tikzpicture}
        \begin{scope}[shift={(0,0)}]
          \draw [edge] (0,0) circle (\cradius);
          
          \cvertex{90}{}{v1}
          \cvertex{210}{}{vp}
          \cvertex{330}{}{vq}

          \cvertices{105}{195}{}{}{6}
          \cvertices{225}{315}{}{}{6}
          \cvertices{-15}{75}{}{}{6}

          \draw [nonedge] (v1) to [out=90+180, in=30] (vp);
          \draw [mmedge] (v1) to [out=90+180, in=30] (vp);
          \draw [nonedge] (v1) to [out=90+180, in=150] (vq);
          \draw [mmedge] (v1) to [out=90+180, in=150] (vq);

          \node [label={[anchor=south]90:{$v$}}] at (90:\cradius) {};
          
          \node [label={[anchor=south, rotate=150-90]150:{fellows}}] at (150:\cradius) {};
          \node [label={[anchor=south, rotate=30-90]30:{fellows}}] at (30:\cradius) {};
          \node [label={[anchor=north]270:{opponents}}] at (270:\cradius) {};

          \node [label={[anchor=north, rotate=210-270, 
          align=center]210:{meta-\\[-1ex] mour}}] at (vp) {};
          \node [label={[anchor=north, rotate=-30-270, align=center]-30:{meta-\\[-1ex] mour}}] at (-30:\cradius) {};
        \end{scope}
      \end{tikzpicture}
      \caption{Fellows and opponents of a vertex~$v$
        in the proof of Lemma~\ref{lem:2mmr:deg-two-options}}
      \label{fig:2mmr:fellows-opponents}
    \end{figure}
    
    We start with the following claims.
    
    \begin{beh}\label{beh:2mmr:adj-to-fellows}
        Every vertex of $G$ is adjacent to each of its fellows.
    \end{beh}
    \begin{proofbeh}{beh:2mmr:adj-to-fellows}
      Let $v_1$ be a vertex of~$G$ and $\gamma=(v_1,\dots,v_n,v_1)$.
      Suppose~$v_p$ is the vertex with smallest index~$p$ that is not
      adjacent to~$v_1$. We have to show that~$v_p$ is a
      metamour of~$v_1$. The index~$p$ exists because~$v_1$
      is not adjacent to its metamours.
      Moreover, this index satisfies~$p>2$
      as $v_2$ is adjacent to $v_1$ because they are consecutive
      vertices on~$\gamma$. Thus, $v_1$ and $v_p$ have $v_{p-1}$ as common
      neighbor and are therefore metamours.

      By symmetry, the vertex~$v_q$ with largest index~$q$ that is not
      adjacent to~$v_1$, is also a metamour of~$v_1$.
      Note that as $v_1$ has exactly two metamours,
      $v_p$ and $v_q$ are these metamours, so $v_1$ is adjacent to each of 
      its fellows.
    \end{proofbeh}

    \begin{beh}\label{beh:2mmr:adj-to-all-or-non-opponents}
        Every vertex of $G$ is either adjacent to each of its 
        opponents, or not adjacent to any of its opponents.
    \end{beh}
    \begin{proofbeh}{beh:2mmr:adj-to-all-or-non-opponents}
        It is enough to show that if a vertex~$v_1$ of~$G$
        is adjacent to at least one opponent of~$v_1$, then 
        it is adjacent to every opponent of~$v_1$.
        Let $\gamma=(v_1, \dots, v_n,v_1)$, and  
        let $W$ be a subset of the opponents of~$v_1$ that consists of
        consecutive vertices of~$\gamma$, say from $v_{i}$ to $v_{j}$ for some
        $i \le j$,
        such that each of these vertices is adjacent to~$v_1$, and $W$ is 
        maximal (with respect to inclusion)
        with this property.
        Note that the set~$W$ is not empty because of our assumption.

        Clearly none of the vertices in~$W$ is a metamour of~$v_1$. However
        $v_{i-1}$ and $v_{j+1}$ are metamours of~$v_1$ because of their common
        neighbors~$v_{i}$ and $v_{j}$, and the maximality of~$W$.
        Therefore, as $v_1$ has exactly two metamours, $W$ equals the set of 
        opponents of~$v_1$ which was to show.
    \end{proofbeh}    
    
    Now we are ready to start with the heart of the proof of 
    Lemma~\ref{lem:2mmr:deg-two-options}.
    Suppose $v_1$ is a vertex of~$G$ with $2 < \deg(v_1) < n-3$.
    In order to complete the proof
    we have to show that $G=\specialH{7}{b}$.
    
    Let $\gamma=(v_1, \dots, v_n,v_1)$ be a cycle of length~$n$, and let 
    $v_p$ and $v_q$ be the metamours of $v_1$ with $p < q$.
    In the following claims we will derive several properties of $G$. 
    
    \begin{beh}\label{beh:2mmr:situation-v1}
        The vertex $v_1$ is adjacent to its fellows $v_{2}$, \dots, $v_{p-1}$, 
        $v_{q+1}$, \dots, $v_n$ and not adjacent to any
        metamour or opponent $v_{p}$, \dots, $v_{q}$.
        Furthermore, $p + 1 < q$ holds, i.e., there exists at least
        one opponent of~$v_1$.
    \end{beh}
    \begin{proofbeh}{beh:2mmr:situation-v1}
        Clearly $v_1$ is not adjacent to its metamours $v_p$ and $v_q$. 
        Furthermore, $v_1$ 
        is adjacent to all its fellows 
        $v_{2}$, \dots, $v_{p-1}$, $v_{q+1}$, \dots, $v_n$ 
        by~\behref{beh:2mmr:adj-to-fellows}.
        This together with $\deg(v_1) < n-3$ implies that $v_1$ has an opponent 
        to which it is not 
        adjacent, so $p + 1 < q$. 
        Then by~\behref{beh:2mmr:adj-to-all-or-non-opponents}, $v_1$ is not 
        adjacent 
        to any of its opponents. 
    \end{proofbeh}    

    \begin{figure}
      \centering
      \begin{tikzpicture}
        \begin{scope}[shift={(0,0)}]
          \draw [edge] (0,0) circle (\cradius);
          
          \def\wvone{90}
          \def\wvtwo{105}
          \def\wvpmone{195}
          \def\wvp{210}
          \def\wvppone{225}
          \def\wvqmone{315}
          \def\wvq{330}
          \def\wvqpone{345}
          \def\wvqptwo{360}
          \def\wvn{435}

          \cvertex{\wvone}{$v_1$}{vone}
          \cvertices{\wvtwo}{\wvpmone}{$v_2$}{$v_{p-1}$}{6}
          \node [svertex] (vpmone) at (\wvpmone:\cradius) {};
          \cvertex{\wvp}{$v_p$}{vp}
          \cvertices{\wvppone}{\wvqmone}{$v_{p+1}$}{$v_{q-1}$}{6}
          \node [svertex] (vppone) at (\wvppone:\cradius) {};
          \node [svertex] (vqmone) at (\wvqmone:\cradius) {};
          \cvertex{\wvq}{$v_q$}{vq}
          \cvertex{\wvqpone}{$v_{q+1}$}{vqpone}
          \cvertices{\wvqptwo}{\wvn}{$v_{q+2}$}{$v_n$}{5}
          \node [svertex] (vqptwo) at (\wvqptwo:\cradius) {};

          \cedgevv{nonedge}{vone}{vp}{}
          \cedgevv{mmedge}{vone}{vp}{}
          \cedgevv{nonedge}{vone}{vq}{}
          \cedgevv{mmedge}{vone}{vq}{}
          
          \cedgevw{nonedge}{vone}{\wvppone/2+\wvqmone/2}{}
          \cedgevw{edge}{vone}{\wvtwo/2+\wvpmone/2}{}
          \cedgevw{edge}{vone}{\wvqptwo/2+\wvn/2}{}
          \cedgevv{edge}{vone}{vqpone}{}
        \end{scope}
      \end{tikzpicture}
      \caption{Subgraph of the situation between~\behref{beh:2mmr:situation-v1}
        and~\behref{beh:2mmr:no-edges-fellows-opponents}}
      \label{fig:2mmr:situation-v1}
    \end{figure}

    Now $\deg(v_1) > 2$ together with~\behref{beh:2mmr:situation-v1} imply 
    that $v_1$ 
    has at least one fellow 
    different from $v_2$ and $v_n$.    
    Without loss of generality (by renumbering the vertices in the opposite 
    direction of rotation along $\gamma$) assume that $v_{q + 1}$ is a fellow  
    of $v_1$ different from $v_n$, so in other words we assume $q + 1 < n$.
    The situation is shown in Figure~\ref{fig:2mmr:situation-v1}.

    We will now prove several claims about edges, non-edges and metamours of 
    $G$.
    
    \begin{beh}\label{beh:2mmr:no-edges-fellows-opponents}
      No opponent $v_{p+1}$, \dots, $v_{q-1}$ is adjacent to any
      fellow $v_{2}$, \dots, $v_{p-1}$, $v_{q+1}$, \dots, $v_n$.
    \end{beh}
    \begin{proofbeh}{beh:2mmr:no-edges-fellows-opponents}
        Assume that $v_j$ is adjacent to $v_i$ for some 
        $j \in \set{p+1, \dots, q-1}$ and some $i \in \set{2, \dots, p-1} 
        \cup 
        \set{q+1, \dots, n}$. Then $v_j$ and $v_1$ have the common neighbor 
        $v_i$ because of~\behref{beh:2mmr:situation-v1}.
        Furthermore, $v_j$ and $v_1$ are 
        not adjacent by~\behref{beh:2mmr:situation-v1}, so $v_j$ and $v_1$ are 
        metamours.
        This is a contradiction to $v_p$ and $v_q$ being the only metamours 
        of $v_1$, therefore our assumption was wrong.
    \end{proofbeh}
    
    \begin{figure}
      \centering
      \begin{tikzpicture}
        \begin{scope}[shift={(0,0)}]
          \node at (0,-2.5) {(a) between~\behref{beh:2mmr:no-edges-fellows-opponents}
            and \behref{beh:2mmr:neighbors-of-metamours-are-metamours}};

          \draw [edge] (0,0) circle (\cradius);

          \def\wvone{90}
          \def\wvtwo{105}
          \def\wvpmone{195}
          \def\wvp{210}
          \def\wvppone{225}
          \def\wvqmone{315}
          \def\wvq{330}
          \def\wvqpone{345}
          \def\wvqptwo{360}
          \def\wvn{435}

          \cvertex{\wvone}{$v_1$}{vone}
          \cvertices{\wvtwo}{\wvpmone}{$v_2$}{$v_{p-1}$}{6}
          \node [svertex] (vpmone) at (\wvpmone:\cradius) {};
          \cvertex{\wvp}{$v_p$}{vp}
          \cvertices{\wvppone}{\wvqmone}{$v_{p+1}$}{$v_{q-1}$}{6}
          \node [svertex] (vppone) at (\wvppone:\cradius) {};
          \node [svertex] (vqmone) at (\wvqmone:\cradius) {};
          \cvertex{\wvq}{$v_q$}{vq}
          \cvertex{\wvqpone}{$v_{q+1}$}{vqpone}
          \cvertices{\wvqptwo}{\wvn}{$v_{q+2}$}{$v_n$}{5}
          \node [svertex] (vqptwo) at (\wvqptwo:\cradius) {};

          \cedgevv{nonedge}{vone}{vp}{}
          \cedgevv{mmedge}{vone}{vp}{}
          \cedgevv{nonedge}{vone}{vq}{}
          \cedgevv{mmedge}{vone}{vq}{}
          
          \cedgevw{nonedge}{vone}{\wvppone/2+\wvqmone/2}{}
          \cedgevw{edge}{vone}{\wvtwo/2+\wvpmone/2}{}
          \cedgevw{edge}{vone}{\wvqptwo/2+\wvn/2}{}
          \cedgevv{edge}{vone}{vqpone}{}

          \cedgeww{nonedge}{\wvppone/2+\wvqmone/2}{\wvtwo/2+\wvpmone/2}{}
          \cedgeww{nonedge}{\wvppone/2+\wvqmone/2}{\wvqptwo/2+\wvn/2}{}
          \cedgewv{nonedge}{\wvppone/2+\wvqmone/2}{vqpone}{}
        \end{scope}
        \begin{scope}[shift={(6,0)}]
          \node at (0,-2.5) {(b) between~\behref{beh:2mmr:neighbors-of-metamours-are-metamours}
            and~\behref{beh:2mmr:second-metamour-vl}};

          \draw [edge] (0,0) circle (\cradius);
          
          \def\wvone{90}
          \def\wvtwo{105}
          \def\wvpmone{195}
          \def\wvp{210}
          \def\wvppone{225}
          \def\wvqmone{315}
          \def\wvq{330}
          \def\wvqpone{345}
          \def\wvqptwo{360}
          \def\wvn{435}

          \cvertex{\wvone}{$v_1$}{vone}
          \cvertices{\wvtwo}{\wvpmone}{$v_2$}{$v_{p-1}$}{6}
          \node [svertex] (vpmone) at (\wvpmone:\cradius) {};
          \cvertex{\wvp}{$v_p$}{vp}
          \cvertices{\wvppone}{\wvqmone}{$v_{p+1}$}{$v_{q-1}$}{6}
          \node [svertex] (vppone) at (\wvppone:\cradius) {};
          \node [svertex] (vqmone) at (\wvqmone:\cradius) {};
          \cvertex{\wvq}{$v_q$}{vq}
          \cvertex{\wvqpone}{$v_{q+1}$}{vqpone}
          \cvertices{\wvqptwo}{\wvn}{$v_{q+2}$}{$v_n$}{5}
          \node [svertex] (vqptwo) at (\wvqptwo:\cradius) {};

          \cedgevv{nonedge}{vone}{vp}{}
          \cedgevv{mmedge}{vone}{vp}{}
          \cedgevv{nonedge}{vone}{vq}{}
          \cedgevv{mmedge}{vone}{vq}{}
          
          \cedgevw{nonedge}{vone}{\wvppone/2+\wvqmone/2}{}
          \cedgevw{edge}{vone}{\wvtwo/2+\wvpmone/2}{}
          \cedgevw{edge}{vone}{\wvqptwo/2+\wvn/2}{}
          \cedgevv{edge}{vone}{vqpone}{}

          \cedgeww{nonedge}{\wvppone/2+\wvqmone/2}{\wvtwo/2+\wvpmone/2}{}
          \cedgeww{nonedge}{\wvppone/2+\wvqmone/2}{\wvqptwo/2+\wvn/2}{}
          \cedgewv{nonedge}{\wvppone/2+\wvqmone/2}{vqpone}{}

          \cedgevv{nonedge}{vpmone}{vppone}{looseness=2}
          \cedgevv{mmedge}{vpmone}{vppone}{looseness=2}
          \cedgevv{nonedge}{vqmone}{vqpone}{looseness=2}
          \cedgevv{mmedge}{vqmone}{vqpone}{looseness=2}
        \end{scope}
      \end{tikzpicture}
      \caption{Subgraphs of the situations
        between~\behref{beh:2mmr:no-edges-fellows-opponents},
        \behref{beh:2mmr:neighbors-of-metamours-are-metamours}, 
        and~\behref{beh:2mmr:second-metamour-vl}}
      \label{fig:2mmr:no-edges-fellows-opponents}
    \end{figure}

    The known edges and non-edges at this moment are
    shown in Figure~\ref{fig:2mmr:no-edges-fellows-opponents}(a).
    
    \begin{beh}\label{beh:2mmr:neighbors-of-metamours-are-metamours}
        The vertices $v_{q-1}$ and $v_{q+1}$ are metamours of each other. 
        Also the vertices $v_{p-1}$ and $v_{p+1}$ are metamours of each other.
    \end{beh}
    \begin{proofbeh}{beh:2mmr:neighbors-of-metamours-are-metamours}
        The vertices $v_{q-1}$ and $v_{q+1}$ have the common neighbor 
        $v_{q}$ and are not adjacent due 
        to~\behref{beh:2mmr:no-edges-fellows-opponents}, so they are metamours. 
        Also $v_{p-1}$ and $v_{p+1}$ are metamours because they have $v_p$ as  
        a common neighbor and are not adjacent because 
        of~\behref{beh:2mmr:no-edges-fellows-opponents}.
    \end{proofbeh}
    
    Now we are in the situation shown
    in Figure~\ref{fig:2mmr:no-edges-fellows-opponents}(b).
   
    \begin{beh}\label{beh:2mmr:second-metamour-vl}
        The vertices $v_{q}$ and $v_{q + 2}$ are metamours 
        of each other.
    \end{beh}
    \begin{figure}
      \centering
      \begin{tikzpicture}
        \begin{scope}[shift={(0,0)},
          spy using outlines={circle, magnification=2, connect spies}]

          \draw [edge] (0,0) circle (\cradius);
          
          \def\wvone{90}
          \def\wvtwo{105}
          \def\wvpmone{195}
          \def\wvp{210}
          \def\wvppone{225}
          \def\wvqmone{315}
          \def\wvq{330}
          \def\wvqpone{345}
          \def\wvqptwo{360}
          \def\wvn{435}

          \cvertex{\wvone}{$v_1$}{vone}
          \cvertices{\wvtwo}{\wvpmone}{$v_2$}{$v_{p-1}$}{6}
          \node [svertex] (vpmone) at (\wvpmone:\cradius) {};
          \cvertex{\wvp}{$v_p$}{vp}
          \cvertices{\wvppone}{\wvqmone}{$v_{p+1}$}{$v_{q-1}$}{6}
          \node [svertex] (vppone) at (\wvppone:\cradius) {};
          \node [svertex] (vqmone) at (\wvqmone:\cradius) {};
          \cvertex{\wvq}{$v_q$}{vq}
          \cvertex{\wvqpone}{$v_{q+1}$}{vqpone}
          \cvertices{\wvqptwo}{\wvn}{$v_{q+2}$}{$v_n$}{5}
          \node [svertex] (vqptwo) at (\wvqptwo:\cradius) {};

          \cedgevv{nonedge}{vone}{vp}{}
          \cedgevv{mmedge}{vone}{vp}{}
          \cedgevv{nonedge}{vone}{vq}{}
          \cedgevv{mmedge}{vone}{vq}{}
          
          \cedgevw{nonedge}{vone}{\wvppone/2+\wvqmone/2}{}
          \cedgevw{edge}{vone}{\wvtwo/2+\wvpmone/2}{}
          \cedgevw{edge}{vone}{\wvqptwo/2+\wvn/2}{}
          \cedgevv{edge}{vone}{vqpone}{}

          \cedgeww{nonedge}{\wvppone/2+\wvqmone/2}{\wvtwo/2+\wvpmone/2}{}
          \cedgeww{nonedge}{\wvppone/2+\wvqmone/2}{\wvqptwo/2+\wvn/2}{}
          \cedgewv{nonedge}{\wvppone/2+\wvqmone/2}{vqpone}{}

          \cedgevv{nonedge}{vpmone}{vppone}{looseness=2}
          \cedgevv{mmedge}{vpmone}{vppone}{looseness=2}
          \cedgevv{nonedge}{vqmone}{vqpone}{looseness=2}
          \cedgevv{mmedge}{vqmone}{vqpone}{looseness=2}

          \cedgevv{edge}{vq}{vqptwo}{looseness=2}
          \cedgevv{nonedge}{vqmone}{vqptwo}{looseness=2}
          \cedgevv{mmedge}{vqmone}{vqptwo}{looseness=2}
          \cedgevv{edge}{vqmone}{vone}{}

          \spy [blue, size=4cm] on (337.5:\cinner) in node [fill=white] at (6,0);
        \end{scope}
      \end{tikzpicture}
      \caption{Subgraph of the situation
        in the proof of~\behref{beh:2mmr:second-metamour-vl}}
      \label{fig:2mmr:second-metamour-vl}
    \end{figure}
    \begin{proofbeh}{beh:2mmr:second-metamour-vl}
        This proof is accompanied by Figure~\ref{fig:2mmr:second-metamour-vl}.
        Assume $v_{q}$ and $v_{q+2}$ are adjacent. 
        Then $v_{q-1}$ and $v_{q+2}$ have the common neighbor $v_q$ 
        and are not adjacent because 
        of~\behref{beh:2mmr:no-edges-fellows-opponents}. Hence, $v_{q+2}$ is 
        a metamour of $v_{q-1}$. Due 
        to~\behref{beh:2mmr:neighbors-of-metamours-are-metamours},
        $v_{q + 1}$ is the second metamour of $v_{q-1}$.
        Both metamours are consecutive vertices on the cycle~$\gamma$,
        therefore, every other vertex except~$v_{q-1}$ is a fellow
        of~$v_{q-1}$, thus adjacent to~$v_{q-1}$
        by~\behref{beh:2mmr:adj-to-fellows}. In particular, $v_1$ is
        adjacent to~$v_{q-1}$ which
        contradicts~\behref{beh:2mmr:situation-v1}.

        Therefore, $v_{q}$ and $v_{q+2}$ are not adjacent and because
        of their common neighbor~$v_{q+1}$, metamours.
    \end{proofbeh}

    \begin{beh}\label{beh:2mmr:neighbors-vl}
      The vertex $v_q$ is adjacent to $v_2$, \dots, $v_{p-1}$,
      $v_p$, $v_{p+1}$, \dots, $v_{q-1}$.
    \end{beh}
    \begin{proofbeh}{beh:2mmr:neighbors-vl}
        The two metamours of $v_q$ are $v_1$ and $v_{q+2}$ because 
        of~\behref{beh:2mmr:second-metamour-vl}.
        This implies that $v_{2}$, \dots, $v_{q-1}$ are fellows of $v_q$ 
        and therefore adjacent to 
        $v_q$ because of~\behref{beh:2mmr:adj-to-fellows}.
    \end{proofbeh}    
            
    \begin{figure}
      \centering
      \begin{tikzpicture}
        \begin{scope}[shift={(0,0)},
          spy using outlines={circle, magnification=2, connect spies}]

          \draw [edge] (0,0) circle (\cradius);
          
          \def\wvone{90}
          \def\wvtwo{105}
          \def\wvpmone{195}
          \def\wvp{210}
          \def\wvppone{225}
          \def\wvqmone{315}
          \def\wvq{330}
          \def\wvqpone{345}
          \def\wvqptwo{360}
          \def\wvn{435}

          \cvertex{\wvone}{$v_1$}{vone}
          \cvertices{\wvtwo}{\wvpmone}{$v_2$}{$v_{p-1}$}{6}
          \node [svertex] (vpmone) at (\wvpmone:\cradius) {};
          \cvertex{\wvp}{$v_p$}{vp}
          \cvertices{\wvppone}{\wvqmone}{$v_{p+1}$}{$v_{q-1}$}{6}
          \node [svertex] (vppone) at (\wvppone:\cradius) {};
          \node [svertex] (vqmone) at (\wvqmone:\cradius) {};
          \cvertex{\wvq}{$v_q$}{vq}
          \cvertex{\wvqpone}{$v_{q+1}$}{vqpone}
          \cvertices{\wvqptwo}{\wvn}{$v_{q+2}$}{$v_n$}{5}
          \node [svertex] (vqptwo) at (\wvqptwo:\cradius) {};

          \cedgevv{nonedge}{vone}{vp}{}
          \cedgevv{mmedge}{vone}{vp}{}
          \cedgevv{nonedge}{vone}{vq}{}
          \cedgevv{mmedge}{vone}{vq}{}
          
          \cedgevw{nonedge}{vone}{\wvppone/2+\wvqmone/2}{}
          \cedgevw{edge}{vone}{\wvtwo/2+\wvpmone/2}{}
          \cedgevw{edge}{vone}{\wvqptwo/2+\wvn/2}{}
          \cedgevv{edge}{vone}{vqpone}{}

          \cedgeww{nonedge}{\wvppone/2+\wvqmone/2}{\wvtwo/2+\wvpmone/2}{}
          \cedgeww{nonedge}{\wvppone/2+\wvqmone/2}{\wvqptwo/2+\wvn/2}{}
          \cedgewv{nonedge}{\wvppone/2+\wvqmone/2}{vqpone}{}

          \cedgevv{nonedge}{vpmone}{vppone}{looseness=2}
          \cedgevv{mmedge}{vpmone}{vppone}{looseness=2}
          \cedgevv{nonedge}{vqmone}{vqpone}{looseness=2}
          \cedgevv{mmedge}{vqmone}{vqpone}{looseness=2}

          \cedgevv{nonedge}{vq}{vqptwo}{looseness=2}
          \cedgevv{mmedge}{vq}{vqptwo}{looseness=2}

          \cedgevw{edge}{vq}{\wvtwo/2+\wvpmone/2}{}
          \cedgevv{edge}{vq}{vp}{}
          \cedgevw{edge}{vq}{\wvppone/2+\wvqmone/2}{}

          \spy [blue, size=4cm] on (337.5:\cinner) in node [fill=white] at (6,0);
        \end{scope}
      \end{tikzpicture}
      \caption{Subgraph of the situation
        between~\behref{beh:2mmr:neighbors-vl} and
        \behref{beh:2mmr:only-one-fellow-one-side}}
      \label{fig:2mmr:2mmr:neighbors-vl}
    \end{figure}

    Figure~\ref{fig:2mmr:2mmr:neighbors-vl} shows the current situation.

    \begin{beh}\label{beh:2mmr:only-one-fellow-one-side}
        The vertices $v_{q-1}$ and $v_{p -1}$ are metamours of each other.
        Furthermore, $v_{p-1} = v_2$ holds, so there is exactly one fellow of 
        $v_1$ on 
        the cycle $\gamma$ between $v_1$ and $v_p$.
    \end{beh}
    \begin{proofbeh}{beh:2mmr:only-one-fellow-one-side}
        The vertex $v_{q-1}$ is not adjacent to any of $v_{2}$, \dots, 
        $v_{p-1}$ due to~\behref{beh:2mmr:no-edges-fellows-opponents}. 
        Furthermore, $v_{q-1}$
        has the common neighbor $v_q$ with each of these vertices because 
        of~\behref{beh:2mmr:neighbors-vl}. So every vertex $v_{2}$, \dots, 
        $v_{p-1}$ is a metamour of $v_{q-1}$. 
        This implies $\abs{\set{ v_{2}, \dots, v_{p-1}}} \le 1$ because 
        $v_{q-1}$ 
        also has $v_{q+1}$ as metamour and has in total exactly two 
        metamours.
        Moreover, as $v_2$ is adjacent to~$v_1$, $v_1$ and $v_2$ are not
        metamours, thus $v_2$ and $v_p$ cannot coincide.
        This implies $p > 2$ has to hold.
        In consequence, we obtain $p = 3$ implying $v_{p-1}=v_2$ has to hold.
    \end{proofbeh}

    \begin{figure}[t]
      \centering
      \begin{tikzpicture}
        \begin{scope}[shift={(0,0)}]
          \node at (0,-3.5) {(a)
          between~\behref{beh:2mmr:only-one-fellow-one-side}
            and \behref{beh:2mmr:exactly-one-opponent}};
      
          \draw [edge] (0,0) circle (\cradius);

          \def\wvone{90}
          \def\wvtwo{150}
          \def\wvpmone{150}
          \def\wvp{210}
          \def\wvppone{225}
          \def\wvqmone{315}
          \def\wvq{330}
          \def\wvqpone{345}
          \def\wvqptwo{360}
          \def\wvn{435}

          \cvertex{\wvone}{$v_1$}{vone}
          \cvertex{\wvpmone}{$v_{p-1}=v_2$}{vpmone}
          \cvertex{\wvp}{$v_p=v_3$}{vp}
          \cvertices{\wvppone}{\wvqmone}{$v_{p+1}=v_4$}{$v_{q-1}$}{6}
          \node [svertex] (vppone) at (\wvppone:\cradius) {};
          \node [svertex] (vqmone) at (\wvqmone:\cradius) {};
          \cvertex{\wvq}{$v_q$}{vq}
          \cvertex{\wvqpone}{$v_{q+1}$}{vqpone}
          \cvertices{\wvqptwo}{\wvn}{$v_{q+2}$}{$v_n$}{5}
          \node [svertex] (vqptwo) at (\wvqptwo:\cradius) {};

          \cedgevv{nonedge}{vone}{vp}{}
          \cedgevv{mmedge}{vone}{vp}{}
          \cedgevv{nonedge}{vone}{vq}{}
          \cedgevv{mmedge}{vone}{vq}{}
          
          \cedgevw{nonedge}{vone}{\wvppone/2+\wvqmone/2}{}
          \cedgevw{edge}{vone}{\wvqptwo/2+\wvn/2}{}
          \cedgevv{edge}{vone}{vqpone}{}

          \cedgewv{nonedge}{\wvppone/2+\wvqmone/2}{vpmone}{}
          \cedgeww{nonedge}{\wvppone/2+\wvqmone/2}{\wvqptwo/2+\wvn/2}{}
          \cedgewv{nonedge}{\wvppone/2+\wvqmone/2}{vqpone}{}

          \cedgevv{nonedge}{vpmone}{vppone}{looseness=1.5}
          \cedgevv{mmedge}{vpmone}{vppone}{looseness=1.5}
          \cedgevv{nonedge}{vqmone}{vqpone}{looseness=1.5}
          \cedgevv{mmedge}{vqmone}{vqpone}{looseness=1.5}

          \cedgevv{nonedge}{vq}{vqptwo}{looseness=1.5}
          \cedgevv{mmedge}{vq}{vqptwo}{looseness=1.5}

          \cedgevv{edge}{vq}{vpmone}{}
          \cedgevv{edge}{vq}{vp}{}
          \cedgevw{edge}{vq}{\wvppone/2+\wvqmone/2}{}
          \cedgevv{nonedge}{vpmone}{vqmone}{}
          \cedgevv{mmedge}{vpmone}{vqmone}{}
        \end{scope}
        \begin{scope}[shift={(7,0)}]
          \node at (0,-3.5) {(b) between~\behref{beh:2mmr:exactly-one-opponent}
            and \behref{beh:2mmr:second-metamour-vp}};

          \draw [edge] (0,0) circle (\cradius);

          \def\wvone{90}
          \def\wvtwo{150}
          \def\wvpmone{150}
          \def\wvp{210}
          \def\wvppone{270}
          \def\wvqmone{270}
          \def\wvq{330}
          \def\wvqpone{345}
          \def\wvqptwo{360}
          \def\wvn{435}

          \cvertex{\wvone}{$v_1$}{vone}
          \cvertex{\wvpmone}{$v_{p-1}=v_2$}{vpmone}
          \cvertex{\wvp}{$v_p=v_3$}{vp}
          \cvertex{\wvppone}{\rotatebox{90}{$\begin{aligned}&v_{q-1}\\[-1ex]=&v_{p+1}\\[-1ex]=&v_4\end{aligned}$}}{vppone}
          \cvertex{\wvqmone}{}{vqmone}
          \node [svertex] (vqmone) at (\wvqmone:\cradius) {};
          \cvertex{\wvq}{$v_q=v_5$}{vq}
          \cvertex{\wvqpone}{$v_{q+1}=v_6$}{vqpone}
          \cvertices{\wvqptwo}{\wvn}{$v_{q+2}=v_7$}{$v_n$}{5}
          \node [svertex] (vqptwo) at (\wvqptwo:\cradius) {};

          \cedgevv{nonedge}{vone}{vp}{}
          \cedgevv{mmedge}{vone}{vp}{}
          \cedgevv{nonedge}{vone}{vq}{}
          \cedgevv{mmedge}{vone}{vq}{}
          
          \cedgevv{nonedge}{vone}{vppone}{}
          \cedgevw{edge}{vone}{\wvqptwo/2+\wvn/2}{}
          \cedgevv{edge}{vone}{vqpone}{}

          \cedgevw{nonedge}{vppone}{\wvqptwo/2+\wvn/2}{}

          \cedgevv{nonedge}{vpmone}{vppone}{looseness=1}
          \cedgevv{mmedge}{vpmone}{vppone}{looseness=1}
          \cedgevv{nonedge}{vqmone}{vqpone}{looseness=1}
          \cedgevv{mmedge}{vqmone}{vqpone}{looseness=1}

          \cedgevv{nonedge}{vq}{vqptwo}{looseness=1}
          \cedgevv{mmedge}{vq}{vqptwo}{looseness=1}

          \cedgevv{edge}{vq}{vpmone}{}
          \cedgevv{edge}{vq}{vp}{}
        \end{scope}
      \end{tikzpicture}
      \caption{Subgraphs of the situations
        between~\behref{beh:2mmr:only-one-fellow-one-side},
        \behref{beh:2mmr:exactly-one-opponent} and
        \behref{beh:2mmr:second-metamour-vp}}
      \label{fig:2mmr:only-one-fellow-one-side}
    \end{figure}
    
    We are now in the situation shown in
    Figure~\ref{fig:2mmr:only-one-fellow-one-side}(a).
    
    \begin{beh}\label{beh:2mmr:exactly-one-opponent}
        It holds that $v_{p+1} = v_{q-1}$, so $v_1$ has exactly one opponent.
    \end{beh}
    \begin{proofbeh}{beh:2mmr:exactly-one-opponent}        
        The vertices $v_{q+1}$ and $v_{p-1}$ are 
        metamours of $v_{q-1}$ because 
        of~\behref{beh:2mmr:neighbors-of-metamours-are-metamours} 
        and~\behref{beh:2mmr:only-one-fellow-one-side}.
        Furthermore, $v_{p-1}$ and $v_{p+1}$ are metamours because 
        of~\behref{beh:2mmr:neighbors-of-metamours-are-metamours}.
        
        Now assume $p + 1 < q - 1$, so the vertices $v_{p+1}$ and 
        $v_{q-1}$ are distinct. Then $v_{p+1}$ and $v_{q+1}$ have the 
        common neighbor $v_q$ because of~\behref{beh:2mmr:neighbors-vl} and 
        they are not adjacent because 
        of~\behref{beh:2mmr:no-edges-fellows-opponents}, so they are metamours. 
        This implies that $(v_{q-1}, v_{q+1}, v_{p+1}, 
        v_{p-1},v_{q-1})$ is a cycle in the metamour graph that does not 
        contain all vertices, a contradiction to our assumption.
        So $p + 1 = q - 1$. 
    \end{proofbeh}
    
    Now we are in the situation shown in
    of Figure~\ref{fig:2mmr:only-one-fellow-one-side}(b).
    
    \begin{figure}[t]
      \centering
      \begin{tikzpicture}
        \begin{scope}[shift={(0,0)}]
          \node at (0,-3.5) {(a) between~\behref{beh:2mmr:second-metamour-vp}
            and \behref{beh:2mmr:only-two-fellows-other-side}};

          \draw [edge] (0,0) circle (\cradius);

          \def\wvone{90}
          \def\wvtwo{150}
          \def\wvpmone{150}
          \def\wvp{210}
          \def\wvppone{270}
          \def\wvqmone{270}
          \def\wvq{330}
          \def\wvqpone{345}
          \def\wvqptwo{360}
          \def\wvn{435}

          \cvertex{\wvone}{$v_1$}{vone}
          \cvertex{\wvpmone}{$v_{p-1}=v_2$}{vpmone}
          \cvertex{\wvp}{$v_p=v_3$}{vp}
          \cvertex{\wvppone}{\rotatebox{90}{$\begin{aligned}&v_{q-1}\\[-1ex]=&v_{p+1}\\[-1ex]=&v_4\end{aligned}$}}{vppone}
          \cvertex{\wvqmone}{}{vqmone}
          \node [svertex] (vqmone) at (\wvqmone:\cradius) {};
          \cvertex{\wvq}{$v_q=v_5$}{vq}
          \cvertex{\wvqpone}{$v_{q+1}=v_6$}{vqpone}
          \cvertices{\wvqptwo}{\wvn}{$v_{q+2}=v_7$}{$v_n$}{5}
          \node [svertex] (vqptwo) at (\wvqptwo:\cradius) {};

          \cedgevv{nonedge}{vone}{vp}{}
          \cedgevv{mmedge}{vone}{vp}{}
          \cedgevv{nonedge}{vone}{vq}{}
          \cedgevv{mmedge}{vone}{vq}{}
          
          \cedgevv{nonedge}{vone}{vppone}{}
          \cedgevw{edge}{vone}{\wvqptwo/2+\wvn/2}{}
          \cedgevv{edge}{vone}{vqpone}{}

          \cedgevw{nonedge}{vppone}{\wvqptwo/2+\wvn/2}{}

          \cedgevv{nonedge}{vpmone}{vppone}{looseness=1}
          \cedgevv{mmedge}{vpmone}{vppone}{looseness=1}
          \cedgevv{nonedge}{vqmone}{vqpone}{looseness=1}
          \cedgevv{mmedge}{vqmone}{vqpone}{looseness=1}

          \cedgevv{nonedge}{vq}{vqptwo}{looseness=1}
          \cedgevv{mmedge}{vq}{vqptwo}{looseness=1}

          \cedgevv{edge}{vq}{vpmone}{}
          \cedgevv{edge}{vq}{vp}{}

          \cedgevv{nonedge}{vp}{vqpone}{}
          \cedgevv{mmedge}{vp}{vqpone}{}
          \cedgevw{nonedge}{vp}{\wvqptwo/2+\wvn/2}{}
        \end{scope}
        \begin{scope}[shift={(7,0)}]
          \node at (0.5,-3.5) {(b)
          between~\behref{beh:2mmr:only-two-fellows-other-side}
            and \behref{beh:2mmr:g-is-h7b}};

          \draw [edge] (0,0) circle (\cradius);

          \def\wvone{90}
          \def\wvtwo{150}
          \def\wvpmone{150}
          \def\wvp{210}
          \def\wvppone{270}
          \def\wvqmone{270}
          \def\wvq{330}
          \def\wvqpone{345}
          \def\wvqptwo{398}
          \def\wvn{398}

          \cvertex{\wvone}{$v_1$}{vone}
          \cvertex{\wvpmone}{$v_{p-1}=v_2$}{vpmone}
          \cvertex{\wvp}{$v_p=v_3$}{vp}
          \cvertex{\wvppone}{\rotatebox{90}{$\begin{aligned}&v_{q-1}\\[-1ex]=&v_{p+1}\\[-1ex]=&v_4\end{aligned}$}}{vppone}
          \cvertex{\wvqmone}{}{vqmone}
          \node [svertex] (vqmone) at (\wvqmone:\cradius) {};
          \cvertex{\wvq}{$v_q=v_5$}{vq}
          \cvertex{\wvqpone}{$v_{q+1}=v_6$}{vqpone}
          \cvertex{\wvqptwo}{$\begin{aligned}&v_n\\[-1ex]=&v_{q+2}\\[-1ex]=&v_7\end{aligned}$}{vqptwo}

          \cedgevv{nonedge}{vone}{vp}{}
          \cedgevv{mmedge}{vone}{vp}{}
          \cedgevv{nonedge}{vone}{vq}{}
          \cedgevv{mmedge}{vone}{vq}{}
          
          \cedgevv{nonedge}{vone}{vppone}{}
          \cedgevv{edge}{vone}{vqpone}{}

          \cedgevv{nonedge}{vppone}{vqptwo}{}

          \cedgevv{nonedge}{vpmone}{vppone}{looseness=1}
          \cedgevv{mmedge}{vpmone}{vppone}{looseness=1}
          \cedgevv{nonedge}{vqmone}{vqpone}{looseness=1}
          \cedgevv{mmedge}{vqmone}{vqpone}{looseness=1}

          \cedgevv{nonedge}{vq}{vqptwo}{looseness=1}
          \cedgevv{mmedge}{vq}{vqptwo}{looseness=1}

          \cedgevv{edge}{vq}{vpmone}{}
          \cedgevv{edge}{vq}{vp}{}

          \cedgevv{nonedge}{vp}{vqpone}{}
          \cedgevv{mmedge}{vp}{vqpone}{}
          \cedgevv{nonedge}{vp}{vqptwo}{}

          \cedgevv{edge}{vpmone}{vqpone}{}
        \end{scope}
      \end{tikzpicture}
      \caption{Subgraphs of the situations
        between~\behref{beh:2mmr:second-metamour-vp},
        \behref{beh:2mmr:only-two-fellows-other-side} and
        \behref{beh:2mmr:g-is-h7b}}
      \label{fig:2mmr:exactly-one-opponent}
    \end{figure}
            
    \begin{beh}\label{beh:2mmr:second-metamour-vp}
        The vertices $v_{p}$ and $v_{q + 1}$ are metamours of each other. 
        Furthermore, $v_{p}$ is not adjacent to any of the vertices 
        $v_{q+2}$, \dots, 
        $v_{n}$. 
    \end{beh}
    \begin{proofbeh}{beh:2mmr:second-metamour-vp}
        If $v_p$ is adjacent to $v_i$ for $i \in \set{q+2, \dots, n}$, then 
        $v_{p+1}$ and $v_i$ are metamours because they have $v_p$ as a common 
        neighbor, and they are not adjacent due 
        to~\behref{beh:2mmr:no-edges-fellows-opponents}. This is a 
        contradiction as $v_{p+1}$ already has the two metamours 
        $v_{p-1}$ and $v_{q+1}$ because 
        of~\behref{beh:2mmr:neighbors-of-metamours-are-metamours} 
        and an implication of~\behref{beh:2mmr:exactly-one-opponent}.
        As a result, $v_p$ is not adjacent to any of $v_{q+2}$, \dots, 
        $v_{n}$. 
        
        If $v_p$ would be adjacent to $v_{q+1}$, then $v_p$ and $v_{q+2}$ 
        are metamours because of the common neighbor $v_{q+1}$ and because 
        they are not adjacent by the above. But then, due 
        to~\behref{beh:2mmr:second-metamour-vl}, 
        $(v_p,v_1,v_q,v_{q+2},v_p)$ is a cycle in the metamour graph 
        which does not contain all vertices, a contradiction to our
        assumption. Therefore, $v_p$ 
        is not adjacent to $v_{q+1}$. The vertex $v_p$ is adjacent to
        $v_{q}$ due to~\behref{beh:2mmr:neighbors-vl}, therefore $v_{q}$ is a
        common neighbor of $v_p$ and $v_{q+1}$, and hence these vertices
        are metamours.
    \end{proofbeh} 
    
    Figure~\ref{fig:2mmr:exactly-one-opponent}(a) shows the situation.

    \begin{beh}\label{beh:2mmr:only-two-fellows-other-side}
        It holds that $q + 2 = n$, so there are exactly two fellows of $v_1$ 
        on the cycle $\gamma$ between $v_q$ and $v_1$. 
        Furthermore, the vertices $v_{p-1}$ and $v_{q+2}$ are metamours of 
        each 
        other, and $v_{p-1}$ is adjacent to all vertices except its metamours.
    \end{beh}
    \begin{proofbeh}{beh:2mmr:only-two-fellows-other-side}        
        The vertex $v_{p-1}$ is a metamour of $v_{q-1}$ due 
        to~\behref{beh:2mmr:only-one-fellow-one-side}, and it is adjacent to 
        $v_q$ because of~\behref{beh:2mmr:neighbors-vl}. This together with 
        $p+1 = q-1$ by~\behref{beh:2mmr:exactly-one-opponent} implies that 
        $v_{p-1}$ is adjacent to one of its opponents, namely $v_{q}$. Then 
        by~\behref{beh:2mmr:adj-to-all-or-non-opponents} 
        and~\behref{beh:2mmr:adj-to-fellows}, this implies that $v_{p-1}$ is 
        adjacent to all vertices except its metamours.
        
        If $v_{p-1}$ is adjacent to a vertex $v_i$ for 
        $i \in \set{q+2, \dots, n}$, then $v_{p}$ and $v_{i}$ are metamours 
        because they have $v_{p-1}$ as common neighbor and are not adjacent due 
        to~\behref{beh:2mmr:second-metamour-vp}. But $v_p$ already has the two 
        metamours $v_1$ and $v_{q+1}$ due 
        to~\behref{beh:2mmr:second-metamour-vp}, a contradiction.
        As a result, $v_{p-1}$ is not adjacent to any vertex of $v_{q+2}$, 
        \dots, $v_n$.
        
        Now assume $q + 2 < n$, so the vertex $v_{q+3}$ exists.
        Due to the fact that $v_{p-1}$ is adjacent to all vertices except its 
        metamours and that it has $v_{p+1}$ as metamour 
        by~\behref{beh:2mmr:neighbors-of-metamours-are-metamours}, it follows 
        that it is adjacent to at least one of $v_{q+2}$ and $v_{q+3}$. 
        But we showed that $v_{p-1}$ is not adjacent to any of these two 
        vertices, a contradiction. Therefore, $q + 2 = n$ holds.
        Furthermore, $v_{p-1}$ is not adjacent to $v_{q+2}$, and therefore 
        these two vertices are metamours of each other.
    \end{proofbeh}
    
    Our final figure is
    Figure~\ref{fig:2mmr:exactly-one-opponent}(b).
    
    \begin{beh}\label{beh:2mmr:g-is-h7b}
        It holds that $G = \specialH{7}{b}$.
    \end{beh}
    \begin{proofbeh}{beh:2mmr:g-is-h7b}
        We 
        have $p-1 = 2$ by~\behref{beh:2mmr:only-one-fellow-one-side}, we have 
        $p+1=q-1$ by~\behref{beh:2mmr:exactly-one-opponent} and $q+2=n$ 
        by~\behref{beh:2mmr:only-two-fellows-other-side}. This implies that 
        $n=7$. 
        
        The properties we have derived so far fix all 
        edges and non-edges of $G$ except between~$v_2$ and $v_7$. This
        has to be a non-edge to close the metamour cycle.
        The result is $G = \specialH{7}{b}$.
        With respect to Figure~\ref{fig:graphs-7},
        $v_1$ is the top left vertex of $\specialH{7}{b}$ and the 
        vertices 
        are numbered clock-wise.        
    \end{proofbeh}
    
    This completes the proof of Lemma~\ref{lem:2mmr:deg-two-options}.
\end{proof}

Next we consider all cases of 
$2$-metamour-regular graphs whose metamour graph is connected, that contain  
a cycle of length~$n$ and whose degrees are not as in the previous lemma.

    \begin{lemma}\label{lemma:2mmr:all-deg-nMinus3}
      Let $G$ be a connected $2$-metamour-regular graph with $n$ vertices
      \begin{itemize}
      \item whose metamour graph equals the~$C_n$,
      \item that contains a cycle of length~$n$, and
      \item in which every vertex has degree $n-3$.
      \end{itemize}
      Then
      \begin{equation*}
        G=\complement{C_n}.
      \end{equation*}
    \end{lemma}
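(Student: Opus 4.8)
The plan is to note that the degree hypothesis alone, together with the foundational results of Section~\ref{sec:formal-stuff} relating a graph's degrees, its complement and its metamour graph, immediately forces the conclusion; none of the cycle-chasing used for the genuinely exceptional graphs is needed here.

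Concretely, I would apply Proposition~\ref{pro:regular-join-and-mm-graphs} with $k=2$. The graph $G$ is a connected $2$-metamour-regular graph with $n$ vertices in which every vertex has degree $n-3 = n-1-2$, so statement~\ref{it:pro:regular-join-and-mm-graphs:regular} of that proposition is satisfied. By the equivalence asserted there, statement~\ref{it:pro:regular-join-and-mm-graphs:M-eq-compl} also holds, i.e.\ the metamour graph $M$ of $G$ equals $\complement{G}$. (One could equivalently argue via Proposition~\ref{pro:mm-eq-complement:degree}: for every vertex the sum of its degree and its metamour-degree is $(n-3)+2 = n-1$, which gives $M = \complement{G}$ directly.) Now by hypothesis $M = C_n$, hence $\complement{G} = C_n$, and taking complements yields $G = \complement{C_n}$, which is the claim. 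For completeness one would also remark that $\complement{C_n}$ is indeed $2$-metamour-regular (by Proposition~\ref{pro:join-of-complements-is-kmr}, or by Remark~\ref{rem:frequently-used-complements} and a direct check) and does contain a cycle of length $n$ for $n \ge 5$, so the hypotheses of the lemma are consistent.

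I do not anticipate any substantive obstacle. The only points that require care are matching the hypotheses of Proposition~\ref{pro:regular-join-and-mm-graphs} exactly—connectedness, $2$-metamour-regularity, and the precise regularity value $n-1-k$—and observing that the assumption ``$G$ contains a cycle of length $n$'', although part of the surrounding case distinction in the proof of Theorem~\ref{thm:2mriff}, plays no role in deriving $G=\complement{C_n}$ in this particular case.
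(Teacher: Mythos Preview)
Your proposal is correct and takes essentially the same approach as the paper: both argue that the degree hypothesis forces the metamour graph to coincide with $\complement{G}$, and then use the assumption $M=C_n$ to conclude. The only difference is cosmetic---the paper re-derives this step inline (degree $n-3$ means two non-neighbors, which must be the two metamours), whereas you invoke Proposition~\ref{pro:regular-join-and-mm-graphs} (or equivalently Proposition~\ref{pro:mm-eq-complement:degree}) which packages exactly that reasoning.
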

    \begin{proof}
        If a vertex~$v$ of $G$ has degree $n-3$, then $v$ 
        is adjacent to all but two vertices.
        These two vertices are exactly the metamours of~$v$.
        This implies that $G$ equals the complement of the
        metamour graph.
        Hence, $G=\complement{C_n}$ as the metamour graph of 
        $G$ is the~$C_n$.
    \end{proof}
  
    \begin{lemma}\label{lemma:2mmr:all-deg-2}
      Let $G$ be a connected $2$-metamour-regular graph with $n$ vertices
      \begin{itemize}
      \item whose metamour graph equals the~$C_n$,
      \item that contains a cycle of length~$n$, and
      \item in which every vertex has degree $2$.
      \end{itemize}
      Then
      \begin{equation*}
        G=C_n
      \end{equation*}
      and $n$ is odd.
    \end{lemma}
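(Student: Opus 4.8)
The plan is to first pin down the graph $G$ itself and then read off the parity of $n$ from the structure of the metamour graph. The whole argument is short; the only care needed is in treating the small values of $n$ and in phrasing the final connectivity observation.

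First I would use the third hypothesis directly: let $\gamma=(v_1,\dots,v_n,v_1)$ be a cycle of length~$n$ in $G$. Then $\gamma$ is a spanning subgraph of $G$ in which every vertex has degree~$2$. Since by the last hypothesis every vertex of $G$ also has degree~$2$, no vertex of $G$ can be incident to an edge outside~$\gamma$; hence $G=\gamma=C_n$, and from now on I write $G=(v_1,\dots,v_n,v_1)$ with indices taken modulo~$n$. (Equivalently, one may invoke the standard fact that a connected graph all of whose vertices have degree~$2$ is a cycle.)

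Next I would describe the metamour graph~$M$ of $C_n$. A short case check shows that the only vertices at distance~$2$ from $v_i$ in $C_n$ are $v_{i-2}$ and $v_{i+2}$, and that these are two distinct non-neighbours of~$v_i$ precisely when $n\ge5$ (for $n=3$ they are neighbours of~$v_i$, for $n=4$ they coincide). As $G$ is $2$-metamour-regular, every vertex has exactly two metamours, so this forces $n\ge5$, and $M$ is the graph on $v_1,\dots,v_n$ with edge set $\setm{\edge{v_i}{v_{i+2}}}{i\in\set{1,\dots,n}}$, indices modulo~$n$.

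Finally, to obtain that $n$ is odd, I would argue by contradiction. If $n$ were even, the parity of the index would be well defined on $\set{v_1,\dots,v_n}$, and every edge $\edge{v_i}{v_{i+2}}$ of~$M$ would join two vertices of equal index-parity; since the class of odd-indexed vertices and the class of even-indexed vertices are both non-empty, $M$ would then be disconnected, contradicting the hypothesis $M=C_n$. Hence $n$ is odd, and together with $G=C_n$ from the first step this completes the proof. There is no genuine obstacle here: the content of the lemma is essentially the observation that a connected $2$-regular spanning subgraph must exhaust a $2$-regular graph, followed by the elementary parity bookkeeping above.
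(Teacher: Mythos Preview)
Your proof is correct and follows essentially the same approach as the paper: both use the spanning cycle together with the degree-$2$ hypothesis to force $G=C_n$, and then observe that for even~$n$ the metamour graph of $C_n$ splits into two components (the paper phrases this as two cycles of length~$n/2$, you via an index-parity argument), contradicting $M=C_n$. Your treatment is slightly more explicit about ruling out $n\le4$, but otherwise the arguments coincide.
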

    \begin{proof}
      Let $\gamma$ be a cycle of length $n$ in $G$. 
      If every vertex of $G$ has degree~$2$, then every vertex in
      the induced subgraph $G[\gamma]$ has degree~$2$
      as $\gamma$ contains every vertex by assumption. As $G[\gamma]$ is connected,
      it equals $C_n$. In total this implies $G=G[\gamma]=C_n$.
      
      It is easy to see that if $n$ is even, then the metamour graph
      consists of exactly two cycles of length $\frac{n}{2}$
      which contradicts our assumption.
      Therefore, $n$ is odd.
    \end{proof}

\begin{lemma}\label{lem:2mmr:deg-2-and-nm3}
    Let $G$ be a connected $2$-metamour-regular graph with $n$ vertices
    \begin{itemize}
    \item whose metamour graph equals the~$C_n$,
    \item that contains a cycle of length~$n$,
    \item in which every vertex has degree $2$ or $n-3$, and
    \item that has a vertex of degree $2$ and
      a vertex of degree $n-3$.
    \end{itemize}
    Then
    \begin{equation*}
      G \in \set{C_5, \specialH{6}{c}}.
    \end{equation*}
\end{lemma}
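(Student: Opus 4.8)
The plan is to fix a Hamiltonian cycle $\gamma=(v_1,\dots,v_n,v_1)$ of~$G$ (which exists by hypothesis), with indices read modulo~$n$, and to split the vertex set into the set $A$ of vertices of degree~$2$ and the set $B$ of vertices of degree~$n-3$; by hypothesis both are nonempty. For $n\le 4$ there is nothing to prove: then $\complement{G}$, containing a metamour graph equal to~$C_n$, would have at least $n$ edges, leaving $G$ with fewer than $n-1$ edges, so $G$ could not be connected. If $n=5$, then $n-3=2$, so $G$ is $2$-regular and connected and hence $G=C_5$, which is one of the allowed graphs; so this case is settled. From now on I would assume $n\ge 6$, so that $2\ne n-3$ and therefore $A\cap B=\emptyset$.

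The crucial first step is to determine the metamours of a degree-$2$ vertex. If $v_j\in A$, then its only neighbours are $v_{j-1}$ and $v_{j+1}$, while $v_{j-2}$ and $v_{j+2}$ are at distance exactly~$2$ from~$v_j$ (through $v_{j-1}$ and $v_{j+1}$ respectively) and are distinct since $n\ge 5$; by $2$-metamour-regularity they are precisely the two metamours of~$v_j$. Consequently every vertex outside $\set{v_{j-2},v_{j-1},v_j,v_{j+1},v_{j+2}}$ is a non-metamour of~$v_j$ and hence adjacent to neither $v_{j-1}$ nor~$v_{j+1}$; in particular the neighbours of $v_{j-1}$ all lie in $\set{v_{j-2},v_j,v_{j+1},v_{j+2}}$, so $\deg(v_{j-1})\le 4$, and likewise $\deg(v_{j+1})\le 4$.

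The second step rules out $n\ge 7$. If $n\ge 8$, then $\deg(v_{j\pm1})\le 4<n-3$ forces $v_{j-1},v_{j+1}\in A$ whenever $v_j\in A$; iterating along~$\gamma$ gives $A=V(G)$, contradicting $B\ne\emptyset$. If $n=7$, I would show instead that $v_j\in A$ implies $v_{j+3}\in A$: the vertex $v_{j+3}$ is not adjacent to $v_j$ (since $v_j$ has degree~$2$) and, by the previous paragraph applied to~$v_j$, not adjacent to $v_{j-1}$ or to $v_{j+1}$ either, and these three non-neighbours are pairwise distinct when $n=7$; hence $\deg(v_{j+3})\le 3$ and so $\deg(v_{j+3})=2$. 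Since $\gcd(3,7)=1$, iterating $j\mapsto j+3$ again yields $A=V(G)$, a contradiction. Therefore $n=6$ is the only remaining case.

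The case $n=6$ (where every vertex has degree~$2$ or~$3$) is the heart of the argument. Choosing $v_1\in A$, Step~1 forces its metamours to be $v_3$ and~$v_5$ and forces $v_4$ to be non-adjacent to each of $v_1$, $v_2$, $v_6$; since $v_3,v_5$ are its only remaining possible neighbours, $v_4\in A$ with metamours $v_2$ and~$v_6$. Now the metamour graph equals~$C_6$, in which the neighbours of~$v_1$ are $v_3,v_5$ and the neighbours of~$v_4$ are $v_2,v_6$; there are exactly two ways to complete this to a $6$-cycle, and one of them would make $v_2$ and $v_3$ metamours although $\edge{v_2}{v_3}$ is an edge of~$\gamma$. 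Hence the metamour graph is the other completion, and together with Step~1 this pins down every edge and non-edge of~$G$ except the two potential chords $\edge{v_2}{v_6}$ and $\edge{v_3}{v_5}$. Since $B\ne\emptyset$, at least one chord is present; and if exactly one chord were present, then the two vertices forced to have degree~$2$ by the absent chord would lie in~$A$ but, by Step~1, have a pair of metamours incompatible with the metamour graph just determined. So both chords are present, whence $G$ is the two triangles $\set{v_1,v_2,v_6}$ and $\set{v_3,v_4,v_5}$ joined by the edges $\edge{v_2}{v_3}$ and $\edge{v_5}{v_6}$, i.e.\ $G=\specialH{6}{c}$. The main obstacle is not conceptual but bookkeeping: in the $n=6$ case one must track all forced adjacencies and non-adjacencies carefully and, for the positive direction, verify that the graph so obtained really is $2$-metamour-regular with metamour graph~$C_6$.
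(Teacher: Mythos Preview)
Your proof is correct. The overall architecture---bound $n$, then handle $n\in\{5,6,7\}$ separately---matches the paper's, but the mechanics differ in two places.

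\textbf{Bounding $n$.} The paper picks consecutive cycle-vertices $v_1\in A$ and $v_2\in B$ and observes that any extra neighbour of $v_2$ in $\{v_4,\dots,v_{n-2}\}$ would become a third metamour of~$v_1$; since $\deg(v_2)=n-3$, this forces $\lvert\{v_4,\dots,v_{n-2}\}\rvert\le 2$ and hence $n\le 7$ in one stroke. You never invoke a high-degree vertex directly; instead you show that the cycle-neighbours of a degree-$2$ vertex have degree at most~$4$, which for $n\ge 8$ propagates $A$ along $\gamma$ to all of $V(G)$, and for $n=7$ you use the step-$3$ propagation $v_j\in A\Rightarrow v_{j+3}\in A$ together with $\gcd(3,7)=1$. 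Your route is slightly longer but uses only the degree-$2$ side of the dichotomy, which is pleasant.

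\textbf{The case $n=6$.} The paper argues edge by edge: first $\edge{v_3}{v_5}$ must be present (else a metamour $3$-cycle), then $\edge{v_2}{v_6}$ (else a metamour $5$-cycle missing $v_4$), then $v_4$'s metamours are forced. You instead pin down the metamour $6$-cycle first from the constraints at $v_1$ and $v_4$, deduce all edges and non-edges except the two chords $\edge{v_2}{v_6}$ and $\edge{v_3}{v_5}$, and then eliminate the zero-chord and one-chord configurations. Both arguments arrive at $G=\specialH{6}{c}$; yours has the minor advantage of making the role of the metamour cycle explicit before touching the undecided chords. Your closing remark about verifying that $\specialH{6}{c}$ really has metamour graph~$C_6$ is good hygiene but not needed for the one-way implication stated in the lemma.
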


\resetbeh
\begin{proof}
  Let $\gamma=(v_1,\dots,v_n,v_1)$ be
  a cycle of length~$n$ such that $\deg(v_1) = 2$ and $\deg(v_2) = n-3$.

    \begin{beh}\label{beh:2mmr:mixed-degrees-nle7}
      We have $5 \le n \le 7$ and the metamours of~$v_1$ are $v_3$ and 
      $v_{n-1}$.
    \end{beh}
      
    \begin{figure}
      \centering
      \begin{tikzpicture}
        \begin{scope}[shift={(0,0)}]
          \node at (0,-2) {$\edge{v_2}{v_i} \in E(G)$};

          \def\cradius{1}
          \def\cinner{0.8}
          \def\couter{1.2}
          \def\cstep{30}

          \draw [edge] (0,0) circle (\cradius);

          \def\wvone{90}
          \def\wvtwo{120}
          \def\wvthree{150}
          \def\wvfour{180}
          \def\wvi{240}
          \def\wvnmtwo{360}
          \def\wvnmone{390}
          \def\wvn{420}

          \cvertex{\wvone}{$v_1$}{vone}
          \cvertex{\wvtwo}{$v_2$}{vtwo}
          \cvertex{\wvthree}{$v_3$}{vthree}
          \cvertices{\wvfour}{\wvnmtwo}{$v_4$}{$v_{n-2}$}{6}
          \node [svertex] (vi) at (\wvi:\cradius) {};
          \cvlabel{\wvi}{$v_i$}
          \cvertex{\wvnmone}{$v_{n-1}$}{vnmone}
          \cvertex{\wvn}{$v_n$}{vn}

          \cedgevv{nonedge}{vone}{vthree}{}
          \cedgevv{mmedge}{vone}{vthree}{}
          \cedgevv{nonedge}{vone}{vnmone}{}
          \cedgevv{mmedge}{vone}{vnmone}{}

          \cedgevv{edge}{vtwo}{vi}{}
          \cedgevv{nonedge}{vone}{vi}{}
          \cedgevv{mmedge}{vone}{vi}{}
          
        \end{scope}
      \end{tikzpicture}
      \caption{Subgraph of the situation
        in the proof of \behref{beh:2mmr:mixed-degrees-nle7}}
      \label{fig:2mmr:mixed-degrees-nle7}
    \end{figure}

    \begin{proofbeh}{beh:2mmr:mixed-degrees-nle7}
    Clearly $v_1$ is only adjacent to $v_2$ and $v_n$. Hence, $v_3$ and 
    $v_{n-1}$ have to be the two metamours of $v_1$ and $G$ contains at least 
    $5$ different vertices, so $n \ge 5$.
    
    If $v_2$ is adjacent to some $v_i$ for $i\in \set{4, \dots, n-2}$, then 
    $v_1$ is a metamour of $v_i$ due to the common neighbor $v_2$;
    see Figure~\ref{fig:2mmr:mixed-degrees-nle7}. Hence, 
    $v_2$ is not adjacent to any vertex $v_4$, \dots, $v_{n-2}$. However, 
    because $\deg(v_2) = n-3$, the vertex $v_2$ is adjacent to every vertex but 
    its two metamours. This implies that $\abs{\set{v_4, \dots, v_{n-2}}} \le 2$, because $v_2$ 
    has at most two metamours among $v_{4}$, \dots, $v_{n-2}$. As a result, we 
    have $n \le 7$.
    \end{proofbeh} 
    
    This implies that $n=5$, $n=6$ and $n=7$ are the only cases to 
    consider. We do so in the following claims.

    \begin{beh}\label{beh:2mmr:case-n5}
        If $n=5$, then $G=C_n$.
    \end{beh}

    \begin{figure}
      \centering
      \begin{tikzpicture}
        \begin{scope}[shift={(0,0)}]
          \node at (0,-2.5) {(a) earlier};

          \def\cradius{1}
          \def\cstep{72}

          \draw [edge] (0,0) circle (\cradius);

          \def\wvone{90}
          \def\wvtwo{162}
          \def\wvthree{234}
          \def\wvnmone{306}
          \def\wvn{378}

          \cvertex{\wvone}{$v_1$}{vone}
          \cvertex{\wvtwo}{$v_2$}{vtwo}
          \cvertex{\wvthree}{$v_3$}{vthree}
          \cvertex{\wvnmone}{$v_{n-1}=v_4$}{vnmone}
          \cvertex{\wvn}{$v_n=v_5$}{vn}

          \cedgevv{nonedge}{vone}{vthree}{}
          \cedgevv{mmedge}{vone}{vthree}{}
          \cedgevv{nonedge}{vone}{vnmone}{}
          \cedgevv{mmedge}{vone}{vnmone}{}
        \end{scope}
        \node at (4,0) {$\Longrightarrow$};
        \begin{scope}[shift={(7,0)}]
          \node at (0,-2.5) {(b) later};

          \def\cradius{1}
          \def\cstep{72}

          \draw [edge] (0,0) circle (\cradius);

          \def\wvone{90}
          \def\wvtwo{162}
          \def\wvthree{234}
          \def\wvnmone{306}
          \def\wvn{378}

          \cvertex{\wvone}{$v_1$}{vone}
          \cvertex{\wvtwo}{$v_2$}{vtwo}
          \cvertex{\wvthree}{$v_3$}{vthree}
          \cvertex{\wvnmone}{$v_{n-1}=v_4$}{vnmone}
          \cvertex{\wvn}{$v_n=v_5$}{vn}

          \cedgevv{nonedge}{vone}{vthree}{}
          \cedgevv{mmedge}{vone}{vthree}{}
          \cedgevv{nonedge}{vone}{vnmone}{}
          \cedgevv{mmedge}{vone}{vnmone}{}

          \cedgevv{nonedge}{vtwo}{vnmone}{}
          \cedgevv{mmedge}{vtwo}{vnmone}{}
          \cedgevv{nonedge}{vthree}{vn}{}
          \cedgevv{mmedge}{vthree}{vn}{}

          \cedgevv{nonedge}{vtwo}{vn}{}
          \cedgevv{mmedge}{vtwo}{vn}{}
        \end{scope}
      \end{tikzpicture}
      \caption{Subgraphs of the situations
        in the proof of \behref{beh:2mmr:case-n5}}
      \label{fig:2mmr:case-n5}
    \end{figure}

    \begin{proofbeh}{beh:2mmr:case-n5}
      If $n = 5$, then $v_3$ and $v_{n-1}=v_4$ are the two metamours of $v_1$;
      see Figure~\ref{fig:2mmr:case-n5}(a). Then $v_2$ 
    is the only option as second metamour of $v_4$, and $v_5$ is the only option 
    as second metamour of $v_3$. Then $v_2$ and $v_5$ have to be metamours in 
    order to close the cycle in the metamour graph;
    see Figure~\ref{fig:2mmr:case-n5}(b). As a result,
    we have $G = C_5$.
    \end{proofbeh} 

    \begin{beh}\label{beh:2mmr:case-n6}
        If $n=6$, then $G=\specialH{6}{c}$.    
    \end{beh}

    \begin{figure}
      \centering
      \begin{tikzpicture}
        \begin{scope}[shift={(0,0)}]
          \node at (0,-2.5) [align=center] {(a) \\ $\set{v_3,v_5}\not\in E(G)$};

          \def\cradius{1}
          \def\cstep{60}

          \draw [edge] (0,0) circle (\cradius);

          \def\wvone{90}
          \def\wvtwo{150}
          \def\wvthree{210}
          \def\wvfour{270}
          \def\wvnmone{330}
          \def\wvn{390}

          \cvertex{\wvone}{$v_1$}{vone}
          \cvertex{\wvtwo}{$v_2$}{vtwo}
          \cvertex{\wvthree}{$v_3$}{vthree}
          \cvertex{\wvfour}{$v_4$}{vfour}
          \cvertex{\wvnmone}{$v_{n-1}=v_5$}{vnmone}
          \cvertex{\wvn}{$v_n=v_6$}{vn}

          \cedgevv{nonedge}{vone}{vthree}{}
          \cedgevv{mmedge}{vone}{vthree}{}
          \cedgevv{nonedge}{vone}{vnmone}{}
          \cedgevv{mmedge}{vone}{vnmone}{}

          \cedgevv{nonedge}{vthree}{vnmone}{}
          \cedgevv{mmedge}{vthree}{vnmone}{}
        \end{scope}
        \begin{scope}[shift={(5,0)}]
          \node at (0,-2.5) [align=center] {(b) \\ $\set{v_3,v_5}\in E(G)$ \\ earlier};

          \def\cradius{1}
          \def\cstep{60}

          \draw [edge] (0,0) circle (\cradius);

          \def\wvone{90}
          \def\wvtwo{150}
          \def\wvthree{210}
          \def\wvfour{270}
          \def\wvnmone{330}
          \def\wvn{390}

          \cvertex{\wvone}{$v_1$}{vone}
          \cvertex{\wvtwo}{$v_2$}{vtwo}
          \cvertex{\wvthree}{$v_3$}{vthree}
          \cvertex{\wvfour}{$v_4$}{vfour}
          \cvertex{\wvnmone}{$v_{n-1}=v_5$}{vnmone}
          \cvertex{\wvn}{$v_n=v_6$}{vn}

          \cedgevv{nonedge}{vone}{vthree}{}
          \cedgevv{mmedge}{vone}{vthree}{}
          \cedgevv{nonedge}{vone}{vnmone}{}
          \cedgevv{mmedge}{vone}{vnmone}{}

          \cedgevv{edge}{vthree}{vnmone}{}

          \cedgevv{nonedge}{vthree}{vn}{}
          \cedgevv{mmedge}{vthree}{vn}{}
          \cedgevv{nonedge}{vtwo}{vnmone}{}
          \cedgevv{mmedge}{vtwo}{vnmone}{}
        \end{scope}
        \node at (8,0) {$\Longrightarrow$};
        \begin{scope}[shift={(10,0)}]
          \node at (0,-2.5) [align=center] {(c) \\ $\set{v_3,v_5}\in E(G)$ \\ later};

          \def\cradius{1}
          \def\cstep{60}

          \draw [edge] (0,0) circle (\cradius);

          \def\wvone{90}
          \def\wvtwo{150}
          \def\wvthree{210}
          \def\wvfour{270}
          \def\wvnmone{330}
          \def\wvn{390}

          \cvertex{\wvone}{$v_1$}{vone}
          \cvertex{\wvtwo}{$v_2$}{vtwo}
          \cvertex{\wvthree}{$v_3$}{vthree}
          \cvertex{\wvfour}{$v_4$}{vfour}
          \cvertex{\wvnmone}{$v_{n-1}=v_5$}{vnmone}
          \cvertex{\wvn}{$v_n=v_6$}{vn}

          \cedgevv{nonedge}{vone}{vthree}{}
          \cedgevv{mmedge}{vone}{vthree}{}
          \cedgevv{nonedge}{vone}{vnmone}{}
          \cedgevv{mmedge}{vone}{vnmone}{}

          \cedgevv{edge}{vthree}{vnmone}{}

          \cedgevv{nonedge}{vthree}{vn}{}
          \cedgevv{mmedge}{vthree}{vn}{}
          \cedgevv{nonedge}{vtwo}{vnmone}{}
          \cedgevv{mmedge}{vtwo}{vnmone}{}

          \cedgevv{edge}{vtwo}{vn}{}
          
          \cedgevv{nonedge}{vfour}{vtwo}{}
          \cedgevv{mmedge}{vfour}{vtwo}{}
          \cedgevv{nonedge}{vfour}{vn}{}
          \cedgevv{mmedge}{vfour}{vn}{}
        \end{scope}
      \end{tikzpicture}
      \caption{Subgraphs of the situations
        in the proof of \behref{beh:2mmr:case-n6}}
      \label{fig:2mmr:case-n6}
    \end{figure}

    \begin{proofbeh}{beh:2mmr:case-n6}    
    If $n = 6$, then $v_3$ and $v_{n-1}=v_5$ are the two metamours of $v_1$. 
    
    If $v_3$ is not adjacent to $v_5$, then $v_3$ and $v_5$ are metamours 
    because of their common neighbor $v_4$;
    see Figure~\ref{fig:2mmr:case-n6}(a).
    But then $(v_1,v_3,v_5,v_1)$ is a 
    cycle in the metamour graph that does not contain all vertices, a 
    contradiction to our assumption. Hence, $v_3$ and $v_5$ are adjacent;
    see Figure~\ref{fig:2mmr:case-n6}(b).
    Then $v_6$ is the only option left as the second metamour of $v_3$, and 
    $v_2$ is the only option left as the second metamour of $v_5$.
    
    If $v_2$ and $v_6$ are not adjacent, then they are metamours because 
    of their common neighbor $v_1$. But then $(v_1,v_3,v_6,v_2,v_5,v_1)$ is a 
    cycle in the metamour graph that does not contain $v_4$, a contradiction.
    So $v_2$ and $v_6$ are adjacent;
    see Figure~\ref{fig:2mmr:case-n6}(c).
    
    But then $v_4$ has to have $v_2$ and $v_6$ as metamours, because they are 
    the only options left. Hence, we obtain $G =  \specialH{6}{c}$. 
    \end{proofbeh}         
    
    \begin{beh}\label{beh:2mmr:case-n7}
      We cannot have $n=7$.
    \end{beh}

    \begin{figure}
      \centering
      \begin{tikzpicture}
        \begin{scope}[shift={(0,0)}]
          \node at (0,-2.5) {(a) first paragraph};

          \def\cradius{1}
          \def\cstep{51}

          \draw [edge] (0,0) circle (\cradius);

          \def\wvone{90}
          \def\wvtwo{141} 
          \def\wvthree{193} 
          \def\wvfour{244} 
          \def\wvfive{296} 
          \def\wvnmone{347} 
          \def\wvn{399} 

          \cvertex{\wvone}{$v_1$}{vone}
          \cvertex{\wvtwo}{$v_2$}{vtwo}
          \cvertex{\wvthree}{$v_3$}{vthree}
          \cvertex{\wvfour}{$v_4$}{vfour}
          \cvertex{\wvfive}{$v_5$}{vfive}
          \cvertex{\wvnmone}{$v_{n-1}=v_6$}{vnmone}
          \cvertex{\wvn}{$v_n=v_7$}{vn}

          \cedgevv{nonedge}{vone}{vthree}{}
          \cedgevv{mmedge}{vone}{vthree}{}
          \cedgevv{nonedge}{vone}{vnmone}{}
          \cedgevv{mmedge}{vone}{vnmone}{}

          \cedgevv{nonedge}{vone}{vfive}{}
          
        \end{scope}
        \node at (4,0) {$\Longrightarrow$};
        \begin{scope}[shift={(7,0)}]
          \node at (0,-2.5) {(b) second paragraph};

          \def\cradius{1}
          \def\cstep{51}

          \draw [edge] (0,0) circle (\cradius);

          \def\wvone{90}
          \def\wvtwo{141} 
          \def\wvthree{193} 
          \def\wvfour{244} 
          \def\wvfive{296} 
          \def\wvnmone{347} 
          \def\wvn{399} 

          \cvertex{\wvone}{$v_1$}{vone}
          \cvertex{\wvtwo}{$v_2$}{vtwo}
          \cvertex{\wvthree}{$v_3$}{vthree}
          \cvertex{\wvfour}{$v_4$}{vfour}
          \cvertex{\wvfive}{$v_5$}{vfive}
          \cvertex{\wvnmone}{$v_{n-1}=v_6$}{vnmone}
          \cvertex{\wvn}{$v_n=v_7$}{vn}

          \cedgevv{nonedge}{vone}{vthree}{}
          \cedgevv{mmedge}{vone}{vthree}{}
          \cedgevv{nonedge}{vone}{vnmone}{}
          \cedgevv{mmedge}{vone}{vnmone}{}

          \cedgevv{nonedge}{vone}{vfive}{}

          \cedgevv{nonedge}{vthree}{vfive}{}
          \cedgevv{mmedge}{vthree}{vfive}{}
          \cedgevv{nonedge}{vn}{vfive}{}
          \cedgevv{mmedge}{vn}{vfive}{}
          
          \cedgevv{nonedge}{vtwo}{vfive}{}
          
        \end{scope}
      \end{tikzpicture}
      \caption{Subgraphs of the situations
        in the proof of \behref{beh:2mmr:case-n7}}
      \label{fig:2mmr:case-n7}
    \end{figure}

    \begin{proofbeh}{beh:2mmr:case-n7}      
    If $n=7$, then $v_3$ and $v_{n-1}=v_6$ are the two metamours of $v_1$. As 
    $\deg(v_1) = 2$, the vertices $v_1$ and $v_5$ are not adjacent, and 
    they are also not metamours; see Figure~\ref{fig:2mmr:case-n7}(a).
    Therefore, $\deg(v_5)<n-3=4$. As the only options are
    $\deg(v_5) \in \set{2,n-3}$, we conclude $\deg(v_5)=2$.

    As a result, $v_5$ is only adjacent to $v_4$ and $v_6$, and the vertices $v_3$ 
    and $v_7$ have to be the two metamours of $v_5$;
    see Figure~\ref{fig:2mmr:case-n7}(b). In particular, $v_5$ is not 
    adjacent to $v_2$, and the vertices $v_5$ and $v_2$ are not metamours. This
    implies $\deg(v_2)<n-3=4$ which is a contradiction to $\deg(v_2)=4$.
    Hence, $n=7$ is not possible.
    \end{proofbeh}

    To summarize, in the case that not all vertices of $G$ have
    the same degree in $\set{2,n-3}$,
    $G=C_5$ and $G=\specialH{6}{c}$ are the only possible graphs due 
    to~\behref{beh:2mmr:mixed-degrees-nle7},~\behref{beh:2mmr:case-n5},~\behref{beh:2mmr:case-n6}
     and~\behref{beh:2mmr:case-n7}. This finishes 
    the proof of Lemma~\ref{lem:2mmr:deg-2-and-nm3}.
\end{proof}

Eventually, we can collect all results on $2$-metamour-regular graphs that have 
a connected metamour graph in the following proposition.

\begin{proposition}\label{pro:2mmr:Cn-or-complementCn}
  Let $G$ be a connected $2$-metamour-regular graph with $n$ vertices
  whose metamour graph is the~$C_n$. Then $n \ge 5$ and one of
  \begin{enumerate}[(a)]
  \item $G = C_n$ and $n$ is odd,
  \item $G = \complement{C_n}$, or
  \item
    $G \in \set{
      \specialH{6}{a},
      \specialH{6}{b},
      \specialH{6}{c},
      \specialH{7}{a},
      \specialH{7}{b}
    }$
  \end{enumerate}
  holds.
\end{proposition}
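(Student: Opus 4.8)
The plan is to assemble the five lemmas just established --- Lemmas~\ref{lem:2mmr:contain-Cn}, \ref{lem:2mmr:deg-two-options}, \ref{lemma:2mmr:all-deg-nMinus3}, \ref{lemma:2mmr:all-deg-2} and~\ref{lem:2mmr:deg-2-and-nm3} --- into a single case analysis, the only extra ingredients being a short argument that $G$ is not a tree and a crude bound on the vertex degrees of $G$. The hypothesis $M=C_n$ is in force throughout. One first observes that $n\ge4$: if $n=3$ then each of the three vertices of $G$ would have to be at distance~$2$ from both others, which is impossible in a connected graph on three vertices; in particular $M=C_n$ is triangle-free.

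Next I would show that $G$ is not a tree. Suppose it were. A leaf $v$ of $G$ has as its metamours precisely the remaining neighbours of its unique neighbour $u$, so $2$-metamour-regularity forces $\deg(u)=3$. Now take a longest path $(p_1,\dots,p_m)$ in $G$; if $m\le3$ then $G$ would be $K_1$, $K_2$ or a star, none of which is $2$-metamour-regular, so $m\ge4$. Then $p_1$ is a leaf, $\deg(p_2)=3$, and the third neighbour $q$ of $p_2$ (besides $p_1$ and $p_3$) is a leaf by the maximality of the path. Since $G$ is a tree, $p_1$, $q$, $p_3$ are pairwise non-adjacent and all adjacent to $p_2$, hence pairwise metamours --- a triangle in $M=C_n$, contradicting $n\ge4$. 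So $G$ contains a cycle.

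Now I would split according to whether $G$ contains a cycle of length~$n$. If not, then by the previous paragraph $G$ is not a tree and contains no spanning cycle, so Lemma~\ref{lem:2mmr:contain-Cn} applies and yields $G\in\set{\specialH{6}{a},\specialH{6}{b},\specialH{7}{a}}$, which is case~(c) with $n\in\set{6,7}$. If $G$ does contain a spanning cycle $\gamma$, then every vertex has degree at least~$2$; a vertex of degree $n-1$ would have metamour-degree~$0$, and a vertex of degree $n-2$ would have metamour-degree~$1$ (its unique non-neighbour lies on $\gamma$, hence has a neighbour, and that neighbour is adjacent to the vertex as well and so a common neighbour), both contradicting $2$-metamour-regularity. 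Hence every degree lies in $\set{2,\dots,n-3}$, and non-emptiness of this set forces $n\ge5$. If some vertex has degree strictly between~$2$ and $n-3$, then Lemma~\ref{lem:2mmr:deg-two-options} gives $G=\specialH{7}{b}$ (case~(c)). Otherwise every vertex has degree $2$ or $n-3$, and according to whether only the value~$2$ occurs, only the value $n-3$ occurs, or both occur, Lemmas~\ref{lemma:2mmr:all-deg-2}, \ref{lemma:2mmr:all-deg-nMinus3} and~\ref{lem:2mmr:deg-2-and-nm3} give, respectively, $G=C_n$ with $n$ odd (case~(a)), $G=\complement{C_n}$ (case~(b)), or $G\in\set{C_5,\specialH{6}{c}}$ (cases~(a) and~(c)); note that for $n=5$ one has $2=n-3$, so the three alternatives overlap consistently.

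In every branch $G$ is one of the graphs listed in~(a), (b) or~(c) and $n\ge5$, so the proposition follows. All the substantial work has already been carried out inside the five lemmas, via the long inductive analyses of the longest cycle of $G$; at the level of this proposition the only points requiring a little care are the exhaustiveness of the degree dichotomy (the bound $2\le\deg\le n-3$ together with the coincidence $2=n-3$ at $n=5$) and the fact that the tree case must be excluded before invoking Lemma~\ref{lem:2mmr:contain-Cn}. I do not anticipate any genuine obstacle at this stage.
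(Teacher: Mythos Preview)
Your proposal is correct and follows essentially the same case structure as the paper's proof: rule out trees, then split on whether $G$ contains a spanning cycle, and in the Hamiltonian case split further by the degree profile, invoking the same five lemmas. The only cosmetic differences are that the paper establishes $n\ge5$ upfront via the edge count $\binom{n}{2}\ge(n-1)+n$ rather than deriving it inside the case analysis, and it disposes of the tree case by directly bounding the maximum degree of a tree (any vertex of degree $\ge3$ forces a triangle in $M$) instead of your longest-path argument.
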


\resetbeh
\begin{proof}
  First we derive two properties 
  of $G$ in the following claims.
  
  \begin{beh}\label{beh:2mmr:n-ge-5}
    We have $n\ge5$.
  \end{beh}
  \begin{proofbeh}{beh:2mmr:n-ge-5}
    The graph $G$ is connected, hence it contains at least $n-1$ edges.
    Furthermore, the graph~$G$ has the metamour graph $C_n$, so the complement 
    of $G$ contains at least $n$ edges.
    As the sum of the number of edges of $G$ and of the complement of $G$ 
    is 
    equal to $\binom{n}{2}$, we have $\binom{n}{2} \ge (n-1) + n$. This is only 
    true for $n \ge 5$. 
  \end{proofbeh}

  \begin{beh}\label{beh:2mmr:not-tree}
    The graph~$G$ is not a tree.
  \end{beh}
  \begin{proofbeh}{beh:2mmr:not-tree}
    Suppose that~$G$ is a tree.
    We first show that the maximum degree of~$G$ is at most~$2$.
    
    Let $v$ be a vertex and $d$ its degree,
    and let $v_1$, \dots, $v_d$ its neighbors.
    Then no vertices of a pair in
    $\set{v_1, \dots, v_d}$ are adjacent, as otherwise we would have a cycle.
    Therefore, the vertices of every such pair are metamours.

    We cannot have $d\ge4$, as otherwise one vertex of $\set{v_1, \dots, v_d}$ would have at
    least three metamours, and this contradicts the 
    $2$-metamour-regularity of the graph~$G$. 
    If $d=3$, then there is a $3$-cycle
    in the metamour graph of~$G$ which contradicts that the metamour
    graph is~$C_n$ and $n\ge5$. Therefore, $d\le2$, and consequently
    we have indeed shown that the maximum degree of $G$ is at most~$2$.
    
    This now implies that $G$ has to be the path graph~$P_n$ which
    is again a contradiction to~$G$ being $2$-metamour-regular,
    as an end vertex of~$P_n$ only has one metamour.
  \end{proofbeh}

  So by~\behref{beh:2mmr:not-tree}, $G$ is not a tree, therefore it
  contains a cycle.
  If $G$ does not contain a cycle of length~$n$, then we
  can apply Lemma~\ref{lem:2mmr:contain-Cn} and conclude
  that $G \in \set{\specialH{6}{a}, \specialH{6}{b}, \specialH{7}{a}}$.
  We are finished in this case.
  
  Otherwise, 
  the graph~$G$ contains a cycle of length~$n$.
  If there is a vertex $v$ of~$G$ with $2 < \deg(v) < n-3$, then 
  we can use Lemma~\ref{lem:2mmr:deg-two-options}, deduce that 
  $G = \specialH{7}{b}$ and the proof is complete in this case.

  Otherwise, every vertex has degree at most~$2$ or at least~$n-3$.
  Due to the fact that $G$ contains a cycle of length $n$, the degree
  of every vertex is at least $2$. Because $G$ is $2$-metamour-regular,
  every vertex is not adjacent to at least two vertices, so the degree of
  every vertex
  is at most $n-3$. This implies that every vertex has degree $2$ or $n-3$.
    
  If all vertices of $G$ have the same degree, then
  Lemma~\ref{lemma:2mmr:all-deg-nMinus3} (for degrees~$n-3$)
  implies $G=\complement{C_n}$
  and Lemma~\ref{lemma:2mmr:all-deg-2} (for degrees~$2$)
  implies $G=C_n$ and $n$ odd.
  Hence, in these cases we are finished with the proof as well.
  
    What is left to consider is the situation that there are two vertices with different 
    degrees in~$G$. This is done in Lemma~\ref{lem:2mmr:deg-2-and-nm3},
    and we conclude $G \in \set{C_5, \specialH{6}{c}}$ in this case.

    This completes the proof.
\end{proof}    

\subsection{Graphs with disconnected metamour graph}

After characterizing all graphs that are $2$-metamour-regular and that have a 
connected metamour graph, we now turn to $2$-metamour-regular graphs that do 
not have a connected metamour graph. In this case either 
statement~\ref{it:general:general} or 
statement~\ref{it:general:exceptional} of 
Theorem~\ref{thm:kmmr:metamour-graph-not-connected} is satisfied. In the 
case of \ref{it:general:general} there is nothing left to do, because 
it provides a characterization. In the other case we determine all 
graphs and capture them in the following proposition. 

\begin{proposition}\label{pro:2mmr:metamour-graph-not-cn}
  Let $G$ be a connected $2$-metamour-regular graph with $n$ vertices.
  Suppose statement~\ref{it:general:exceptional}
  of Theorem~\ref{thm:kmmr:metamour-graph-not-connected} is satisfied.
  Then $n \ge 6$ and one of
    \begin{enumerate}[(a)]
        \item $G = C_n$ and $n$ is even, or 
        \item
        $G \in \set{
            \specialH{4,4}{a},
            \specialH{4,4}{b},
            \specialH{4,4}{c},
            \specialH{4,3}{a},
            \specialH{4,3}{b},
            \specialH{4,3}{c},
            \specialH{4,3}{d},
            \specialH{3,3}{a}, 
            \specialH{3,3}{b},
            \specialH{3,3}{c}, 
            \specialH{3,3}{d},
            \specialH{3,3}{e}            
        }$
    \end{enumerate}
    holds.
\end{proposition}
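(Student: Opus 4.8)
The plan is to squeeze everything out of the structural data handed to us by Theorem~\ref{thm:kmmr:metamour-graph-not-connected}\ref{it:general:exceptional}. By Observation~\ref{obs:2mr_metamour-graph-consists-of-cycles} the metamour graph $M$ is a disjoint union of cycles, and by hypothesis it has exactly two components, say $M_1=C_{n_1}$ and $M_2=C_{n_2}$ with $n=n_1+n_2$ and $n_1,n_2\ge3$; in particular $n\ge6$. Put $G^M=G[V(M_1)]\cup G[V(M_2)]$. By Theorem~\ref{thm:kmmr:metamour-graph-not-connected}\ref{it:at-most-k-vert} every connected component (``piece'') of $G^M$ has at most two vertices, and by \ref{it:kmr-implies-regular-conn-comp} each is regular; hence each piece is $K_1$ or $K_2$. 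Since $G[V(M_1)]$ and $G[V(M_2)]$ are induced subgraphs, every edge of $G$ lies inside one piece or joins a piece of $V(M_1)$ to a piece of $V(M_2)$, and by \ref{it:one-egdge-implies-all-edges} such cross-adjacency between two pieces is complete.

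First I would determine the possible piece patterns inside $V(M_i)$. By \ref{it:complement-graph-is-metamour-graph} no two vertices of the same piece are metamours, and by \ref{it:metamour-components-contain-metamours} two vertices in distinct pieces of $V(M_1)$ are metamours exactly when their pieces share a common neighbouring piece in $V(M_2)$; so the metamour relation on $V(M_1)$ is determined at the level of pieces. Imposing that this relation is the single $2$-regular cycle $C_{n_1}$ forces $V(M_1)$ into one of three patterns: (A) $n_1$ singleton pieces, i.e.\ $G[V(M_1)]$ edgeless; (B) $n_1=4$ with two $K_2$-pieces; (C) $n_1=4$ with pieces $K_2,K_1,K_1$ (indeed a $K_2$-piece makes both of its vertices have the same two metamours, so the metamour cycle through it has length $4$). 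By symmetry the same trichotomy holds for $V(M_2)$, and in particular a $K_2$-piece occurs only when the ambient $n_i$ equals $4$, so $n_i\ge5$ forces the corresponding induced subgraph to be edgeless.

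The proof then divides according to $\max(n_1,n_2)$. Suppose $n_1\ge5$, so $G[V(M_1)]$ is edgeless and each vertex of $V(M_1)$ is a singleton piece whose two metamours are its cyclic neighbours in $C_{n_1}$. One first rules out patterns (B) and (C) for $V(M_2)$: the two vertices of such a $K_2$-piece have no metamour in $V(M_1)$, which confines all $G$-neighbours of $V(M_1)$ to at most three consecutive vertices of $C_{n_1}$ and leaves some vertex of $V(M_1)$ isolated, contradicting connectedness. With both induced subgraphs edgeless, $G$ is bipartite with parts $V(M_1)$ and $V(M_2)$; using Theorem~\ref{thm:kmmr:metamour-graph-not-connected}\ref{it:not-too-many-components} to cap neighbourhoods, together with $2$-metamour-regularity, one shows every vertex of $V(M_2)$ is adjacent to exactly two cyclically consecutive vertices of $C_{n_1}$ and that each consecutive pair is realised by exactly one such vertex; this forces $n_2=n_1$ and identifies $G$ with $C_{2n_1}$, i.e.\ conclusion (a) with $n$ even.

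The remaining case is $n_1,n_2\in\{3,4\}$, hence $n\in\{6,7,8\}$, and here the argument becomes a finite but delicate enumeration. For each value of $(n_1,n_2)$ and each admissible pair of piece patterns one writes down all bipartite ``piece-adjacency'' graphs between $V(M_1)$ and $V(M_2)$ compatible with \ref{it:one-egdge-implies-all-edges}--\ref{it:not-too-many-components} and with every vertex having metamour-degree exactly $2$ (equivalently, with the two induced metamour graphs being precisely $C_{n_1}$ and $C_{n_2}$); reconstructing $G$ from each admissible pattern and discarding disconnected ones then recovers the graphs listed in the statement (for $n=6$ and $n=8$ this also produces the even cycles $C_6$ and $C_8$), which one matches against Figures~\ref{fig:graphs-6}, \ref{fig:graphs-7} and~\ref{fig:graphs-8}. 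I expect this last enumeration — keeping the many small subcases simultaneously exhaustive and non-overlapping — to be the main obstacle; I would organise it around which, if either, of $G[V(M_1)]$ and $G[V(M_2)]$ is edgeless, the edgeless/edgeless subcase reducing to classifying small bipartite graphs with prescribed ``common-neighbour'' projections.
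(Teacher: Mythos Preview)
Your plan matches the paper's proof closely; the paper also reduces to the piece structure ($K_1$ or $K_2$), establishes your trichotomy (A)/(B)/(C) when a $K_2$-piece is present, and then enumerates. The only organisational difference is that the paper splits by the \emph{signature} (tuple of piece sizes) of each side rather than by $\max(n_1,n_2)$: it shows directly that signature $(1,\dots,1)$ with at least four entries on one side forces $G=C_n$ (so $C_8$ drops out there rather than in the finite enumeration), and then runs through the six signature pairs involving $(2,2)$, $(2,1,1)$ and $(1,1,1)$ one by one. Two small caveats on your sketch: in the ruling-out argument for a $K_2$-piece on the $V(M_2)$ side when $n_1\ge5$, the vertices of $V(M_1)$ having any neighbour in $V(M_2)$ can span up to \emph{four} consecutive positions of $C_{n_1}$ (not three), which is still enough; and your finite enumeration will also turn up the joins $\complement{C_3}\join\complement{C_3}$, $\complement{C_4}\join\complement{C_3}$ and $\complement{C_4}\join\complement{C_4}$ --- the paper's proof meets these same graphs and simply records them, leaving their absorption into case~\ref{it:thm-2mmr:general} of Theorem~\ref{thm:2mriff} to the final assembly step.
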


\resetbeh
\begin{proof}
  Theorem~\ref{thm:kmmr:metamour-graph-not-connected}\ref{it:general:exceptional}
  implies that the metamour graph is not connected.
  First observe that by Observation~\ref{obs:2mr_metamour-graph-consists-of-cycles},
  each connected component of the metamour graph of a
  $2$-metamour-regular graph is a cycle.

    The proof is split into several claims. 
    As a first step, we consider the number of 
    vertices of $G$.
    \begin{beh}\label{beh:2mmr:nle6}
        We have $n \ge 6$.
    \end{beh}
    \begin{proofbeh}{beh:2mmr:nle6}
        As the metamour graph of $G$ is not connected, the metamour graph 
        contains at least two connected components, which are cycles. Each cycle 
        has to contain 
        at least three 
        vertices, so $n \ge 6$.
    \end{proofbeh}
    
    Now we come to the main part of the proof.
    Theorem~\ref{thm:kmmr:metamour-graph-not-connected}\ref{it:general:exceptional} states that the metamour graph consists of exactly
    two connected components;
    we denote these by $M'$ and $M^\ast$.
    Set $G'=G[V(M')]$ and $G^\ast=G[V(M^\ast)]$. Then $G^M$ (as in
    Theorem~\ref{thm:kmmr:metamour-graph-not-connected}) equals
    $G' \cup G^\ast$.
    The definitions of $G'$ and $G^\ast$ are symmetric and we might switch
    the roles of the two without loss of generality during the proof
    and in the statements.

    We introduce the following notion: The \emph{signature $\sigma$}
    of a graph is the tuple of the numbers of vertices of its connected 
    components,
    sorted in descending order.
    If follows from \ref{it:at-most-k-vert} of
    Theorem~\ref{thm:kmmr:metamour-graph-not-connected}\ref{it:general:exceptional}
    that all connected components of $G'$ and $G^\ast$  have at most $2$ 
    vertices. As a consequence, the signatures $\sigma(G')$ and $\sigma(G^\ast)$
    have entries in $\set{1,2}$.
    Note that in case a connected component has two vertices, then these
    vertices are adjacent, i.e., this component equals~$P_2$.

    We perform a case distinction by the signatures of the
    graphs~$G'$ and $G^\ast$; this is stated as the following claims.
    We start with the case that at least one connected component of $G'$ 
    or $G^\ast$ has two vertices.
    
    \begin{beh}\label{beh:2mmr:component-with-2-vertices-implies-4-vertices-total}
      If the first (i.e., largest) entry of $\sigma(G')$ is $2$, then
      either $\sigma(G')=(2,2)$ or $\sigma(G')=(2,1,1)$.
      In the latter case, the two vertices of the two connected components 
      containing
      only one vertex do not have any common neighbor in~$G$.
    \end{beh}
    \begin{proofbeh}{beh:2mmr:component-with-2-vertices-implies-4-vertices-total}
      Suppose we have $G'_1 \in \CC{G'}$ with $\abs{V(G'_1)} = 2$.
        Let $v_1$ be one of the two vertices of $G'_1$. 
        Then $v_1$ is adjacent to the other vertex of $G'_1$, 
        therefore it must have its two metamours in another component of 
        $G'$.
        
        Let us assume that a metamour of $v_1$
        is in a connected component~$G'_2$ of $G'$ that consists of two 
        vertices. Then every
        vertex of $G'_1$ is a metamour of every vertex of 
        $G'_2$ due to \ref{it:metamour-components-contain-metamours} of
        Theorem~\ref{thm:kmmr:metamour-graph-not-connected}\ref{it:general:exceptional}.
        This implies that the four vertices of $G'_1$ and 
        $G'_2$ form a $C_4$ 
        in the metamour graph and consequently that $M'=C_4$. Therefore, 
        $G'$ cannot contain other vertices and $\sigma(G')=(2,2)$.
        
        Let us now assume that the two metamours of $v_1$
        are in different connected components $G'_2$ and $G'_3$ of $G'$ that 
        consist of only one vertex each.
        Then also the other vertex of $G'_1$ 
        is a metamour of the two 
        vertices in $G'_2$ and $G'_3$ due to
        \ref{it:metamour-components-contain-metamours} of
        Theorem~\ref{thm:kmmr:metamour-graph-not-connected}\ref{it:general:exceptional}.
        Hence, these four vertices form again a $C_4$ in the metamour graph
        and consequently $M'=C_4$. As a result, $G'$ 
        cannot contain
        any more vertices, so $\sigma(G')=(2,1,1)$.
        
        If the two vertices of $G'_2$ and $G'_3$
        have a common neighbor, then they are metamours of 
        each other because they are not adjacent. This is a contradiction to 
        the fact that the vertex of 
        $G'_2$ already has two metamours; they are in $G'_1$.
        Hence, the vertices of $G'_2$ and 
        $G'_3$ do not have any common neighbor.
    \end{proofbeh}
    
    By~\behref{beh:2mmr:component-with-2-vertices-implies-4-vertices-total} we 
    can 
    deduce how the graphs $G'$ and $G^\ast$ look like, if one of their 
    connected component contains $2$ vertices.    
     We will now continue by going 
    through all possible combinations of signatures of $G'$ and $G^\ast$
    implied by~\behref{beh:2mmr:component-with-2-vertices-implies-4-vertices-total}.
    In every case, we have to determine 
    which edges between vertices of $G'$ and $G^\ast$ exist
    and which do not exist in order to specify the graph~$G$.

    Due to
    \ref{it:one-egdge-implies-all-edges} of
    Theorem~\ref{thm:kmmr:metamour-graph-not-connected}\ref{it:general:exceptional},
    we know that as soon as there is an edge in~$G$ between a connected
    component of $G'$ and a connected component of $G^\ast$,
    then there are all possible edges between these two components in $G$.
    This implies, now rephrased in the language introduced in Section~\ref{sec:definitions},
    adjacency of two connected components of $G^M$
    is equivalent to complete adjacency of these components.
    Therefore, we equip $G^M=G' \cup G^\ast$ with a graph structure:
    The set~$\CC{G^M}$ is the vertex set and the edge set---we simply
    write it as $E(G^M)$---is determined by the adjacency relation above.
    Note that this graph is bipartite.
    
     \begin{beh}\label{beh:2mmr:2-2-and-2-2-components}
         If $\sigma(G')=(2,2)$ and $\sigma(G^\ast)=(2,2)$, then we have 
         \begin{equation*}
           G \in \set{
             \complement{C_4} \join \complement{C_4},
             \specialH{4,4}{a}}.
         \end{equation*}
       \end{beh}
     \begin{proofbeh}{beh:2mmr:2-2-and-2-2-components}
       Let $\set{G'_1,G'_2} = \CC{G'}$ and $\set{G^\ast_1,G^\ast_2} = \CC{G^\ast}$.
       Each of these four components has size~$2$, therefore, $n=8$.
       This proof is accompanied by Figure~\ref{fig:2mmr:2-2}(a).
       
       The components $G'_1$ and $G'_2$ need a common neighbor in $G^\ast$
       with respect to~$G^M$
         because their vertices are metamours; see the proof of
         \behref{beh:2mmr:component-with-2-vertices-implies-4-vertices-total}.
         So, we assume without loss of generality (by 
         renumbering the connected components of $G^\ast$) that 
         $\edge{G'_1}{G^\ast_1} \in E(G^M)$ and $\edge{G'_2}{G^\ast_1} \in E(G^M)$.
         Furthermore, $G^\ast_2$ needs to be adjacent to at least one connected 
         component of $G'$ because $G$ is connected, so assume without loss of 
         generality (by renumbering the connected components of $G'$) that 
         $\edge{G'_2}{G^\ast_2} \in E(G^M)$. 
         Now there is only the edge between $G'_1$ and $G^\ast_2$ left to consider. If 
         $\edge{G'_1}{G^\ast_2} \in E(G^M)$, then $G = \complement{C_4} \join 
         \complement{C_4}$ and if $\edge{G'_1}{G^\ast_2} \not\in E(G^M)$ then 
         $G=\specialH{4,4}{a}$. This completes the proof.
     \end{proofbeh}      
    
         \begin{figure}
           \centering
           \begin{tikzpicture}
             \begin{scope}[shift={(0,0)}]
               \node at (0,-2) {(a) in proof of \behref{beh:2mmr:2-2-and-2-2-components}};
               
               \node [vertex, label=west:$G'_1$] (a1) at (-0.5,0.5) {};
               \node [vertex, label=west:$G'_2$] (a2) at (-0.5,-0.5) {};
               \node [vertex, label=east:$G^\ast_1$] (b1) at (0.5,0.5) {};
               \node [vertex, label=east:$G^\ast_2$] (b2) at (0.5,-0.5) {};

               \draw [mmedge] (a1) -- (a2);
               \draw [mmedge] (b1) -- (b2);

               \draw [edge] (a1) -- (b1);
               \draw [edge] (a2) -- (b1);
               \draw [edge] (a2) -- (b2);
             \end{scope}
             \begin{scope}[shift={(5,0)}]
               \node at (0,-2) {(b) in proof of \behref{beh:2mmr:2-2-and-2-1-1-components}};

               \node [vertex, label=west:$G'_1$] (a1) at (-0.5,0.5) {};
               \node [vertex, label=west:$G'_2$] (a2) at (-0.5,-0.5) {};
               \node [vertex, label=east:$G^\ast_1$] (b1) at (0.5,1) {};
               \node [vertex, label={[label distance=0.25cm]east:$G^\ast_2$}] (b2) at (0.5,0) {};
               \node [vertex, label=east:$G^\ast_3$] (b3) at (0.5,-1) {};

               \draw [mmedge] (a1) -- (a2);
               \draw [mmedge] (b1) -- (b2);
               \draw [mmedge] (b3) to [bend right] (b1);

               \draw [edge] (a1) -- (b1);
               \draw [nonedge] (a1) -- (b3);
               \draw [edge] (a2) -- (b3);
               \draw [edge] (a2) -- (b1);
               \draw [nonedge] (a2) -- (b2);
               \draw [edge] (a1) -- (b2);
             \end{scope}
           \end{tikzpicture}
           \caption{Subgraphs of the situations
             in the proofs of \behref{beh:2mmr:2-2-and-2-2-components}
           and \behref{beh:2mmr:2-2-and-2-1-1-components}}
           \label{fig:2mmr:2-2}
         \end{figure}

     \begin{beh}\label{beh:2mmr:2-2-and-2-1-1-components}
         If $\sigma(G')=(2,2)$ and $\sigma(G^\ast)=(2,1,1)$, then we have 
         \begin{equation*}
           G = \specialH{4,4}{b}.
         \end{equation*}
     \end{beh}
     \begin{proofbeh}{beh:2mmr:2-2-and-2-1-1-components}
       Let $\set{G'_1,G'_2} = \CC{G'}$ and
       $\set{G^\ast_1,G^\ast_2,G^\ast_3} = \CC{G^\ast}$
       such that $\abs{V(G^\ast_1)}=2$.
      The size of each other component is then determined, specifically
       we have $\abs{V(G'_1)} = 2$, $\abs{V(G'_2)} = 2$,
       $\abs{V(G^\ast_2)}=1$ and $\abs{V(G^\ast_3)}=1$.
       Therefore, $n=8$.
       This proof is accompanied by Figure~\ref{fig:2mmr:2-2}(b).

         The component $G^\ast_1$ need to be adjacent in $G^M$ to at 
         least one 
         connected component of $G'$, so assume without loss of generality (by 
         renumbering the connected components of $G'$) that  
         $\edge{G'_1}{G^\ast_1} \in E(G^M)$. 
         It is not possible that both $G^\ast_2$ and $G^\ast_3$ 
         are adjacent in $G^M$ to $G'_1$ due 
         to~\behref{beh:2mmr:component-with-2-vertices-implies-4-vertices-total},
         so assume without 
         loss of generality (by renumbering $G^\ast_2$ and $G^\ast_3$) that 
         $\edge{G'_1}{G^\ast_3} \not \in E(G^M)$. But $G^\ast_3$ must have a common 
         neighbor in $G'$ with $G^\ast_1$ because their vertices are metamours, 
         so this implies 
         $\edge{G'_2}{G^\ast_3} \in E(G^M)$ and $\edge{G'_2}{G^\ast_1} \in E(G^M)$. 
        Due 
        to~\behref{beh:2mmr:component-with-2-vertices-implies-4-vertices-total},
          this implies that $\edge{G'_2}{G^\ast_2} \not \in E(G^M)$. 
         But as $G^\ast_2$ needs to be adjacent to at least one connected 
         component of $G'$, we find $\edge{G'_1}{G^\ast_2} \in E(G^M)$.
         As a consequence, we obtain $G = \specialH{4,4}{b}$. 
       \end{proofbeh}      
    
     \begin{beh}\label{beh:2mmr:2-1-1-and-2-1-1-components}
         If $\sigma(G')=(2,1,1)$ and $\sigma(G^\ast)=(2,1,1)$, then we have 
         \begin{equation*}
           G = \specialH{4,4}{c}.
         \end{equation*}
        \end{beh}
    \begin{proofbeh}{beh:2mmr:2-1-1-and-2-1-1-components}
      Let $\set{G'_1,G'_2,G'_3} = \CC{G'}$ and
      $\set{G^\ast_1,G^\ast_2,G^\ast_3} = \CC{G^\ast}$
      such that $\abs{V(G'_1)}=2$ and $\abs{V(G^\ast_1)}=2$.
      The size of each other component is then determined.
       We get $n=8$.
        
       Because 
       of~\behref{beh:2mmr:component-with-2-vertices-implies-4-vertices-total},
       not both $G^\ast_2$ and $G^\ast_3$ can be adjacent in~$G^M$ to $G'_1$, so 
       assume without loss of generality (by renumbering $G^\ast_2$ and 
       $G^\ast_3$) that $\edge{G'_1}{G^\ast_3} \not \in E(G^M)$.
       If $\edge{G'_1}{G^\ast_2} \not \in E(G^M)$, then $G^\ast_1$ 
       is the only 
       possible common neighbor in $G^M$ for 
        $G'_1$ and $G'_2$ as well as $G'_1$ and $G'_3$. But then 
        $G'_2$ and $G'_3$ 
        would have the common neighbor $G^\ast_1$ in $G^M$, a contradiction
     to~\behref{beh:2mmr:component-with-2-vertices-implies-4-vertices-total}. 
     As a result, we obtain $\edge{G'_1}{G^\ast_2} \in E(G^M)$.
        
        By symmetric arguments for $G^\ast$, we can assume without loss of 
        generality (by renumbering $G'_2$ and $G'_3$) that 
        $\edge{G'_3}{G^\ast_1} \not \in E(G^M)$ and then deduce that 
        $\edge{G'_2}{G^\ast_1} \in E(G^M)$.
        The current situation is shown in
        Figure~\ref{fig:2mmr:2-1-1-and-2-1-1}(a).
         \begin{figure}
           \centering
           \begin{tikzpicture}
             \begin{scope}[shift={(0,0)}]
               \node at (0,-1.75) {(a)};
               \node [vertex, label=west:$G'_1$] (a1) at (-0.5,1) {};
               \node [vertex, label={[label distance=0.25cm]west:$G'_2$}] (a2) at (-0.5,0) {};
               \node [vertex, label=west:$G'_3$] (a3) at (-0.5,-1) {};
               \node [vertex, label=east:$G^\ast_1$] (b1) at (0.5,1) {};
               \node [vertex, label={[label distance=0.25cm]east:$G^\ast_2$}] (b2) at (0.5,0) {};
               \node [vertex, label=east:$G^\ast_3$] (b3) at (0.5,-1) {};

               \draw [mmedge] (a1) -- (a2);
               \draw [mmedge] (a3) to [bend left] (a1);
               \draw [mmedge] (b1) -- (b2);
               \draw [mmedge] (b3) to [bend right] (b1);

               \draw [nonedge] (a1) -- (b3);
               \draw [edge] (a1) -- (b2);
               \draw [nonedge] (a3) -- (b1);
               \draw [edge] (a2) -- (b1);
             \end{scope}
             \node at (2.5,0) {$\Longrightarrow$};
             \begin{scope}[shift={(5,0)}]
               \node at (0,-1.75) {(b)};
               \node [vertex, label=west:$G'_1$] (a1) at (-0.5,1) {};
               \node [vertex, label={[label distance=0.25cm]west:$G'_2$}] (a2) at (-0.5,0) {};
               \node [vertex, label=west:$G'_3$] (a3) at (-0.5,-1) {};
               \node [vertex, label=east:$G^\ast_1$] (b1) at (0.5,1) {};
               \node [vertex, label={[label distance=0.25cm]east:$G^\ast_2$}] (b2) at (0.5,0) {};
               \node [vertex, label=east:$G^\ast_3$] (b3) at (0.5,-1) {};

               \draw [mmedge] (a1) -- (a2);
               \draw [mmedge] (a3) to [bend left] (a1);
               \draw [mmedge] (b1) -- (b2);
               \draw [mmedge] (b3) to [bend right] (b1);

               \draw [nonedge] (a1) -- (b3);
               \draw [edge] (a1) -- (b2);
               \draw [nonedge] (a3) -- (b1);
               \draw [edge] (a2) -- (b1);

               \draw [edge] (a2) -- (b3);
               \draw [edge] (a3) -- (b2);
               \draw [nonedge] (a2) -- (b2);
               \draw [nonedge] (a3) -- (b3);
               \draw [edge] (a1) -- (b1);
             \end{scope}
           \end{tikzpicture}
           \caption{Subgraphs of the situations
             in the proof of \behref{beh:2mmr:2-1-1-and-2-1-1-components}}
           \label{fig:2mmr:2-1-1-and-2-1-1}
         \end{figure}
        
        As a result, $G'_2$ is the only possible common neighbor in~$G^M$
        of $G^\ast_1$ and $G^\ast_3$, so $\edge{G'_2}{G^\ast_3} \in 
        E(G^M)$. 
        Analogously, we obtain $\edge{G'_3}{G^\ast_2} \in E(G^M)$.
        Now due 
        to~\behref{beh:2mmr:component-with-2-vertices-implies-4-vertices-total}, 
        we can deduce that 
        $\edge{G'_2}{G^\ast_2} \not \in E(G^M)$ and $\edge{G'_3}{G^\ast_3} \not \in E(G^M)$.
        So far $G'_1$ and $G'_2$ do not have a common neighbor, 
        and $G^\ast_1$ is the only possibility for that left, 
        so 
        $\edge{G'_1}{G^\ast_1} \in E(G^M)$;
        see Figure~\ref{fig:2mmr:2-1-1-and-2-1-1}(b).

        This fully determines $G$ and it holds that $G = \specialH{4,4}{c}$. 
    \end{proofbeh}      
    
    With~\behref{beh:2mmr:2-2-and-2-2-components},~\behref{beh:2mmr:2-2-and-2-1-1-components}
    and~\behref{beh:2mmr:2-1-1-and-2-1-1-components} we have considered all 
    cases 
    in which both $G'$ and $G^\ast$ have a connected component consisting of two vertices.
    
    So from now on we can assume that at least one of $G'$ and $G^\ast$ has no 
    connected component consisting of two vertices.
    Next we will deduce a result in the case that the signature of
    one of $G'$ and $G^\ast$ is $(1,\dots,1)$ with at least four entries.
    
    \begin{beh}\label{beh:2mmr:many-components-with-1-vertex-Cn}
      If $\sigma(G')=(1,\dots,1)$, $r$ times with $r\ge4$, then we have 
      \begin{equation*}
        G=C_n
      \end{equation*}
      and $n$ is even.
    \end{beh}
    \begin{proofbeh}{beh:2mmr:many-components-with-1-vertex-Cn}
      Let $\set{G'_1,\dots,G'_r} = \CC{G'}$.
        Let without loss of generality (by renumbering the connected components 
        of $G'$) the vertices of $G'_{i-1}$ and $G'_{i+1}$ be the two 
        metamours of the vertex of $G'_i$.
        Note that we take the indices modulo $r$ and that
        we keep doing this for the remaining proof.
        
        Let $i \in \set{1,\dots,r}$.
        Then clearly 
        $G'_i$ and $G'_{i+1}$ have a common neighbor $G^\ast_i \in 
        \CC{G^\ast}$.
        If $G^\ast_i$ is adjacent to any other connected component of $\CC{G'}$,
        then the vertex of this component together with the two
        vertices of $G'_i$ and $G'_{i+1}$
        form a $C_3$ in the metamour 
        graph. This is a contradiction, because $M'$ is $C_r$ with $r\ge4$.
        Hence, $G^\ast_i$ is adjacent in $G$ to only $G'_i$ and $G'_{i+1}$ of 
        $G'$.
        In particular, this implies that the components $G^\ast_1$, \dots, $G^\ast_r$ are 
        pairwise disjoint due to \ref{it:one-egdge-implies-all-edges} of
        Theorem~\ref{thm:kmmr:metamour-graph-not-connected}\ref{it:general:exceptional}.
        
        Now because of the common neighbors, the vertices of $G^\ast_i$ have the
        vertices of $G^\ast_{i-1}$ and $G^\ast_{i+1}$ as metamours.
        Therefore, as we are $2$-metamour-regular, every component $G^\ast_i$
        consists of exactly one vertex.
        As a consequence, $G^\ast_1$, \dots, $G^\ast_r$ lead to a $C_r$ in the metamour graph
        of $G$, specifically $M^\ast=C_r$.
        It 
        is easy to see that the vertices of
        $G'_1$, $G^\ast_1$, $G'_2$, $G^\ast_2$, \dots, $G'_r$, $G^\ast_r$ 
        form a $C_{2r}$. As we have ruled out all other possible edges, this 
        implies that $G=C_{2r}$. Hence, $n=2r$ and $G=C_n$ for $n$ even.
    \end{proofbeh}   

    In~\behref{beh:2mmr:many-components-with-1-vertex-Cn}, we have
    dealt with signatures $(1,\dots,1)$ of length at least~$4$.
    We will consider~$(1,1,1)$ below. There cannot be fewer than
    three connected components of only single vertices because
    each connected component of the metamour graph
    is a cycle and therefore has at least $3$ vertices.
    
    So what is left to consider are the two cases that $G'$ contains a 
    connected component with two vertices and $G^\ast$ has three isolated 
    vertices and the case that both $G'$ and $G^\ast$ have three isolated vertices.
    We consider these cases in the following claims.

    \begin{beh}\label{beh:2mmr:2-2-and-1-1-1}
      If $\sigma(G')=(2,2)$ and $\sigma(G^\ast)=(1,1,1)$, then we have 
      \begin{equation*}
        G \in \set{\complement{C_4} \join \complement{C_3}, \specialH{4,3}{a}, 
        \specialH{4,3}{b}}.
    \end{equation*}
   \end{beh}
    \begin{proofbeh}{beh:2mmr:2-2-and-1-1-1}
      Let $\set{G'_1,G'_2} = \CC{G'}$ and
      $\set{G^\ast_1,G^\ast_2,G^\ast_3} = \CC{G^\ast}$.
      The size of each component is then determined, and we get $n=7$.
        
        The single-vertex components $G^\ast_1$ and $G^\ast_2$ need a 
        common neighbor, as well as $G^\ast_2$ and $G^\ast_3$, and
        $G^\ast_1$ and $G^\ast_3$. At least two of these common neighbors 
        are from the same connected component of $G'$, because $G'$ has only 
        two connected components.
        Let without loss of generality 
        (by renumbering $G'_1$ and $G'_2$) this connected component be $G'_1$. 
        As a result, every component
        $G^\ast_1$, $G^\ast_2$ and $G^\ast_3$ is
        adjacent to $G'_1$, and
        $\edge{G'_1}{G^\ast_1}$, $\edge{G'_1}{G^\ast_2}$ and
        $\edge{G'_1}{G^\ast_2} \in E(G^M)$.
        
        The connected component $G'_2$ has to be adjacent to some component of 
        $G^\ast$ because $G$ is connected, so assume without loss of generality 
        (by renumbering $G^\ast_1$, $G^\ast_2$ and $G^\ast_3$) that 
        $\edge{G'_2}{G^\ast_1} \in E(G^M)$. 
         The current situation is shown in Figure~\ref{fig:2mmr:1-1-1}(a).
        
        Now if both $\edge{G'_2}{G^\ast_2}$ and $\edge{G'_2}{G^\ast_3}$ are in $E(G^M)$, 
        then $G$ is 
        fully determined and $G=\complement{C_4} \join \complement{C_3}$.
        If only one of  $\edge{G'_2}{G^\ast_2}$  and $\edge{G'_2}{G^\ast_3}$ is in 
        $E(G^M)$, then we have $G =  \specialH{4,3}{a}$, and if none of 
        $\edge{G'_2}{G^\ast_2}$ and $\edge{G'_2}{G^\ast_3}$ is in $E(G^M)$, then 
        $G=\specialH{4,3}{b}$.
        As one of these three settings has to occur, this proof is completed.
    \end{proofbeh}
    
         \begin{figure}
           \centering
           \begin{tikzpicture}
             \begin{scope}[shift={(0,0)}]
               \node at (0,-2) {(a) in proof of \behref{beh:2mmr:2-2-and-1-1-1}};

               \node [vertex, label=west:$G'_1$] (a1) at (-0.5,0.5) {};
               \node [vertex, label=west:$G'_2$] (a2) at (-0.5,-0.5) {};
               \node [vertex, label=east:$G^\ast_1$] (b1) at (0.5,1) {};
               \node [vertex, label={[label distance=0.25cm]east:$G^\ast_2$}] (b2) at (0.5,0) {};
               \node [vertex, label=east:$G^\ast_3$] (b3) at (0.5,-1) {};

               \draw [mmedge] (a1) -- (a2);
               \draw [mmedge] (b1) -- (b2) -- (b3);
               \draw [mmedge] (b3) to [bend right] (b1);

               \draw [edge] (a1) -- (b1);
               \draw [edge] (a1) -- (b2);
               \draw [edge] (a1) -- (b3);
               \draw [edge] (a2) -- (b1);
             \end{scope}
             \begin{scope}[shift={(5,0)}]
               \node at (0,-2) {(b) in proof of \behref{beh:2mmr:2-1-1-and-1-1-1}};

               \node [vertex, label=west:$G'_1$] (a1) at (-0.5,1) {};
               \node [vertex, label={[label distance=0.25cm]west:$G'_2$}] (a2) at (-0.5,0) {};
               \node [vertex, label=west:$G'_3$] (a3) at (-0.5,-1) {};
               \node [vertex, label=east:$G^\ast_1$] (b1) at (0.5,1) {};
               \node [vertex, label={[label distance=0.25cm]east:$G^\ast_2$}] (b2) at (0.5,0) {};
               \node [vertex, label=east:$G^\ast_3$] (b3) at (0.5,-1) {};

               \draw [mmedge] (a1) -- (a2);
               \draw [mmedge] (a3) to [bend left] (a1);
               \draw [mmedge] (b1) -- (b2) -- (b3);
               \draw [mmedge] (b3) to [bend right] (b1);

               \draw [edge] (a2) -- (b2);
               \draw [edge] (a3) -- (b3);
               \draw [nonedge] (a2) -- (b3);
               \draw [nonedge] (a3) -- (b2);
               \draw [edge] (a1) -- (b2);
               \draw [edge] (a1) -- (b3);
               \draw [edge] (a1) -- (b1);
             \end{scope}
           \end{tikzpicture}
           \caption{Subgraphs of the situations
             in the proofs of \behref{beh:2mmr:2-2-and-1-1-1}
             and \behref{beh:2mmr:2-1-1-and-1-1-1}}
           \label{fig:2mmr:1-1-1}
         \end{figure}
    
    \begin{beh}\label{beh:2mmr:2-1-1-and-1-1-1}
     If $\sigma(G')=(2,1,1)$ and $\sigma(G^\ast)=(1,1,1)$, then we have 
     \begin{equation*}
       G \in \set{\specialH{4,3}{c}, \specialH{4,3}{d}}.
     \end{equation*}
    \end{beh}
    \begin{proofbeh}{beh:2mmr:2-1-1-and-1-1-1}
      Let $\set{G'_1,G'_2,G'_3} = \CC{G'}$ and
      $\set{G^\ast_1,G^\ast_2,G^\ast_3} = \CC{G^\ast}$
      such that $\abs{V(G'_1)}=2$.
      The size of each other component is then determined.
      We get $n=7$.

      As $G$ is connected, let us assume without loss of generality
      (by renumbering $G^\ast_1$, $G^\ast_2$ and $G^\ast_3$)
      that $\edge{G'_2}{G^\ast_2}$ and $\edge{G'_3}{G^\ast_3} \in E(G^M)$.
         Because 
         of~\behref{beh:2mmr:component-with-2-vertices-implies-4-vertices-total},
         the vertices of $G'_2$ and $G'_3$ cannot have a common neighbor, 
         therefore $\edge{G'_2}{G^\ast_3}$ and $\edge{G'_3}{G^\ast_2} \not\in E(G^M)$
         holds.

         The only choice for a common neighbor of $G^\ast_2$ and $G^\ast_3$
         is $G'_1$, therefore
         $\edge{G'_1}{G^\ast_2}$ and $\edge{G'_1}{G^\ast_3} \in E(G^M)$.

         If $G'_1$ is not adjacent in~$G^M$ to $G^\ast_1$, then
         we have to have
         $\edge{G'_2}{G^\ast_1}$ and $\edge{G'_3}{G^\ast_1} \in E(G^M)$
         so that $G^\ast_1$ and $G^\ast_2$ as well as $G^\ast_1$ and $G^\ast_3$
         have a common neighbor. But then $G'_2$ and $G'_3$ get the
         common neighbor~$G^\ast_1$ which is a contradiction
         to~\behref{beh:2mmr:component-with-2-vertices-implies-4-vertices-total}.
         Therefore, we have $\edge{G'_1}{G^\ast_1} \in E(G^M)$.
         The current situation is shown in Figure~\ref{fig:2mmr:1-1-1}(b).

         Furthermore, not both of $G'_2$ and $G'_3$ can be adjacent to the 
         vertex of $G^\ast_1$, so assume without loss of generality (by 
         renumbering $G'_2$ and $G'_3$) that $\edge{G'_3}{G^\ast_1} \not \in E(G^M)$.
         
         Now if $\edge{G'_2}{G^\ast_1} \in E(G^M)$, then $G=\specialH{4,3}{c}$.
         If $\edge{G'_2}{G^\ast_1} \not \in E(G^M)$, then $G=\specialH{4,3}{d}$.
         This completes the proof.
     \end{proofbeh}
     
     Now the only case left to consider is that both $G'$ and $G^\ast$ contain 
     three isolated vertices.

    \begin{beh}\label{beh:2mmr:1-1-1-and-1-1-1}
        If $\sigma(G')=(1,1,1)$ and $\sigma(G^\ast)=(1,1,1)$, then we have 
        \begin{equation*}
          G \in \set{C_6, \complement{C_3} \join \complement{C_3}, 
            \specialH{3,3}{a}, \specialH{3,3}{b},
            \specialH{3,3}{c}, \specialH{3,3}{d},
            \specialH{3,3}{e}}.
        \end{equation*}
    \end{beh}
    \begin{proofbeh}{beh:2mmr:1-1-1-and-1-1-1}
      Let $\set{G'_1,G'_2,G'_3} = \CC{G'}$ and
      $\set{G^\ast_1,G^\ast_2,G^\ast_3} = \CC{G^\ast}$.
      Then clearly $n=6$.
      
        We first consider the case that every connected component has at most 
        degree $2$ in $G^M$. If a vertex has degree $1$, then in order to have 
        a common neighbor with its both metamours, the component it is adjacent to has 
        to have degree $3$, a contradiction. Therefore, every 
        vertex has degree $2$. Due to the fact that $G$ 
        is connected, this implies that $G=C_6$, so in this case we are done.

        Now assume there is at least one component that has degree $3$ in $G^M$. Let without 
        loss of generality (by switching $G'$ and $G^\ast$ and by renumbering 
        the connected components of $G'$) this component be $G'_1$.
        Then we have 
        $\edge{G'_1}{G^\ast_1}$, $\edge{G'_1}{G^\ast_2}$ and
        $\edge{G'_1}{G^\ast_3} \in E(G^M)$.
                
        If every component of $G'$ has degree $3$ in $G^M$, then $G = \complement{C_3} \join 
        \complement{C_3}$, so also in this case we are done.
        Hence, we can assume without loss of generality (by renumbering $G'_2$ 
        and $G'_3$ and by renumbering the components of $G^\ast$) that 
         $\edge{G'_3}{G^\ast_3} \not \in E(G^M)$. 
         
         Now $G'_3$ and $G'_2$ need a common neighbor. This cannot be $G^\ast_3$.  
         Assume without loss of generality (by renumbering $G^\ast_1$ and 
         $G^\ast_2$) that the common neighbor is $G^\ast_2$. Then 
         this implies that
         $\edge{G'_2}{G^\ast_2}$ and $\edge{G'_3}{G^\ast_2} \in E(G^M)$.
         The current situation is shown in 
         Figure~\ref{fig:2mmr:1-1-1-and-1-1-1}.
         
         \begin{figure}
           \centering
           \begin{tikzpicture}
             \begin{scope}[shift={(0,0)}]
               \node [vertex, label=west:$G'_1$] (a1) at (-0.5,1) {};
               \node [vertex, label={[label distance=0.25cm]west:$G'_2$}] (a2) at (-0.5,0) {};
               \node [vertex, label=west:$G'_3$] (a3) at (-0.5,-1) {};
               \node [vertex, label=east:$G^\ast_1$] (b1) at (0.5,1) {};
               \node [vertex, label={[label distance=0.25cm]east:$G^\ast_2$}] (b2) at (0.5,0) {};
               \node [vertex, label=east:$G^\ast_3$] (b3) at (0.5,-1) {};

               \draw [mmedge] (a1) -- (a2) -- (a3);
               \draw [mmedge] (a3) to [bend left] (a1);
               \draw [mmedge] (b1) -- (b2) -- (b3);
               \draw [mmedge] (b3) to [bend right] (b1);

               \draw [edge] (a1) -- (b1);
               \draw [edge] (a1) -- (b2);
               \draw [edge] (a1) -- (b3);
               \draw [nonedge] (a3) -- (b3);
               \draw [edge] (a2) -- (b2);
               \draw [edge] (a3) -- (b2);
             \end{scope}
             \node at (2.5,0) {$=$};
             \begin{scope}[shift={(5, 0)}]
               \node [vertex, label={[label distance=-0.15cm]north east:$G'_2$}] (ru) at (45:1) {};
               \node [vertex, label={[label distance=-0.15cm]north west:$G^\ast_3$}] (lu) at (135:1) {};
               \node [vertex, label={[label distance=-0.15cm]south west:$G^\ast_1$}] (ll) at (225:1) {};
               \node [vertex, label={[label distance=-0.15cm]south east:$G'_3$}] (rl) at (315:1) {};
               \node [vertex, label={[label distance=-0.1cm]west:$G'_1$}] (lc) at (-0.3,0) {};
               \node [vertex, label={[label distance=-0.1cm]east:$G^\ast_2$}] (rc) at (0.3,0) {};

               \draw [edge] (lc) -- (rc);
               \draw [edge] (ru) -- (rc);
               \draw [edge] (rl) -- (rc);
               \draw [edge] (lu) -- (lc);
               \draw [edge] (ll) -- (lc);

               \draw [mmedge] (ru) to [bend right] (lc);
               \draw [mmedge] (lc) to [bend right] (rl);
               \draw [mmedge] (rl) to [bend right=45] (ru);
               
               \draw [mmedge] (lu) to [bend left] (rc);
               \draw [mmedge] (rc) to [bend left] (ll);
               \draw [mmedge] (ll) to [bend left=45] (lu);
             \end{scope}
           \end{tikzpicture}
           \caption{Subgraphs of the situation
             in the proof of \behref{beh:2mmr:1-1-1-and-1-1-1}}
           \label{fig:2mmr:1-1-1-and-1-1-1}
         \end{figure}
         The potential edges, which status is still 
         undetermined, are $\edge{G'_2}{G^\ast_1}$, $\edge{G'_2}{G^\ast_3}$ and 
         $\edge{G'_3}{G^\ast_1}$.
         At this stage, if each of these pairs is a non-edge in~$G^M$, then
         it is easy to see that 
         whenever two vertices should be metamours they are metamours,
         to be precise
         all components of $G'$ have the common neighbor $G^\ast_2$
         and all components of 
         $G^\ast$ have the common neighbor $G'_1$.
         Therefore, we have enough edges in~$G^M$, so that additional 
         edges between components of $G'$ and $G^\ast$ can be included without 
         interfering with the metamours.
         
         Now we first consider all cases where $\edge{G'_2}{G^\ast_3} \in E(G^M)$.
         In this case 
         if both of $\edge{G'_2}{G^\ast_1}$ and $\edge{G'_3}{G^\ast_1}$ are in 
         $E(G^M)$, then $G=\specialH{3,3}{a}$.
         If $\edge{G'_2}{G^\ast_1} \not \in E(G^M)$ and $\edge{G'_3}{G^\ast_1} \in E(G^M)$,
         then $G=\specialH{3,3}{b}$.         
         If $\edge{G'_2}{G^\ast_1} \in E(G^M)$ and $\edge{G'_3}{G^\ast_1} \not \in E(G^M)$,
         then $G=\specialH{3,3}{c}$.
         And finally, if $\edge{G'_2}{G^\ast_1} \not \in E(G^M)$ and 
         $\edge{G'_3}{G^\ast_1} \not \in E(G^M)$,
         then $G=\specialH{3,3}{d}$.        
         
         Next we consider the case where  $\edge{G'_2}{G^\ast_3} \not \in E(G^M)$.
         If in this case both of $\edge{G'_2}{G^\ast_1}$ and $\edge{G'_3}{G^\ast_1}$ 
         are in 
         $E(G^M)$, then $G=\specialH{3,3}{c}$.
         If one of $\edge{G'_2}{G^\ast_1}$ and $\edge{G'_3}{G^\ast_1}$ is in $E(G^M)$, 
         then $G=\specialH{3,3}{d}$.
         If none of  $\edge{G'_2}{G^\ast_1}$ and $\edge{G'_3}{G^\ast_1}$ is in $E(G^M)$, 
         then $G=\specialH{3,3}{e}$.
         
         Eventually, we have considered all cases and proven what we wanted to 
         show.
    \end{proofbeh}    

    Finally, we are finished in all cases and therefore the proof of 
    Proposition~\ref{pro:2mmr:metamour-graph-not-cn} is complete.
\end{proof}

\subsection{Assembling results \& other proofs}

With all results above,
we are now able to prove the main theorem of this section which provides a 
characterization of $2$-metamour-regular graphs.

\begin{proof}[Proof of Theorem~\ref{thm:2mriff}]
  Let $G$ be $2$-metamour-regular and $M$ its metamour graph. We apply
  Theorem~\ref{thm:kmmr:metamour-graph-not-connected} with $k=2$.
  This leads us to one of three cases.
  
  Case~\ref{it:general:general} of
  Theorem~\ref{thm:kmmr:metamour-graph-not-connected}
  gives
  \begin{equation*}
    G = \complement{M_1} \join \dots \join \complement{M_t}
  \end{equation*}
  with $\set{M_1,\dots,M_t} = \CC{M}$ and $t \ge 2$.
  By Observation~\ref{obs:2mr_metamour-graph-consists-of-cycles}
  every connected component~$M_i$, $i\in\set{1,\dots,t}$, is a
  cycle~$C_{n_i}$. This results in \ref{it:thm-2mmr:general}
  of Theorem~\ref{thm:2mriff} for $t\ge2$.

  If we are in case~\ref{it:general:mm-connected} of
  Theorem~\ref{thm:kmmr:metamour-graph-not-connected},
  then the metamour graph~$M$ is connected and we apply
  Proposition~\ref{pro:2mmr:Cn-or-complementCn}.
  If we are in case~\ref{it:general:exceptional} of
  Theorem~\ref{thm:kmmr:metamour-graph-not-connected}, then the metamour
  graph consists of exactly two connected components and
  we can apply Proposition~\ref{pro:2mmr:metamour-graph-not-cn}.
  Collecting all graphs coming from these two propositions yields
  the remaining graphs of \ref{it:thm-2mmr:general},
  \ref{it:thm-2mmr:Cn} and \ref{it:thm-2mmr:exceptional}
  of Theorem~\ref{thm:2mriff}.

  For the other direction,
  Proposition~\ref{pro:join-of-complements-is-kmr} implies that
  the graph~$G$ in \ref{it:thm-2mmr:general} of Theorem~\ref{thm:2mriff}
  is a $2$-metamour-regular graph for~$t \ge 2$.
    Furthermore, it is easy to check that all other mentioned graphs are 
    $2$-metamour-regular, which proves this side of the 
    equivalence and completes the proof.
\end{proof}

Finally we are able to prove the following corollaries of 
Theorem~\ref{thm:2mriff}.

\begin{proof}[Proof of Corollary~\ref{cor:2mm:collect-non-standard-cases}]
    The result is an immediate consequence of Theorem~\ref{thm:2mriff}.
\end{proof}

\begin{proof}[Proof of Corollary~\ref{cor:2mm:2mm-iff-regular}]
    Corollary~\ref{cor:2mm:collect-non-standard-cases} provides a 
    characterization of all $2$-metamour-regular graphs with $n \ge 9$ 
    vertices. It is easy to see that all of these graphs are either $2$-regular 
    (in the case of $C_n$) or $(n-3)$-regular. This proves one direction of the 
    equivalence.
    
    For the other direction first consider a connected $2$-regular graph on $n$ 
    vertices. Clearly, this graph equals $C_n$, therefore this graph is 
    $2$-metamour-regular. If a connected graph is $(n-3)$-regular, then its 
    complement~$\complement{G}$ is a $2$-regular graph. As a result, each 
    connected component of $\complement{G}$ is a cycle graph. Let 
    $C_{n_1}$, \dots, $C_{n_t}$ be the connected components of 
    $\complement{G}$. 
    It is easy to see that then $n_i \ge 3$ and $n=n_1+\dots+n_t$ hold. In 
    consequence, $G = \complement{C_{n_1}} \join \dots \join 
    \complement{C_{n_t}}$ holds and therefore $G$ is $2$-metamour-regular. This 
    completes the proof.
\end{proof}

\begin{proof}[Proof of Corollary~\ref{cor:2mm:mm-ne-complement}]
    The statement of the corollary is a direct consequence of Theorem~\ref{thm:2mriff}.
\end{proof}

\begin{proof}[Proof of Corollary~\ref{cor:2mmr:counting}]
  We use the characterization provided by Theorem~\ref{thm:2mriff}.
  So let us consider $2$-metamour-regular graphs.
  Such a graph has at least~$n\ge5$ vertices.
  
  In case~\ref{it:thm-2mmr:general}, there is one graph per integer partition
  of~$n$ into a sum, where each summand is at least~$3$. Note that
  the graph operator~$\join$ is commutative which coincides with
  the irrelevance of the order of the summands of the sum. There
  are $p_3(n)$ many such partitions.

  Case~\ref{it:thm-2mmr:Cn} gives exactly one graph for
  each~$n\ge5$. The graph~$C_5$ is counted
  in both~\ref{it:thm-2mmr:general} and~\ref{it:thm-2mmr:Cn}; see first
  item of Remark~\ref{rem:thm-2mmr}.
  Case~\ref{it:thm-2mmr:exceptional} brings in additionally $8$ graphs
  for $n=6$, $6$ graphs for $n=7$ and $3$ graphs for $n=8$.

  In total, this gives the claimed numbers.
\end{proof}

This completes all proofs of the present paper.

\section{Conclusions \& open problems}
\label{sec:conclusions}

In this paper we have introduced the metamour graph~$M$ of a graph~$G$: The set 
of vertices of $M$ is the set of vertices of $G$ and two 
vertices are adjacent in $M$ if and only if they are at distance $2$ in~$G$,
i.e., they are metamours. This definition is motivated by polyamorous 
relationships, where two persons are metamours if they have a
relationship with a common partner, but are not in a relationship 
themselves. 

We focused on $k$-metamour-regular graphs, i.e., graphs in which every vertex 
has exactly~$k$ metamours.
We presented a generic construction to obtain $k$-metamour-regular graphs
from $k$-regular graphs for an arbitrary $k \ge 0$.
Furthermore, in our main results, we provided a full characterization of 
all $k$-metamour-regular graphs for each $k \in \set{0,1,2}$. These 
characterizations revealed that with a few exceptions, all graphs
come from the generic construction. In particular,
\begin{itemize}
\item for $k=0$ every $k$-metamour-regular graph is 
  obtained by the generic construction.
\item For $k=1$ there is only one exceptional graph that is 
  $k$-metamour-regular and not obtained by the generic 
  construction.
\item In the case of $k=2$ there are 17 exceptional graphs with
  at most $8$ vertices and a family of graphs,
  one for each number of vertices at least $6$, that 
  are $2$-metamour-regular and cannot be created by the generic construction.
\end{itemize}
Additionally,
we were able to characterize all graphs where every vertex has
at most one metamour and give properties of the structure of graphs where every
vertex has at most~$k$ metamours for arbitrary $k \ge 0$.
Every characterization is accompanied by counting for each number of
vertices how many unlabeled graphs there are.

The obvious unanswered question is clearly the following.
\begin{question}
  What is a characterization of 
  $k$-metamour-regular graphs for each $k \ge 3$?
\end{question}
This is of particular interest for
$k = 3$. As our generic construction yields $k$-metamour-regular graphs for every 
$k \ge 0$, we clearly already have determined a lot of $3$-metamour-regular 
graphs. It would, however, be lovely to determine all remaining graphs. 
Another interesting question is about fixed maximum metamour-degree.
\begin{question}
  What is a characterization of all graphs that have maximum 
  metamour-degree~$k$?
\end{question}
We have answered this question for $k\in\set{0,1}$ and 
would be delighted to know the answer in general,
but as first steps specifically for $k=2$ and $k=3$.

It would also be interesting to find some structure in the graphs that are 
$k$-metamour-regular and cannot be obtained with our generic construction.
In particular, we ask the following.
\begin{question}
  Is it possible to give properties (necessary or sufficient)
  of the exceptional graphs or graph classes?
\end{question}

When dealing with metamour graphs, one question to ask is whether it is 
possible to characterize all graphs whose metamour graph has a certain property.
In the present paper we have started to give an answer for the 
feature that the metamour graph is $k$-regular. But what about other
graph classes?
Of course it would be interesting to answer the following questions.
\begin{question}
  Is it possible to characterize all graphs whose metamour graph is
  in some graph class like
  planar, 
  bipartite, Eulerian or Hamiltonian graphs or like graphs of a 
  certain diameter, girth, stability number or chromatic number?
\end{question}

Another question of interest concerns constructing graphs, namely
given a graph $M$, is there a 
graph $G$ such that $M$ is the metamour graph of $G$?
If $M$ is not connected, then
the answer is easy and also provided in this paper,
namely $G = \complement{M}$ is such a graph. However, if 
$M$ is connected this question is still open and an answer more complicated.
This give rise to the following question.
\begin{question}
  What is a characterization of the class of graphs with the property that
  each graph in this class is the metamour graph of some graph?
\end{question}

Motivated by~\cite{Zypen:2019:graphs-distance-2} we ask the following.
\begin{question}
  What is a characterization of the class of graphs, where
  every graph is isomorphic to its metamour graph?
\end{question}

Going into another direction, one can also think about random graphs
like the graphs from the Erd\H{o}s--R\'enyi model $G(n,p)$.
\begin{question}
  Given a random graph of $G(n,p)$, which 
  properties does its metamour graph have?
  Is there a critical value for $p$ (depending on~$n$) such 
  that the metamour graph is connected?
\end{question}

In enumerative and probabilistic combinatorics the following question arise.
\begin{question}
  Given a random graph model, for example that all graphs
  with the same number of vertices are equally likely,
  what is the expected value of the  
  metamour-degree? What about its distribution?
\end{question}
Most of the results and open questions focus on the number of vertices
of the graph and metamour graph respectively, as these two numbers match.
But it would be interesting to know how the number of 
edges of the metamour graph of a graph relates to the number of edges in this graph. Specifically, we ask the following questions.
\begin{question}
  Given a graph $G$ with $m$ edges, in which range can the number of
  edges of the metamour graph of $G$ be?
\end{question}
\begin{question}
  What is the distribution of the number of edges of the 
  metamour graph over all 
  possible graphs with $m$ edges?
\end{question}

{\footnotesize
\newcommand\MR[1]{}
\bibliography{bib/cheub,poly-bib/poly}

\providecommand{\Submitted}{Submitted} \providecommand{\availableat}{ available
  at } \providecommand{\alsoavailableat}{ also available at }
  \providecommand{\evavailableat}{earlier version available at }
  \providecommand{\toappearin}{To appear in } \providecommand{\toappear}{to
  appear} \providecommand{\inpreparation}{in preparation}
  \providecommand{\doi}[1]{\href{http://dx.doi.org/#1}{\path{doi:#1}}}
  \providecommand{\lowercaseforams}{}
  \providecommand{\etc}{\emph{etc.}}\def\cprime{$'$}
\providecommand{\bysame}{\leavevmode\hbox to3em{\hrulefill}\thinspace}
\providecommand{\MR}{\relax\ifhmode\unskip\space\fi MR }
\providecommand{\MRhref}[2]{%
  \href{http://www.ams.org/mathscinet-getitem?mr=#1}{#2}
}
\providecommand{\href}[2]{#2}
\begin{thebibliography}{10}

\bibitem{Akiyama-Kaneko-Simic:1978:line-graphs-power-graphs}
Jin Akiyama, Kimiko Kaneko, and Slobodan Simi\'{c}, \emph{Graph equations for
  line graphs and {$n$}-th power graphs. {I}}, Publ. Inst. Math. (Beograd)
  (N.S.) \textbf{23(37)} (1978), 5--8. \MR{508120}

\bibitem{Alon-Mohar:2002:chromatic-number-powers}
Noga Alon and Bojan Mohar,
  \href{http://dx.doi.org/10.1017/S0963548301004965}{\emph{The chromatic number
  of graph powers}}, Combin. Probab. Comput. \textbf{11} (2002), no.~1, 1--10.
  \MR{1888178}

\bibitem{Azimi-Farrokhi:2014:2-distance-graphs-paths-cycles}
Ali Azimi and Mohammad Farrokhi D.~G.,
  \href{http://dx.doi.org/10.1007/s12215-014-0157-7}{\emph{Simple graphs whose
  2-distance graphs are paths or cycles}}, Matematiche (Catania) \textbf{69}
  (2014), no.~2, 183--191. \MR{3274103}

\bibitem{Azimi-Farrokhi:2017:self-2-distance-graphs}
\bysame, \href{http://dx.doi.org/10.4153/CMB-2016-071-6}{\emph{Self 2-distance
  graphs}}, Canad. Math. Bull. \textbf{60} (2017), no.~1, 26--42. \MR{3612096}

\bibitem{Bonamy-Leveque-Ponlou:2014:2-distance-coloring}
Marthe Bonamy, Benjamin L\'{e}v\^{e}que, and Alexandre Pinlou,
  \href{http://dx.doi.org/10.1002/jgt.21782}{\emph{2-distance coloring of
  sparse graphs}}, J. Graph Theory \textbf{77} (2014), no.~3, 190--218.
  \MR{3258718}

\bibitem{Bondy-Murty:2008:graph-theory}
John~A. Bondy and Uppaluri S.~R. Murty,
  \href{http://dx.doi.org/10.1007/978-1-84628-970-5}{\emph{Graph theory}},
  Graduate Texts in Mathematics, vol. 244, Springer, New York, 2008.
  \MR{2368647}

\bibitem{Borodin-Ivanova-Neustroeva:2004:2-dist-coloring}
Oleg~V. Borodin, Anna~O. Ivanova, and Tatyana~K. Neustroeva, \emph{2-distance
  coloring of sparse planar graphs}, Sib. \`Elektron. Mat. Izv. \textbf{1}
  (2004), 76--90. \MR{2132449}

\bibitem{Bozdag-Catalyurek-Gebremedhin-Manne-Boman-Ozguner:2010:coloring-alg-distance-2}
Doruk Bozda\u{g}, \"{U}mit~V. \c{C}ataly\"{u}rek, Assefaw~H. Gebremedhin,
  Fredrik Manne, Erik~G. Boman, and F\"{u}sun \"{O}zg\"{u}ner,
  \href{http://dx.doi.org/10.1137/080732158}{\emph{Distributed-memory parallel
  algorithms for distance-2 coloring and related problems in derivative
  computation}}, SIAM J. Sci. Comput. \textbf{32} (2010), no.~4, 2418--2446.
  \MR{2678107}

\bibitem{Brouwer-Cohen-Neumaier:1989:distance-regular-graphs}
Andries~E. Brouwer, Arjeh~M. Cohen, and Arnold Neumaier,
  \href{http://dx.doi.org/10.1007/978-3-642-74341-2}{\emph{Distance-regular
  graphs}}, Ergebnisse der Mathematik und ihrer Grenzgebiete (3), vol.~18,
  Springer-Verlag, Berlin, 1989. \MR{1002568}

\bibitem{Bu-Wang:2019:2-distance-coloring}
Yuehua Bu and Lixia Wang, \emph{2-distance coloring of sparse planar graphs},
  Adv. Math. (China) \textbf{48} (2019), no.~2, 145--155. \MR{3995521}

\bibitem{Chaudhary:2018:vertices-having-distance}
Deepak Chaudhary, \href{https://stackoverflow.com/q/50890827}{\emph{Pairs of
  vertices in graph having specific distance}}, StackOverflow,
  \url{https://stackoverflow.com/q/50890827} (version: 2018-11-16).

\bibitem{Ching-Garces:2019:characterizing-2-distance-graphs}
Ramuel~P. Ching and Ian J.~L. Garces,
  \href{http://dx.doi.org/10.1142/S1793557119500062}{\emph{Characterizing
  2-distance graphs}}, Asian-Eur. J. Math. \textbf{12} (2019), no.~1, 1950006,
  10. \MR{3910644}

\bibitem{Diestel:2017:graph-theory:5th}
Reinhard Diestel,
  \href{http://dx.doi.org/10.1007/978-3-662-53622-3}{\emph{Graph theory}},
  fifth ed., Graduate Texts in Mathematics, vol. 173, Springer, Berlin, 2017.
  \MR{3644391}

\bibitem{Harary:1969:graph-theory}
Frank Harary, \emph{Graph theory}, Addison-Wesley Publishing Co., Reading,
  Mass.-Menlo Park, Calif.-London, 1969. \MR{0256911}

\bibitem{Hardy-Easton:2017:ethical-slut}
Janet~W. Hardy and Dossie Easton, \emph{The ethical slut: A practical guide to
  polyamory, open relationships, and other freedoms in sex and love}, third
  ed., Ten Speed Press, 2017.

\bibitem{Haritaworn-Lin-Klesse:2006:poly-logue}
Jin Haritaworn, Chin ju~Lin, and Christian Klesse,
  \href{http://dx.doi.org/10.1177/1363460706069963}{\emph{Poly/logue: A
  critical introduction to polyamory}}, Sexualities \textbf{9} (2006), no.~5,
  515--529.

\bibitem{Hyde-DeLamater:2006:underst-human-sexuality}
Janet~S. Hyde and John~D. DeLamater, \emph{Understanding human sexuality},
  ninth ed., McGraw-Hill Higher Education, 2006, accessed at
  \url{http://highered.mcgraw-hill.com/sites/0072986360/student_view0/chapter12/glossary.html}
  (version: 2020-08-31).

\bibitem{Iqbal-Koolen-Park-Rehman:2020:dist-reg}
Quaid Iqbal, Jack~H. Koolen, Jongyook Park, and Masood~Ur Rehman,
  \href{http://dx.doi.org/10.1016/j.laa.2019.10.021}{\emph{Distance-regular
  graphs with diameter 3 and eigenvalue $a_2-c_3$}}, Linear Algebra and its
  Applications \textbf{587} (2020), 271--290.

\bibitem{Kamga-Wang-Wang-Chen:2018:edge-color-vertices-distance-2}
Victor~Loumngam Kamga, Weifan Wang, Ying Wang, and Min Chen,
  \href{http://dx.doi.org/10.1007/s10878-018-0288-4}{\emph{2-distance
  vertex-distinguishing index of subcubic graphs}}, J. Comb. Optim. \textbf{36}
  (2018), no.~1, 108--120. \MR{3811195}

\bibitem{Kiser-Haynes:2015:distance-2-domatic-numbers-grids}
Derek Kiser and Teresa~W. Haynes, \emph{Distance-2 domatic numbers of grid
  graphs}, Congr. Numer. \textbf{225} (2015), 55--62. \MR{3468710}

\bibitem{Kramer-Kramer:2008:survey-distance-coloring}
Florica Kramer and Horst Kramer,
  \href{http://dx.doi.org/10.1016/j.disc.2006.11.059}{\emph{A survey on the
  distance-colouring of graphs}}, Discrete Math. \textbf{308} (2008), no.~2-3,
  422--426. \MR{2378044}

\bibitem{Molloy-Salavatipour:2005:chromatic-number-square-panar}
Michael Molloy and Mohammad~R. Salavatipour,
  \href{http://dx.doi.org/10.1016/j.jctb.2004.12.005}{\emph{A bound on the
  chromatic number of the square of a planar graph}}, J. Combin. Theory Ser. B
  \textbf{94} (2005), no.~2, 189--213. \MR{2145512}

\bibitem{OEIS:2020}
\emph{The {O}n-{L}ine {E}ncyclopedia of {I}nteger {S}equences},
  \url{http://oeis.org}, 2020.

\bibitem{Sheff:2016:someone-is-poly}
Elisabeth Sheff, \emph{When someone you love is polyamorous: Understanding poly
  people and relationships}, Thorntree Press, 2016.

\bibitem{Simic:1983:graph-equations-distance-graphs}
Slobodan~K. Simi\'{c}, \emph{Graph equations for line graphs and {$n$}th
  distance graphs}, Publ. Inst. Math. (Beograd) (N.S.) \textbf{33(47)} (1983),
  203--216. \MR{723447}

\bibitem{Sloane-Plouffe:1995:encyclopedia-int-seq}
Neil J.~A. Sloane and Simon Plouffe, \emph{The encyclopedia of integer
  sequences}, Academic Press, Inc., San Diego, CA, 1995. \MR{1327059}

\bibitem{Underground:1978:hamilton-squares}
Polly Underground,
  \href{http://dx.doi.org/10.1016/0012-365X(78)90164-4}{\emph{On graphs with
  {H}amiltonian squares}}, Discrete Math. \textbf{21} (1978), no.~3, 323.
  \MR{522906}

\bibitem{Zypen:2019:graphs-distance-2}
Dominic van~der Zypen, \href{https://mathoverflow.net/q/321877}{\emph{Graphs
  formed of vertices of distance $2$}}, MathOverflow,
  \url{https://mathoverflow.net/q/321877} (version: 2019-01-28).

\bibitem{Veaux-Rickert:2014:more-than-two}
Franklin Veaux and Eve Rickert, \emph{More than two: A practical guide to
  ethical polyamory}, Thorntree Press, 2014.

\bibitem{Weisstein:2020:cocktail-party-graph}
Eric~W. Weisstein, \emph{Cocktail party graph},
  \url{https://mathworld.wolfram.com/CocktailPartyGraph.html}, 2020, from
  MathWorld---A Wolfram Web Resource.

\bibitem{Zelinka:1983:k-domatic-numbers}
Bohdan Zelinka, \emph{On {$k$}-domatic numbers of graphs}, Czechoslovak Math.
  J. \textbf{33(108)} (1983), no.~2, 309--313. \MR{699028}

\bibitem{ZellRavenheart:1990:bouquet-lovers}
Morning~Glory Zell-Ravenheart, \emph{A bouquet of lovers}, Green Egg
  \textbf{XXIII} (1990), no.~89.

\bibitem{Zhang-Li-Chen-Cheng-Yao:2006:x-vertex-dist-edge-coloring}
Zhong~Fu Zhang, Jing~Wen Li, Xiang~En Chen, Hui Cheng, and Bing Yao,
  \emph{{$D(\beta)$}-vertex-distinguishing proper edge-coloring of graphs},
  Acta Math. Sinica (Chin. Ser.) \textbf{49} (2006), no.~3, 703--708.
  \MR{2252073}

\end{thebibliography}
\bibliographystyle{bibstyle/amsplainurl}
}

\end{document}